\newcolumntype{L}{>{$}l<{$}}
\newcommand*{\sheafhom}{\mathcal{H}\kern -.5pt om}
\DeclareMathOperator{\Prob}{Prob}
\DeclareMathOperator{\rred}{red}
\DeclareMathOperator{\im}{Im}
\DeclareMathOperator{\abs}{abs}
\DeclareMathOperator{\pred}{pred}
\DeclareMathOperator{\actual}{actual}
\DeclareMathOperator{\rel}{rel}
\DeclareMathOperator{\CLT}{CLT}
\DeclareMathOperator{\disc}{disc}
\DeclareMathOperator{\GL}{GL}
\DeclareMathOperator{\GSp}{GSp}
\DeclareMathOperator{\Gal}{Gal}
\DeclareMathOperator{\lcm}{lcm}
\DeclareMathOperator{\rad}{rad}
\DeclareMathOperator{\SL}{SL}
\DeclareMathOperator{\SU}{SU}
\DeclareMathOperator{\tr}{tr}
\DeclareMathOperator{\Li}{Li}
\DeclareMathOperator{\USp}{USp}
\DeclareMathOperator{\Frob}{Frob}
\newcommand{\mbq}{\mathbb{Q}}
\newcommand{\mbz}{\mathbb{Z}}
\newcommand{\bF}{\mathbb{F}}
\newcommand{\bN}{\mathbb{N}}
\newcommand{\bQ}{\mathbb{Q}}
\newcommand{\bR}{\mathbb{R}}
\newcommand{\bZ}{\mathbb{Z}}
\newcommand{\dd}{\mathrm{d}}
\newcommand{\legendre}[2]{\left(\frac{#1}{#2}\right)}
\newcommand{\mc}[1]{\mathcal{#1}} 
\newcommand{\ds}{\displaystyle}
\newcommand{\ve}{\varepsilon}
\newcommand{\ga}{\alpha}
\newcommand{\gd}{\delta}
\newcommand{\gD}{\Delta}
\newcommand{\om}{\omega}
\theoremstyle{plain}
\newtheorem{theorem}{Theorem}[section]
\newtheorem{proposition}[theorem]{Proposition}
\newtheorem{conjecture}[theorem]{Conjecture}
\newtheorem{Corollary}[theorem]{Corollary}
\newtheorem{Lemma}[theorem]{Lemma}
\theoremstyle{definition}
\newtheorem{Definition}[theorem]{Definition}
\theoremstyle{remark}
\newtheorem{goal}[theorem]{Goal}
\newtheorem{remark}[theorem]{Remark}
\newtheorem*{remarks}{Remarks}
\numberwithin{equation}{section}
\numberwithin{figure}{section}
\numberwithin{table}{section}
\numberwithin{theorem}{subsection}
\title{The Lang-Trotter Conjecture for products of non-CM elliptic curves}
\author{Hao Chen, Nathan Jones, Vlad Serban}
\thanks{The third author's research was partially funded by START-prize Y-966 of the Austrian Science Fund (FWF) under P.I. Harald Grobner}
\begin{document}

\maketitle


\begin{abstract}
Inspired by the work of Lang-Trotter on the densities of primes with fixed Frobenius traces for elliptic curves defined over $\mbq$ and by the subsequent generalization of Cojocaru-Davis-Silverberg-Stange to generic abelian varieties, we study the analogous question for abelian surfaces isogenous to products of non-CM elliptic curves over $\mbq$.  We formulate the corresponding conjectural asymptotic, provide upper bounds, and explicitly compute (when the elliptic curves lie outside a thin set) the arithmetically significant constants appearing in the asymptotic. This allows us to provide computational evidence for the conjecture. 
\end{abstract}

\section{Introduction} \label{section:introduction}

Let $E_{/\bQ}$ be an elliptic curve and let $p$ denote any prime of good reduction of $E$. The coefficients $a_p(E)$, which for such primes may be defined as the trace of a choice of absolute Frobenius $\Frob_p\in\Gal(\overline{\mathbb{Q}}/\mathbb{Q})$ acting on the Tate module $T_\ell(E)$ of $E$ for $\ell\neq p$, are of central importance in understanding the arithmetic of elliptic curves and related questions. For instance, these integers determine the size of the group over finite fields since $\vert E(\mathbb{F}_p)\vert=1+p-a_p(E)$, and via modularity $a_p(E)$ also equals the $p$-th Fourier coefficient of the weight $2$ newform $f_E$ attached to $E$. They are well-known to satisfy the Hasse bound $\vert a_p(E) \vert \leq 2\sqrt{p}$, and several finer questions about their distribution have been considered. We focus here on a classical conjecture of S. Lang and H. Trotter \cite{LangTrotter} which examines the density of primes whose Frobenius trace $a_p(E)\in\mathbb{Z}$ is equal to a fixed integer. When $E$ does not have complex multiplication, for any trace $t\in \bZ$ they conjecture the asymptotic  
\begin{equation} \label{originallangtrotter}
    \pi_{E,t}(x):=\vert\{p\leq x\colon p\nmid N_E\text{ and }a_p(E)=t\}\vert \sim C(E,t)\cdot \frac{\sqrt x}{\log x},
\end{equation}
where $C(E,t)$ is an explicit constant depending on the compatible system of Galois representations attached to $E$ and where $N_E$ denotes the conductor of $E$. When $E$ has complex multiplication and $t\neq 0$ they similarly conjecture an asymptotic $\pi_{E,t}(x)\sim C'(E,t)\cdot\frac{\sqrt x}{\log x}$, whereas supersingular primes, for which $a_p(E)=0$, can be seen to occur half of the time. Despite some results in its direction, the Lang-Trotter conjecture remains open. \par
More generally, it is natural to study densities of primes with fixed Frobenius trace for higher dimensional motives and compatible systems of $\ell$-adic Galois representations. Considering principally polarized abelian varieties $A_{/\bQ}$ of dimension $g$, the action of the Galois group on torsion points gives rise to Galois representations
\begin{align*}
    \rho_A\colon \Gal(\overline{\mathbb{Q}}/\mathbb{Q}) &\to \GSp_{2g}(\hat\bZ) \\
    \rho_{A,m}\colon \Gal(\overline{\mathbb{Q}}/\mathbb{Q}) &\to \prod_{\ell\mid m}\GSp_{2g}(\bZ_{\ell}) \\
    \bar\rho_{A,m}\colon \Gal(\overline{\mathbb{Q}}/\mathbb{Q}) &\to \GSp_{2g}(\bZ/m\bZ) 
\end{align*}
for any positive integer $m$. The analogue to Lang and Trotter's conjecture for those abelian varieties where $\im(\rho_A)\subset \GSp_{2g}(\hat\bZ)$ is open, which we shall call \emph{generic}, was recently formulated and studied in \cite{CDSS}. 
In our paper, we shift the focus to non-generic abelian varieties and abelian surfaces in particular. With the goal of better understanding the distribution of Frobenius traces in the non-generic case, we examine abelian surfaces that are $\bQ$-isogenous to a product of elliptic curves defined over $\bQ$ and without complex multiplication.\par
Let us first give a more precise formulation of the existing conjectures. For a prime $p$ of good reduction of $A$,  denote more generally $a_p(A) = \tr (\rho_{A,\ell}(\Frob_p))\in \bZ$ (well-defined for $\ell \neq p$). 
The conjectures study for a fixed trace $T$ the asymptotics of  
\[
	\pi_{A,T}(x) := \vert\{p\leq x : p\nmid N_A\text{ and }a_p(A) = T\}\vert,
\]
where $N_A$ denotes the conductor of $A$. The traces $a_p(A)$ are a sum of $2g$ Weil numbers of modulus $\sqrt{p}$ and thus $|a_p(A)| \leq 2g\sqrt{p}$. The distribution of the normalized traces $a_p(A)/\sqrt{p}$ is then governed by a compact real Lie group $K\subset U(2g)$ known as the Sato-Tate group of $A$, which is determined by the $\ell$-adic Galois representations $\rho_{A,\ell}$ attached to $A$.  
The Haar measure on $K$ gives rise to a distribution $\Phi_K\colon [-2g,2g]\to \bR$ for the trace on conjugacy classes of $K$. 
The equidistribution assumption then states:
\begin{conjecture}[Sato--Tate, Serre]\label{conj:equidistrib}
Let $A_{/\bQ}$ be a generic abelian $g$-fold. Then
$\frac{a_p(A)}{\sqrt p}$ is equidistributed in $[-2g,2g]$ with respect to 
$\Phi_{\USp(2g)}$. 
\end{conjecture}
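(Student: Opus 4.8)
The plan is to follow the by now standard strategy deriving Sato--Tate type equidistribution from the analytic behaviour of a family of $L$-functions, specialized to the symplectic situation. First I would record that the genericity hypothesis — $\im(\rho_A)$ open in $\GSp_{2g}(\hat\bZ)$ — forces the $\ell$-adic algebraic monodromy group of $A$ to be all of $\GSp_{2g}$, so that the Sato--Tate group $K$ of $A$ is (conjugate to) $\USp(2g)$ and $\Phi_{\USp(2g)}$ is genuinely the pushforward of normalized Haar measure under the trace. I would then invoke Weyl's equidistribution criterion: since the irreducible characters of the compact group $\USp(2g)$ span a dense subspace of the continuous class functions, it suffices to prove that for every nontrivial irreducible representation $\rho\colon\USp(2g)\to\GL_d(\bC)$ one has
\begin{equation*}
\sum_{\substack{p\leq x \\ p\nmid N_A}}\tr\rho\bigl(\theta_p\bigr)=o\bigl(\pi(x)\bigr),
\end{equation*}
where $\theta_p\in\USp(2g)$ denotes the conjugacy class of the normalized Frobenius at $p$.

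Second, to each such $\rho$ I would attach the $L$-function $L(s,A,\rho)=\prod_{p\nmid N_A}\det\bigl(1-\rho(\Frob_p)\,p^{-s}\bigr)^{-1}$ (completed suitably at the bad primes), built from the $\lambda$-adic realization $\rho\circ\rho_{A,\lambda}$ of the motive cut out of a tensor power of $H^1(A)$ corresponding to $\rho$. By a classical Tauberian argument (Wiener--Ikehara, as in Serre's original framework), the cancellation above follows once one knows that, after the usual normalizing shift, $L(s,A,\rho)$ continues to a holomorphic, nowhere vanishing function on the closed half-plane to the right of and on the edge of the critical strip, i.e.\ that it has neither pole nor zero there. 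Thus the problem is converted into establishing, for each nontrivial irreducible $\rho$: meromorphic continuation, the expected functional equation, and non-vanishing on the edge line.

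Third, to obtain these analytic inputs I would appeal to automorphy. Decomposing $\rho$ in terms of the symplectic fundamental representations and using the corresponding branching rules, one reduces to proving potential automorphy of the relevant tensor/exterior-power constructions on $H^1(A)$, and then I would apply the potential automorphy machinery of Barnet-Lamb--Geraghty--Harris--Taylor, Patrikis--Taylor, and related work, checking in each case the regularity, (essential) self-duality, and residual irreducibility hypotheses. Potential automorphy over a varying family of CM fields yields holomorphic continuation there, non-vanishing on the edge follows from the theorems of Jacquet--Shalika and Shahidi for automorphic $L$-functions, and Brauer's induction theorem patches these into meromorphic continuation and non-vanishing of $L(s,A,\rho)$ over $\bQ$.

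The main obstacle is precisely this third step. For $g=1$ the program is complete (Clozel--Harris--Shepherd-Barron--Taylor together with Barnet-Lamb--Geraghty--Harris--Taylor) and recovers the classical Sato--Tate theorem, but for $g\geq 2$ the potential automorphy of \emph{all} the tensor constructions attached to a generic abelian $g$-fold is not known: the available theorems require hypotheses on distinctness of Hodge--Tate weights, on the precise shape of the image, and on compatibility with the symplectic pairing that are not yet uniformly available. This is why the statement remains conjectural; the genericity assumption is what identifies $K$ with $\USp(2g)$ and confines the representation-theoretic bookkeeping to the symplectic case.
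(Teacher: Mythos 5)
The statement you are asked about is labeled a \emph{Conjecture} in the paper, and the paper offers no proof of it: immediately after stating it, the authors remark that it is known for elliptic curves over $\bQ$ (via modularity and the Sato--Tate theorem) but ``for $g\geq 2$ is not known in general.'' So there is no paper proof to compare against, and no complete proof can currently be given.

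Your proposal is an accurate account of the standard Serre-style strategy — reduce via Weyl's equidistribution criterion to cancellation in sums of $\tr\rho(\theta_p)$ over nontrivial irreducibles $\rho$ of $\USp(2g)$, convert that to non-vanishing and holomorphy of the associated $L$-functions $L(s,A,\rho)$ on the edge of the critical strip via a Tauberian theorem, and try to supply those analytic properties through (potential) automorphy. You also correctly identify that the genericity hypothesis is what pins the Sato--Tate group down to $\USp(2g)$. But as you yourself concede in your final paragraph, the third step — potential automorphy of all the tensor constructions on $H^1(A)$ for a generic abelian $g$-fold with $g\geq 2$ — is not a theorem, so the argument does not close. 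This is not a repairable gap in your write-up; it is the open problem itself. The honest conclusion is that your text is a correct explanation of why the statement is believed and of how one would prove it given future automorphy results, not a proof. Had the problem been restricted to $g=1$, your route (Clozel--Harris--Shepherd-Barron--Taylor and Barnet-Lamb--Geraghty--Harris--Taylor) would indeed complete, consistent with the paper's remark that the $g=1$ case is known.
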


Its content is one of the main statements of the Sato-Tate conjecture and is therefore known for elliptic curves over $\mathbb{Q}$, but for $g\geq 2$ is not known in general. Here and throughout the paper, for any positive integer $m$, any integer $T$ and any algebraic subgroup $G\subseteq\GSp_{2g}$, we use the notation
\begin{equation} \label{defofGSp2gandHsubA}
\begin{split}
G(m) :=& G(\mbz/m\mbz), \\
G(m,T) :=& \{ g \in G(m) : \tr g \equiv T \mod m \}, \\
H_A :=& \im(\rho_{A} ) \subseteq \GSp_{2g}(\hat{\mbz}), \\
H_A(m) :=& \im(\bar\rho_{A,m}) \subseteq \GSp_{2g}(\mbz/m\mbz), \\
H_A(m,T) :=& \{ h \in H_A(m) : \tr h \equiv T \mod m \}.
\end{split}
\end{equation}
Also, provided $H_A=\im (\rho_{A})$ is an open subgroup of $G(\hat{\bZ})\subseteq \GSp_{2g}(\hat{\mbz})$, we let $m_A$ denote the \emph{conductor} of $H_A$, i.e. the least positive integer $m$ such that in the 
commutative diagram 
\begin{equation}
\label{conductorofimage}
\begin{tikzcd}
    \Gal(\overline{\mathbb{Q}}/\mathbb{Q}) \ar[r, "\rho_A"] \ar[dr, "\bar\rho_{A,m}"']
        & G(\hat\bZ) \ar[d, "\rred"] \\
    & G(m) ,
\end{tikzcd}
\end{equation}
we have $H_A := \im(\rho_A) = \rred^{-1} \left( H_A(m) \right)$, and we abbreviate for nonzero trace $T$
\begin{equation} \label{defofmsubAT}
m_{A,T} := m_A\prod_{\ell\vert m_A}\ell^{v_\ell(T)}.
\end{equation}

We state the existing conjectures, when $\im(\rho_A)\subseteq G(\hat\bZ)$ is open for $G=\GSp_{2g}$, uniformly for $T\neq 0$:

\begin{conjecture}[Lang--Trotter ($g = 1$), Cojocaru--Davis--Silverberg--Stange ($g \geq 2$)]\label{conj:LTsummary}
Let $A_{/\bQ}$ be a generic abelian $g$-fold. Assume the equidistribution stated in Conjecture \ref{conj:equidistrib} holds for $A$. Then for $T\neq 0$ we have
\[
\pi_{A,T}(x) \sim C(A,T)\cdot \frac{\sqrt x}{\log x},
\]
where the constant $C(A,T) \geq 0$ is given by
$$ C(A,T)=\underbrace{
		2\Phi_{\USp(2g)}(0)
	}_{{\begin{substack} { \text{Sato--Tate} \\ \text{factor} } \end{substack}}} 
	\cdot \underbrace{
		\frac{m_{A,T} \vert H_A(m_{A,T},T) \vert}{\vert H_A(m_{A,T})\vert}
	}_\text{exceptional factor}
	\cdot \underbrace{
		\prod_{\ell\nmid m_A} \frac{\ell^{v_\ell(T)+1} \vert \GSp_{2g}(\ell^{v_\ell(T)+1},T) \vert}{\vert \GSp_{2g}(\ell^{v_\ell(T)+1})\vert}
	}_\text{universal factors}. $$
\end{conjecture}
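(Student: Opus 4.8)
Since the statement is a conjectural asymptotic rather than a theorem, the plan is two-fold: (a) derive the formula for $C(A,T)$ from a probabilistic heuristic, and (b) prove that the displayed expression is well-defined, i.e. that the infinite product converges and its value does not depend on the auxiliary level used to write the finite part. For (a), write $\pi_{A,T}(x)=\sum_{p\le x,\,p\nmid N_A}\mathbf 1\{a_p(A)=T\}$ and replace each indicator by a heuristic ``probability'' $\Prob(a_p(A)=T)$ predicted by the arithmetic of $A$. This probability factors as an archimedean piece, governed by the Sato--Tate equidistribution of Conjecture \ref{conj:equidistrib} for $a_p(A)/\sqrt p\in[-2g,2g]$, times a congruence piece, governed by the Chebotarev density theorem applied to the residual representations $\bar\rho_{A,m}$ of \eqref{defofGSp2gandHsubA}; the heuristic --- and this is exactly where rigour is lost --- is that these two pieces are asymptotically independent, so one multiplies them and sums over $p\le x$.

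For the archimedean piece: $a_p(A)$ is an integer lying in an interval of length $4g\sqrt p$, so inside a fixed residue class modulo $m$ there are about $4g\sqrt p/m$ admissible values of $a_p(A)$, whose normalizations $a_p(A)/\sqrt p$ are spaced $m/\sqrt p$ apart in $[-2g,2g]$. By Conjecture \ref{conj:equidistrib} the density of $a_p(A)/\sqrt p$ at $T/\sqrt p$ is $\Phi_{\USp(2g)}(T/\sqrt p)$, which tends to $\Phi_{\USp(2g)}(0)$ as $p\to\infty$ with $T$ fixed; so, conditionally on $a_p(A)\equiv T\pmod m$, the chance that $a_p(A)$ equals the single integer $T$ is $\sim \Phi_{\USp(2g)}(0)\,m/\sqrt p$. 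Since the Chebotarev density of the event $a_p(A)\equiv T\pmod m$ is $|H_A(m,T)|/|H_A(m)|$, this gives
\[
\Prob(a_p(A)=T)\ \approx\ \frac{m\,|H_A(m,T)|}{|H_A(m)|}\cdot\frac{\Phi_{\USp(2g)}(0)}{\sqrt p}.
\]
Assuming the bracketed factor stabilizes as $m\to\infty$, summing over $p\le x$ together with $\sum_{p\le x}p^{-1/2}\sim 2\sqrt x/\log x$ (partial summation against the prime number theorem) produces the Sato--Tate factor $2\Phi_{\USp(2g)}(0)$ times $\sqrt x/\log x$.

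It then remains to compute $\lim_{m\to\infty} m\,|H_A(m,T)|/|H_A(m)|$ and to check it exists. Restrict to $m=m_A n$ with $\gcd(m_A,n)=1$ and $n$ supported on primes $\ell\nmid m_A$. Because $H_A=\rred^{-1}(H_A(m_A))$ by the defining property of the conductor $m_A$, cf.\ \eqref{conductorofimage}, the subgroup $H_A$ contains the full kernel of reduction modulo $m_A$; hence by the Chinese Remainder Theorem and a Goursat argument $H_A(m)=H_A(m_A)\times\GSp_{2g}(\mbz/n\mbz)$ and $H_A(m,T)=H_A(m_A,T)\times\GSp_{2g}(n,T)$, so
\[
\frac{m\,|H_A(m,T)|}{|H_A(m)|}=\frac{m_A\,|H_A(m_A,T)|}{|H_A(m_A)|}\cdot\prod_{\ell^{e}\|n}\frac{\ell^{e}\,|\GSp_{2g}(\ell^{e},T)|}{|\GSp_{2g}(\ell^{e})|}.
\]
A standard lifting argument shows $\ell^{e}|\GSp_{2g}(\ell^{e},T)|/|\GSp_{2g}(\ell^{e})|$ is independent of $e$ once $e\ge v_\ell(T)+1$, yielding the universal factors; the analogous stabilization at the entangled primes $\ell\mid m_A$ occurs exactly at the modulus $m_{A,T}$ of \eqref{defofmsubAT}, so $m_A|H_A(m_A,T)|/|H_A(m_A)|=m_{A,T}|H_A(m_{A,T},T)|/|H_A(m_{A,T})|$ is the exceptional factor. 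Finally one checks that the product over $\ell\nmid m_A$ converges: a direct count shows each local factor is $1+O(1/\ell^{2})$ (for $g=1$ even $1+O(1/\ell^{3})$ at all but finitely many $\ell$), so the product converges absolutely and $C(A,T)$ is a well-defined nonnegative real number, independent of the level chosen to display the finite part.

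The genuine obstacle, and the reason this remains a conjecture, is the step treating the archimedean and congruence conditions as independent while freely interchanging the limits $p\to\infty$ and $m\to\infty$: making it rigorous would require effective equidistribution of $\bar\rho_{A,m}(\Frob_p)$ with a power-saving error uniform in $m$ up to nearly $\sqrt x$, far beyond current technology and open already for $g=1$ even under GRH. Accordingly, the rigorous content consists of the well-definedness of $C(A,T)$ sketched above together with unconditional (and, under GRH, sharper) upper bounds for $\pi_{A,T}(x)$ obtained from a large-sieve or effective-Chebotarev input, as developed elsewhere in the paper; the matching lower bound, hence the asymptotic itself, stays conjectural. For the non-generic abelian surfaces that are the actual subject of the paper, the whole template carries over once $\GSp_{2g}$ is replaced by the relevant reductive monodromy group $G$ and $\USp(2g)$ by the associated Sato--Tate group $K$ --- which is exactly what the computations in the body of the paper are set up to do.
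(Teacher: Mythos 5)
Your proposal is correct and follows essentially the same route as the paper: since the statement is a conjecture, the only provable content is the heuristic derivation (Sato--Tate density times \v{C}ebotarev density, assumed independent, summed via $\sum_{p\le x}p^{-1/2}\sim 2\sqrt{x}/\log x$) together with the well-definedness of $C(A,T)$, and your splitting of $m\,|H_A(m,T)|/|H_A(m)|$ into the exceptional factor at $m_{A,T}$ and stabilized universal factors, plus the $1+O(\ell^{-2})$ convergence estimate, is exactly the argument of Section \ref{section:heuristic} (Lemmas \ref{equidistrib}--\ref{stable} and Propositions \ref{vertical} and \ref{infiniteproduct}, following \cite{LangTrotter} and \cite{CDSS}).
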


Conjecture \ref{conj:LTsummary} contains Lang and Trotter's original conjecture \cite{LangTrotter} when $A$ is an elliptic 
curve without complex multiplication and therefore is generic by a well-known result of J-P. Serre \cite{Serre1972}, whereas in the higher dimensional case Conjecture \ref{conj:LTsummary} is due to \cite{CDSS} for nonzero trace.  \par 

Consider now abelian surfaces $\bQ$-isogenous to products of non-CM elliptic curves $E_1\times E_2$ defined over $\bQ$. 
\begin{remark}
Since all but finitely many Frobenius traces $a_p(A)$ are invariant under $\bQ$-isogeny, when $A\sim_{\mathbb{Q}} E_1\times E_2$ we have that $\pi_{A,T}(x)=\pi_{E_1\times E_2}(x)+O(1)$. Hence for our purposes we may, and by and large in this paper do, just work with products of elliptic curves. 
\end{remark}
We also assume $E_1$ and $E_2$ are non-isogenous over $\overline{\mbq}$, so that denoting
\begin{equation} \label{defofG}
\begin{split}
    G:=& \GL_2 \times_{\det} \GL_2 = \{(g_1,g_2)\in \GL_2\times \GL_2\colon \det(g_1)=\det(g_2)\},
\end{split}
\end{equation}
the Galois image subgroup $\im(\rho_{E_1\times E_2})$ is an open subgroup of $G(\hat{\mbz})$ by \cite[Th.~6]{Serre1972}. Fixing an embedding $G \hookrightarrow \GSp_4$, we may view $H_{E_1\times E_2}:=\im(\rho_{E_1\times E_2})\subseteq G(\hat{\mbz})\subseteq \GSp_{4}(\hat{\mbz})$ and use the notations in \eqref{defofGSp2gandHsubA}.
 An explicit embedding $G \hookrightarrow \GSp_4$ is for instance given by
 \[
 \left( \begin{pmatrix} a & b \\ c & d \end{pmatrix}, \begin{pmatrix} a' & b' \\ c' & d' \end{pmatrix} \right) \mapsto \begin{pmatrix} a & 0 & b & 0 \\ 0 & a' & 0 & b' \\ c & 0 & d & 0 \\ 0 & c' & 0 & d' \end{pmatrix}. 
 \]
 
 In particular, this defines via \eqref{conductorofimage} a conductor $m_{{E_1\times E_2}}$ of the open subgroup $H_{{E_1\times E_2}} \subseteq G(\hat{\mbz})$. 
Furthermore, in this setting the equidistribution result for the normalized traces of Frobenius with respect to the Sato-Tate measure is known (see e.g., \cite[Th.~5.4]{Harris2009}):

\begin{proposition} \label{SatoTateProposition}
Let $E_{1/\bQ}$, $E_{2/\bQ}$ be non-CM and non-isogenous over $\overline\bQ$. 
Then $\frac{a_p(E_1\times E_2)}{\sqrt p}$ is equidistributed with respect 
to $\Phi_{\SU(2)^{\times 2}} = \Phi_{\SU(2)}\ast \Phi_{\SU(2)}$ in $[-4,4]$. 
\end{proposition}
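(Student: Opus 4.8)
The plan is to reduce the proposition to the joint Sato--Tate equidistribution for the pair $(E_1,E_2)$ and then read off the law of the trace on $E_1\times E_2$ by pushing forward under addition. The first, elementary, step is that for every prime $p$ of good reduction for both curves --- hence for all but finitely many $p$ --- one has $a_p(E_1\times E_2)=a_p(E_1)+a_p(E_2)$, so after dividing by $\sqrt p$ it suffices to understand the joint distribution of the pair $\bigl(a_p(E_1)/\sqrt p,\ a_p(E_2)/\sqrt p\bigr)\in[-2,2]^2$.

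The substantive input, which I would cite rather than reprove, is the following: since $E_1$ and $E_2$ are non-CM and non-isogenous over $\overline\bQ$, the Sato--Tate group of $E_1\times E_2$ is the connected group $\SU(2)\times\SU(2)$, and the pair $\bigl(a_p(E_1)/\sqrt p,\ a_p(E_2)/\sqrt p\bigr)$ is equidistributed in $[-2,2]^2$ with respect to the product measure $\mu_{\SU(2)}\times\mu_{\SU(2)}$, where $\mu_{\SU(2)}$ denotes the pushforward of Haar measure on $\SU(2)$ under the trace (so that $\Phi_{\SU(2)}$ is its density on $[-2,2]$). This is \cite[Th.~5.4]{Harris2009}; its proof rests on the automorphy results underlying the Sato--Tate conjecture for $\GL_2$ over $\bQ$ together with the holomorphy and non-vanishing on $\mathrm{Re}(s)=1$ of the Rankin--Selberg $L$-functions $L\bigl(s,\mathrm{Sym}^i f_{E_1}\times\mathrm{Sym}^j f_{E_2}\bigr)$ for $(i,j)\neq(0,0)$, the geometric non-isogeny being exactly what prevents these $L$-functions from having a pole or zero on that line and hence forces the joint moments to factor as products of individual Sato--Tate moments. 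This is the heart of the matter and the only genuinely deep ingredient; were the curves isogenous over $\overline\bQ$ the limiting joint distribution would be supported on a proper subvariety of $[-2,2]^2$ and the conclusion would fail.

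It then remains to transfer the equidistribution from the pair to the sum. The addition map $s\colon[-2,2]^2\to[-4,4]$, $(u,v)\mapsto u+v$, is continuous, so $f\circ s\in C([-2,2]^2)$ for every $f\in C([-4,4])$, and applying the joint equidistribution to the test function $f\circ s$ yields
\begin{align*}
\lim_{x\to\infty}\frac{1}{\pi(x)}\sum_{\substack{p\le x\\ p\,\nmid\, N_{E_1}N_{E_2}}} f\!\left(\frac{a_p(E_1\times E_2)}{\sqrt p}\right)
&=\int_{[-2,2]^2}(f\circ s)\,\mathrm{d}(\mu_{\SU(2)}\times\mu_{\SU(2)}) \\
&=\int_{[-4,4]} f\,\mathrm{d}\bigl(s_*(\mu_{\SU(2)}\times\mu_{\SU(2)})\bigr).
\end{align*}
By definition $s_*(\mu_{\SU(2)}\times\mu_{\SU(2)})$ is the convolution $\mu_{\SU(2)}*\mu_{\SU(2)}$, equivalently the law of $\tr(g_1)+\tr(g_2)=\tr\bigl((g_1,g_2)\bigr)$ for a Haar-random element $(g_1,g_2)\in\SU(2)\times\SU(2)$, which is the measure on $[-4,4]$ with density $\Phi_{\SU(2)^{\times 2}}=\Phi_{\SU(2)}*\Phi_{\SU(2)}$. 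This is precisely the claimed equidistribution.
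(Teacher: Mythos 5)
Your proposal is correct and follows the same route as the paper, which simply cites \cite[Th.~5.4]{Harris2009} for this statement without further argument. You have merely made explicit the two routine steps the paper leaves implicit --- the additivity $a_p(E_1\times E_2)=a_p(E_1)+a_p(E_2)$ at primes of good reduction and the pushforward of the joint Sato--Tate law under the addition map, yielding the convolution $\Phi_{\SU(2)}\ast\Phi_{\SU(2)}$ --- while correctly identifying the Rankin--Selberg input as the only deep ingredient.
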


We adapt Lang and Trotter's heuristic to this situation in Section \ref{section:heuristic} of this paper, proving convergence of the constant appearing in the asymptotic. This leads to the analogous conjecture: 
\begin{conjecture}\label{conj:main}
Let $E_{1/\bQ}$, $E_{2/\bQ}$ be non-CM elliptic curves that are not $\overline{\bQ}$-isogenous. Let $A\sim_{\mathbb{Q}} E_1\times E_2$ be an abelian surface isogenous over $\mathbb{Q}$ to their product and let $T\in \mathbb{Z}$, $T\neq 0$. Then 
\begin{equation}\label{eqn:asymptotic}
\pi_{A,T}(x) \sim C(E_1\times E_2,T)\cdot\frac{\sqrt x}{\log x}
\end{equation}
	as $x \rightarrow \infty$, where the constant $C(E_1\times E_2,T) \geq 0$ is given by
\[
C(E_1\times E_2,T)=
	\underbrace{
		2\Phi_{\SU(2)^{\times 2}}(0)
	}_{{\begin{substack} { \text{Sato--Tate} \\ \text{factor} } \end{substack}}} \cdot
	\underbrace{
		\frac{m_{E_1\times E_2,T} \vert H_{E_1\times E_2}(m_{E_1\times E_2,T},T) \vert}{\vert H_{E_1\times E_2}(m_{E_1\times E_2,T})\vert}
	}_\text{exceptional factor} \cdot
	\underbrace{
		\prod_{\ell\nmid m_{E_1\times E_2}} \frac{\ell^{v_\ell(T)+1} \vert G(\ell^{v_\ell(T)+1},T) \vert}{\vert G(\ell^{v_\ell(T)+1})\vert}
	}_\text{universal factors}
\]
where $G=\GL_2 \times_{\det} \GL_2$ and notations are as in \eqref{defofGSp2gandHsubA}, and where $m_{E_1\times E_2}$ and $m_{E_1\times E_2,T}$ are defined as in \eqref{conductorofimage} and \eqref{defofmsubAT}, respectively.
In case $C(E_1\times E_2,T)=0$ we take the asymptotic \ref{eqn:asymptotic} to mean that there are only finitely many primes $p$ with $a_p(A)=T$. 
\end{conjecture}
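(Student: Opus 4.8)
Since the final statement is a \emph{conjecture}, there is nothing to prove outright; what can be established --- and what I take the task to be --- is twofold: first, to derive the precise shape of $C(E_1\times E_2,T)$ from a Lang--Trotter-style probabilistic heuristic adapted to $G=\GL_2\times_{\det}\GL_2$; second, to prove rigorously that this constant is well-defined, i.e.\ that the displayed Euler product converges. I would organise the exposition around these two tasks.

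\emph{The heuristic.} Following Lang--Trotter \cite{LangTrotter} and its generalisation in \cite{CDSS}, model $\pi_{A,T}(x)\approx\sum_{p\le x}\Prob(a_p(A)=T)$ and factor the ``probability'' that the integer $a_p(A)$ equals the fixed value $T$ into one archimedean and one $\ell$-adic contribution per prime, assumed mutually independent. The archimedean factor comes from Proposition \ref{SatoTateProposition}: since $a_p(A)/\sqrt p$ is governed by $\Phi_{\SU(2)^{\times 2}}$ and consecutive integer values of $a_p(A)$ have normalised spacing $1/\sqrt p$, it equals $\Phi_{\SU(2)^{\times 2}}(T/\sqrt p)/\sqrt p\to\Phi_{\SU(2)^{\times 2}}(0)/\sqrt p$ as $p\to\infty$. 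For each prime power $\ell^k$, the Chebotarev density theorem applied to $\bar\rho_{A,\ell^k}$ gives $\Prob(a_p(A)\equiv T\bmod\ell^k)=|H_A(\ell^k,T)|/|H_A(\ell^k)|$; dividing by the ``expected'' value $1/\ell^k$ yields the local correction $\ell^k|H_A(\ell^k,T)|/|H_A(\ell^k)|$. Multiplying all factors and summing over $p\le x$ with $\sum_{p\le x}p^{-1/2}\sim 2\sqrt x/\log x$ produces the announced shape. Two standard points complete the derivation: a stabilisation step --- the kernel of $G(\ell^{k+1})\to G(\ell^k)$ is (for $k\ge1$) identified with $\mathrm{Lie}(G)(\bF_\ell)=\{(X_1,X_2)\in\mathfrak{gl}_2^{\times 2}:\tr X_1=\tr X_2\}$, on which the trace via $G\hookrightarrow\GSp_4$ is the functional $(X_1,X_2)\mapsto 2\tr X_1$, nonzero for $\ell$ odd, so every trace residue mod $\ell^{k+1}$ has equally many preimages and the local ratio may be computed at level $v_\ell(T)+1$; and a decoupling step --- for $\ell\nmid m_{E_1\times E_2}$ the conductor property $H_{E_1\times E_2}=\rred^{-1}(H_{E_1\times E_2}(m_{E_1\times E_2}))$ forces $H_{E_1\times E_2}(\ell^k)=G(\ell^k)$, so that the trace condition modulo $M=m_{E_1\times E_2,T}\prod_{\ell\nmid m_{E_1\times E_2}}\ell^{k_\ell}$ splits, via the Chinese Remainder Theorem and additivity of the trace along the CRT factors, into the exceptional factor at $m_{E_1\times E_2,T}$ and the Euler product over $\ell\nmid m_{E_1\times E_2}$.

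\emph{Convergence.} The core is an explicit trace count in $G(\bF_\ell)$ for $\ell$ odd. A case analysis of $g\in\GL_2(\bF_\ell)$ by the splitting type of $x^2-tx+d$ (split semisimple, anisotropic, or central-or-Jordan, according as $t^2-4d$ is a nonzero square, a non-square, or zero) gives the uniform formula
\[
f(d,t):=\#\{g\in\GL_2(\bF_\ell):\tr g=t,\ \det g=d\}=\ell^2+\ell\,\legendre{t^2-4d}{\ell}\qquad(d\ne 0).
\]
As $\det$ has image $\bF_\ell^\times$ with all fibres of cardinality $|\SL_2(\bF_\ell)|$, and $G\hookrightarrow\GSp_4$ sends $(g_1,g_2)$ to a matrix of trace $\tr g_1+\tr g_2$, one obtains $|G(\ell)|=(\ell-1)|\SL_2(\bF_\ell)|^2$ and
\[
|G(\ell,T)|=\sum_{d\in\bF_\ell^\times}\ \sum_{t_1+t_2=T}f(d,t_1)f(d,t_2).
\]
Expanding the product of Legendre symbols and using $\sum_{t\in\bF_\ell}\legendre{t^2-4d}{\ell}=-1$ for $d\ne0$ reduces everything to the double character sum $\sum_{d\in\bF_\ell^\times}\sum_t\legendre{(t^2-4d)((T-t)^2-4d)}{\ell}$; interchanging the two summations turns it into a sum over $t$ of quadratic character sums in $d$ of discriminant $16T^2(2t-T)^2$, which are evaluated exactly after isolating the degenerate loci $t\in\{0,T,T/2\}$. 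The expected outcome is that for every prime $\ell\nmid 2Tm_{E_1\times E_2}$,
\[
\frac{\ell\,|G(\ell,T)|}{|G(\ell)|}=1-\frac{\ell^2-\ell-1}{(\ell-1)^3(\ell+1)^2}=1+O(\ell^{-3}),
\]
so the Euler product converges absolutely. The remaining finitely many factors (at $\ell\mid 2T$ or $\ell\mid m_{E_1\times E_2}$), the exceptional factor $m_{E_1\times E_2,T}|H_{E_1\times E_2}(m_{E_1\times E_2,T},T)|/|H_{E_1\times E_2}(m_{E_1\times E_2,T})|$, and the Sato--Tate factor $2\Phi_{\SU(2)^{\times 2}}(0)$ are each finite, the last two nonnegative, so $C(E_1\times E_2,T)$ is a well-defined nonnegative real, vanishing precisely when $H_{E_1\times E_2}(m_{E_1\times E_2,T},T)=\varnothing$ --- a congruence obstruction, consistent with the ``finitely many primes'' reading.

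\emph{The main obstacle.} The genuinely insurmountable point is that the heuristic above is not a theorem: the assumed independence of the mod-$\ell$ conditions across primes, and of all of them with the archimedean Sato--Tate constraint, is precisely the content that cannot be justified, which is why the statement must remain a conjecture (and why the rigorous results of the paper are upper bounds only). Among the provable ingredients, the delicate one is the exact evaluation of the double character sum: a crude Weil-type bound $O(\ell^{3/2})$ already forces convergence, but the closed form needed for the explicit constant requires careful tracking of the degenerate cases, and the prime $\ell=2$ (where the trace has zero differential on $\mathrm{Lie}(G)$) must be treated by hand. A secondary technical point is checking that the decoupling step is legitimate at every relevant level when $H_{E_1\times E_2}$ is a proper open subgroup of $G(\hat\bZ)$.
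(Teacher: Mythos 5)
Your overall architecture --- derive the shape of $C(E_1\times E_2,T)$ from the Lang--Trotter probabilistic model and then prove the Euler product converges --- is exactly the paper's (Section \ref{section:heuristic}), and your convergence computation is correct: one checks that $1-\frac{\ell^2-\ell-1}{(\ell-1)^3(\ell+1)^2}$ equals the paper's $\frac{\ell(\ell^4-\ell^3-2\ell^2+\ell+2)}{(\ell^2-1)^2(\ell-1)}$ from Lemma \ref{good primes}, and your fixed-trace-and-determinant count agrees with the paper's $\ell(\ell+\legendre{t^2-4d}{\ell})$.

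The genuine gap is in your stabilisation step. You assert that since the kernel of $G(\ell^{k+1})\to G(\ell^{k})$ is $\mathrm{Lie}(G)(\bF_\ell)$ and the trace is a nonzero linear functional on it for $\ell$ odd, ``every trace residue mod $\ell^{k+1}$ has equally many preimages.'' Two problems. First, the relevant functional is not the trace of the kernel element itself but the variation of the trace over the fibre above a fixed $g=(g_1,g_2)\in G(\ell^k)$: the lifts are $g(I+\ell^kX)$ and their traces vary by $\ell^k\left(\tr(g_1X_1)+\tr(g_2X_2)\right)$. This functional on $\{(X_1,X_2):\tr X_1=\tr X_2\}$ degenerates exactly when both $g_i$ are scalar modulo $\ell$, say $\lambda_i I$, with $\lambda_1+\lambda_2\equiv 0$; such pairs occur precisely in the locus $\ell\mid T$ (this is the content of the paper's smoothness criterion for $X_T(\bZ_\ell)$). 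Second, if your claim held for all $k\geq 1$ it would force the local ratio to stabilise already at level $\ell$, so the universal factor would depend only on $T\bmod \ell$ --- contradicting Theorem \ref{primesdividingtrace}, where the limit visibly depends on $v_\ell(T)$ through $e_\ell(T)$; a direct check shows the $\ell$ lifts of the residue $0\bmod\ell$ to $\bZ/\ell^2\bZ$ do \emph{not} carry equal counts in $G(\ell^2,\cdot)$. The paper's actual argument (Lemmas \ref{equidistrib} and \ref{stable}) proves equidistribution only among the $\ell$ lifts of $T$ from modulus $m\ell^{v_\ell(T)}$ to $m\ell^{v_\ell(T)+1}$, by constructing a unit $u\equiv 1\bmod m$ with $uT\equiv S\bmod m\ell^{v_\ell(T)+1}$ and translating by the scalar pair $uI_2\times uI_2\in H(m)$; this is precisely why the level $v_\ell(T)+1$ appears and cannot be lowered. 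To repair your write-up you need either this multiplier argument or the explicit stratified count of Section \ref{section:explicit constants} (the sets $\mathcal{H}_{i,n}(t,s)$ indexed by $\ell$-adic distance to the scalars).
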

The Sato--Tate factor appearing above is explicitly given by:
\[
	2\Phi_{\SU(2)^{\times 2}}(0) 
	= 2\int_{-2}^2 \left(\frac{\sqrt{4-t^2}}{2\pi}\right)^2\, \dd t 
	= \frac{1}{2\pi^2} \int_{-2}^2 (4-t^2)\, \dd t 
	= \frac{16}{3\pi^2} .
\]
 Furthermore, we are able in this setting to compute explicit formulas for the factors involved in the constant $C(E_1\times E_2,T)$ in the case where $\im (\rho_{E_1\times E_2})$ and hence $\im(\rho_A)$ are as large as possible.  More precisely, the pair of elliptic curves $(E_1,E_2)$ is called a \emph{Serre pair} if $\left[ G(\hat{\bZ}) : \im (\rho_{E_1\times E_2}) \right] = 4$, noting we always have $\left[ G(\hat{\bZ}) : \im (\rho_{E_1 \times E_2}) \right] \geq 4$ (see, e.g., \cite{MR3071819} or Section \ref{section:bad primes}). For primes not dividing the conductor of the open subgroup $\im(\rho_{E_1\times E_2}) \subseteq G(\hat{\mbz})$ (universal primes), the explicit evaluation of the local factor is done in Section \ref{section:explicit constants} and is valid for \emph{any} pair $(E_1,E_2)$ of non-CM elliptic curves which are not $\overline{\bQ}$-isogenous. Section \ref{section:bad primes} then deals with the more delicate remaining factor, and the formulas are only valid when $(E_1,E_2)$ is a Serre pair.  We summarize our results here: 
 
 \begin{theorem}\label{thm:mainformulas}
 Let $E_{1/\bQ}$, $E_{2/\bQ}$ be non-CM elliptic curves that are not $\overline{\bQ}$-isogenous. Then in the notations of Conjecture \ref{conj:main}, for $T\neq 0$ we obtain: 
 \begin{enumerate}
     \item 
     The factors away from $m_{E_1\times E_2}$ are given by
$$\frac{\ell^{v_\ell(T)+1} \vert G(\ell^{v_\ell(T)+1},T) \vert}{\vert G(\ell^{v_\ell(T)+1})\vert}=
\begin{cases}
	\frac{\ell(\ell^4-\ell^3-2\ell^2+\ell+2)}{(\ell^2-1)^2(\ell-1)}&\text{ if }\ell\nmid m_{E_1\times E_2} \cdot T\\
\frac{\ell^6-\ell^4-3\ell^3-\ell}{\ell^6-2\ell^4+\ell^2}+\frac{E(\ell,T)+\delta_2(v_\ell(T))(\ell^3-1)}{\ell^{4e_\ell(T)+4}(\ell^2-1)^2}&\text{ if }\ell\nmid m_{E_1\times E_2}, \ell\mid T,\\
\end{cases}
$$
with notations for $e_\ell(T), E(\ell,T), \delta_2(v_\ell(T))$ specified below. 

\item Moreover, if $(E_1, E_2)$ is a Serre pair, we compute the factor at the conductor $m_{E_1\times E_2}$ of the open subgroup $\im \rho_{E_1\times E_2} \subseteq G(\hat{\mbz})$. Let
$$D_i=\begin{cases}
\vert \Delta_{i}\vert& \text{if }  \Delta_{i}\equiv 1\mod 4\\
4\vert \Delta_{i}\vert& \text{otherwise}
\end{cases}
$$
where $\Delta_i$ denotes the square-free part of the discriminants of the curves $E_i$. Then $m_{E_1\times E_2}=\lcm(2,D_1,D_2)$. Abbreviating $m=m_{{E_1\times E_2},T}=m_{E_1\times E_2}\prod_{\ell\vert m_{E_1\times E_2}}\ell^{v_\ell(T)}$ and writing $D_i'$ and $m'$ for the odd parts of these quantities, when $\vert D_1'\vert \neq \vert D_2'\vert$ the factor $\frac{m \vert H_{E_1\times E_2}(m,T) \vert}{\vert H_{E_1\times E_2}(m)\vert}$ equals:
\small{$$
\frac{m\cdot (m')^5\varphi(m')}{\left( \rad(m') \right)^5\vert G(m)\vert} \left( \mathfrak{S}_{{\boldsymbol{\psi}}_2}(T) \prod_{\ell \mid m'} \left( A_\ell(T) + 1\right) 
+ 2^{\omega\left( \frac{m'}{|D_1'|} \right) } \mathfrak{S}_{{\boldsymbol{\psi}}_2}^{(1)}(T) \prod_{\ell \mid D_1'} B_\ell(T) \\
+ 2^{\omega\left( \frac{m'}{|D_2'|} \right)} \mathfrak{S}_{{\boldsymbol{\psi}}_2}^{(2)}(T) \prod_{\ell \mid D_2'} B_\ell(T) \right)
$$}
with $\vert G(m)\vert=\prod_{\ell\vert m}\ell^{7v_{\ell}(m)-5}(\ell^2-1)^2(\ell-1)$. We obtain a similar formula when $\vert D_1'\vert = \vert D_2'\vert$ (see Proposition \ref{badprimesmainprop}). 
\end{enumerate}
Throughout these results we have used the following notations:
\begin{equation} 
\begin{split}
e_\ell(T) &:= \lfloor (v_\ell(T)-1)/2 \rfloor, \quad\quad \gd_n(m) :=
\begin{cases}
1 & \text{ if } n \text{ divides } m \\
0 & \text{ otherwise,}
\end{cases} \\
E(\ell,T) &:= \frac{\ell^3(\ell+1)(\ell^{4e_\ell(T)+4}-1) + (\ell^4 + \ell^3 + 2\ell^2 + \ell - 1)(\ell^{4e_\ell(T)}-1)}{(\ell^2+1)(\ell+1)}, \\
A_\ell(T) &:= 
\begin{cases}
 \ell^5 - \ell^3 - 3\ell^2 + 1 + \frac{2E(\ell,T)}{\ell^{4e_\ell(T)+3}} + \gd_2(v_\ell(T)) \frac{(\ell^3-1)}{\ell^{4e_\ell(T)+2}} & \text{ if $\ell \mid T$} \\ 
(\ell^6 - \ell^5 - 2\ell^4 + \ell^3 + 2\ell^2)/(\ell-1) & \text{ if $\ell \nmid T$,}
\end{cases} \\
B_\ell(T) &:= 
\begin{cases}
-3\ell^2 + 1 + \frac{2E(\ell,T)}{\ell^{4e_\ell(T)+3}} + \gd_2(v_\ell(T)) \frac{(\ell^3-1)}{\ell^{4e_\ell(T)+2}} & \text{ if $\ell \mid T$} \\
\ell^2/(\ell-1) & \text{ if $\ell \nmid T$},
\end{cases} \\
\mathfrak{S}_{{\boldsymbol{\psi}}_2}(T) &:=
\left| G\left(2^{v_2(m)},T\right)_{\im {\boldsymbol{\psi}}_2 \subseteq \im {\boldsymbol{\psi}}_{m'}} \right|, \\
\mathfrak{S}_{{\boldsymbol{\psi}}_2}^{(i)}(T) &:=
\left| G\left(2^{v_2(m)},T\right)_{\im {\boldsymbol{\psi}}_2 \subseteq \im {\boldsymbol{\psi}}_{m'}}^{\psi_2^{(i)} = 1} \right| - \left| G\left(2^{v_2(m)},T\right)_{\im {\boldsymbol{\psi}}_2 \subseteq \im {\boldsymbol{\psi}}_{m'}}^{\psi_2^{(i)} = -1} \right|,
\end{split}
\end{equation}
 where in the last two expressions, the subscripts and superscripts denote the subset of $G\left(2^{v_2(m)},T\right)$ cut out by the given conditions, where $\psi^{(i)}$ is a product of a Kronecker symbol and the non-trivial character of order two on $\GL_2(\mathbb{Z}/2\bZ)$, and a subscript $n$ indicates restriction to the $n$-primary part. See Section \ref{section:bad primes} for details. 
 \end{theorem}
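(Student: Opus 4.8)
The plan is to reduce both parts to counting $2\times 2$ matrices over $\bZ/\ell^k$ with prescribed trace and determinant. Under the fixed embedding $G\hookrightarrow\GSp_4$ one has $\tr(g_1,g_2)=\tr g_1+\tr g_2$, so
\[
G(\ell^k,T)=\bigl\{(g_1,g_2)\in\GL_2(\bZ/\ell^k)^2 : \det g_1=\det g_2,\ \tr g_1+\tr g_2\equiv T\pmod{\ell^k}\bigr\}.
\]
Since $\det$ is surjective with fibres of equal size, $|G(\ell^k)|=\ell^{7k-5}(\ell^2-1)^2(\ell-1)$, and writing $N_{\ell^k}(t,d)$ for the number of $g\in\GL_2(\bZ/\ell^k)$ with $\tr g=t$ and $\det g=d$,
\[
|G(\ell^k,T)|=\sum_{d\in(\bZ/\ell^k)^\times}\ \sum_{t\bmod\ell^k}N_{\ell^k}(t,d)\,N_{\ell^k}(T-t,d).
\]
First I would recall the classical evaluation of $N_{\ell^k}(t,d)$ for odd $\ell$ --- the only relevant case, since $2\mid m_{E_1\times E_2}$. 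It depends only on $\min(v_\ell(t^2-4d),k)$: it equals $\ell^{2k-1}\bigl(\ell+\legendre{t^2-4d}{\ell}\bigr)$ when $\ell\nmid t^2-4d$, and there is a similar closed form in general, obtained by partitioning matrices with characteristic polynomial $x^2-tx+d$ into $\GL_2(\bZ/\ell^k)$-conjugacy classes and computing the orders of their centralizers (the unit groups of $(\bZ/\ell^k)[x]/(x^2-tx+d)$ and of its overorders).

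For part~(1) I would substitute this formula and evaluate the double sum. When $\ell\nmid m_{E_1\times E_2}\cdot T$ (so $k=1$) it reduces to Jacobi-type sums over $\bF_\ell$ --- essentially $\sum_t\legendre{t^2-4d}{\ell}=-1$ for $d\neq 0$, together with the single quartic sum $\sum_t\legendre{(t^2-4d)((T-t)^2-4d)}{\ell}$, which one evaluates by factoring the quartic according to whether $d$ is a square. When $\ell\mid T$ (so $k=v_\ell(T)+1$) one instead stratifies by $\bigl(\min(v_\ell(t^2-4d),k),\min(v_\ell((T-t)^2-4d),k)\bigr)$; because $(t^2-4d)-((T-t)^2-4d)=(2t-T)T$ ties these two valuations together through $v_\ell(T)$, summing the geometric series over the strata is exactly what produces $e_\ell(T)=\lfloor(v_\ell(T)-1)/2\rfloor$, the partial sum $E(\ell,T)$ and the parity correction $\delta_2(v_\ell(T))$. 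This step uses only that $H_{E_1\times E_2}(\ell^k)=G(\ell^k)$ for $\ell\nmid m_{E_1\times E_2}$, hence holds for any pair of non-CM elliptic curves that are not $\overline{\bQ}$-isogenous.

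For part~(2) I would first pin down the image. For a Serre pair, $H:=\im\rho_{E_1\times E_2}$ is the index-$4$ subgroup of $G(\hat\bZ)$ cut out by the two quadratic characters $g\mapsto\psi_i(g)\,\chi_i(\det g)$, $i=1,2$, where $\psi_i$ is the sign character $\GL_2(\bZ/2)\cong S_3\to\{\pm1\}$ on the $i$-th factor and $\chi_i=\legendre{\Delta_i}{\cdot}$ is the Kronecker character of conductor $D_i$; these relations are forced by $\bQ(\sqrt{\Delta_i})\subseteq\bQ(E_i[2])\cap\bQ(\zeta_\infty)$ together with Kronecker--Weber, they are independent on $G(2)=S_3\times S_3$, and for a Serre pair they account for the entire index $4$, so $H$ is exactly this subgroup and its conductor is $\lcm(2,D_1,D_2)=m_{E_1\times E_2}$. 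Writing $m=m_{E_1\times E_2,T}$ with odd part $m'$, I would use the identity $\mathbf 1_{H(m)}=\tfrac14\prod_{i=1,2}\bigl(1+\psi_i\,\chi_i(\det(\cdot))\bigr)$ on $G(m)$ together with $|H(m)|=|G(m)|/4$ to express $\tfrac{m\,|H(m,T)|}{|H(m)|}$ as a signed combination of the twisted counts $\sum_{g\in G(m),\,\tr g\equiv T}(\psi_1\chi_1)^{a}(\psi_2\chi_2)^{b}(g)$, and then factor each of these over the primes $\ell\mid m$ by the Chinese Remainder Theorem. Every local factor that arises is either an untwisted count $|G(\ell^k,T)|$ from part~(1) (at primes outside the support of the relevant character), a Legendre-twisted count $\sum_d\legendre{d}{\ell}\sum_t N_{\ell^k}(t,d)N_{\ell^k}(T-t,d)$ at odd $\ell\mid D_i$ (which the same matrix-count input evaluates in terms of $A_\ell(T)$ and $B_\ell(T)$), or, at $\ell=2$, a count over $G(2^{v_2(m)})$ twisted by the $S_3$-signs, namely the constrained counts $\mathfrak{S}_{{\boldsymbol{\psi}}_2}(T)$ and $\mathfrak{S}_{{\boldsymbol{\psi}}_2}^{(i)}(T)$. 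Repackaging --- collapsing the four character-sum pieces into the three displayed terms using the constraint that links the $2$-adic and odd-part sign data, and tracking the powers of $\ell$ pulled out of each local count --- should yield the prefactor $\tfrac{m(m')^5\varphi(m')}{\rad(m')^5|G(m)|}$ and the stated formula; the case $|D_1'|=|D_2'|$ runs the same way and is recorded in Proposition~\ref{badprimesmainprop}.

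The hard part will be the $2$-adic local factor. At $\ell=2$ the count $N_{2^k}(t,d)$ is not governed by a single Legendre symbol --- it is sensitive to the square class of $t^2-4d$ in $(\bZ/2^k)^\times$ --- and the $S_3$-signs $\psi_i$, the $2$-parts $\chi_{i,2}$ of the Kronecker characters (which depend on $\Delta_i\bmod 8$), and the fibre-product-over-$\det$ constraint must be disentangled by hand; this is precisely the information packaged in $\mathfrak{S}_{{\boldsymbol{\psi}}_2}(T)$ and $\mathfrak{S}_{{\boldsymbol{\psi}}_2}^{(i)}(T)$. Moreover, the two relations defining $H$ share a common sign vector in $\{\pm1\}^2$, so the $2$-adic and odd-part counts are coupled, and whether the equality $\chi_{1,\ell}=\chi_{2,\ell}$ can hold at a common odd prime --- equivalently whether $D_1'=D_2'$ --- controls which mixed sign combinations survive. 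This coupling is exactly what forks the answer into the cases $|D_1'|\neq|D_2'|$ and $|D_1'|=|D_2'|$, and carrying it out correctly is the crux of part~(2).
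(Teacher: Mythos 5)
Your proposal is correct and follows essentially the same route as the paper: part (1) is proved by reducing to counts of matrices with fixed trace and determinant and stratifying by $\ell$-adic valuation data (the paper stratifies matrices by their distance to scalars rather than stratifying the pairs $(t,d)$ by $v_\ell(t^2-4d)$, but the key identity $\Delta(T-t,s)-\Delta(t,s)=T(T-2t)$ drives the geometric series in both versions), and part (2) proceeds exactly as you describe, via the two quadratic relations $\psi^{(i)}\cdot\chi^{(i)}(\det)=1$ cutting out the index-$4$ image, a Chinese Remainder factorization coupling the $2$-adic and odd-part sign data through a common value in $\{\pm1\}^2$, Legendre-twisted local sums producing $A_\ell$ and $B_\ell$, and a separate hand computation at $\ell=2$ packaged into $\mathfrak{S}_{{\boldsymbol{\psi}}_2}$ and $\mathfrak{S}_{{\boldsymbol{\psi}}_2}^{(i)}$. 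The only cosmetic differences are that the paper counts $N_{\ell^k}(t,d)$ directly rather than via centralizers of conjugacy classes, and organizes the exceptional factor as a sum over fibers of ${\boldsymbol{\psi}}_{m'}$ rather than expanding $\mathbf{1}_{H(m)}$ by character orthogonality; these are equivalent bookkeeping choices.
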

 
It has been shown by the second author \cite{MR3071819,MR2563740} that almost all elliptic curves and pairs of curves (ordered by height) over $\bQ$ are Serre curves or Serre pairs, respectively, so that the restriction on computing the exceptional factor in Theorem \ref{thm:mainformulas} is reasonable. In general, this factor is quite intricate to compute and it would likely be hard to produce any kind of formula that works in much greater generality. \par 
Several remarks concerning Conjecture \ref{conj:main} and Theorem \ref{thm:mainformulas} are in order: 
\begin{remarks} \label{remarksss}

\begin{enumerate}[(i)]
    \item One can similarly formulate a conjecture for arbitrary products of non-CM, pairwise non-isogenous elliptic curves $E_1\times\cdots\times E_k$. \\
     \item When two curves are $\bQ$-isogenous, then for all primes $p\nmid N_{E_1}N_{E_2}$ we have $a_p(E_1)=a_p(E_2)$ and the distribution is then already predicted by Lang and Trotter's original conjecture. \\
    \item For trace $T=0$, we expect similarly a constant $C(E_1\times E_2,0)$ such that $\pi_{A,0}(x)\sim C(E_1\times E_2,0)\cdot\sqrt{x}/\log(x)$, but have not computed $C(E_1\times E_2,0)$ explicitly. \\
    \item One may also formulate a conjecture when $E_1$, $E_2$, or both curves have complex multiplication after taking into account the correct Sato-Tate measure. For CM elliptic curves, Lang and Trotter still predict that for $T \neq 0$ there is a constant $C'(E,T)$ such that $\pi_{E,T}(x)\sim C'(E,T)\cdot\sqrt{x}/\log(x)$. On the other hand, when $T=0$, M. Deuring \cite{Deuring} showed that half of the primes have supersingular reduction, i.e. that
    $$\pi_{E,0}(x)\sim 1/2\cdot x/\log(x)$$
    for $E$ with complex multiplication. This reflects the fact that the Sato-Tate measure has a discrete part of mass $1/2$ at zero. When, say, both $E_1$ and $E_2$ are CM and non-isogenous, we thus arrive at at least $1/4$ of primes having trace $T=0$ and the heuristic for general traces has to be adapted according to the Sato-Tate group (e.g., when both curves have CM the Sato-Tate group is $F_{a,b}$ in the notations of \cite{KedlayaFite+} with connected component $U(1)\times U(1)$). \\
    \item It can indeed occur that $C(E_1\times E_2,T)=0$ for some $T$ because $\im(\overline{\rho}_{{E_1\times E_2},m})\subset G(m)$ is small enough not to contain any pairs of matrices with traces summing to $T$. For example, if $E_1$ and $E_2$ both have rational $2$-torsion (implying that $2$ divides $m_{E_1 \times E_2}$), then at all odd primes $p \nmid N_{E_1}N_{E_2}$ this forces $a_p(E_1)\equiv a_p(E_2)\equiv 0 \mod 2$, and the constant $C(E_1\times E_2,T)=0$ vanishes for $T$ odd. For more details and related results and examples, including pairs $(E_1,E_2)$ for which the original Lang-Trotter constants $C(E_1,T)$ and $C(E_2,T)$ are non-zero for all $T$ but nevertheless $C(E_1 \times E_2,T) = 0$ for some $T$, see Section \ref{missingtracessection} below.  The fact that this arithmetic information is captured by the exceptional factor in Conjecture \ref{conj:main} should indicate that it is difficult to compute in general. Still, for Serre pairs, our results show the constant $C(E_1\times E_2,T)$ never vanishes for nonzero trace (see Section \ref{missingtracessection}).
    
   \end{enumerate}
\end{remarks}

 Compared to the study of the Lang-Trotter Conjecture for a single elliptic curve over $\bQ$, there are various additional complications that arise. For instance, one has to reckon with possible entanglements of the division fields of $E_1$ and $E_2$ when the Galois extension $\bQ(\zeta_m)$ is strictly contained in $\bQ(E_1[m])\cap \bQ(E_2[m])$. This increases the index $\left[ G(\hat{\bZ}) : \im (\rho_{E_1 \times E_2})\right]$ which, as Remark \ref{remarksss}.(v) already indicates, can lead to additional congruence obstructions which prevent the realization of some $T\in\bZ$ as Frobenius traces. See Section \ref{missingtracessection} and Proposition \ref{prop:Serrenopair} for some results illustrating this phenomenon. It is therefore also not surprising that the computation of the exceptional factor of the constant
 becomes even more onerous when considering the product $E_1 \times E_2$. 
 Furthermore, it is harder to compute the factors $\lim_{n\to\infty}\ell^n\vert G(\ell^n,T)\vert/\vert G(\ell^n)\vert$ of the Lang-Trotter constant, which do not immediately stabilize for primes $\ell \vert T$. See for instance the difficulties appearing in A. Akbary and J. Parks' \cite{AkbaryParks} work where they study for $E_1\times E_2$ the density of primes with both Frobenius traces fixed \cite[Conjecture 1.2.]{AkbaryParks}.\par

 With the formulas of Theorem \ref{thm:mainformulas} in hand, we are then able in Section \ref{section:numerical} to provide and discuss computational evidence for Conjecture \ref{conj:main}. The computations were performed using the MAGMA computational algebra system \cite{MAGMA} as well as the SageMath mathematics software system \cite{SAGEMATH} and can be reproduced with either software using the results of this paper.  As hoped, we find a strong correlation between the predicted and actual counts of primes with fixed Frobenius trace, providing the first such evidence for higher-dimensional abelian varieties in the non-generic case. Moreover, as predicted, the counts are sensitive to the prime factorization of the trace and to the arithmetic of the abelian surface, lending further credibility to the intricate constant $C(E_1\times E_2)$ appearing in the asymptotic. Extensive computations are more accessible here than in the setting of Akbary and Parks \cite[Remark 1.3.(iv)]{AkbaryParks} or in the setting of generic abelian varieties of dimension $\geq 2$ of \cite{CDSS}, so this paper focuses primarily on explicit calculations. However, just as for elliptic curves, one can establish non-trivial upper bounds for $\pi_{A,T}(x)$ and we expect that one can prove along the lines of \cite{DavidPapp} that Conjecture \ref{conj:main} holds on average. Toward that end, we present an upper bound in Theorem \ref{upperboundtheorem} and relegate the rest to future work. \par 

Similar densities for general compatible systems of Galois representations were studied in \cite{MurtyFrobdistribs} and for primes where the Frobenius fields of two elliptic curves coincide in \cite{BaierPitankar}. See also \cite{KatzLTrevisited} for a discussion of the Lang-Trotter conjecture for elliptic curves and for results over function fields. Returning to abelian surfaces, there is now a complete classification of the Sato-Tate groups \cite{KedlayaFite+} and we hope that the study of the distribution of Frobenius traces for non-generic abelian surfaces can be completed in the future. Indeed, this was our initial motivation, but already the case of products of elliptic curves provided enough substance to warrant this paper.

\par
\subsection*{Acknowledgements}
We would like to express our thanks to Alina Carmen Cojocaru and Andrew Sutherland for pointing us toward this project and engaging in enlightening discussions, and to the organizers of the Arizona Winter School for providing the stimulating environment where the research was initiated. We would also like to thank Daniel Miller and Gabor Wiese for helpful conversations on some aspects of this paper.

\section{The Lang-Trotter heuristic}\label{section:heuristic}
In this section we adapt Lang and Trotter's original heuristic in \cite{LangTrotter} to the situation of products of non-$\overline{\bQ}$-isogenous elliptic curves defined over $\mathbb{Q}$ without complex multiplication. We follow Lang and Trotter's original reasoning as well as the formulation for generic abelian varieties in \cite{CDSS}. We normalize the continuous part of the Sato-Tate density function $\Phi:[-2g,2g]\to \mathbb{R}$ so that $\Phi$ is of total mass one. In general, if the Sato-Tate measure were to have a discontinuous part (e.g., for CM elliptic curves one has $\mu_{disc}=\frac{1}{2}\delta_o$) the heuristic would need to be adapted (in particular, for $T \neq 0$, the value $\Phi(0)$ would need to be replaced by the appropriate limiting value at zero).  \par
\subsection{The probabilistic model}
Fix integers $m \geq 1$ and $T\in\mathbb{Z}$ and recall the notations for the group $G(m) := \GL_2(\mbz/m\mbz) \times_{\det} \GL_2(\mbz/m\mbz)$, the subgroup $H_A(m)$ and the subset $H_A(m,T)$ in \eqref{defofGSp2gandHsubA}. In what follows, we take $A=E_1\times E_2$ for simplicity and typically drop the subscript for brevity, writing $H(m)$ and $H(m,T)$ when $A=E_1\times E_2$ is implicit. 
The ratio
$$F_m(T)=\frac{m\vert H(m,T)\vert}{\vert H(m)\vert}$$
is $m$ times the probability that $\overline{\rho}_{A,m}(\Frob_p)$ has trace $T$ modulo $m$, obtained by applying the \v{C}ebotarev density theorem to the division field generated by the $m$-torsion (we may ignore primes $p\mid m$ or which ramify). The additional factor $m$ ensures that $F_m(T)$ has average value $1$ over the congruence classes of $T$ modulo $m$.
We also define the probability weights $\mu_{p,m}(T)$ by
\begin{equation} \label{formoffsubmofTandp}
\mu_{p,m}(T) :=
\begin{cases}
c_{p,m}\Phi\left(\frac{T}{\sqrt{p}}\right)F_m(T) & \text{ if } \vert T \vert \leq 2g\sqrt{p} \\
0 & \text{ if } \vert T \vert > 2g\sqrt{p},
\end{cases}
\end{equation}
where $c_{p,m}$ is the unique constant chosen so that 
\[
\sum_{ T \in \bZ } \mu_{p,m}(T) = 1,
\]
namely
\[
c_{p,m} := \frac{1}{\sum_{ \vert T\vert \leq 2g\sqrt{p}}\Phi\left( \frac{T}{\sqrt{p}} \right) F_m(T)}.
\]
The \emph{main probabilistic assumption}, as found in \cite{LangTrotter}, is that the function $T \mapsto \mu_{p,m}(T)$ models the probability of the event $\{ a_p(A)=T \}$, taking into account the Sato-Tate distribution and taking into account the \v{C}ebotarev density theorem for \emph{only} the $m$-th division field; in particular, it assumes that these two are statistically independent from one another.
We presently show that this model is in fact consistent with these two equidistribution results.

\begin{Lemma}\label{Riemannsum}
Fix integers $T_0$ and $m\geq 1$, and let $I \subseteq [-2g,2g]$ be any interval. We have
$$
\lim_{p\to\infty}\frac{m}{\sqrt{p}}\sum_{{\begin{substack} { T\equiv T_0 \; (m) \\ \frac{T}{\sqrt{p}} \in I } \end{substack}}} \Phi\left(\frac{T}{\sqrt{p}}\right)= \int_I \Phi(s) \, ds.
$$
\end{Lemma}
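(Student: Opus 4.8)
The plan is to read the sum as a Riemann sum for $\int_I\Phi$ whose mesh shrinks to zero. Write $x_T := T/\sqrt p$ and $\delta_p := m/\sqrt p$. As $T$ runs over the residue class $T\equiv T_0 \pmod m$, the sample points $x_T$ form an arithmetic progression in $\mathbb R$ of common difference $\delta_p$, with $\delta_p\to 0$ as $p\to\infty$; thus the quantity appearing in the lemma is exactly $\sum_{x_T\in I}\delta_p\,\Phi(x_T)$, a Riemann sum for $\int_I\Phi(s)\,ds$ of step $\delta_p$ (sampled at the points of the progression that lie in $I$).

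First I would record the regularity of $\Phi$ that is needed. In our setting ($g=2$) we have $\Phi=\Phi_{\SU(2)^{\times 2}}=\Phi_{\SU(2)}\ast\Phi_{\SU(2)}$, a convolution of two continuous, compactly supported functions, so $\Phi$ is continuous on $[-2g,2g]=[-4,4]$; in particular it is bounded, $M:=\|\Phi\|_\infty<\infty$, and uniformly continuous, with a modulus of continuity $\omega(\delta)\to 0$ as $\delta\to 0^+$. (The argument below uses only that $\Phi$ is bounded and Riemann integrable on $[-2g,2g]$.)

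Then I would run the standard comparison. Attach to each $x_T\in I$ the half-open interval $J_T:=[x_T,x_T+\delta_p)$; the $J_T$ are pairwise disjoint, and $\bigcup_T J_T$ differs from $I$ by a set of measure at most $2\delta_p$ (two end pieces of length $<\delta_p$). On each $J_T$ one has $\bigl|\delta_p\Phi(x_T)-\int_{J_T}\Phi(s)\,ds\bigr|\le\delta_p\,\omega(\delta_p)$, while on the mismatch $|\Phi|\le M$. Summing over the $\le |I|/\delta_p+1$ relevant values of $T$ gives
\[
\left|\frac{m}{\sqrt p}\sum_{\substack{T\equiv T_0\;(m)\\ x_T\in I}}\Phi(x_T)-\int_I\Phi(s)\,ds\right|\le\bigl(|I|+\delta_p\bigr)\,\omega(\delta_p)+2M\delta_p,
\]
and the right-hand side tends to $0$ as $p\to\infty$, which proves the lemma (the degenerate case $|I|=0$ being trivial, as both sides vanish in the limit).

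I do not expect any genuine obstacle: this is a routine Riemann-sum estimate, and the only real input is enough regularity of $\Phi$, which is available since $\Phi$ is honestly continuous on the compact interval $[-2g,2g]$. The one point needing a little care is the behaviour at the two endpoints of $I$, since the progression $x_T$ need not line up with $\partial I$; but the mismatch is confined to intervals of length $<\delta_p$ and hence contributes $O(\delta_p)\to 0$. Were one to state the lemma for a Sato--Tate density with an unbounded continuous part or a discrete atom (the CM situations mentioned in the introduction), one would instead apply the same argument to the continuous part over compact subintervals avoiding the singularities; this is not needed here.
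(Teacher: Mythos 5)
Your argument is correct and is exactly the approach the paper takes (the paper's proof is a one-line appeal to the same Riemann-sum interpretation with mesh $m/\sqrt{p}$); your write-up simply supplies the standard uniform-continuity and endpoint estimates that the paper leaves implicit.
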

\begin{proof}
When $p$ is much larger than $m$, this approaches a Riemann sum computing $\int_I \Phi(s)\, ds$ by splitting the interval $I$ into pieces of length $m/\sqrt{p}$.
\end{proof}
Taking $I = [-2g,2g]$, we obtain the following corollary.
\begin{Corollary} \label{Iequalsminusonetoonecor}
For any $m \geq 1$ and $T_0 \in \mbz$ we have
\[
\lim_{p \to \infty} \frac{m}{\sqrt{p}} \sum_{{\begin{substack} { T\equiv T_0(m) \\ \vert T\vert\leq 2g\sqrt{p} } \end{substack}}} \Phi\left( \frac{T}{\sqrt{p}} \right) = 1.
\]
\end{Corollary}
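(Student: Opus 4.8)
The plan is to deduce the corollary directly from Lemma~\ref{Riemannsum} by specializing the interval to $I=[-2g,2g]$ and then identifying the resulting integral with $1$.

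First I would recall the normalization fixed at the beginning of this section: $\Phi$ denotes the \emph{continuous} part of the Sato--Tate density on $[-2g,2g]$, rescaled to have total mass one. In the case of interest here ($g=2$, Sato--Tate group $\SU(2)^{\times 2}$) the Sato--Tate measure has no discrete part by Proposition~\ref{SatoTateProposition}, so $\Phi$ is a genuine probability density and $\int_{-2g}^{2g}\Phi(s)\,ds = 1$. Even in a setting with a discrete part (e.g.\ the CM case alluded to above), it is by definition the rescaled continuous part that carries total mass one, so this identity persists for $\Phi$ as normalized.

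Next I would apply Lemma~\ref{Riemannsum} with $I=[-2g,2g]$ and the given $T_0$. The constraint $T/\sqrt p\in[-2g,2g]$ is exactly $\vert T\vert\le 2g\sqrt p$, so the summation range in Lemma~\ref{Riemannsum} coincides with the one in the corollary except possibly at the two endpoints $T=\pm 2g\sqrt p$, which are attained only when $2g\sqrt p\in\bZ$ and in any event contribute $O(m/\sqrt p)$ since $\Phi$ is bounded (indeed it vanishes at $\pm 2g$). Hence
\[
\lim_{p\to\infty}\frac{m}{\sqrt p}\sum_{\substack{T\equiv T_0\,(m)\\ \vert T\vert\le 2g\sqrt p}}\Phi\!\left(\frac{T}{\sqrt p}\right)
= \int_{-2g}^{2g}\Phi(s)\,ds = 1,
\]
which is precisely the assertion of the corollary.

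There is essentially no obstacle here; the only point meriting a word of care is that the Riemann-sum argument underlying Lemma~\ref{Riemannsum} be run on the \emph{closed} interval $[-2g,2g]$, on which $\Phi$ extends continuously (vanishing at the endpoints), so that no mass escapes at the boundary and the $O(1)$ ambiguity in the indexing set is absorbed by the factor $m/\sqrt p\to 0$.
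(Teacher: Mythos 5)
Your proposal is correct and is exactly the paper's argument: the corollary is obtained by taking $I=[-2g,2g]$ in Lemma~\ref{Riemannsum} and using the normalization $\int_{-2g}^{2g}\Phi(s)\,ds=1$. The extra remarks about endpoint terms and the continuous extension of $\Phi$ to the closed interval are sensible but not needed beyond what the paper leaves implicit.
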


\begin{Corollary} \label{asymptoticofcsubpmcor}
For all $m\geq 1$ we have
$$
\lim_{p\to\infty}\sqrt{p}\cdot c_{p,m}=1.
$$
\end{Corollary}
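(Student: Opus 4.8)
The plan is to write $\sqrt{p}\cdot c_{p,m}$ as the reciprocal of the Riemann-type sum
$S_p := \frac{1}{\sqrt{p}}\sum_{\vert T\vert\leq 2g\sqrt{p}}\Phi\!\left(\frac{T}{\sqrt{p}}\right)F_m(T)$
and show $S_p\to 1$, whence $\sqrt{p}\cdot c_{p,m}=1/S_p\to 1$. The crucial structural feature I would exploit is that $F_m(T)$ depends only on the residue class of $T$ modulo $m$. So I would break the sum over $\vert T\vert\leq 2g\sqrt{p}$ into the $m$ residue classes $T_0\in\mbz/m\mbz$, pulling the constant $F_m(T_0)$ out of each inner sum:
$S_p=\sum_{T_0\,(m)}F_m(T_0)\cdot\frac{1}{\sqrt{p}}\sum_{T\equiv T_0\,(m),\ \vert T\vert\leq 2g\sqrt{p}}\Phi\!\left(\frac{T}{\sqrt{p}}\right)$.
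Since $m$ is fixed, this is a finite sum of $m$ terms, so I may pass to the limit term by term.

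Next, I would apply Corollary \ref{Iequalsminusonetoonecor} (the $I=[-2g,2g]$ case of Lemma \ref{Riemannsum}) to each inner sum: it gives $\frac{m}{\sqrt{p}}\sum_{T\equiv T_0\,(m),\ \vert T\vert\leq 2g\sqrt{p}}\Phi(T/\sqrt{p})\to 1$, i.e. the inner sum tends to $1/m$. Therefore $S_p\to \frac{1}{m}\sum_{T_0\,(m)}F_m(T_0)$. Finally I would invoke the normalization of $F_m$ already noted in the text: the subsets $H(m,T_0)$ partition $H(m)$ according to the value of the trace modulo $m$, so $\sum_{T_0\,(m)}\vert H(m,T_0)\vert=\vert H(m)\vert$, and hence $\sum_{T_0\,(m)}F_m(T_0)=\frac{m}{\vert H(m)\vert}\sum_{T_0\,(m)}\vert H(m,T_0)\vert=m$. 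Thus $S_p\to 1$, giving the claim.

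I do not expect a genuine obstacle here: because $m$ is fixed, the interchange of limit and sum is over finitely many terms and is immediate, the only analytic input being the single-residue-class equidistribution already recorded in Corollary \ref{Iequalsminusonetoonecor}, and the remaining ingredient is the purely combinatorial fact $\sum_{T_0\,(m)}F_m(T_0)=m$. The one point to keep straight is that $\Phi$ is continuous and supported on the compact interval $[-2g,2g]$, hence bounded, which is what legitimizes treating these sums as Riemann sums — but this is already used in the proof of Lemma \ref{Riemannsum}, so nothing new is needed.
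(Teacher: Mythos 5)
Your proposal is correct and follows essentially the same route as the paper's proof: decompose the normalizing sum into the $m$ residue classes modulo $m$, apply Corollary \ref{Iequalsminusonetoonecor} to each inner sum, and use the normalization $\sum_{T_0 \in \mbz/m\mbz} F_m(T_0) = m$ (which the paper leaves implicit but which follows, as you note, from the partition $H(m) = \bigsqcup_{T_0} H(m,T_0)$). Nothing further is needed.
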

\begin{proof}
This is \cite[Lemma 21]{CDSS}, mutatis mutandis.  We include the details here for completeness.  By choice of $c_{p,m}$, and noting that the factor $F_m(T)$ only depends on $T$ modulo $m$, we have
\[
\begin{split}
1 = \sum_{T \in \mbz} \mu_{p,m}(T) 
&= c_{p,m}\sum_{{\begin{substack} { T \in \mbz \\ |T| \leq 2g\sqrt{p}} \end{substack}}} \Phi\left( \frac{T}{\sqrt{p}} \right) F_m(T) \\
&= \sqrt{p} \cdot c_{p,m} \sum_{T_0 \in \mbz/m\mbz} \frac{1}{m}F_m(T_0) \cdot \frac{m}{\sqrt{p}} \sum_{{\begin{substack} { T\equiv T_0(m) \\ \vert T\vert\leq 2g\sqrt{p} } \end{substack}}} \Phi\left( \frac{T}{\sqrt{p}} \right).
\end{split}
\]
Thus, using Corollary \ref{Iequalsminusonetoonecor}, we have
\[
\lim_{p \to \infty} \frac{1}{\sqrt{p} \cdot c_{p,m}} = \sum_{T_0 \in \mbz/m\mbz} \frac{1}{m}F_m(T_0) \lim_{p \to \infty} \frac{m}{\sqrt{p}}  \sum_{{\begin{substack} { T\equiv T_0(m) \\ \vert T\vert\leq 2g\sqrt{p} } \end{substack}}} 
\Phi\left( \frac{T}{\sqrt{p}} \right) = 1,
\]
proving the corollary.
\end{proof}
\begin{Corollary} \label{cpmproposition}
The choice of model \eqref{formoffsubmofTandp} agrees with the \v{C}ebotarev density theorem and the Sato-Tate equidistribution law as $p \to \infty$.
\end{Corollary}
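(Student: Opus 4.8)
The plan is to verify separately the two equidistribution statements the model must reproduce: (a) agreement with the Sato--Tate law, namely that for every interval $I\subseteq[-2g,2g]$ one has $\sum_{T\,:\,T/\sqrt p\in I}\mu_{p,m}(T)\to\int_I\Phi(s)\,ds$ as $p\to\infty$; and (b) agreement with the \v{C}ebotarev density theorem for the $m$-th division field, namely that for every residue class $T_0\in\mbz/m\mbz$ one has $\sum_{T\equiv T_0\,(m)}\mu_{p,m}(T)\to \vert H(m,T_0)\vert/\vert H(m)\vert=F_m(T_0)/m$ as $p\to\infty$. Together these say precisely that the model ``agrees with'' both equidistribution laws in the limit.

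For (b), I would fix $T_0$ and sum $\mu_{p,m}(T)$ over those $T$ with $T\equiv T_0\pmod m$ and $\vert T\vert\le 2g\sqrt p$. Since $F_m(T)=F_m(T_0)$ on this set, the sum equals $c_{p,m}\,F_m(T_0)\sum_{T\equiv T_0(m),\,\vert T\vert\le 2g\sqrt p}\Phi(T/\sqrt p)$. Inserting a factor $\sqrt p/\sqrt p$ and applying Corollary~\ref{Iequalsminusonetoonecor} to the trailing sum and Corollary~\ref{asymptoticofcsubpmcor} to $\sqrt p\cdot c_{p,m}$, the limit is $1\cdot F_m(T_0)\cdot\tfrac1m=F_m(T_0)/m$, which is exactly the proportion predicted by \v{C}ebotarev for the trace-$T_0$ subset of $H(m)$.

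For (a), I would split the sum over $T$ with $T/\sqrt p\in I$ according to the residue class $T_0\in\mbz/m\mbz$ of $T$, obtaining $\sum_{T_0\in\mbz/m\mbz}c_{p,m}F_m(T_0)\sum_{T\equiv T_0(m),\,T/\sqrt p\in I}\Phi(T/\sqrt p)$. Again inserting $\sqrt p/\sqrt p$ and applying Lemma~\ref{Riemannsum} to each inner sum together with Corollary~\ref{asymptoticofcsubpmcor}, each term tends to $\tfrac1m F_m(T_0)\int_I\Phi(s)\,ds$, so the whole expression tends to $\bigl(\sum_{T_0\in\mbz/m\mbz}\tfrac1m F_m(T_0)\bigr)\int_I\Phi(s)\,ds$. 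Finally $\sum_{T_0\in\mbz/m\mbz}\tfrac1m F_m(T_0)=\sum_{T_0}\vert H(m,T_0)\vert/\vert H(m)\vert=1$, since the subsets $H(m,T_0)$ partition $H(m)$ as $T_0$ ranges over $\mbz/m\mbz$; hence the limit is $\int_I\Phi(s)\,ds$.

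The whole thing is bookkeeping layered on the Riemann-sum estimate of Lemma~\ref{Riemannsum}, so I do not expect a genuine obstacle. The only points needing a little care are the interchange of the finite sum over $\mbz/m\mbz$ with the limit in $p$ (harmless, as $\mbz/m\mbz$ is finite and $F_m$ is bounded) and the observation that the truncation $\vert T\vert\le 2g\sqrt p$ coincides with $T/\sqrt p\in[-2g,2g]$, so that taking $I=[-2g,2g]$ in (a) recovers the normalization $\sum_T\mu_{p,m}(T)=1$ and no Sato--Tate mass escapes at the Hasse boundary.
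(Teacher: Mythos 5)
Your proposal is correct and follows essentially the same route as the paper's proof: the same decomposition over residue classes modulo $m$, the same use of Lemma \ref{Riemannsum} and Corollaries \ref{Iequalsminusonetoonecor} and \ref{asymptoticofcsubpmcor}, and the same normalization identity $\sum_{T_0}\frac{1}{m}F_m(T_0)=1$. The only (immaterial) difference is that the paper additionally invokes the law of large numbers to phrase the conclusion in terms of densities of primes rather than limits of the measures $\mu_{p,m}$.
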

\begin{proof}
By \eqref{formoffsubmofTandp} together with Corollaries \ref{Iequalsminusonetoonecor} and \ref{asymptoticofcsubpmcor}, we find that
\[
\begin{split}
\lim_{p\to\infty}\mu_{p,m}\left( \left\{ T\in\mathbb{Z} : \;  T\equiv T_0\; (m) \right\} \right) 
&= \lim_{p\to \infty} \sum_{{\begin{substack} { T \in \mbz \\ T \equiv T_0 \; (m)} \end{substack}}} c_{p,m} \Phi\left( \frac{T}{\sqrt{p}} \right) F_m(T_0) \\
&= \frac{1}{m}F_m(T_0) \lim_{p\to\infty} \sqrt{p} \cdot c_{p,m} \cdot \frac{m}{\sqrt{p}} \sum_{{\begin{substack} { T \in \mbz \\ T \equiv T_0 \; (m)} \end{substack}}} \Phi\left( \frac{T}{\sqrt{p}} \right) \\
&=
\frac{1}{m} F_m(T_0).
\end{split}
\]
By the law of large numbers, we recover that the density of primes with Frobenius trace $T_0$ modulo $m$ is asymptotically $\displaystyle \frac{1}{m} F_m(T_0)$, in accordance with the \v{C}ebotarev density theorem.
Similarly, for any interval $I\subseteq [-2g,2g]$, by Lemma \ref{Riemannsum} and Corollary \ref{asymptoticofcsubpmcor} we have
\[
\begin{split}
\lim_{p\to\infty}\mu_{p,m}\left( \left\{
\frac{a_p(A)}{\sqrt{p}}\in I
\right\} \right)
&= \lim_{p \to \infty} \sqrt{p} \cdot c_{p,m} \sum_{T_0 \in \mbz/m\mbz} \frac{1}{m}F_m(T_0) \cdot \frac{m}{\sqrt{p}} \sum_{{\begin{substack} {T \equiv T_0 \; (m) \\ \frac{T}{2g\sqrt{p}} \in I} \end{substack}}} \Phi\left( \frac{T}{\sqrt{p}} \right) \\
&=\int_I\Phi(s)ds.
\end{split}
\]
Again the law of large numbers gives
$$
\lim_{x\to \infty}\frac{\vert\{p\leq x : \; \frac{a_p(A)}{\sqrt{p}} \in I\}\vert}{\pi(x)}=\int_I\Phi(s)ds,
$$
which by our choice of $\Phi$ agrees with the Sato-Tate equidistribution.
\end{proof}

We have seen that the measure $\mu_{p,m}$ on $\mbz$ models the probability of the event that $a_p(A)$ takes on a fixed value $T$, i.e. 
\[
\Prob\{ a_p(A) = T \} \approx \mu_{p,m}(T),
\]
and that this model agrees with the Sato-Tate equidistribution law and with the \v{C}ebotarev density theorem for \emph{only} the $m$-th division field of $A$.  It is natural to desire a limiting measure $\mu_p$ on $\mbz$ modeling the probability of the event $\{ a_p(A) = T \}$ which agrees with Sato-Tate equidistribution as well as with the \v{C}ebotarev density theorem for \emph{all} of the division fields of $A$ simultaneously.  This is (heuristically) achieved by taking the point-wise limit as $m$ approaches infinity via any sequence of positive integers that is co-final with respect to divisibility.  We denote this limit by
$\displaystyle \mu_p(T) := \lim_{m\tilde{\to}\infty} \mu_{p,m}(T)$; for the sake of concreteness, we may take
\[
\mu_p(T) := \lim_{m \tilde{\to} \infty} \mu_{p,m}(T) = \lim_{n \rightarrow \infty} \mu_{p,m_n}(T) \quad\quad \left( m_n := \prod_{\ell \leq n} \ell^n \right).
\]
In Subsection \ref{constantconvergencesubsection}, we establish the above pointwise convergence of the  measures $\mu_{p,m}$ by computing the limit 
\[
F(t) := \lim_{m \tilde{\to} \infty} F_m(T) = \lim_{n \rightarrow \infty} F_{m_n}(T),
\]
which gives rise to the limiting probability measure 
\[
\mu_p(T) = c_p \Phi\left( \frac{T}{\sqrt{p}} \right) F(T),
\]
where
\[
c_p = \lim_{m \tilde{\to} \infty} c_{p,m} = \frac{1}{\sum_{ \vert T\vert \leq 2g\sqrt{p}}\Phi\left( \frac{T}{\sqrt{p}} \right) F(T)}.
\]
Note that, using Corollary \ref{asymptoticofcsubpmcor} and interchanging limits, we obtain
\begin{equation} \label{asymptoticforcp}
c_p \sim \frac{1}{\sqrt{p}} \quad \text{ as } \quad p \to \infty.
\end{equation}
Assuming that the above-mentioned limit interchange is justified and that $\mu_p(T)$ models $\Prob\{ a_p(A) = T \}$, we arrive at Conjecture \ref{conj:main} by writing
\begin{equation} \label{howtheheuristicworks}
\begin{split}
\vert\{p\leq x : \; p\nmid N_A, a_p(A)=T\}\vert
&\approx \sum_{p\leq x} \mu_p(T)\\
&= \sum_{p\leq x} c_p \Phi\left( \frac{T}{\sqrt{p}} \right) F(T) \\
& \sim F(T) \sum_{p\leq x} \frac{1}{\sqrt{p}} \cdot \Phi\left( \frac{T}{\sqrt{p}} \right),
\end{split}
\end{equation}
the last asymptotic as $x \rightarrow \infty$ being justified by \eqref{asymptoticforcp} and by our next lemma, which implies that the sum on the right-hand side approaches infinity with $x$.
\begin{Lemma} \label{asymptoticequalssqrtxoverlogxlemma}
We have the asymptotic
$$
\sum_{p\leq x} \frac{1}{\sqrt{p}} \cdot \Phi\left( \frac{T}{\sqrt{p}} \right) \; \sim \; 2\Phi(0)\cdot\frac{\sqrt{x}}{\log x},
$$
as $x \rightarrow \infty$.
\end{Lemma}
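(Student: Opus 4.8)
\textit{Proof proposal.} The plan is to pull the constant $\Phi(0)$ out of the sum using continuity of the Sato--Tate density at the origin, and then reduce to the elementary estimate $\sum_{p\le x}p^{-1/2}\sim 2\sqrt x/\log x$. Recall that $\Phi$ is continuous on $[-2g,2g]$, hence at $0$, where it takes the positive value $\Phi(0)$ (this continuity is exactly the hypothesis that would need to be replaced by the appropriate limiting value if the Sato--Tate measure had an atom at $0$, as noted in the paper; for $\Phi=\Phi_{\SU(2)^{\times 2}}$ there is no such atom). So for every $\varepsilon>0$ there is $\delta>0$ with $|\Phi(s)-\Phi(0)|<\varepsilon$ whenever $|s|<\delta$. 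For all primes $p>(|T|/\delta)^2$ one has $|T|/\sqrt p<\delta$; the finitely many smaller primes --- including those with $|T|>2g\sqrt p$, for which the summand is taken to be $0$ per the convention of \eqref{formoffsubmofTandp} --- contribute only $O(1)$ to $\sum_{p\le x}p^{-1/2}\bigl(\Phi(T/\sqrt p)-\Phi(0)\bigr)$, while the remaining primes contribute at most $\varepsilon\sum_{p\le x}p^{-1/2}$ in absolute value. Thus
\[
\sum_{p\le x}\frac{1}{\sqrt p}\,\Phi\!\left(\frac{T}{\sqrt p}\right)
= \Phi(0)\sum_{p\le x}\frac{1}{\sqrt p} + O(1) + O\!\left(\varepsilon\sum_{p\le x}\frac{1}{\sqrt p}\right),
\]
and it remains to evaluate $\sum_{p\le x}p^{-1/2}$ and then let $\varepsilon\to0$.

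For the latter I would use partial summation. Writing $\pi(t)=\sum_{p\le t}1$,
\[
\sum_{p\le x}\frac{1}{\sqrt p}=\frac{\pi(x)}{\sqrt x}+\frac12\int_2^x\frac{\pi(t)}{t^{3/2}}\,dt .
\]
By the prime number theorem $\pi(t)\sim t/\log t$, so the boundary term is $\sim\sqrt x/\log x$, and the substitution $t=u^2$ identifies the main term of the integral with $\int_{\sqrt 2}^{\sqrt x}du/\log u\sim 2\sqrt x/\log x$ (the standard asymptotic for the logarithmic integral, or a direct L'Hôpital computation). Adding the two pieces gives $\sum_{p\le x}p^{-1/2}\sim\sqrt x/\log x+\sqrt x/\log x=2\sqrt x/\log x$. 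Substituting this back into the displayed identity and letting $\varepsilon\to0$ after dividing by $\sqrt x/\log x$ yields $\sum_{p\le x}p^{-1/2}\Phi(T/\sqrt p)\sim 2\Phi(0)\,\sqrt x/\log x$, as claimed.

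I do not expect a real obstacle here. The only two points deserving any care are the continuity-at-zero step, where one genuinely needs $\Phi(0)$ to be a bona fide (positive) value of $\Phi$, and the routine-but-slightly-fiddly evaluation of $\int_2^x\pi(t)t^{-3/2}\,dt$; the latter can be sidestepped entirely by instead bounding $\sum_{p\le x}p^{-1/2}$ above and below by summing the prime number theorem estimate over dyadic ranges $2^k<p\le 2^{k+1}$, at the cost of a marginally less transparent computation.
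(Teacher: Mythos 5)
Your proposal is correct and follows essentially the same route as the paper's proof: split off $\Phi(0)\sum_{p\le x}p^{-1/2}$, control the difference via continuity of $\Phi$ at $0$ (splitting primes at $p\le T^2/\delta^2$), and evaluate $\sum_{p\le x}p^{-1/2}\sim 2\sqrt{x}/\log x$ by partial summation. The only difference is that you spell out the partial-summation/PNT computation and the handling of the finitely many primes with $|T|>2g\sqrt{p}$, which the paper leaves implicit; both details check out.
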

\begin{proof}
First, observe that 
\begin{equation} \label{startingproofofasymptoticlemma}
\sum_{p \leq x} \Phi\left( \frac{T}{\sqrt{p}} \right) \frac{1}{\sqrt{p}} = \sum_{p \leq x} \Phi\left( 0 \right) \frac{1}{\sqrt{p}} + \sum_{p \leq x} \left( \Phi\left( \frac{T}{\sqrt{p}} \right) - \Phi\left( 0 \right) \right) \frac{1}{\sqrt{p}}.
\end{equation}
Fix $\varepsilon > 0$.  By continuity of $\Phi$ at $0$, there is a $\delta > 0$ for which 
\[
\left| \frac{T}{\sqrt{p}} - 0 \right| < \delta \; \Longrightarrow \; \left| \Phi\left( \frac{T}{\sqrt{p}} \right) - \Phi(0) \right| < \varepsilon.
\]
We divide the second sum on the right-hand side of \eqref{startingproofofasymptoticlemma},
obtaining the bound
\begin{align*}
\left\vert\sum_{p \leq \frac{T^2}{\delta^2}}\left(\Phi\left(\frac{T}{\sqrt{p}}\right)-\Phi(0)\right)\frac{1}{\sqrt{p}} + \sum_{\frac{T^2}{\delta^2} < p\leq x}\left(\Phi\left(\frac{T}{\sqrt{p}}\right)-\Phi(0)\right)\frac{1}{\sqrt{p}} \right\vert
\leq C_{\delta, T} + 2\varepsilon \cdot \sum_{p \leq x} \frac{1}{2\sqrt{p}}.
\end{align*}
Finally, by partial summation, we have the asymptotic $\displaystyle \sum_{p\leq x}\frac{1}{2\sqrt{p}} \sim \frac{\sqrt{x}}{\log x}$.
Inserting this into \eqref{startingproofofasymptoticlemma}, we find that
\begin{equation*}\sum_{p\leq x}\Phi\left( \frac{T}{\sqrt{p}} \right)\cdot\frac{1}{\sqrt{p}}= 2\Phi(0) \cdot \frac{\sqrt{x}}{\log x}
+ O\left( \varepsilon\cdot\frac{\sqrt{x}}{\log x} \right),
\end{equation*}
with an absolute implied constant.  Since $\varepsilon > 0$ was arbitrary, this establishes the lemma.
\end{proof}
Combining Lemma \ref{asymptoticequalssqrtxoverlogxlemma} with \eqref{howtheheuristicworks}, we arrive at Conjecture \ref{conj:main} in the form
\[
\vert\{p\leq x : \; p\nmid N_A, a_p(A)=T\}\vert \; \sim \; 2\Phi(0) \cdot F(T) \cdot \frac{\sqrt{x}}{\log x} \quad\quad \left( x \rightarrow \infty \right).
\]
Finally, it remains to show that $\displaystyle
F(T) = \lim_{m\tilde{\to}\infty}F_m(T)$ converges, which is the subject of the next subsection. 

\begin{remark} \label{integralexpressionremark}
In fact, the above heuristics lead to the conjecture that, for fixed $T \in \bZ$, we have
\[
\pi_{E_1 \times E_2, T}(x) \sim \frac{C(E_1 \times E_2,T)}{2 \Phi(0)} \int_{\max \left\{ 2, (T/2g)^2 \right\}}^x \Phi \left( \frac{T}{\sqrt{t}} \right) \frac{1}{2\sqrt{t} \log t} \, dt
\]
as $x \rightarrow \infty$.  For numerical computations, this integral expression is more accurate when $T$ is on the scale of $\sqrt{x}$.
\end{remark}


\subsection{Convergence of the constant} \label{constantconvergencesubsection}
We consider the limit that yields the constant in the heuristic and prove it converges and is well-defined for any trace $T\neq 0$. We first show the following:
\begin{proposition}\label{vertical}
Fix $T\neq 0$ an integer. One has the following product expansion, where $m_A$ denotes the conductor of the image of the Galois representation on torsion of $A=E_1\times E_2$, where $H(m)=\im(\overline{\rho}_{A,m})$ and where $H(m,T)$ denotes the trace $T$ elements in $H(m)$ :
$$
F(T) := \lim_{m\tilde{\to} \infty}\frac{m \vert H(m,T)\vert }{\vert H(m)\vert }=\frac{m_{A,T} \vert H(m_{A,T},T)\vert }{\vert H(m_{A,T})\vert }\prod_{\ell \nmid m_A}\frac{\ell^{v_\ell(T)+1}\vert H(\ell^{v_\ell(T)+1},T)\vert }{\vert H(\ell^{v_\ell(T)+1})\vert }.
$$
\end{proposition}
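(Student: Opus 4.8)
The plan is to prove this in three moves: a multiplicativity step that expresses $F_m(T)$ as a finite ``bad'' factor times a product of local factors, a stabilization statement for each of these factors, and an absolute convergence estimate for the resulting product. Because $G = \GL_2 \times_{\det} \GL_2$ and the determinant condition is imposed prime by prime, one has $G(\hat{\bZ}) = \prod_\ell G(\bZ_\ell)$ and $G(m) = \prod_{\ell \mid m} G(\bZ/\ell^{v_\ell(m)})$, and the Chinese Remainder Theorem identifies $\tr\colon G(m)\to \bZ/m\bZ$ with the tuple of the component traces. Let $S$ be the set of primes dividing $m_A$. Since $m_A$ is the conductor, $H_A=\rred^{-1}(H_A(m_A))$ as in \eqref{conductorofimage}, so $H_A = H_A^{S}\times\prod_{\ell\notin S}G(\bZ_\ell)$ where $H_A^{S}\subseteq\prod_{\ell\in S}G(\bZ_\ell)$ is the preimage of $H_A(m_A)$; in particular $H(\ell^{k})=G(\ell^{k})$ for $\ell\notin S$. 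Writing $m=m^{S}m'$ with $m^{S}$ the part of $m$ supported on $S$, we get $H(m)=H^{S}(m^{S})\times\prod_{\ell\mid m'}G(\bZ/\ell^{v_\ell(m)})$ with $H^{S}(m^{S})$ the preimage of $H_A(m_A)$ in $G(m^{S})$, and, applying CRT to the trace condition,
\[
F_m(T)=\frac{m^{S}\,\vert H^{S}(m^{S},T)\vert}{\vert H^{S}(m^{S})\vert}\ \cdot\ \prod_{\ell\mid m',\,\ell\notin S}\frac{\ell^{v_\ell(m)}\,\vert G(\ell^{v_\ell(m)},T)\vert}{\vert G(\ell^{v_\ell(m)})\vert}.
\]
As $m\tilde{\to}\infty$ we eventually have $m_{A,T}\mid m^{S}$ and $v_\ell(m)\to\infty$ for each $\ell\notin S$, so it remains to find the stable values of the two kinds of factor and to show the product over $\ell\notin S$ converges.

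\textbf{Local stabilization.} The claim is that the $\ell$-factor (for $\ell\notin S$) is already constant at exponent $v_\ell(T)+1$ and that the $S$-factor is constant once $m_{A,T}\mid m^{S}$, with value $m_{A,T}\vert H(m_{A,T},T)\vert/\vert H(m_{A,T})\vert$. Both reduce to a single-prime lifting count: passing from level $\ell^{b}$ to $\ell^{b+1}$, a lift of $h=(g_1,g_2)\in G(\ell^{b})$ is $(g_1+\ell^{b}M_1,g_2+\ell^{b}M_2)$ with $M_1,M_2$ $2\times 2$ matrices over $\bF_\ell$ subject to a single linear $\det$-matching constraint, and one asks that $\tr M_1+\tr M_2$ hit $(T-\tr h)/\ell^{b}$ modulo $\ell$ (well defined since $\tr h\equiv T\pmod{\ell^{b}}$); an elementary computation with the differentials of $\tr$ and $\det$ shows that the number of such lifts is exactly $\ell^{\dim G-1}$ \emph{unless} $g_1$ and $g_2$ are both scalar modulo $\ell$ with opposite scalars $\lambda,-\lambda$ (a small modification is needed at $\ell=2$), in which case it is $\ell^{\dim G}$ or $0$. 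Away from that degenerate locus the trace-$T$ lift count therefore grows by exactly $\ell^{\dim G-1}$ per level, which forces the normalized ratio to be level-independent. The degenerate locus is controlled by iterating the $\det$-match: being ``scalar modulo $\ell^{k}$ with opposite signs'' forces $\tr h\in\ell^{2k}\bZ_\ell$ (the leading terms $\tr(\pm\lambda I)=\pm2\lambda$ cancel and the next coefficient vanishes), so once $b>v_\ell(T)$ only the finitely many degeneracy layers of depth $\le v_\ell(T)/2$ meet the trace-$T$ stratum; each such layer is the preimage of a fixed finite subset with the trace equidistributed transversally, hence its contribution is itself eventually geometric of ratio $\ell^{\dim G-1}$, and the $\ell$-factor stabilizes at $\ell^{v_\ell(T)+1}$. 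For the $S$-factor, although $H_A(m_A)$ need not be a product over $\ell\in S$, a lift of an element of $H_A(m_A)$ into $H^{S}(m^{S})$ is an arbitrary collection of local lifts with no cross-prime coupling, and the trace-$T$ condition factors over $S$ by CRT; applying the above one prime at a time with $a=v_\ell(m_A)$ and summing over $H_A(m_A)$, the powers of $\ell$ telescope to give the asserted value once every $\ell\in S$ occurs in $m^{S}$ to the power $v_\ell(m_A)+v_\ell(T)=v_\ell(m_{A,T})$.

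\textbf{Convergence and conclusion.} For $\ell\notin S$ with $\ell\nmid T$ the stable $\ell$-factor is $\ell\vert G(\ell,T)\vert/\vert G(\ell)\vert$, and the explicit count carried out in Section~\ref{section:explicit constants} gives $1+O(\ell^{-2})$, so $\prod_{\ell\notin S}$ of the stable factors converges absolutely; combined with the previous steps and $H(\ell^{k})=G(\ell^{k})$ for $\ell\nmid m_A$, this gives exactly the product expansion in the statement.

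\textbf{Main obstacle.} I expect the local stabilization at primes $\ell\mid T$ to be the hard part: the $\times_{\det}$ condition makes the trace fail to be equidistributed on lift-fibers precisely at nearly central elements, so one must isolate the hierarchy of degenerate strata, bound its depth by $v_\ell(T)$, and check that each contributes geometrically with the common ratio $\ell^{\dim G-1}$. This is the very mechanism that produces $e_\ell(T)$, $E(\ell,T)$ and the $\delta_2(v_\ell(T))$ corrections of Theorem~\ref{thm:mainformulas}, and the reason stabilization occurs only at level $v_\ell(T)+1$ (respectively exponent $v_\ell(m_{A,T})$) rather than at level $1$, as it does for a single copy of $\GL_2$.
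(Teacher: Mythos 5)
Your decomposition of $F_m(T)$ into the factor at the primes dividing $m_A$ times a product of local factors at $\ell\nmid m_A$, and your convergence estimate for the stable factors, are exactly the paper's first and last steps (the CRT argument in the proof of Proposition \ref{vertical}, and Proposition \ref{infiniteproduct}). The difference --- and the problem --- lies in how you establish stabilization. Your differential analysis of the lift fibers is correct as far as it goes: the trace form and the determinant-matching form on a fiber are proportional exactly over the locus $(g_1,g_2)\equiv(\lambda I,-\lambda I)\bmod\ell$; off that locus each element of $G(\ell^{b})$ with trace $\equiv T\bmod\ell^{b}$ has exactly $\ell^{6}$ lifts with trace $\equiv T\bmod\ell^{b+1}$; and on the depth-$k$ degenerate stratum the determinant condition does force $\ell^{2k}\mid T$. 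But what you actually need is that for $b>v_\ell(T)$ the degenerate elements of $G(\ell^{b},T)$ still produce $\ell^{6}$ admissible lifts apiece \emph{on average}. Since each degenerate element has either $\ell^{7}$ consistent lifts or none (all of its lifts share one common trace mod $\ell^{b+1}$), this is equivalent to the statement that this common trace value is equidistributed over the $\ell$ refinements of $T$ as one ranges over the degenerate stratum. That equidistribution is precisely what you assert as ``the trace equidistributed transversally'' and flag as the main obstacle; it is nowhere proved, so the argument is incomplete at its central point.

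The paper closes exactly this gap with a symmetry rather than a stratification. Lemma \ref{equidistrib} chooses, for each $S\equiv T\bmod m\ell^{v_\ell(T)}$, a unit $u$ with $u\equiv 1\bmod m$ and $uT\equiv S$ to the relevant modulus; left multiplication by the scalar pair $uI_2\times uI_2$ (which lies in $H$ because it is trivial mod $m$ and $H$ is the full preimage of $H(m_A)$) is a bijection $H(m\ell^{v_\ell(T)+1},T)\to H(m\ell^{v_\ell(T)+1},S)$. Hence all $\ell$ refinements of the trace class are equally populated --- degenerate and non-degenerate elements alike --- and Lemma \ref{stable} then deduces stabilization in two lines, uniformly for $\ell\mid m_A$ and $\ell\nmid m_A$ (which also removes the need for your separate, and likewise unjustified, ``one prime at a time'' treatment of the exceptional factor). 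If you insist on your route, you would in effect have to carry out the explicit stratified counts of Section \ref{section:explicit constants} (Lemmas \ref{singlematrix}--\ref{notcongruent}), which the paper performs only later and only in order to obtain the exact formulas, not to prove stabilization.
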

We note that this shows not only that the limit splits into factors over each prime, but also that at each prime $\ell$ the factors eventually stabilize when increasing the powers $d\in\mathbb{N}$ such that $\ell^d\vert m$. This is a non-trivial feature of this setup, see for instance the failure thereof when considering products of elliptic curves where each individual trace is fixed in \cite{AkbaryParks}. The reader may consult, e.g., Theorem 1.5., Proposition 1.8. and Conjecture 1.9. of \cite{AkbaryParks} for the difficulties encountered.\\
In our setting, at primes $\ell \nmid m_A$, the stabilization is connected to the smoothness of the $\ell$-adic analytic manifold $X_{T}(\bZ_\ell)$, where $X_{T}$ denotes the subvariety of $\GL_2\times_{\det}\GL_2$ whose traces sum to $T$. 
Indeed, smoothness of $X_{T}(\bZ_\ell)$ implies that 
$$\vert X_{T}(\mathbb{Z}/\ell^n\mathbb{Z})\vert=V(X_{T}(\bZ_\ell))\cdot \ell^{6n}$$
for a constant $V(X_{T}(\bZ_\ell))$ which computes the volume of the manifold $X_{T}(\bZ_\ell)$, as proven by J-P. Serre in \cite[Th\'eor\`eme 9]{SerreCebo}. The factors at each prime are just a rational multiple of this volume.
A computation of the Jacobian matrix of the defining equations of $X_{T}$ shows the following:
\begin{Lemma}
The $\ell$-adic analytic manifold $X_{T}(\mathbb{Z}_\ell)$ is smooth if and only if the prime $\ell$ does not divide the trace $T$. 
\end{Lemma}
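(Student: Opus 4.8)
The plan is to present $X_T$ as the fibre over $(0,T)$ of the morphism
\[
\Psi\colon(g_1,g_2)\longmapsto\bigl(\det g_1-\det g_2,\ \tr g_1+\tr g_2\bigr)
\]
defined on the open subscheme $U=\{\det g_1\neq 0\}\subseteq\mathbb{A}^8_{\mathbb{Z}_\ell}$, and to read smoothness off its Jacobian. Here a point of $\GL_2\times_{\det}\GL_2$ is written as a pair of $2\times 2$ matrices $g_1=\abcd{a}{b}{c}{d}$, $g_2=\abcd{a'}{b'}{c'}{d'}$; the fibre‑product condition is $\det g_1=\det g_2$, and under the fixed block embedding into $\GSp_4$ the symplectic trace restricts to $\tr g_1+\tr g_2=a+d+a'+d'$. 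Since $U$ is smooth over $\mathbb{Z}_\ell$ and $X_T=\Psi^{-1}(0,T)$ is cut out in it by those two equations, the relative Jacobian criterion says that $X_T$ is smooth over $\mathbb{Z}_\ell$, necessarily of relative dimension $6=8-2$, at a point $P$ with residue field $\kappa(P)$ exactly when the Jacobian matrix
\[
\begin{pmatrix} d & -c & -b & a & -d' & c' & b' & -a' \\ 1 & 0 & 0 & 1 & 1 & 0 & 0 & 1 \end{pmatrix}
\]
of the two functions $\det g_1-\det g_2$ and $\tr g_1+\tr g_2$ with respect to $a,b,c,d,a',b',c',d'$ has rank $2$ over $\kappa(P)$. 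Granting that $X_T$ is smooth over $\mathbb{Z}_\ell$, Serre's Th\'eor\`eme~9 then supplies the identity $\vert X_T(\mathbb{Z}/\ell^n\mathbb{Z})\vert=V(X_T(\mathbb{Z}_\ell))\cdot\ell^{6n}$ invoked above.

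Next I would locate exactly where this Jacobian degenerates. Its bottom row is never zero, so the rank is $<2$ precisely when the top row equals a scalar multiple $\mu$ of the bottom one; comparing the eight entries forces $a=d=\mu$, $b=c=0$, $a'=d'=-\mu$, $b'=c'=0$, that is, $g_1=\mu\cdot\mathrm{Id}_2$ and $g_2=-\mu\cdot\mathrm{Id}_2$. Invertibility excludes $\mu=0$; such a point automatically has $\det g_1-\det g_2=\mu^2-\mu^2=0$; and, crucially, $\tr g_1+\tr g_2=2\mu-2\mu=0$, so it lies on $X_T$ if and only if $T$ maps to $0$ in $\kappa(P)$. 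Reading this off on the residue fields occurring on $X_T$ — in characteristic $0$ it says $T=0$ as an integer, in characteristic $\ell$ it says $\ell\mid T$ — gives, for $T\neq 0$, the asserted dichotomy: if $\ell\nmid T$, no point of $X_T$ has the degenerate shape, the Jacobian has rank $2$ everywhere, and $X_T$ is smooth over $\mathbb{Z}_\ell$; if $\ell\mid T$, then each of the $\ell-1$ points $(\mu\cdot\mathrm{Id}_2,\,-\mu\cdot\mathrm{Id}_2)$ with $\mu\in\mathbb{F}_\ell^\times$ is an $\mathbb{F}_\ell$‑rational singular point of the special fibre $X_T\otimes\mathbb{F}_\ell$, so $X_T$ is not smooth over $\mathbb{Z}_\ell$ — and, consistently with this, the later computations of Sections~\ref{section:explicit constants} and~\ref{section:bad primes} show that the counts $\vert X_T(\mathbb{Z}/\ell^n\mathbb{Z})\vert$ then fail to be of the form $V\cdot\ell^{6n}$.

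I do not expect a genuine obstacle here: with the set-up in place, the whole lemma reduces to the identity $\tr(\mu\cdot\mathrm{Id}_2)+\tr(-\mu\cdot\mathrm{Id}_2)=0$ fed into the Jacobian criterion. The only points needing a little care are bookkeeping: confirming that $U$ (hence $X_T\subseteq U$) is the right smooth $\mathbb{Z}_\ell$‑model and that the $\GSp_4$‑trace pulls back to $a+d+a'+d'$ under the chosen block embedding; checking that the degeneracy locus found above is genuinely the singular locus of the fibre rather than an artefact of the chosen presentation — it is a one-dimensional $\mathbb{G}_m$ properly contained in the six-dimensional fibre, which is therefore generically smooth and singular precisely along it; and noting that at $\ell=2$, although several sign entries of the Jacobian coincide, the rank still drops only at $(\mu\cdot\mathrm{Id}_2,\,-\mu\cdot\mathrm{Id}_2)$ with $2\mu=0$ in $\mathbb{F}_2$, hence only when $2\mid T$, leaving the statement unchanged.
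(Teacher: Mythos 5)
Your argument is correct and is precisely the Jacobian computation the paper alludes to (the paper gives no details beyond the remark that ``a computation of the Jacobian matrix of the defining equations of $X_T$ shows'' the lemma): you exhibit the two defining equations on the open locus $\det g_1\neq 0$, observe that the Jacobian drops rank exactly at $(\mu\cdot\mathrm{Id}_2,\,-\mu\cdot\mathrm{Id}_2)$, and note that such a point lies on $X_T$ over a residue field precisely when $T$ vanishes there, i.e.\ when $\ell\mid T$ on the special fibre. The supporting checks you flag (the trace pulls back to $a+d+a'+d'$, the degeneracy locus is genuinely singular because $X_T$ is pure of dimension $6$ while the tangent space there has dimension $\geq 7$, and the $\ell=2$ case) are all handled correctly, so there is nothing to add.
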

Nevertheless, a result of Oesterl\'e \cite[Th\'eor\`eme 2]{Oesterle} shows that the limit $\lim_{n\to\infty}\ell^{-6n}\cdot\vert X_{T}(\mathbb{Z}/\ell^n\mathbb{Z})\vert$ still converges to the volume $V(X_{T}(\bZ_\ell))$. Proposition \ref{vertical} shows that, in addition to that, at primes $\ell\nmid m_A$ the factors stabilize at exponent $n=v_\ell(T)+1$. The computation of those stable factors will then be treated in Section \ref{section:explicit constants}.\par

We turn to the proof of Proposition \ref{vertical}, proceeding as in \cite[Lemma 12]{CDSS}:
\begin{Lemma}\label{equidistrib}
If $(m, m_A)=1$ or $m_A\vert m$, then for any prime $\ell$ dividing $m$ we have
$$\vert H(m\ell^{v_\ell(T)+1},S)\vert=\vert H(m\ell^{v_\ell(T)+1},T)\vert$$
for any integer $S\equiv T \mod m\ell^{v_\ell(T)}$.
\end{Lemma}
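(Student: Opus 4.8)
The plan is to produce an explicit bijection $H(N,T)\xrightarrow{\ \sim\ }H(N,S)$, where $N:=m\ell^{v_\ell(T)+1}$, realised by left multiplication by a single, carefully chosen scalar matrix. Under the fixed embedding $G\hookrightarrow\GSp_4$, the trace of the $4\times4$ matrix attached to $(g_1,g_2)\in G(N)=\GL_2(\mbz/N\mbz)\times_{\det}\GL_2(\mbz/N\mbz)$ is $\tr g_1+\tr g_2$; write $\tr(g_1,g_2)$ for this. For a unit $\lambda\in(\mbz/N\mbz)^\times$ consider the central element $\lambda\cdot\id:=(\lambda\id_2,\lambda\id_2)\in G(N)$ (it lies in $G(N)$ because its two determinants are both $\lambda^2$), which satisfies $\tr\bigl((\lambda\cdot\id)\,h\bigr)=\lambda\,\tr(h)$ for every $h$. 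Thus, as soon as $\lambda\cdot\id\in H(N)$, the map $h\mapsto(\lambda\cdot\id)\,h$ is a bijection of $H(N)$ onto itself carrying $H(N,T)$ bijectively onto $H(N,\lambda T\bmod N)$. It therefore suffices to exhibit $\lambda\in(\mbz/N\mbz)^\times$ with (i) $\lambda T\equiv S\pmod N$ and (ii) $\lambda\cdot\id\in H(N)$.

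I would dispose of (ii) using the hypothesis. If $(m,m_A)=1$, then $N$ is supported at primes not dividing $m_A$; since $G(\hat\mbz)=\prod_p G(\mbz_p)$ and $H_A=\rred^{-1}(H_A(m_A))$ by definition of the conductor, the reduction $H_A\to G(N)$ is surjective, i.e.\ $H(N)=G(N)$, so (ii) holds for every $\lambda$. If instead $m_A\mid m$, then from $H_A=\rred^{-1}(H_A(m_A))$ one gets $H(N)=\{g\in G(N):g\bmod m_A\in H(m_A)\}$, which contains $\ker\bigl(G(N)\twoheadrightarrow G(m_A)\bigr)$; hence (ii) is implied by (ii$'$): $\lambda\equiv1\pmod{m_A}$. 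In either case it is enough to find $\lambda$ satisfying (i) together with (ii$'$), reading (ii$'$) as vacuous when $(m,m_A)=1$.

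Such a $\lambda$ I would build prime by prime via the Chinese remainder theorem. Since $\ell\mid m$, for every prime $p\mid N$ with $p\neq\ell$ we have $v_p(N)=v_p(m)$, whence $S\equiv T\pmod{p^{v_p(N)}}$; so $\lambda\equiv1\pmod{p^{v_p(N)}}$ already meets (i) and (ii$'$) at $p$. At $p=\ell$, put $n:=v_\ell(N)=v_\ell(m)+v_\ell(T)+1$; from $\ell\mid m$ and $S\equiv T\pmod{m\ell^{v_\ell(T)}}$ we get $v_\ell(S-T)\geq v_\ell(T)+1>v_\ell(T)$, whence $v_\ell(S)=v_\ell(T)$, and condition (i) at $\ell$ becomes $\lambda\equiv\bigl(S/\ell^{v_\ell(T)}\bigr)\bigl(T/\ell^{v_\ell(T)}\bigr)^{-1}\pmod{\ell^{\,v_\ell(m)+1}}$, the right-hand side being a well-defined unit. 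Because $S\equiv T\pmod{\ell^{\,v_\ell(m)+v_\ell(T)}}$, this unit is $\equiv1\pmod{\ell^{\,v_\ell(m)}}$, and as $v_\ell(m_A)\leq v_\ell(m)$ it therefore also satisfies (ii$'$) at $\ell$. Assembling these local choices by CRT produces $\lambda\in(\mbz/N\mbz)^\times$ obeying (i) and (ii$'$), and the lemma follows.

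Set up this way the argument is essentially bookkeeping, and I do not expect a genuine obstacle; the single point that has to be verified rather than asserted is the compatibility at the prime $\ell$ of the ``translation'' (i) with the group-membership constraint (ii$'$). Concretely, it is precisely the extra factor $\ell$ in $N=m\ell^{v_\ell(T)+1}$ — together with $\ell\mid m$ (so that $v_\ell(m)\geq1$) and the trivial bound $v_\ell(m_A)\leq v_\ell(m)$ — that leaves enough slack to solve $\lambda T\equiv S$ modulo $\ell^{v_\ell(m)+1}$ while keeping $\lambda\equiv1$ modulo the $\ell$-part of $m_A$; were one to replace the exponent $v_\ell(T)+1$ by $v_\ell(T)$, this margin could disappear.
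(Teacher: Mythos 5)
Your proof is correct and follows essentially the same route as the paper's: both construct, via the Chinese Remainder Theorem, a unit $u$ (your $\lambda$) with $u\equiv T_0^{-1}S_0 \bmod \ell^{v_\ell(m)+1}$ and $u\equiv 1$ away from $\ell$, observe that $uI_2\times uI_2$ lies in $H(m\ell^{v_\ell(T)+1})$ by the conductor hypothesis, and conclude by left multiplication. Your explicit case split on $(m,m_A)=1$ versus $m_A\mid m$ merely spells out what the paper compresses into ``by definition of $m_A$.''
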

\begin{proof}
Assume $S\equiv T \mod m\ell^{v_\ell(T)}$. Since the $\ell$-adic valuation $v_\ell(m)>0$ is strictly positive it follows that $v_\ell(T)=v_\ell(S)$. We may write $T=T_0\ell^{v_\ell(T)}$ and $S=S_0\ell^{v_\ell(T)}$ for integers $T_0,S_0$. Moreover, writing $m=m_0\ell^{v_\ell(m)}$ there exists by the Chinese remainder theorem $u\in\bZ$ such that 
\begin{equation} \label{definitionofuviaCRT}
u\equiv T_0^{-1}S_0 \mod \ell^{v_\ell(m)+1} \; \text{ and } \; u\equiv 1\mod m_0.
\end{equation}
It follows from this and from the hypothesis $S \equiv T \mod m\ell^{v_\ell(T)}$ that   
\begin{equation} \label{utakesTtoS}
    \begin{split}
    &uT\equiv S \mod \ell^{v_\ell(m) + v_\ell(T)+1}, \\
    &uT\equiv S \mod m_0.
    \end{split}
\end{equation}
Furthermore, by \eqref{definitionofuviaCRT} we have $u\equiv 1 \mod m$, and thus the pair $U := uI_2\times uI_2$ of integer matrices is congruent modulo $m$ to $I_1 \times I_2$, and it therefore lies in $H(m)$.
By definition of $m_A$, it follows that $U \mod m \ell^{v_\ell(T)+1}$ belongs to $H(m \ell^{v_\ell(T)+1})$, and therefore left multiplication by $U$ defines a bijection from $H(m \ell^{v_\ell(T)+1})$ onto itself which, by \eqref{utakesTtoS}, restricts to a bijection between the sets $H(m\ell^{v_\ell(T)+1},T)$ and $H(m\ell^{v_\ell(T)+1},S)$, as desired.
\end{proof}
\begin{Lemma}\label{stable}
If $(m, m_A)=1$ or $m_A\vert m$, then for any prime $\ell$ dividing $m$ and $k\geq 1$ we have:
$$\frac{m\ell^{v_\ell(T)+k}\vert H(m\ell^{v_\ell(T)+k},T)\vert}{\vert H(m\ell^{v_\ell(T)+k})\vert}=\frac{m\ell^{v_\ell(T)}\vert H(m\ell^{v_\ell(T)},T)\vert}{\vert H(m\ell^{v_\ell(T)})\vert}.$$
\end{Lemma}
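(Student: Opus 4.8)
The plan is to reduce at once to the case $k=1$ and then double count a set of trace classes, with Lemma \ref{equidistrib} supplying the only substantive ingredient. Set $M_j := m\ell^{v_\ell(T)+j}$ for $j \geq 0$. It suffices to prove
\[
\frac{M_{j+1}\,\vert H(M_{j+1},T)\vert}{\vert H(M_{j+1})\vert} = \frac{M_j\,\vert H(M_j,T)\vert}{\vert H(M_j)\vert} \qquad (j \geq 0),
\]
since the asserted identity then follows by telescoping from $j=0$ up to $j=k-1$. To establish the $j$-th such equality I would apply the $k=1$ argument below with $m$ replaced by $m' := m\ell^{j}$: the hypotheses persist, since $\ell \mid m'$ and, in the coprime case, $\ell \mid m$ forces $\ell \nmid m_A$ so that $(m\ell^{j},m_A)=1$, while in the divisibility case $m_A \mid m \mid m'$; moreover $v_\ell(T)$ is unchanged, so $m'\ell^{v_\ell(T)} = M_j$ and $m'\ell^{v_\ell(T)+1} = M_{j+1}$.

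For the case $k=1$, write $M := m\ell^{v_\ell(T)}$ and let $\pi\colon H(M\ell) \twoheadrightarrow H(M)$ be the reduction homomorphism; its fibers are the cosets of $\ker\pi$, so each has cardinality $\vert H(M\ell)\vert/\vert H(M)\vert$. I would count the set $\Sigma := \{\, h \in H(M\ell) : \tr h \equiv T \bmod M \,\}$ in two ways. Since $\tr h \bmod M$ depends only on $\pi(h)$, grouping by $\pi$ gives $\Sigma = \pi^{-1}\bigl(H(M,T)\bigr)$, whence
\[
\vert \Sigma \vert = \vert H(M,T)\vert \cdot \frac{\vert H(M\ell)\vert}{\vert H(M)\vert}.
\]
On the other hand, the reduction $\bZ/M\ell\bZ \to \bZ/M\bZ$ has kernel of order $\ell$ (as $\ell \mid M$), so there are exactly $\ell$ residues $S \bmod M\ell$ with $S \equiv T \bmod M$, and each of them satisfies $S \equiv T \bmod m\ell^{v_\ell(T)}$; hence Lemma \ref{equidistrib} gives $\vert H(M\ell,S)\vert = \vert H(M\ell,T)\vert$ for each, and partitioning $\Sigma$ according to the value of $\tr h \bmod M\ell$ yields $\vert \Sigma \vert = \ell\,\vert H(M\ell,T)\vert$. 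Equating the two expressions gives $\ell\,\vert H(M\ell,T)\vert\,\vert H(M)\vert = \vert H(M,T)\vert\,\vert H(M\ell)\vert$, and multiplying both sides by $M/(\vert H(M\ell)\vert\,\vert H(M)\vert)$ produces the desired $k=1$ identity.

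The genuine content lies entirely in Lemma \ref{equidistrib}, which is what forces the finer congruence class $\tr h \bmod M\ell$ to be distributed evenly among the $\ell$ lifts of $T \bmod M$; everything else is elementary bookkeeping with the fibers of $\pi$. The main point to watch — and the only place a mistake could creep in — is that the standing hypothesis ``$(m,m_A)=1$ or $m_A\mid m$'', together with ``$\ell \mid m$'', is preserved under the replacement $m \mapsto m\ell^{j}$ used in the reduction step; this I verified above, so I expect the argument to go through without further difficulty.
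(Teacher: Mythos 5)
Your argument is correct and follows essentially the same route as the paper: reduce to $k=1$ via the replacement $m \mapsto m\ell^{j}$ (whose legitimacy you rightly verify), then compare $\vert\pi^{-1}(H(M,T))\vert$ computed via the fibers of the surjection $\pi$ with the same quantity computed as a sum over the $\ell$ lifts of $T$, using Lemma \ref{equidistrib} to equate the summands. The double-count of $\Sigma$ is just a repackaging of the paper's two displayed equations involving $\vert\ker\pi\vert$, so nothing is missing.
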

\begin{proof}
It suffices to establish the case $k=1$ (we can then make the replacement $m \mapsto \ell m$ and repeat the argument). Considering the projection map
\[
\pi:H(m\ell^{v_\ell(T)+1})\to H(m\ell^{v_\ell(T)}),
\]
we note that, by the assumptions on $m$, we have $H(m\ell^{v_\ell(T)+1}) = \pi^{-1}\left( H(m\ell^{v_\ell(T)}) \right)$.  Thus, on the one hand we see that
 $$\vert H(m\ell^{v_\ell(T)+1})\vert=\vert \ker \pi\vert \cdot \vert H(m\ell^{v_\ell(T)})\vert.$$
On the other hand, using Lemma \ref{equidistrib} to show that the counts for all $\ell$ lifts of $T$ from $\mathbb{Z}/m\ell^{v_\ell(T)}\mbz$ to $\mathbb{Z}/m\ell^{v_\ell(T)+1}\mbz$ are the same, we obtain
 $$\vert H(m\ell^{v_\ell(T)+1},T)\vert\cdot \ell = \vert \ker \pi\vert \cdot \vert H(m\ell^{v_\ell(T)},T)\vert.$$
 The claim follows.
\end{proof}
\begin{proof}[Proof of Proposition \ref{vertical}]
We write any $m$ as a product $m=m_1\cdot m_2$ where $m_1\vert m_A^\infty$ and $(m_2,m_A)=1$. 
By the Chinese Remainder theorem, one has 
$$\frac{m \vert H(m,T)\vert }{\vert H(m)\vert }=\frac{m_1\vert H(m_1,T)\vert }{\vert H(m_1)\vert }\prod_{\ell \vert m_2}\frac{\ell^{v_\ell(m_2)}\vert H(\ell^{v_\ell(m_2)},T)\vert }{\vert H(\ell^{v_\ell(m_2)})\vert }$$
and thus taking limits
$$\lim_{m\tilde{\to} \infty}\frac{m \vert H(m,T)\vert }{\vert H(m)\vert }=\lim_{m_1\tilde{\to} \infty}\frac{m_1\vert H(m_1,T)\vert }{\vert H(m_1)\vert }\prod_{\ell \nmid m_A}\lim_{n\to\infty}\frac{\ell^{n}\vert H(\ell^{n},T)\vert }{\vert H(\ell^{n})\vert }.$$
The result now follows from Lemma \ref{stable}.
\end{proof}

It remains to show that the infinite product of rational functions in the primes $\ell\nmid m_A$ converges:
\begin{proposition}\label{infiniteproduct}
Assume $T\neq 0$. The infinite product $\displaystyle \prod_{\ell \nmid m_A}\frac{\ell^{v_\ell(T)+1}\vert H(\ell^{v_\ell(T)+1},T)\vert }{\vert H(\ell^{v_\ell(T)+1})\vert }$ converges.
\end{proposition}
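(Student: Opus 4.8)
The plan is to reduce the product to a cofinite subproduct of \emph{universal} local factors --- rational functions of the prime $\ell$ --- and to show that each such factor equals $1 + O(\ell^{-2})$, so that the product converges absolutely. \emph{Reduction:} for $\ell \nmid m_A$ the defining property of the conductor $m_A$ gives $H(\ell^n) = G(\ell^n)$ for every $n$, so the $\ell$-th factor equals $\ell^{v_\ell(T)+1}\lvert G(\ell^{v_\ell(T)+1},T)\rvert/\lvert G(\ell^{v_\ell(T)+1})\rvert$. Since $T \neq 0$, only finitely many primes divide $m_A T$; each corresponding factor is a positive rational number (the denominator $\lvert G(\ell^n)\rvert$ is nonzero, and $\lvert G(\ell^n,T)\rvert > 0$ because the pair of companion matrices $\bigl(\begin{smallmatrix} a & -1 \\ 1 & 0\end{smallmatrix}\bigr),\bigl(\begin{smallmatrix} T-a & -1 \\ 1 & 0\end{smallmatrix}\bigr)$ has equal determinants $1$ and trace sum $T$), and so these finitely many factors do not affect convergence. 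It thus suffices to prove that $\prod_{\ell \nmid m_A T} \ell\lvert G(\ell,T)\rvert/\lvert G(\ell)\rvert$ converges; here $v_\ell(T) = 0$, and every such $\ell$ is odd since $2 \mid m_A$.

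The key estimate is that $\ell\lvert G(\ell,T)\rvert/\lvert G(\ell)\rvert = 1 + O(\ell^{-2})$. I note first that a Lang--Weil bound for the $6$-dimensional variety $X_T$ only yields $1 + O(\ell^{-1/2})$, which is not summable over primes, so the extra precision must come from the $\GL_2$-structure. Writing $c(d,t) := \lvert\{g \in \GL_2(\bF_\ell) : \det g = d,\ \tr g = t\}\rvert$ for $d \in \bF_\ell^\times$, the classical enumeration of conjugacy classes of $\GL_2(\bF_\ell)$ by the characteristic polynomial $X^2 - tX + d$ gives, for $\ell$ odd and $d \neq 0$,
\[
c(d,t) = \ell^2 + \ell\,\legendre{t^2-4d}{\ell},
\]
with the convention that the Legendre symbol of $0$ is $0$ (the cases being $t^2-4d$ a nonzero square, a nonsquare, or zero). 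Since $\lvert G(\ell,T)\rvert = \sum_{d \in \bF_\ell^\times}\sum_{t \in \bF_\ell} c(d,t)\,c(d,T-t)$, one expands the inner convolution over the additive characters $\psi$ of $\bF_\ell$: the trivial character contributes the main term
\[
\frac{1}{\ell}\sum_{d \in \bF_\ell^\times}\Bigl(\sum_{t \in \bF_\ell} c(d,t)\Bigr)^2 = \frac{\ell-1}{\ell}\bigl(\ell^3-\ell\bigr)^2 = \frac{\lvert G(\ell)\rvert}{\ell},
\]
using $\sum_t c(d,t) = \lvert\SL_2(\bF_\ell)\rvert$ and $\lvert G(\ell)\rvert = (\ell-1)\lvert\SL_2(\bF_\ell)\rvert^2$; for $\psi$ nontrivial, $\sum_t c(d,t)\psi(t) = \ell\sum_t \legendre{t^2-4d}{\ell}\psi(t)$ since $\sum_t \psi(t) = 0$, and this last sum is a Salié-type sum of modulus $\leq 2\sqrt{\ell}$ by Weil's bound (valid because $t^2 - 4d$ has distinct roots when $d \neq 0$). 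Summing the $O(\ell)$ nontrivial characters over the $O(\ell)$ choices of $d$ gives $\lvert G(\ell,T)\rvert = \lvert G(\ell)\rvert/\ell + O(\ell^4)$, hence $\ell\lvert G(\ell,T)\rvert/\lvert G(\ell)\rvert = 1 + O(\ell^{-2})$ as $\lvert G(\ell)\rvert \asymp \ell^7$.

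Finally, since $\sum_\ell \ell^{-2} < \infty$, this estimate gives $\sum_{\ell \nmid m_A T}\bigl\lvert \ell\lvert G(\ell,T)\rvert/\lvert G(\ell)\rvert - 1\bigr\rvert < \infty$, so the subproduct converges absolutely (and to a nonzero limit), and reinstating the finitely many factors from the reduction step proves the proposition. The only genuine difficulty is the key estimate: one must beat the $\ell^{-1/2}$ error afforded by Lang--Weil, which forces the use of the explicit $\GL_2$ conjugacy-class count for $c(d,t)$ together with the cancellation $\sum_t \psi(t) = 0$ and Weil's bound for the resulting character sum. Alternatively, one may simply quote the exact evaluation of this universal factor carried out later in Section \ref{section:explicit constants} (namely Theorem \ref{thm:mainformulas}(1)), which rearranges to $1 + (-\ell^2+\ell+1)/\bigl((\ell-1)^3(\ell+1)^2\bigr) = 1 + O(\ell^{-3})$ and makes convergence immediate; since that computation logically comes afterwards, however, the self-contained estimate above is preferable here.
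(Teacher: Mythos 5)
Your proof is correct, and the reduction step is exactly the paper's: discard the finitely many primes dividing $m_A T$ (checking non-vanishing there via the companion-matrix pair, which is a nice touch the paper leaves implicit), use $\ell \nmid m_A$ to replace $H$ by $G$, and then show the remaining universal factors are $1 + O(\ell^{-c})$ with $c>1$. Where you diverge is in how that local estimate is obtained. The paper simply quotes its exact elementary evaluation $\ell\lvert G(\ell,T)\rvert/\lvert G(\ell)\rvert = \ell(\ell^4-\ell^3-2\ell^2+\ell+2)/\bigl((\ell^2-1)^2(\ell-1)\bigr) = 1+O(\ell^{-3})$, which is derived two lemmas earlier by counting $\lvert G(\ell,0)\rvert$ directly with Legendre-symbol sums and using the homogeneity $\lvert G(\ell,t)\rvert = \lvert G(\ell,1)\rvert$ for units $t$ --- no Weil bound, no Fourier analysis, and the exact value is needed anyway for Theorem \ref{thm:mainformulas}. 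You instead prove the weaker but sufficient bound $1+O(\ell^{-2})$ from scratch by expanding the additive convolution over characters of $\bF_\ell$ and invoking Weil for the mixed sums $\sum_t \legendre{t^2-4d}{\ell}\psi(t)$; your arithmetic checks out (the trivial character recovers exactly $\lvert G(\ell)\rvert/\ell$ via $\sum_t c(d,t)=\lvert\SL_2(\bF_\ell)\rvert$, and the nontrivial characters contribute $O(\ell^4)$ in total), and your remark that Lang--Weil alone only gives $O(\ell^{-1/2})$ correctly identifies why some structure of $G$ must be exploited. The trade-off: your argument is more robust (it would adapt to situations, e.g.\ $\GSp_{2g}$, where exact conjugacy-class counts are painful) at the cost of heavier machinery and a weaker error term, while the paper's is entirely elementary and doubles as the computation of the constant itself --- as you yourself note in your closing alternative.
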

This follows from a straightforward explicit computation of the quantities involved which we record here:
\begin{Lemma}
Let $\ell$ be an odd prime. Then using the quadratic residue symbol we have
\[
	\vert M_{2\times 2}(\mbz/\ell\mbz)^{\tr \equiv t}_{\det \equiv d}\vert = \ell \left(\ell+\legendre{t^2-4 d}{\ell}\right) .
\]
\end{Lemma}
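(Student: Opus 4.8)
The plan is a direct count over the entries of the matrix. Write a general element of $M_{2\times 2}(\bF_\ell)$ as $\left(\begin{smallmatrix} a & b \\ c & e\end{smallmatrix}\right)$; the trace condition $a + e \equiv t$ lets me eliminate $e = t - a$, so the set to be counted is in bijection with the set of triples $(a,b,c) \in \bF_\ell^3$ satisfying the single equation
\[
bc \equiv a(t-a) - d \pmod{\ell}.
\]
Denoting the right-hand side by $f(a)$, the cardinality equals $\sum_{a \in \bF_\ell} N(f(a))$, where $N(k) := \#\{(b,c)\in\bF_\ell^2 : bc = k\}$.

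Next I would evaluate $N$: for $k \neq 0$ one has $N(k) = \ell-1$ (choose $b \in \bF_\ell^\times$ freely, then $c = k/b$), while $N(0) = 2\ell - 1$ (the union of the two coordinate axes, overlapping at the origin). Writing $N(k) = (\ell-1) + \ell$ when $k=0$ and $N(k) = \ell-1$ otherwise, the sum collapses to
\[
\ell(\ell-1) + \ell\cdot\#\{a\in\bF_\ell : f(a) = 0\}.
\]
Since $f(a) = -(a^2 - ta + d)$, counting zeros of $f$ amounts to counting roots of the monic quadratic $a^2 - ta + d$, whose discriminant is $t^2 - 4d$.

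Finally, because $\ell$ is odd I may complete the square: the quadratic $a^2 - ta + d$ has exactly $1 + \legendre{t^2-4d}{\ell}$ roots in $\bF_\ell$, with the usual convention $\legendre{0}{\ell} = 0$ — that is, two roots when $t^2-4d$ is a nonzero square, none when it is a non-residue, and the single repeated root $a = t/2$ when $t^2 - 4d \equiv 0$. Substituting this count yields
\[
\ell(\ell-1) + \ell\left(1 + \legendre{t^2-4d}{\ell}\right) = \ell\left(\ell + \legendre{t^2-4d}{\ell}\right),
\]
which is the claimed formula. There is no real obstacle here; the only points that warrant care are the bookkeeping for $N(0)$ and stating the root count correctly in the degenerate case $t^2 \equiv 4d$, and oddness of $\ell$ enters exactly once, in the quadratic-formula step.
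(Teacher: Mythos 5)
Your proof is correct and is essentially the same computation as the paper's: both eliminate one diagonal entry via the trace condition, reduce to counting solutions of $bc \equiv -(a^2 - ta + d)$, and split according to whether the right-hand side vanishes, using that the quadratic has $1+\legendre{t^2-4d}{\ell}$ roots. The only difference is organizational (you sum $N(f(a))$ over $a$, the paper cases on $yz=0$ versus $yz\neq 0$ first), and the arithmetic agrees.
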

\begin{proof}
We are counting the set of matrices 
\[
	\left\{\begin{pmatrix} 
	x & y \\
	z & w
	\end{pmatrix}\in M_{2\times 2}(\mbz/\ell\mbz) : xw - yz= d, x+w= t\right\}
\]
This amounts to computing $\vert\{x,y,z\in \mbz/\ell\mbz : yz = -x^2 + tx - d \}\vert$.
If $yz=0$, there are $\legendre{t^2-4d}{\ell}+1$ possibilities 
for $x$. If $yz\in \mathbb{F}_\ell^\times$ then $x$ can run through
$\ell-1-\legendre{t^2-4d}{\ell}$ possibilities. We thus obtain
\[
\begin{split}
\vert M_{2\times 2}(\mbz/\ell\mbz)^{\tr \equiv t}_{\det \equiv d}\vert 
&= \left(1+\legendre{t^2-4d}{\ell}\right)\cdot(2\ell-1) 
		+ \left(\ell-1-\legendre{t^2-4d}{\ell}\right)\cdot (\ell-1) \\
&= \ell\left(\ell+\legendre{t^2-4d}{\ell}\right).
\end{split}
\]
\end{proof}
Moreover, recalling the notation $G=\GL_2\times_{\det}\GL_2$, we record here the counts:
\begin{Lemma}
Let $\ell$ be a prime. Then $\vert G\left(\ell\right)\vert = (\ell-1) \ell^2 (\ell^2-1)^2$.
\end{Lemma}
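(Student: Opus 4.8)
The plan is to compute $|G(\ell)|$ directly from the fiber-product description $G(\ell) = \{(g_1,g_2) \in \GL_2(\bF_\ell) \times \GL_2(\bF_\ell) : \det g_1 = \det g_2\}$, by stratifying according to the common value of the determinant.

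First I would recall the standard count $|\GL_2(\bF_\ell)| = (\ell^2-1)(\ell^2-\ell) = \ell(\ell-1)(\ell^2-1)$, obtained by choosing a nonzero first column ($\ell^2-1$ ways) and then a second column not in its span ($\ell^2-\ell$ ways). Next I would observe that the determinant map $\det \colon \GL_2(\bF_\ell) \to \bF_\ell^\times$ is a surjective group homomorphism, hence all of its fibers have the same cardinality, namely $|\SL_2(\bF_\ell)| = |\GL_2(\bF_\ell)| / (\ell-1) = \ell(\ell^2-1)$.

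Then, writing $G(\ell)$ as the disjoint union over $d \in \bF_\ell^\times$ of the sets $\{(g_1,g_2) : \det g_1 = \det g_2 = d\}$, each such set has cardinality $|\det^{-1}(d)|^2 = \left( \ell(\ell^2-1) \right)^2$. Summing over the $\ell-1$ choices of $d$ gives
\[
|G(\ell)| = (\ell-1) \cdot \left( \ell(\ell^2-1) \right)^2 = (\ell-1)\,\ell^2\,(\ell^2-1)^2,
\]
as claimed. There is essentially no obstacle here: the only point requiring (elementary) justification is that the fibers of $\det$ all have equal size, which is immediate from $\det$ being a homomorphism of groups (left-translation by any element of $\GL_2(\bF_\ell)$ lying over a given unit permutes the fibers). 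This lemma, together with the preceding one computing $|M_{2\times 2}(\bZ/\ell\bZ)^{\tr \equiv t}_{\det \equiv d}|$, supplies the local counts at $\ell$ needed to verify convergence of the infinite product in Proposition \ref{infiniteproduct}.
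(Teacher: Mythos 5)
Your proof is correct, and it follows the standard (and essentially only) route: the paper itself states this lemma without proof, and your fibering over the common determinant value, using $|\SL_2(\bF_\ell)| = \ell(\ell^2-1)$ for each fiber, is exactly the computation one would supply. All the arithmetic checks out.
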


\begin{Lemma} \label{tracezero}
Let $\ell$ be an odd prime. Then 

$$\vert G\left(\ell\right)\vert = (\ell-1) \ell^2 (\ell^2-1)^2 \; \text{ and } \; \vert G\left(\ell,0\right)\vert = \ell^2(\ell-1)(\ell^3-\ell-1).$$

\end{Lemma}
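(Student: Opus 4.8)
The plan is to compute both cardinalities by fibering the fiber product $G=\GL_2\times_{\det}\GL_2$ over the common determinant value. The first identity is immediate (and already the preceding lemma): for each $d\in\bF_\ell^\times$ the number of $g\in\GL_2(\bF_\ell)$ with $\det g=d$ is $|\SL_2(\bF_\ell)|=\ell(\ell^2-1)$, so summing the square over the $\ell-1$ admissible values of $d$ gives $|G(\ell)|=(\ell-1)\ell^2(\ell^2-1)^2$.

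For $|G(\ell,0)|$ I would further fiber over the pair $(d,t)=(\det g_1,\tr g_1)$, where $d$ runs over $\bF_\ell^\times$ and $t$ over $\bF_\ell$; the trace condition $\tr g_1+\tr g_2\equiv 0\ (\ell)$ then pins down $\tr g_2=-t$ and $\det g_2=d$. By the lemma above computing $|M_{2\times 2}(\bF_\ell)^{\tr\equiv t}_{\det\equiv d}|$ — and since for $d\neq 0$ every such matrix automatically lies in $\GL_2(\bF_\ell)$ — the number of admissible $g_1$ is $\ell\bigl(\ell+\legendre{t^2-4d}{\ell}\bigr)$, and because this count is \emph{even} in $t$, the number of admissible $g_2$ is the same quantity. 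Hence
\[
|G(\ell,0)|=\ell^2\sum_{t\in\bF_\ell}\sum_{d\in\bF_\ell^\times}\Bigl(\ell+\legendre{t^2-4d}{\ell}\Bigr)^2
=\ell^2\sum_{t\in\bF_\ell}\sum_{d\in\bF_\ell^\times}\Bigl(\ell^2+2\ell\legendre{t^2-4d}{\ell}+\legendre{t^2-4d}{\ell}^2\Bigr).
\]

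It then remains to evaluate the three resulting double sums. The constant term contributes $\ell^2\cdot\ell(\ell-1)$. For the linear term, I would swap the order of summation and use that $d\mapsto t^2-4d$ is a bijection of $\bF_\ell$, so $\sum_{d\in\bF_\ell}\legendre{t^2-4d}{\ell}=0$ and therefore $\sum_{d\in\bF_\ell^\times}\legendre{t^2-4d}{\ell}=-\legendre{t^2}{\ell}$, which sums over $t$ to $-(\ell-1)$. For the square term, $\legendre{t^2-4d}{\ell}^2$ equals $1$ except when $t^2-4d=0$, i.e.\ when $d=t^2/4$; discarding those pairs yields $\sum_t\sum_{d\neq 0}\legendre{t^2-4d}{\ell}^2=(\ell-1)^2$. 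Assembling the pieces gives
\[
|G(\ell,0)|=\ell^2\bigl(\ell^3(\ell-1)-2\ell(\ell-1)+(\ell-1)^2\bigr)=\ell^2(\ell-1)(\ell^3-\ell-1),
\]
as claimed. There is no genuine obstacle here; the computation is an elementary fibering together with a short quadratic character-sum evaluation, and the only points needing a little care are the evenness of the trace–determinant count (so that the $g_2$-factor is literally the $g_1$-factor) and the bookkeeping of which pairs $(t,d)$ satisfy $t^2=4d$ in the square term.
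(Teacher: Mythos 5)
Your proof is correct and follows essentially the same route as the paper: both fiber $G(\ell,0)$ over the common determinant $d$ and the trace $t$ of the first factor, invoke the count $\ell\bigl(\ell+\legendre{t^2-4d}{\ell}\bigr)$ from the preceding lemma, and use that this count is even in $t$. The only (harmless) difference is in evaluating $\sum_{t}\sum_{d\neq 0}\bigl(\ell+\legendre{t^2-4d}{\ell}\bigr)^2$: you expand the square and compute the three character sums directly, whereas the paper splits into $t=0$ and $t\neq 0$ and counts squares and non-squares among $\{t^2-4d\}$; both evaluations are correct and give the same answer.
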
 
\begin{proof}
Note that 
\begin{align*}
	\vert G\left(\ell,0\right) \vert
	&= \sum_{t\in \mbz/\ell\mbz} \sum_{d\in (\mbz/\ell\mbz)^\times} \vert M_{2\times 2}(\mbz/\ell\mbz)\vert^{\tr \equiv t}_{\det \equiv d}\vert M_{2\times 2}(\mbz/\ell\mbz)^{\tr \equiv -t}_{ \det \equiv d}\vert \\
	&= \sum_{t\in \mbz/\ell\mbz} \sum_{d\in (\mbz/\ell\mbz)^\times} \ell^2\left(\ell+\legendre{t^2-4d}{\ell}\right)^2 .
\end{align*}
The $t=0$ term is
\begin{align*}
	\sum_{d\in (\mbz/\ell\mbz)^\times} \ell^2 \left( \ell + \left(\frac{4d}{\ell} \right)\right)^2
	&= (\ell-1)/2\cdot \ell^2((\ell+1)^2+(\ell-1)^2) \\
	&= \ell^2 (\ell-1) (\ell^2+1).
\end{align*}
For fixed $t\in (\mbz/\ell\mbz)^\times$ it is easy to see that the set 
$\{t^2-4d : d\in (\mbz/\ell\mbz)^\times\}$ consists of zero, $\frac{\ell-3}{2}$ squares, and 
$\frac{\ell-1}{2}$ non-squares. Thus, 
\begin{align*}
	\sum_{t\in (\mbz/\ell\mbz)^\times}\sum_{d\in (\mbz/\ell\mbz)^\times} \ell^2 \left( \ell + \legendre{t^2-4d}{\ell}\right)^2
	&= \sum_{t\in (\mbz/\ell\mbz)^\times} \ell^2\left(\ell^2+\frac{(\ell-3)}{2}(\ell+1)^2+\frac{(\ell-1)}{2}(\ell-1)^2\right) \\
	&= \ell^2 (\ell-2) (\ell^2+\ell+1)(\ell-1) .
\end{align*}
We conclude that, as desired,
\begin{align*}
	\vert G\left(\ell,0\right)\vert 
	&= \ell^2 (\ell-1) (\ell^2+1) + \ell^2 (\ell-2) (\ell^2+\ell+1)(\ell-1) \\
	&= \ell^2 (\ell-1) (\ell^3-\ell-1).
\end{align*}
\end{proof}
\begin{Lemma} \label{good primes}
Let $\ell$ be an odd prime. Then 
\[
	\frac{\ell \vert G\left(\ell,T\right)\vert}{\vert G\left(\ell\right)\vert}
	= \begin{cases} 
		\displaystyle \frac{\ell(\ell^3-\ell-1)}{(\ell^2-1)^2} & \text{if }T=0 \\
		\displaystyle \frac{\ell(\ell^4-\ell^3-2\ell^2+\ell+2)}{(\ell^2-1)^2(\ell-1)} & \text{else} 
	\end{cases} .
\]
Moreover if $\ell\nmid T$, then $\vert G\left(\ell^n,T\right)\vert=\ell^{6n-4}\cdot (\ell^4-\ell^3-2\ell^2+\ell+2)$
and
\[
\displaystyle \frac{\ell \vert G\left(\ell,T\right)\vert}{\vert G\left(\ell\right)\vert}=\frac{\ell^n \vert G\left(\ell^n,T\right)\vert}{\vert G\left(\ell^n\right)\vert}.
\]
\end{Lemma}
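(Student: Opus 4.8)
The plan is to reduce everything to the residue-level count of Lemma \ref{tracezero} (together with a scaling symmetry and smoothness), noting that both $G(\ell,T)$ and $G(\ell^n,T)$ depend on $T$ only modulo $\ell$; accordingly there are two cases, $\ell\mid T$ and $\ell\nmid T$.

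\emph{The ratios at $\ell$.} When $\ell\mid T$ (the ``$T=0$'' line) the value is obtained by directly dividing the two identities of Lemma \ref{tracezero}: $\ell\,|G(\ell,0)|/|G(\ell)| = \ell\cdot\ell^2(\ell-1)(\ell^3-\ell-1)\big/\big((\ell-1)\ell^2(\ell^2-1)^2\big) = \ell(\ell^3-\ell-1)/(\ell^2-1)^2$. For $\ell\nmid T$ I would avoid the direct double sum $\sum_{t,d}\ell^2\big(\ell+\legendre{t^2-4d}{\ell}\big)\big(\ell+\legendre{(T-t)^2-4d}{\ell}\big)$ in favour of a scaling symmetry: for $\lambda\in(\mbz/\ell\mbz)^\times$ the map $(g_1,g_2)\mapsto(\lambda g_1,\lambda g_2)$ sends $G(\ell)$ to itself bijectively (since $\det(\lambda g_i)=\lambda^2\det g_i$, so the fibre-product condition is preserved) and scales the total trace by $\lambda$, hence restricts to a bijection from $G(\ell,T)$ onto $G(\ell,\lambda T)$. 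Thus $|G(\ell,T)|$ is the same for every nonzero residue $T$, and partitioning $G(\ell)$ by trace gives $|G(\ell)| = |G(\ell,0)| + (\ell-1)\,|G(\ell,T)|$, whence $|G(\ell,T)| = \big(|G(\ell)|-|G(\ell,0)|\big)/(\ell-1) = \ell^2\big((\ell^2-1)^2-(\ell^3-\ell-1)\big) = \ell^2(\ell^4-\ell^3-2\ell^2+\ell+2)$. Dividing by $|G(\ell)|/\ell$ produces the asserted ratio.

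\emph{Prime powers, $\ell\nmid T$.} Recall that $X_T$ is the subscheme of $G=\GL_2\times_{\det}\GL_2$ cut out by the single equation $\tr g_1+\tr g_2=T$, so $G(\ell^n,T)=X_T(\mbz/\ell^n\mbz)$; since $\ell\nmid T$, the $\ell$-adic manifold $X_T(\mbz_\ell)$ is smooth of dimension $6$ by the smoothness criterion above (proved via the Jacobian of the defining equations), so Serre's \cite[Th\'eor\`eme 9]{SerreCebo} gives $|G(\ell^n,T)| = V\cdot\ell^{6n}$ with $V=V(X_T(\mbz_\ell))$ independent of $n$. Evaluating at $n=1$ with the count $|G(\ell,T)| = \ell^2(\ell^4-\ell^3-2\ell^2+\ell+2)$ just obtained pins down $V=\ell^{-4}(\ell^4-\ell^3-2\ell^2+\ell+2)$, hence $|G(\ell^n,T)| = \ell^{6n-4}(\ell^4-\ell^3-2\ell^2+\ell+2)$ for all $n\ge 1$. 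On the other hand $G$ is smooth of relative dimension $7$ over $\mbz_\ell$ — equivalently $|G(\ell^n)| = |\GL_2(\mbz/\ell^n\mbz)|\cdot|\SL_2(\mbz/\ell^n\mbz)|$ via the projection to the first factor — so $|G(\ell^n)| = \ell^{7(n-1)}|G(\ell)|$; combining the two formulas, $\ell^n|G(\ell^n,T)|/|G(\ell^n)| = \ell^3(\ell^4-\ell^3-2\ell^2+\ell+2)/|G(\ell)|$, which is independent of $n$ and equals the $n=1$ value already computed.

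The main obstacle is the stabilization for $\ell\nmid T$: everything else is bookkeeping with the formulas of Lemma \ref{tracezero} and elementary linear algebra over $\mbz/\ell^n\mbz$, whereas this step genuinely uses that $X_T(\mbz_\ell)$ is smooth precisely when $\ell\nmid T$ together with the ensuing uniform point count. If one prefers not to cite \cite{SerreCebo}, the same conclusion follows from a direct Hensel argument: for $\ell\nmid T$ the reduction map $X_T(\mbz/\ell^{n+1}\mbz)\to X_T(\mbz/\ell^n\mbz)$ is surjective with every fibre of size $\ell^6$, since $X_T$ is smooth of dimension $6$ over $\mbz_\ell$. Either way this is the step to get right.
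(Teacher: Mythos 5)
Your proof is correct. The level-$\ell$ part coincides with the paper's argument: the same unit-scaling bijection $h \mapsto \lambda h$ showing all nonzero-trace fibres of $G(\ell)$ have equal cardinality, the same partition $\vert G(\ell)\vert = \vert G(\ell,0)\vert + (\ell-1)\vert G(\ell,T)\vert$, and the same input from Lemma \ref{tracezero}. Where you genuinely diverge is the stabilization for $\ell \nmid T$: the paper invokes Lemma \ref{stable}, whose proof is group-theoretic --- it compares fibres of the projection $H(m\ell^{v+1}) \to H(m\ell^{v})$ and uses the scaling by $uI_2 \times uI_2$ from Lemma \ref{equidistrib} to distribute the count evenly over the $\ell$ lifts of the trace --- whereas you argue geometrically via smoothness of $X_T(\bZ_\ell)$ and Serre's point-counting theorem (equivalently, Hensel lifting with fibres of size $\ell^6$). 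The paper does flag exactly this geometric viewpoint in the discussion following Proposition \ref{vertical}, including the criterion that $X_T(\bZ_\ell)$ is smooth iff $\ell \nmid T$, but does not use it in this proof. Your route explains \emph{why} stabilization occurs already at $n=1$ and delivers the explicit formula $\vert G(\ell^n,T)\vert = \ell^{6n-4}(\ell^4-\ell^3-2\ell^2+\ell+2)$ directly; the paper's Lemma \ref{stable} is more elementary and applies uniformly to proper subgroups $H \subsetneq G$ and to primes dividing $T$ (where $X_T$ is singular), which is needed elsewhere in the paper. One small imprecision: your opening claim that $\vert G(\ell^n,T)\vert$ depends on $T$ only modulo $\ell$ fails when $\ell \mid T$ and $n>1$ (compare Theorem \ref{primesdividingtrace}, where the count depends on $v_\ell(T)$); since you only use it when $\ell \nmid T$, where the unit-scaling bijection modulo $\ell^n$ justifies it, nothing in your argument breaks.
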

\begin{proof}
For $t\in (\mbz/\ell\mbz)^\times$, the rule $h\mapsto t h$ gives a bijection 
$G\left(\ell,1\right) \to G\left(\ell, t\right)$. Thus $\vert G\left(\ell,t\right)\vert = \vert G\left(\ell,1\right)\vert$ for all $t\in (\mbz/\ell\mbz)^\times$. 
Using Lemma \ref{tracezero}, it follows that for $t\in (\mbz/\ell\mbz)^\times$ we have 
\[
	\vert G\left(\ell,t\right)\vert = \frac{\vert G\left(\ell\right)\vert - \vert G\left(\ell,0\right)\vert}{\ell-1} 
	= \ell^2 (\ell^4-\ell^3-2 \ell^2+\ell+2) .
\]
The first result follows easily and the second follows from Lemma \ref{stable}.  
\end{proof}
We deduce convergence of the product from these counts:
\begin{proof}[Proof of Proposition \ref{infiniteproduct}]

It suffices to consider the primes $\ell\nmid (T\cdot m_A)$. For such primes, we have that $H(\ell^n)=G(\ell^n)$ and from our previous results since $T\neq 0$ we have:
\begin{align*}\frac{\ell^{v_\ell(T)+1} \vert H(\ell^{v_\ell(T)+1},T)\vert}{\vert H(\ell^{v_\ell(T)+1})\vert}=	\frac{\ell \vert G\left(\ell,T\right)\vert}{\vert G\left(\ell\right)\vert}&=\frac{\ell(\ell^4-\ell^3-2\ell^2+\ell+2)}{(\ell^2-1)^2(\ell-1)}\\
&=1+O\left( \frac{1}{\ell^3} \right).
\end{align*}
This proves convergence.
\end{proof}

In Sections \ref{section:explicit constants} and \ref{section:bad primes} below, we will prove Theorem \ref{thm:mainformulas}, establishing explicit formulas for the universal factors and the exceptional factor (in the case where $(E_1,E_2)$ form a Serre pair), respectively.

\subsection{Positivity of the conjectural constant} \label{missingtracessection}

We now discuss briefly the question of when the conjectural constant $C(E_1 \times E_2,T)$ appearing in Conjecture \ref{conj:main} is positive.  As we have seen in the previous section, the infinite product part of the constant is a convergent Euler product. In particular that product is non-zero, and so we have
\begin{equation} \label{equivalentconditionforCATvanishing}
C(A,T) = 0 \; \Longleftrightarrow \; H_A(m_{A,T},T) = \emptyset \quad\quad \left( A = E_1 \times E_2 \right).
\end{equation}
This is analogous to the situation in the original Lang-Trotter conjecture, wherein, for a single elliptic curve $E$ over $\bQ$ and an integer $t$, we have
\begin{equation*} 
C(E,t) = 0 \; \Longleftrightarrow \; H_E(m_E,T) = \emptyset.
\end{equation*}
For each $m \in \mathbb{N}$, we are motivated to define the following subsets of $\mbz/m\mbz$:
\begin{equation} \label{defofmcT1andmcT2}
\begin{split}
\mathcal{T}_{E_i}(m) :=& \left\{ t \in \mbz/m\mbz : \; \left\vert \{ p \text{ prime } : p \nmid N_{E_i}, \; a_p(E_i) \equiv t \mod m \} \right\vert = \infty \right\} \\
=& \tr \left( H_{E_i}(m) \right) \subseteq \bZ/m\bZ \quad\quad\quad \left( i \in \{ 1, 2 \} \right), \\
\mathcal{T}_{E_1 \times E_2}(m) :=& \left\{ T \in \mbz/m\mbz : \; \left\vert \{ p \text{ prime } : p \nmid N_{E_1}N_{E_2}, \; a_p(E_1 \times E_2) \equiv T \mod m \} \right\vert = \infty \right\} \\
=& \tr \left( H_{E_1 \times E_2}(m) \right)  \subseteq \bZ/m\bZ.
\end{split}
\end{equation}
Evidently, we have
\begin{equation} \label{mcTcontainment}
    \mathcal{T}_{E_1 \times E_2}(m) \subseteq \mathcal{T}_{E_1}(m) + \mathcal{T}_{E_2}(m) \subseteq \mbz/m\mbz
\end{equation}
and, denoting by $\overline{t}_m$ the reduction modulo $m$ of an integer $t$,
\begin{equation} \label{interpofconstantvanishingintermsofTs}
\begin{split}
    C(E,t) = 0 \; &\Longleftrightarrow \; \exists m \mid m_E \text{ with } \overline{t}_m  \notin \mathcal{T}_{E}(m), \\
     C(E_1\times E_2,T) = 0 \; &\Longleftrightarrow \; \exists m \mid m_{E_1 \times E_2} \text{ with } \overline{T}_m \notin \mathcal{T}_{E_1 \times E_2}(m).
\end{split}
\end{equation}

Suppose that $A = E_1 \times E_2$ where $(E_1,E_2)$ is a Serre pair, let $m$ be any positive integer divisible by $m_A$, and write $m =: 2^\ga m'$ with $m'$ odd.  By the subgroup containment
\[
\left( \pi_{2^\ga,2}^{-1}\left( \left\langle \begin{pmatrix} 1 & 1 \\ 1 & 0 \end{pmatrix} \right\rangle \right) \times \SL_2(\mbz/m'\mbz) \right) \times \left( \pi_{2^\ga,2}^{-1}\left( \left\langle \begin{pmatrix} 1 & 1 \\ 1 & 0 \end{pmatrix} \right\rangle \right) \times \SL_2(\mbz/m'\mbz) \right) \subseteq H_A(m),
\]
together with \eqref{equivalentconditionforCATvanishing}, we may see that the constant $C(E_1\times E_2,T)$ is positive for all $T \in \bZ$.  By \cite{MR3071819}, the condition that $(E_1,E_2)$ is a Serre pair represents the generic case, happening for almost all pairs $(E_1,E_2)$.  

To find examples of products and values of $T$ for which $C(E_1\times E_2,T) = 0$, the condition \eqref{interpofconstantvanishingintermsofTs}, together with \eqref{mcTcontainment}, leads us to the observation that
\[
\mathcal{T}_{E_1}(m) + \mathcal{T}_{E_2}(m) \neq \bZ/m\bZ \; \Longrightarrow \; \exists T \in \mbz \text{ with } C(E_1 \times E_2,T) = 0.
\]
In particular, for any $T \in \mbz$ for which $\overline{T}_m \notin \mathcal{T}_{E_1}(m) + \mathcal{T}_{E_2}(m)$, we have $C(E_1 \times E_2,T) = 0$.  A basic example of this occurs when
\begin{equation} \label{2torsionhypothesis}
E_1(\bQ)[2] \neq \{ \mc{O}_{E_1} \} \quad \text{ and } \quad E_2(\bQ)[2] \neq \{ \mc{O}_{E_2} \}.
\end{equation}
Note in general that, if $E(\bQ)$ has a point of order $m$ then for all primes $p \nmid N_{E}$, we have 
\[
m \; \text{ divides } \; p+1-a_p(E)=\vert E(\bF_p)\vert.
\]
Taking $m = 2$, we deduce that, provided \eqref{2torsionhypothesis} holds, for all odd primes $p \nmid N_{E_1}N_{E_2}$, each $a_p(E_i)$ must be even, and therefore $a_p(E_1 \times E_2) = a_p(E_1) + a_p(E_2)$ must be even as well; in this case we have
\[
\mathcal{T}_{E_1}(2) = \{ 0 \} = \mathcal{T}_{E_2}(2), \quad \text{ so } \quad \mathcal{T}_{E_1}(2) + \mathcal{T}_{E_2}(2) = \{ 0\}.
\]
A quick search produces many explicit examples, for instance taking 
\[
\begin{split}
&E_1 : \; y^2=x^3+x^2+x, \\
&E_2 : \;  y^2+xy=x^3+x^2+x
\end{split}
\]
(\cite[\href{http://www.lmfdb.org/EllipticCurve/Q/48.a5}{E.C. 48.a5}]{lmfdb} and \cite[\href{http://www.lmfdb.org/EllipticCurve/Q/39.a4}{E.C. 39.a4}]{lmfdb}, respectively) leads to an abelian surface $A=E_1\times E_2$ satisfying the conditions of Conjecture \ref{conj:main} where at most finitely many primes $p$ have odd $a_p(A)$. 

Our next example shows that it is possible to find pairs $(E_1,E_2)$ with $C(E_1 \times E_2,T) = 0$ for an appropriate $T$ even when $\mathcal{T}_{E_1}(m) + \mathcal{T}_{E_2}(m) = \mbz/m\mbz$ for all $m$.  Given \eqref{interpofconstantvanishingintermsofTs} and \eqref{mcTcontainment}, this is only possible if, for some $m \in \mathbb{N}$, we have
\begin{equation} \label{propercontainmentfromentanglement}
\mc{T}_{E_1\times E_2}(m) \subsetneq \mc{T}_{E_1}(m) + \mc{T}_{E_2}(m) = \mbz/m\mbz.
\end{equation}
We define $E_1$ and $E_2$ by 
\[
\begin{split}
    &E_1 : \; y^2 = x^3 + 6x - 2, \\
    &E_2 : \; y^2 = x^3 - 108x - 918.
\end{split}
\]
As demonstrated in \cite{LangTrotter}, the elliptic curve $E_1$ is a Serre curve, and again using that
\[
\pi_{2^\ga,2}^{-1}\left\langle \begin{pmatrix} 1 & 1 \\ 1 & 0 \end{pmatrix} \right\rangle \times \SL_2(\mbz/m'\mbz) \subseteq \im(\rho_{E,2^\ga m'}),
\]
we may see that, for every $m \in \mathbb{N}$, we have $\mathcal{T}_{E_1}(m) = \mbz/m\mbz$.  In particular, this implies that $\mathcal{T}_{E_1}(m) + \mathcal{T}_{E_2}(m) = \mbz/m\mbz$. On the other hand, \cite[Theorem 1]{RubinSilverberg} implies that $E_1[2]$ and $E_2[2]$ are isomorphic as $\Gal(\overline{\bQ}/\bQ)$-modules, which in turn implies that, for every prime $p \nmid N_{E_1}N_{E_2}$, we have $a_p(E_1) \equiv a_p(E_2) \mod 2$.  From this, it follows that $a_p(E_1 \times E_2)$ must always be even, so
\[
\mc{T}_{E_1 \times E_2}(2) = \{ 0 \} \subsetneq \mbz/2\mbz = \mc{T}_{E_1}(2) + \mc{T}_{E_2}(2),
\]
and thus $C(E_1 \times E_2,T) = 0$ for each odd $T \in \bZ$.  

Another example in this same spirit involves level $4$ structure.  Consider the elliptic curves
\[
\begin{split}
    &E_1 : \; y^2 = x^3 - 4900172/45369x - 19600688/45369, \\
    &E_2 : \; y^2 = x^3 - 186732x - 746928.
\end{split}
\]
The elliptic curve $E_1$ admits a rational $4$-isogeny (equivalently, $\displaystyle \im(\rho_{E_1,4}) \subseteq \left\{ \begin{pmatrix} * & * \\ 0 & * \end{pmatrix} \right\}$), whereas $E_2$ has a square discriminant (equivalently, $\displaystyle \im(\rho_{E_2,2}) \subseteq \left\langle \begin{pmatrix} 1 & 1 \\ 1 & 0 \end{pmatrix} \right\rangle$).  Taking $m = 4$ in \eqref{defofmcT1andmcT2}, we have
\[
\begin{split}
\mathcal{T}_{E_1}(4) &= \{ 0, 2 \}, \\
\mathcal{T}_{E_2}(4) &= \{ 0, 1, 2, 3 \}.
\end{split}
\]
However, because of the cyclotomic entanglement $\bQ(i) \subseteq \bQ(E_1[4]) \cap \bQ(E_2[4])$, and the fact that 
\[
\forall g \in \GL_2(\bZ/4\bZ), \; g \equiv I \mod 2 \; \Longrightarrow \tr g \equiv 1 + \det g \mod 4,
\]
we may see that $2 \notin \mc{T}_{E_1 \times E_2}(4)$.  In fact,
\[
\mc{T}_{E_1 \times E_2}(4) = \{0, 1, 3 \},
\]
and thus $C(E_1 \times E_2,T) = 0$ whenever $T \equiv 2 \mod 4$.  

Finally, one might ask whether there exists pairs  $(E_1,E_2)$ such that for some $m \in \bZ$ we have
\[
\mathcal{T}_{E_1}(m) \neq \bZ/m\bZ \; \text{ and } \; \mathcal{T}_{E_2}(m) \neq \bZ/m\bZ,\;  \text{ but } \; \mathcal{T}_{E_1 \times E_2}(m) = \mbz/m\mbz.
\]
Our final example shows that this can indeed happen.  Let $E_1$ and $E_2$ be given by 
\[
\begin{split}
    &E_1 : \; y^2 = x^3 - 41399424/41971x - 496793088/41971, \\
    &E_2 : \; y^2 = x^3 + 533418040116/625x - 3633643689270192/3125.
\end{split}
\]
The elliptic curve $E_1$ has a rational $2$-torsion point, and so, taking $m = 4$, we have $\mathcal{T}_{E_1}(4) = \{ 0, 2 \}$.  The second elliptic curve $E_2$ has the property that $\bQ(\sqrt{\Delta_{E_2}}) = \bQ(i)$, which forces the sub-extension $\bQ(i,\Delta_{E_2}^{1/4})$, which generically has Galois group $D_4$ over $\bQ$, to be at most a bi-quadratic extension of $\bQ$. For $E_2$, we further have $\bQ(i,\Delta_{E_2}^{1/4}) = \bQ(i)$, and this in turn forces $\rho_{E_2,4}(G_\bQ)$ to be contained in a certain index $4$ subgroup of $\GL_2(\bZ/4\bZ)$. The traces of this subgroup are $\{0,2,3\}$, and we compute that, in fact, $\mathcal{T}_{E_2}(4) = \{0,2,3\}$.  However, unlike the previous example, the cyclotomic entanglement does not manage to link up the traces in this case, and we may see by direct computation that 
\[
\{ a_p(E_1 \times E_2) \mod 4 : p \nmid N_{E_1}N_{E_2} \} = \bZ/4\bZ.
\]

All of the above examples suggest that the question of positivity of the conjectural constants $C(E_1 \times E_2,T)$ is rather subtle and worthy of further study.

\subsection{Theoretical evidence}\label{subsection:theoretic}
Although this is not the main focus of our paper, we conclude this section with some remarks about theoretical results towards Conjecture \ref{conj:main}. Applying the effective \v{C}ebotarev results of J. Lagarias and A. Odlyzko \cite{Lagarias1977EffectiveVO}, we obtain, following Serre's original treatment for single elliptic curves \cite[Th\'eor\`eme 20]{SerreCebo}: 
\begin{theorem} \label{upperboundtheorem}
Let $E_{1/\bQ}$, $E_{2/\bQ}$ be non-CM elliptic curves non-isogenous over $\overline{\bQ}$. Let $A\sim_{\bQ}E_1\times E_2$ and let $T\in \mathbb{Z}\setminus \{0\}$. Set $\epsilon(x)=(\log x)(\log\log x)^{-2}(\log\log\log x)^{-1}$ and $\epsilon_R(x)=\sqrt{x}(\log x)^{-2}$.
\begin{enumerate}
    \item We have an unconditional upper bound as $x\to \infty$ of 
    $$\pi_{A,T}(x)=O(\Li(x)\cdot\epsilon(x)^{-1/7}).$$
    \item Under GRH, we obtain the upper bound as $x\to \infty$: 
    $$\pi_{A,T}(x)=O(\Li(x)\cdot\epsilon_R(x)^{-1/7}).$$
    In particular, for all $\varepsilon>0$ we have under GRH the bound $\pi_{A,T}(x)=O(x^{1-\frac{1}{14}+\varepsilon})$ as $x\to \infty$. 
\end{enumerate}
\end{theorem}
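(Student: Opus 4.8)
The plan is to reduce to a counting problem in the division field $\bQ(A[m]) = \bQ(E_1[m]) \cdot \bQ(E_2[m])$ and then run the effective Chebotarev argument of \cite[Th\'eor\`eme 20]{SerreCebo} (and of \cite{CDSS} in the generic case) essentially verbatim, with $G = \GL_2 \times_{\det} \GL_2$ in place of $\GL_2$. Since $\pi_{A,T}(x) = \pi_{E_1 \times E_2,T}(x) + O(1)$, we may assume $A = E_1 \times E_2$; write $H(m) = \im(\bar\rho_{A,m}) = \Gal(\bQ(A[m])/\bQ)$. The starting point is the elementary inequality, valid for every positive integer $m$,
\[
\pi_{A,T}(x) \;\le\; \#\bigl\{ p \le x : p \nmid m N_{E_1} N_{E_2},\ a_p(A) \equiv T \bmod m \bigr\} + O(\log x) \;=\; \#\{ p \le x : \Frob_p \subseteq H(m,T) \} + O(\log x),
\]
where $H(m,T) = \{ h \in H(m) : \tr h \equiv T \bmod m \}$ is a union of conjugacy classes of $H(m)$ because $\tr$ is a class function, and the $O(\log x)$ absorbs the primes dividing $m N_{E_1} N_{E_2}$. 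One then applies the effective Chebotarev density theorem of Lagarias--Odlyzko \cite{Lagarias1977EffectiveVO}, in the refined forms of Serre \cite[\S2]{SerreCebo}, to this union of conjugacy classes and optimizes $m$ as a function of $x$.

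The two quantitative ingredients are already essentially available. First, restricting to squarefree $m$ coprime to $m_A \cdot T$ so that $H(m) = G(m)$, the Chinese Remainder Theorem together with Lemma~\ref{good primes} (applicable since $T \ne 0$ and $\ell \nmid T$ for every $\ell \mid m$) yields the uniform density bound
\[
\frac{\vert H(m,T) \vert}{\vert H(m) \vert} \;=\; \prod_{\ell \mid m} \frac{\vert G(\ell,T) \vert}{\vert G(\ell) \vert} \;=\; \prod_{\ell \mid m} \frac{1}{\ell}\Bigl( 1 + O(\ell^{-3}) \Bigr) \;\ll\; \frac{1}{m},
\]
with an absolute implied constant, so the Chebotarev main term is $\ll \Li(x)/m$. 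Second, for $L = \bQ(A[m])$ one has $n_L := [L:\bQ] = \vert G(m) \vert = \prod_{\ell \mid m} (\ell-1)\ell^2(\ell^2-1)^2 \le m^7$, and since every prime ramifying in $L$ divides $m N_{E_1} N_{E_2}$, the standard conductor--discriminant estimate gives $\log \vert d_L \vert \ll_{E_1,E_2} n_L \log m \ll m^7 \log m$. The exponent $7$ here is exactly the source of the exponent $1/7$ in the final bounds, just as $[\bQ(E[m]):\bQ] \asymp m^4$ produces Serre's exponent $1/4$ for a single curve.

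Feeding this into effective Chebotarev: under GRH the error term is $\ll \tfrac{\vert H(m,T) \vert}{\vert H(m) \vert} x^{1/2}\bigl( \log \vert d_L \vert + n_L \log x \bigr) \ll m^6 x^{1/2} \log x$, which is $\ll \Li(x)/m$ as soon as $m^7 (\log x)^2 \ll x^{1/2}$; taking a suitable squarefree $m \asymp \epsilon_R(x)^{1/7}$ coprime to $m_A T$ then gives $\pi_{A,T}(x) \ll \Li(x)\,\epsilon_R(x)^{-1/7} \asymp x^{1 - 1/14}(\log x)^{-5/7}$, which is $O(x^{1 - 1/14 + \varepsilon})$, proving (2). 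For the unconditional bound one instead takes $m \asymp \epsilon(x)^{1/7}$ and invokes Serre's unconditional upper-bound form of effective Chebotarev \cite[Th.\ 10]{SerreCebo}. The main obstacle is then the usual one in \cite[\S2]{SerreCebo}, namely a possible exceptional (Siegel) zero $\beta_0$ of $\zeta_L$: its contribution is $\ll \tfrac{\vert H(m,T) \vert}{\vert H(m) \vert} \Li(x^{\beta_0}) \ll \tfrac{1}{m} \Li(x)\exp\bigl( -c \log x / \log \vert d_L \vert \bigr)$, and since for this choice of $m$ we have $\log \vert d_L \vert \asymp \epsilon(x) \log\log x \asymp \log x /\bigl( (\log\log x)(\log\log\log x) \bigr)$, this is $\ll \tfrac{1}{m} \Li(x) \exp\bigl( -c'(\log\log x)(\log\log\log x) \bigr)$, hence negligible; the remaining zeros are handled via the Deuring--Heilbronn phenomenon exactly as in \cite{SerreCebo}. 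Precisely the triple-logarithm factor in $\epsilon(x)$ is what forces $\log x / \log \vert d_L \vert \to \infty$ and thereby provides the margin needed to absorb the exceptional zero; once this balancing between the size of $\disc\bigl(\bQ(A[m])\bigr)$ and the range of validity of effective Chebotarev is arranged, one obtains $\pi_{A,T}(x) \ll \Li(x)\,\epsilon(x)^{-1/7}$, proving (1).
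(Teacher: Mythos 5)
The paper's own proof is a two-line application of Serre's $\ell$-adic effective Chebotarev theorem \cite[Th\'eor\`eme 10]{SerreCebo}: fix a single large prime $\ell \nmid T$ with $\im(\rho_{A,\ell}) = G(\bZ_\ell)$, observe that $X_T(\bZ_\ell)$ is a closed, conjugation-stable analytic submanifold of dimension $6$ inside the $7$-dimensional group $G(\bZ_\ell)$, and read off both bounds with exponent $(7-6)/7 = 1/7$. Your proposal instead re-derives the content of that theorem at finite level, using squarefree $m$ coprime to $m_A T$ so that $H(m) = G(m)$, the density bound $\vert G(m,T)\vert/\vert G(m)\vert \ll 1/m$ from Lemma \ref{good primes}, and $n_L = \vert G(m)\vert \asymp m^7$. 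This is a genuinely different and more self-contained route, and your conditional argument is correct: balancing the main term against the GRH error $\ll \frac{\vert C \vert}{\vert G \vert}x^{1/2}(\log\vert d_L\vert + n_L\log x)$ by taking $m^7 \asymp \epsilon_R(x)$ is exactly the mechanism inside the GRH half of Serre's Th\'eor\`eme 10, and it delivers $\pi_{A,T}(x) \ll \Li(x)\epsilon_R(x)^{-1/7}$.

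The unconditional half of your argument has a genuine gap. The unconditional Lagarias--Odlyzko asymptotic, whose error terms (including the $\Li(x^{\beta_0})$ term) you are estimating, is only proved in the range $\log x \geq c\, n_L(\log\vert d_L\vert)^2$. Since $L = \bQ(A[m])$ contains $\bQ(\zeta_m)$, one has $\log\vert d_L\vert \gg n_L \asymp m^7$, so this range condition requires roughly $\log x \gg m^{21}$, which is violently incompatible with your choice $m^7 \asymp \epsilon(x)$ (it would only permit $m \ll (\log x)^{1/21+o(1)}$ and hence a much weaker bound). In addition, the estimate $\beta_0 \leq 1 - c/\log\vert d_L\vert$ you use for the exceptional zero is Stark's bound, which fails in general: when $L$ contains a quadratic subfield with a Siegel zero one only knows $\beta_0 \leq 1 - c\vert d_L\vert^{-1/n_L}$ unconditionally. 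The correct route --- and the one Serre actually takes inside Th\'eor\`eme 10 --- is the upper-bound-only variant of effective Chebotarev (his Th\'eor\`eme 7, resting on the Deuring--Heilbronn zero-repulsion results of Lagarias--Montgomery--Odlyzko), which gives $\pi_C(x) \ll \frac{\vert C\vert}{\vert G\vert}\Li(x)$ in the much wider range $\log x \geq c(\log\vert d_L\vert)(\log\log\vert d_L\vert)(\log\log\log\vert d_L\vert)$; saturating this condition with $\log\vert d_L\vert \asymp n_L\log m$ is precisely what produces the function $\epsilon(x) = (\log x)(\log\log x)^{-2}(\log\log\log x)^{-1}$. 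Your closing remark about the triple logarithm points at the right phenomenon, but the mechanism you describe does not actually yield the stated bound; you should either quote Serre's Th\'eor\`eme 7 (or Th\'eor\`eme 10 directly, as the paper does) rather than the Lagarias--Odlyzko asymptotic formula.
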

\begin{proof}
It suffices to consider the case $A=E_1\times E_2$. Let $\ell$ denote a large rational prime such that the image of the $\ell$-adic Galois representation $\im(\rho_{A,\ell})$ is all of the compact $\ell$-adic Lie group $G(\bZ_\ell)=\GL_2(\bZ_\ell)\times_{\det}\GL_2(\bZ_\ell)$ and such that $\ell\nmid T$. The latter condition implies smoothness of the $\ell$-adic analytic manifold $X_{T}(\bZ_\ell):=\{(g_1,g_2)\in \GL_2(\mathbb{Z}_\ell)\times_{\det}\GL_2(\mathbb{Z}_\ell)\vert \tr(g_1)+\tr(g_2)=T\}$. Moreover,  $X_{T}(\bZ_\ell)\subset G(\bZ_\ell)$ is closed, stable under conjugation, and the dimensions as $\ell$-adic manifolds (in the sense of Serre-Oesterl\'e) of $X_{T}(\bZ_\ell)$ and $G(\bZ_\ell)$ are $6$ and $7$, respectively. Observing that 
$$\pi_{A,T}(x)\leq \vert\{p\leq x \text{ prime }: p\nmid \ell\cdot N_A\text{ and } \rho_{A,\ell}(\Frob_p)\in X_{T}(\bZ_\ell)\}\vert,$$
it suffices to give an upper bound for the latter. This bound now follows from \cite[Th\'eor\`eme 10]{SerreCebo}, which is an application of effective \v{C}ebotarev results to Galois extensions $K_\ell/\bQ$ with Galois groups $G_\ell = \Gal(K_\ell/\bQ)$ which are compact $\ell$-adic Lie groups. Indeed, in our setting taking $K_\ell$ to be the fixed field of $\ker(\rho_{A,\ell})$ gives a Galois extension $K_\ell = \bQ(A_\ell)$ of $\mathbb{Q}$ with Galois group $\Gal(\bQ(A_\ell)/\bQ) \simeq G(\bZ_\ell)$, and we are precisely counting the primes $p$ outside a finite set of ultrametric places such that $\rho_{A,\ell}(\Frob_p)\in X_{T}(\bZ_\ell)$. 
\end{proof}
Just as for single elliptic curves, these results can likely be refined and the upper bound improved using similar techniques, and we plan to address this in ongoing work \cite{future_upperbounds}. \par
Furthermore, we expect that Conjecture \ref{conj:main} can be shown to hold on average, as was proven for single elliptic curves by C. David and F. Pappalardi \cite{DavidPapp}, extending results by E. Fouvry and R. Murty in the supersingular case \cite{fouvry_murty_1996}. More precisely, writing $E(a,b)$ with $a,b\in \bZ$ for the elliptic curve with Weierstrass equation $y^2=x^3+ax+b$, David and Pappalardi show that:
\begin{theorem}[Corollary 1.3. of \cite{DavidPapp}]
Fix $t\in\bZ$. For $\varepsilon>0$ and $A,B>x^{1+\varepsilon}$ we have as $x\to \infty$:
$$\frac{1}{4AB}\sum_{{\begin{substack} {a\leq A\\ b\leq B} \end{substack}}}\pi_{E(a,b),t}(x)\sim C_t\cdot \sqrt{x}/\log(x),$$
where the constant $C_t$ is the universal part of the constant $C(E,t)$ given by:
$$C_t=\frac{2}{\pi}\prod_{\ell\mid t}\left(1-\frac{1}{\ell^2}\right)^{-1}\prod_{\ell\nmid t}\frac{\ell(\ell^2-\ell-1)}{(\ell-1)(\ell^2-1)}.$$
\end{theorem}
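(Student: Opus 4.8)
The plan, following Fouvry--Murty's method \cite{fouvry_murty_1996} (extended to all traces by David and Pappalardi), is to bring the prime $p$ to the outside of the double sum, invoke Deuring's formula expressing the number of Weierstrass models over $\bF_p$ with a prescribed trace of Frobenius in terms of Hurwitz--Kronecker class numbers, and then evaluate the resulting sum $\sum_{p\le x}H(4p-t^2)/p$ by means of the Dirichlet class number formula and the prime number theorem in arithmetic progressions. We describe the case $t\neq 0$; the case $t=0$ needs the usual modification of Deuring's formula to accommodate supersingular reductions and is essentially the content of \cite{fouvry_murty_1996}. Conceptually, the exceptional factor of the Lang--Trotter constant for $E(a,b)$, which equals $1$ for a Serre curve and hence for almost all $(a,b)$, is averaged away, leaving only the universal part $C_t$.

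After exchanging the order of summation, for each fixed prime $p\le x$ one must count the pairs $(a,b)$ in the box $|a|\le A$, $|b|\le B$ for which $E(a,b)$ has good reduction at $p$ --- equivalently $4a^3+27b^2\not\equiv 0 \pmod p$ --- and $a_p(E(a,b))=t$. Since under good reduction this trace depends only on $(a,b)\bmod p$, partitioning the box into translates of $[0,p)^2$ yields a main term $\tfrac{4AB}{p^2}N_p(t)$, where $N_p(t):=\#\{(a,b)\bmod p : 4a^3+27b^2\not\equiv 0,\ a_p(E(a,b))=t\}$, together with a boundary discrepancy of size $O\!\left(\tfrac{A+B}{p}N_p(t)\right)$ (the globally singular models and the finitely many small primes requiring no separate treatment). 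Deuring's theorem gives $N_p(t)=(p-1)H(4p-t^2)+O(1)$ for $p\nmid t$ and $t^2<4p$ --- which for $t\neq 0$ excludes only $O(1)$ primes --- so, with $H(n)=O(\sqrt n\log n)$, the total boundary discrepancy over $p\le x$ is $O\!\left((A+B)x^{3/2}\right)$, which is $o\!\left(AB\cdot\tfrac{\sqrt x}{\log x}\right)$ precisely because $A,B>x^{1+\varepsilon}$. Since moreover $(p-1)/p=1+O(1/p)$ and $\sum_p H(4p-t^2)/p^2$ converges, it therefore suffices to prove
\[
\sum_{p\le x}\frac{H(4p-t^2)}{p}\;\sim\; C_t\cdot\frac{\sqrt x}{\log x}.
\]

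To evaluate this sum, apply the Dirichlet class number formula: writing $-(4p-t^2)=D_p f_p^2$ with $D_p$ a negative fundamental discriminant, one has $H(4p-t^2)=\tfrac{\sqrt{4p-t^2}}{\pi}L(1,\chi_{D_p})\sum_{d\mid f_p}\mu(d)\chi_{D_p}(d)\sigma_1(f_p/d)$, where $\chi_{D_p}$ is the quadratic character attached to $D_p$. Using $\sqrt{4p-t^2}/p=2/\sqrt p+O(p^{-3/2})$ and expanding both $L(1,\chi_{D_p})=\prod_\ell(1-\chi_{D_p}(\ell)/\ell)^{-1}$ and the divisor sum over $f_p$ into Dirichlet series, the quantity $\sum_{p\le x}H(4p-t^2)/p$ becomes, after truncating the expansion, a finite $\bZ$-linear combination of sums $\sum_{p\le x}\psi(p)/\sqrt p$ in which each $\psi$ is a fixed Dirichlet character: by quadratic reciprocity the value of $\chi_{D_p}(\ell)$ depends on $p$ only through its residue class to a bounded modulus. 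The truncation tail, dominated by the primes $p$ for which $4p-t^2$ carries a large square divisor, is estimated by Brun--Titchmarsh together with partial summation, splitting the range of the square divisor $f$ at $x^{1/4}$ (for $f>x^{1/4}$ one uses instead that $p$ is pinned down to $O(1)$ possibilities by $f$ and $(4p-t^2)/f^2$). The prime number theorem in arithmetic progressions, followed by partial summation, then yields $\sum_{p\le x}\psi(p)/\sqrt p\sim\delta_\psi\cdot 2\sqrt x/\log x$ for suitable densities $\delta_\psi$, and reassembling the local contributions into an Euler product identifies the resulting constant as $C_t$, whose $\ell$-factor is $(1-\ell^{-2})^{-1}$ when $\ell\mid t$ and $\tfrac{\ell(\ell^2-\ell-1)}{(\ell-1)(\ell^2-1)}$ when $\ell\nmid t$.

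I expect the main obstacle to be the uniform control of the error terms in this last step, and above all the truncation of the $f_p$-sum in the class number formula: truncating at a mere power of $\log x$ does \emph{not} suffice, since the contribution of primes $p$ with a large square dividing $4p-t^2$ is only barely smaller than the main term, and one genuinely needs the Brun--Titchmarsh plus partial summation argument with the range of the square divisor split as above. Secondary but still delicate are the verification that the Euler product issuing from the class number formula coincides with the stated value of $C_t$, and the correct accounting for the automorphism weights and the $j\in\{0,1728\}$ corrections in Deuring's formula. By contrast, in the range $A,B>x^{1+\varepsilon}$ the completion of the sum over the box in the second paragraph is handled by entirely crude bounds, so no nontrivial cancellation in character sums (Weil-type estimates) is needed here --- which is precisely the ingredient one would have to bring in to treat smaller boxes.
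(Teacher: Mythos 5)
This statement is not proved in the paper at all: it is quoted verbatim as Corollary~1.3 of David--Pappalardi \cite{DavidPapp} and used as background motivation in Subsection~\ref{subsection:theoretic}, so there is no internal argument to compare yours against. Your sketch does, however, faithfully reproduce the strategy of the cited work (which itself extends Fouvry--Murty): exchange the order of summation, tile the box $|a|\le A$, $|b|\le B$ by residue classes mod $p$, invoke Deuring's theorem to write the count of Weierstrass models with $a_p=t$ as $(p-1)H(4p-t^2)+O(1)$, and reduce to the asymptotic for $\sum_{p\le x}H(4p-t^2)/p$ via the class number formula and the prime number theorem in arithmetic progressions. Your error accounting is right where it matters: the boundary discrepancy $O((A+B)x^{3/2})$ is absorbed precisely because $A,B>x^{1+\varepsilon}$, and you correctly single out the contribution of primes with a large square dividing $4p-t^2$ as the step where Brun--Titchmarsh and the splitting of the range of $f$ are genuinely needed rather than a crude truncation. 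Two small normalization points to watch if you were to write this out: the class number formula for the Hurwitz class number as you state it should carry the factor relating $\sqrt{4p-t^2}$ to $\sqrt{|D_p|}\,f_p$ (your divisor sum over $f_p$ is of size roughly $f_p$, so the bookkeeping must be consistent to land on $2/\pi$), and the $O(1)$ in Deuring's formula must absorb the automorphism weights at $j=0,1728$, which you do flag. As a verification of the external theorem the sketch is sound.
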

They also prove that the normal order of $\pi_{E(a,b),t}(x)$ is $C_t\cdot \sqrt{x}/\log(x)$ (see \cite[Corollary 1.5.]{DavidPapp}). Moreover, it is known for Serre curves $E(a,b)$ and conditionally in general (see \cite{MR2534114}) that $$\frac{1}{4AB}\sum_{a\leq A,b\leq B}C(E(a,b),t)\sim C_t$$
as $A, B \rightarrow \infty$. \par 
We anticipate that in our setting for large enough $A,B$ in relation to $x$ one has similarly the asymptotic 
$$\frac{1}{16A^2B^2}\sum_{{\begin{substack} {a_1,a_2\leq A\\ b_1,b_2\leq B} \end{substack}}}\pi_{E(a_1,b_1)\times E(a_2,b_2),T}(x)\sim C_T\cdot (\sqrt{x}/\log(x)),$$
where 
$$
C_T:=2\Phi_{\SU(2)^{\times 2}}(0)\cdot \prod_{\ell\text{ prime }} \frac{\ell^{v_\ell(T)+1} \vert G(\ell^{v_\ell(T)+1},T)\vert}{\vert G(\ell^{v_\ell(T)+1})\vert}.
$$
Moreover, the normal order should also be the average order and it seems reasonable to expect that
$C_T=\lim_{A,B\to\infty}\left(\frac{1}{16A^2B^2}\sum_{{\begin{substack} {a_1,a_2\leq A\\ b_1,b_2\leq B} \end{substack}}}C(E(a_1,b_1)\times E(a_2,b_2),T)\right)$ is the average constant.

\section{Explicit constants}\label{section:explicit constants}
Let $\ell\geq 3$ be a fixed prime. The goal of this section is to compute the stable ratios given by 
$$
\frac{\ell^{v_\ell(T)+1}\vert H(\ell^{v_\ell(T)+1},T)\vert }{\vert H(\ell^{v_\ell(T)+1})\vert }
=
\frac{\ell^{v_\ell(T)+1}\vert G(\ell^{v_\ell(T)+1},T)\vert }{\vert G(\ell^{v_\ell(T)+1})\vert }
$$ 
for primes $\ell\vert T$ and coprime to $m_A$, where $T$ is some non-zero integer. As soon as $\ell\vert T$, one has to take into consideration that the number of matrices of a given trace and determinant above a class modulo $\ell$ scales differently depending on the $\ell$-adic distance to scalar matrices. Therefore a more detailed analysis is necessary. 
\subsection{Fixed trace and determinant counts}
We first consider counts for $M_{2\times 2}(\mathbb{Z}/\ell^{n}\mathbb{Z})$. Denote by $S_0$ the matrices in $M_{2\times 2}(\mathbb{Z})$ which are not congruent to scalar matrices modulo $\ell$ and by $S_0(j)$ the reduction of $S_0$ modulo $\ell^j$:
\[
\begin{split}
S_0 :=& \left\{ \begin{pmatrix} a & b \\ c & d \end{pmatrix} \in M_{2\times 2}(\mbz) : \; \forall \lambda \in \mbz/\ell\mbz, \, \begin{pmatrix} a & b \\ c & d \end{pmatrix} \not\equiv \begin{pmatrix} \lambda & 0 \\ 0 & \lambda \end{pmatrix} \mod \ell \right\}, \\
S_0(j) =& \left\{ \begin{pmatrix} a & b \\ c & d \end{pmatrix} \in M_{2\times 2}(\mbz/\ell^j\mbz) : \; \forall \lambda \in \mbz/\ell\mbz, \, \begin{pmatrix} a & b \\ c & d \end{pmatrix} \not\equiv \begin{pmatrix} \lambda & 0 \\ 0 & \lambda \end{pmatrix} \mod \ell \right\}.  
\end{split}
\]
For fixed integers $t, s \in \mbz$, we let $\mathcal{H}_{i,n}(t,s)$ denote the set of matrices in $M_{2\times 2}(\mathbb{Z}/\ell^{n}\mathbb{Z})$ with trace (resp. determinant) congruent to t (resp. to $s$) modulo $\ell^n$ and which are congruent to a scalar matrix modulo $\ell^i$ but not $\ell^{i+1}$ and denote by $h_{i,n}(s,t)$ the cardinality of that set: 
\[
\begin{split}
\mathcal{H}_{i,n}(t,s) &:= \left\{ A \in M_{2 \times 2}(\mbz/\ell^n\mbz) : \; \begin{matrix} \exists \lambda \in \mbz, \\ \exists A' \in S_0 \end{matrix} \quad \text{ with } \quad \begin{matrix} A \equiv \lambda I_2 + \ell^i A' \mod \ell^n, \\ \tr A \equiv t \mod \ell^n \\ \det A \equiv s \mod \ell^n \end{matrix} \right\}, \\
h_{i,n}(t,s) &:= \left| \mathcal{H}_{i,n}(t,s) \right|.
\end{split}
\]
\begin{Lemma}\label{singlematrix}
Fix $(t,s)\in\mathbb{Z}$. We have that:
$$
h_{i,n}(t,s)= \begin{cases}\ell^{2n-i}\frac{\left(\ell^2+\ell\left(\frac{\ell^{-2i}\Delta(t,s)}{\ell}\right)+\left(\frac{\ell^{-2i}\Delta(t,s)}{\ell}\right)^2-1 \right)}{\ell^2}&\text{ if }0\leq i<n/2 \text{ and }\ell^{2i}\vert \Delta(t,s)\\
\ell^{2n-(3i-n)}\frac{(\ell^3-1)}{\ell^3}&\text{ if }n/2\leq i<n \text{ and }\ell^{n}\vert \Delta(t,s)\\
1&\text{ if }i=n\text{ and }\ell^{n}\vert \Delta(t,s)\\
0&\text{ else,}
\end{cases}
$$
where $\Delta(t,s)=t^2-4s$.
\end{Lemma}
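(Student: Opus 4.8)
The plan is to use the hypothesis $\ell \geq 3$ to peel off the scalar part of a matrix, thereby reducing everything to counting solutions of a single ternary quadratic congruence, and then to lift that count from $\bF_\ell$ to $\mbz/\ell^{j}\mbz$. Since $2$ is a unit modulo $\ell^{n}$, I would write each $A \in M_{2\times 2}(\mbz/\ell^{n}\mbz)$ uniquely as $A = \mu I_2 + B$ with $\mu = \tr(A)/2$ and $B$ traceless; then $A$ is scalar modulo $\ell^{i}$ but not modulo $\ell^{i+1}$ exactly when $B = \ell^{i} B'$ with $B'$ a traceless matrix, well-defined modulo $\ell^{n-i}$ and not $\equiv 0 \bmod \ell$. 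Writing $B' = \abcd{a}{b}{c}{-a}$, one has $\tr A = 2\mu$ and $\det A = \mu^{2} - \ell^{2i}(a^{2}+bc)$, so the trace condition pins down $\mu \equiv t/2$ and the determinant condition becomes $\ell^{2i}(a^{2}+bc) \equiv \tfrac14\Delta(t,s) \bmod \ell^{n}$; hence, for $i < n$, $h_{i,n}(t,s)$ is the number of primitive triples $(a,b,c) \bmod \ell^{n-i}$ satisfying this congruence (the case $i = n$ forces $B = 0$, so that $A = \mu I_2$ and $h_{n,n}(t,s)$ is $1$ or $0$ according as $\ell^{n}$ divides $\Delta(t,s)$ or not).

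Next I would split according to the size of $i$ relative to $n$. When $n/2 \leq i < n$ the coefficient $\ell^{2i}$ is $\equiv 0 \bmod \ell^{n}$, so the congruence is solvable iff $\ell^{n} \mid \Delta(t,s)$, in which case it is vacuous and one simply counts primitive triples modulo $\ell^{n-i}$, giving $\ell^{3(n-i)} - \ell^{3(n-i-1)}$, which is the claimed value. When $0 \leq i < n/2$, solvability forces $\ell^{2i} \mid \Delta(t,s)$, and the congruence then reads $a^{2}+bc \equiv \delta \bmod \ell^{n-2i}$ with $\delta := \Delta(t,s)/(4\ell^{2i})$; since this constrains $(a,b,c)$ only modulo $\ell^{n-2i}$, freely lifting each coordinate back up to $\ell^{n-i}$ contributes an explicit factor $\ell^{3i}$, leaving the count of primitive solutions of $q(a,b,c) := a^{2}+bc \equiv \delta \bmod \ell^{n-2i}$.

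The core of the argument is this last quadric count. The form $q$ is nondegenerate with unit discriminant at $\ell$, and — using $\ell$ odd — its gradient $(2a,c,b)$ vanishes mod $\ell$ only at the origin, so $\{q = \delta\}$ is smooth away from there; standard smooth-point (Hensel) lifting then identifies the count modulo $\ell^{n-2i}$ with the count modulo $\ell$, times the fibre size $\ell^{2(n-2i-1)}$. Over $\bF_\ell$, fixing $a$ and counting $(b,c)$ with $bc = \bar\delta - a^{2}$ gives $\#\{q = \bar\delta \text{ in } \bF_\ell^{3}\} = \ell\bigl(\ell + \legendre{\bar\delta}{\ell}\bigr)$; removing the origin (which lies on the quadric iff $\ell \mid \delta$) turns this into $\ell^{2} + \ell\legendre{\bar\delta}{\ell} + \legendre{\bar\delta}{\ell}^{2} - 1$, a single expression valid whether $\bar\delta$ is a nonzero square, a nonsquare, or zero. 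Assembling the factors $\ell^{3i}$, $\ell^{2(n-2i-1)}$ and this $\bF_\ell$-count, and noting $\legendre{\bar\delta}{\ell} = \legendre{\ell^{-2i}\Delta(t,s)}{\ell}$ because $\tfrac14$ is a square, reproduces $\ell^{2n-i}\cdot\ell^{-2}\bigl(\ell^{2} + \ell\legendre{\ell^{-2i}\Delta(t,s)}{\ell} + \legendre{\ell^{-2i}\Delta(t,s)}{\ell}^{2} - 1\bigr)$, with every remaining case giving $0$.

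I expect the main difficulty to be the careful accounting of powers of $\ell$ in the regime $0 \leq i < n/2$: the triple $(a,b,c)$ lives modulo $\ell^{n-i}$, the determinant congruence sees it only modulo $\ell^{n-2i}$, and within that one still Hensel-lifts from $\bF_\ell$, so there are two distinct lifting steps to track, and one must make sure the smoothness hypothesis genuinely applies — which is exactly where primitivity of $B'$ and the oddness of $\ell$ are used. A secondary point needing care is verifying that the square/nonsquare/zero cases for $\bar\delta$ all collapse into the uniform quantity $\ell^{2} + \ell\chi + \chi^{2} - 1$.
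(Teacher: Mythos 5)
Your proof is correct, and at its core it follows the same strategy as the paper: peel off the scalar part of the matrix, observe that the trace and determinant conditions reduce to a congruence modulo a smaller power of $\ell$, count solutions over $\bF_\ell$, and scale up. The packaging differs in two useful ways. First, your normalization $A=\mu I_2+\ell^iB'$ with $B'$ traceless (legitimate since $2\in(\mbz/\ell^n\mbz)^\times$) makes the trace condition automatic and collapses the whole problem to counting primitive solutions of the single ternary quadric $a^2+bc\equiv \ell^{-2i}\Delta(t,s)/4 \bmod \ell^{n-2i}$, whereas the paper keeps the trace and determinant congruences separate and reduces $h_{i,n}$ to the non-scalar count $h_{0,n-i}$, citing Lang--Trotter for the base case $h_{0,1}(t,s)=\ell^2+\ell\legendre{\Delta}{\ell}+\legendre{\Delta}{\ell}^2-1$. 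Second, where the paper simply asserts that "once an $\ell$-adic distance to scalar matrices is fixed, the count scales uniformly," you justify the scaling by Hensel lifting at the smooth points of the quadric (the gradient $(2a,c,b)$ vanishes mod $\ell$ only at the origin, which is exactly what primitivity and $\ell$ odd exclude). Your bookkeeping of the two lifting steps ($\ell^{3i}$ from the coordinates being constrained only mod $\ell^{n-2i}$, then $\ell^{2(n-2i-1)}$ from Hensel) and the uniform expression $\ell^2+\ell\chi+\chi^2-1$ covering the square, nonsquare, and zero cases all check out against the stated formula in every regime of $i$.
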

\begin{proof}
We are counting matrices of  trace and determinant $(t,s)$ of the form 
$$
aI+\ell^i\begin{pmatrix}b&c\\
d&e\\
\end{pmatrix}
\mod \ell^n
$$
where $\begin{pmatrix}b&c\\
d&e\\
\end{pmatrix}\in S_0$ and $a\in \mathbb{Z}/\ell^i\mathbb{Z}$. 
We first consider the case when $i=0$ and compute $h_{0,n}(t,s)$. We know from \cite[p.127]{LangTrotter} that the count modulo $\ell$ is given by
$$h_{0,1}(t,s)=\ell^2+\ell\left(\frac{\Delta(t,s)}{\ell}\right)+\left(\frac{\Delta(t,s)}{\ell}\right)^2-1.$$ 
Moreover, once an $\ell$-adic distance to scalar matrices is fixed, the count scales uniformly and only depends on the classes of $(t,s)$ in $\mathbb{F}_\ell$.
This yields:
$$h_{0,n}(t,s)=\ell^{2n-2}\left(\ell^2+\ell\left(\frac{\Delta(t,s)}{\ell}\right)+\left(\frac{\Delta(t,s)}{\ell}\right)^2-1\right).$$ 
Let now $i>0$, the condition on the determinant amounts to the congruences
\begin{align*}\ell^{2i}(be-dc)&\equiv -a^2-\ell^ia(b+e)+s \mod \ell^n\\
&\equiv a^2-at+s\mod \ell^n\\
&\equiv 1/4((t-2a)^2-\Delta(t,s)) \mod \ell^n.
\end{align*}
We observe that for our choice of $a$ we must have that $t-2a\equiv 0 \mod \ell^i$ and therefore $a^2-at+s$ is divisible by $\ell^{\min(2i,n)}$ if and only if the discriminant $\Delta(t,s)$ is divisible by $\ell^{\min(2i,n)}$ (here $4$ is a unit). This proves that $h_{i,n}(t,s)=0$ whenever $\Delta(t,s)$ is not divisible by $\ell^{\min(2i,n)}$.
The condition on the trace amounts to the congruence
$$\ell^i(b+e)\equiv t-2a \mod \ell^n$$
which yields exactly one possibility for the sum $b+e$ modulo $\ell^{n-i}$.\\
If $i<n/2$ we must have $\ell^{2i}\vert \Delta(t,s)$ and in that case $be-dc$ has to project onto a fixed residue class modulo $\ell^{n-2i}$, yielding $\ell^i$ possibilities modulo $\ell^{n-1}$. Moreover, $\Delta(b+e, be-dc)\equiv \ell^{-2i}\Delta(t,s) \mod \ell$. Since the computation of $h_{0,n-i}(t,s)$ only depends on the discriminant modulo $\ell$, the count is $\ell^i$ times the number of matrices in $S_0(n-i)$ of a fixed discriminant $\ell^{-2i}\Delta(t,s)$:
$$h_{i,n}(t,s)=\ell^i\cdot \ell^{2(n-1)-2}\left(\ell^2+\ell\left(\frac{\ell^{-2i}\Delta(t,s)}{\ell}\right)+\left(\frac{\ell^{-2i}\Delta(t,s)}{\ell}\right)^2-1\right),$$
as claimed. \par

Finally assume $n/2\leq i<n$, the case of $i=n$ being trivial. We must then have $a^2-at+s\equiv 0 \mod \ell^n$ or equivalently $\ell^n\vert \Delta(t,s)$ since $\ell^i\vert (t-2a)$. We are reduced to counting matrices in $S_0(n-i)$ of a fixed trace $t_0=(t-2a)/\ell^i$, where now the determinant $be-dc$ is allowed to run through all of $\mathbb{Z}/\ell^{n-i}\mathbb{Z}$. We obtain:
\begin{align*}h_{i,n}(t,s)&=\ell^{2(n-i)-2}\sum_{s\in \mathbb{Z}/\ell^{n-i}\mathbb{Z}}\left( \ell^2+\ell\left(\frac{\Delta(t_0,s)}{\ell}\right)+\left(\frac{\Delta(t_0,s)}{\ell}\right)^2-1 \right)\\
&=\ell^{2(n-i)-2}(\ell^{n-i+2}+0-\ell^{n-i-1})\\
&=\ell^{3n-3i}\frac{\ell^3-1}{\ell^3}.
\end{align*}

\end{proof}

\subsection{Computation of $\vert H(\ell^n,T)\vert $}
We now fix a nonzero trace $T\in \mathbb{Z}$ and assume that $v_\ell(T)>0$. Let $S_{i,i',n}(T)$ denote the number of $\mathbb{Z}/\ell^{n}\mathbb{Z}$-points on $\GL_2\times_{\det}\GL_2$ consisting of pairs of two-by-two matrices whose traces sum to $T$ and which are congruent to scalars exactly modulo $\ell^i$ and $\ell^{i'}$, respectively:
\[
\begin{split}
S_{i,i',n}(T) :=& \left| \bigsqcup_{ t \in \mbz/\ell^n\mbz} \bigsqcup_{ s \in (\mbz/\ell^n\mbz)^\times}  \mc{H}_{i,n}(t,s) \times \mc{H}_{i',n}(T-t,s) \right| \\
=& \sum_{t \in \mbz/\ell^n\mbz} \sum_{ s \in (\mbz/\ell^n\mbz)^\times} h_{i,n}(t,s) h_{i',n}(T-t,s).
\end{split}
\]
We may as well assume that $i'\geq i$. To simplify notation, we define the function $h_i : \ell^{2i} \mbz \rightarrow \mbz$ by 
$$
h_i(x):=\ell^2+\ell\left(\frac{\ell^{-2i}x}{\ell}\right)+\left(\frac{\ell^{-2i}x}{\ell}\right)^2-1 \quad\quad\quad \left( \ell^{-2i}x \in \mbz \right).
$$
\begin{Lemma}\label{Hcounts}
Assume $i'>0$ and $v_\ell(T)>0$. We have that 
$$
S_{i,i',n}(T)=\begin{cases}\sum\limits_{t\in (\mathbb{Z}/\ell^{n})^\times}\sum\limits_{s\equiv (t/2)^2(\ell^{2i'})}\ell^{4n-(i+i')-4}h_{i'}(\Delta(t,s))h_i(\Delta(T-t,s))& \text{ if }n>2i'\text{ and } \ell^{2i}\vert T\\
\left(\ell-1\right)\ell^{6n-(i+3i')-6}\left(\ell^3-1\right)\left(\ell^2+\left(\frac{\ell^{-2i}T}{\ell}\right)^2-1\right) & \text{ if }2n>2i'\geq n>2i\text{ and } \ell^{2i}\vert T \\
\left(\ell-1\right)\ell^{3n-i-3}\left(\ell^2+\left(\frac{\ell^{-2i}T}{\ell}\right)^2-1\right) & \text{ if }i'=n>2i\text{ and }\ell^{2i}\vert T\\
(\ell-1)\ell^{7n-(3i+3i')-7}(\ell^3-1)^2 & \text{ if }2n>2i'\geq 2i\geq n \text{ and } \ell^{n}\vert T\\
(\ell-1)\ell^{4n-3i-3} (\ell^3-1)&\text{ if }2n=2i'\geq 2i\geq n \text{ and } \ell^{n}\vert T\\
0&\text{ if }\ell^{\min(2i,n)}\nmid T\\
\end{cases}
$$ 

\end{Lemma}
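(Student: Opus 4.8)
The plan is to substitute the closed-form values of $h_{i,n}(t,s)$ from Lemma \ref{singlematrix} into the defining sum $S_{i,i',n}(T)=\sum_{t}\sum_{s\in(\mbz/\ell^n\mbz)^\times}h_{i,n}(t,s)\,h_{i',n}(T-t,s)$, and then, regime by regime, to identify which pairs $(t,s)$ contribute and to evaluate what remains. It is convenient to make the change of variable $t\mapsto T-t$ so that the ``more scalar'' factor $h_{i',n}$ (recall $i'\ge i$) carries the free trace $t$. The one algebraic identity linking the two factors is $\Delta(t,s)-\Delta(T-t,s)=t^2-(T-t)^2=(2t-T)T$, together with the observation (valid since $4$ is a unit mod $\ell^n$) that $\ell^{2j}\mid\Delta(t,s)=t^2-4s$ is equivalent to $s\equiv(t/2)^2\pmod{\ell^{2j}}$, which for $j\ge 1$ forces $t$, hence $s$, to be a unit.

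First I would explain why the divisibility hypotheses on $T$ appear. The factor $h_{i',n}(t,s)$ is nonzero only when $\ell^{\min(2i',n)}\mid\Delta(t,s)$, which pins down $s\equiv(t/2)^2\pmod{\ell^{\min(2i',n)}}$; feeding this into the identity gives $\Delta(T-t,s)\equiv T(T-2t)\pmod{\ell^{\min(2i',n)}}$, and since $t$ is a unit and $v_\ell(T)\ge 1$ the factor $T-2t$ is a unit, so $v_\ell\bigl(\Delta(T-t,s)\bigr)=v_\ell(T)$ to that precision. Consequently $h_{i,n}(T-t,s)\ne 0$, which requires $\ell^{\min(2i,n)}\mid\Delta(T-t,s)$, forces $\ell^{\min(2i,n)}\mid T$; when this fails we get $S_{i,i',n}(T)=0$, the last case. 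When it holds, the same congruence shows $\ell^{\min(2i,n)}\mid\Delta(T-t,s)$ automatically, so the relevant branch of Lemma \ref{singlematrix} for $h_{i,n}$ genuinely applies and one may substitute its value.

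The remaining five cases are then a finite calculation organized by where $i$ and $i'$ sit relative to $n/2$ and $n$. If $n>2i'$ (so also $2i<n$), both factors are of the ``count mod $\ell$, scaled'' type and nothing collapses; one is left with a sum over units $t$ and the $\ell^{n-2i'}$ lifts $s\equiv(t/2)^2\pmod{\ell^{2i'}}$, the overall power $\ell^{4n-(i+i')-4}$ being the product of the prefactors $\ell^{2n-i'-2}$ and $\ell^{2n-i-2}$, giving the first case verbatim. If $2i'\ge n>2i$, the $h_{i',n}$ factor is the constant $\ell^{3n-3i'-3}(\ell^3-1)$ (or $1$ when $i'=n$, where the matrix is literally $\lambda I$ and $s=(t/2)^2$ is determined), while $h_{i,n}(T-t,s)=\ell^{2n-i-2}h_i\bigl(\Delta(T-t,s)\bigr)$; reducing $\ell^{-2i}\Delta(T-t,s)$ modulo $\ell$ via $T-2t\equiv-2t$ and summing over $t\in(\mbz/\ell^n\mbz)^\times$ using $\sum_{\ell\nmid t}\legendre{t}{\ell}=0$ and $\sum_{\ell\nmid t}\legendre{t}{\ell}^2=\ell-1$, the sum collapses --- uniformly whether $v_\ell(T)=2i$ or $v_\ell(T)>2i$ --- to $\ell^{n-1}(\ell-1)\bigl(\ell^2+\legendre{\ell^{-2i}T}{\ell}^2-1\bigr)$ times the prefactors, yielding the second and third cases. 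Finally, if $2i\ge n$ then both factors are constants ($\ell^{3n-3j-3}(\ell^3-1)$ for $j\in\{i,i'\}$, or $1$ when $i'=n$), so $S_{i,i',n}(T)$ is simply $\vert(\mbz/\ell^n\mbz)^\times\vert=\ell^{n-1}(\ell-1)$ times their product, giving the fourth and fifth cases.

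The main obstacle is not any one computation but the bookkeeping: keeping straight which branch of Lemma \ref{singlematrix} governs each factor across the subregimes $n>2i'$, $n/2\le i'<n$, and $i'=n$, combined with $i<n/2$ versus $n/2\le i<n$, and verifying in every branch the point above --- that once $\ell^{\min(2i,n)}\mid T$ holds, the hypothesis making the $h_{i,n}$ formula applicable (not just its nonvanishing) is automatic. The unit/non-unit dichotomy for $t$ and the reduction $T-2t\equiv-2t\pmod\ell$ make all the character sums fall out at once, so once the case analysis is correctly set up the rest is routine.
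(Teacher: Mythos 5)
Your proposal is correct and follows essentially the same route as the paper's proof: substitute the closed forms from Lemma \ref{singlematrix}, use the unit argument for $t$ and $T-t$ together with the identity $\Delta(T-t,s)-\Delta(t,s)=T(T-2t)$ to isolate the contributing pairs $(t,s)$ and to derive the necessity of $\ell^{\min(2i,n)}\mid T$, and then evaluate regime by regime. In particular, your reduction of the second and third cases to the character sums $\sum_{\ell\nmid t}\legendre{t}{\ell}=0$ and $\sum_{\ell\nmid t}\legendre{t}{\ell}^2=\ell-1$ is exactly the paper's evaluation of $\sum_{u\in(\mbz/\ell^n\mbz)^\times}h_i(Tu)$, so no further commentary is needed.
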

\begin{proof}
This follows from summing the contributions $h_{i,n}(t,s)\cdot h_{i',n}(T-t,s)$ in Lemma \ref{singlematrix}. Since $i'>0$ we get from Lemma \ref{singlematrix} positive contributions only when $v_\ell((T-t)^2-4s)>0$. But here $s$ is only running through units modulo $l\neq 2$ so that $v_\ell(4s)=0$, and thus to get positive contributions $T-t$ must be a unit, and similarly $v_\ell(t)=0$ since $v_\ell(T)>0$. Furthermore, writing $\Delta(T-t,s)-\Delta(t,s)=T(T-2t)$, by Lemma \ref{singlematrix} we only get positive contributions when $\ell^{\min(2i,n)}\vert T$. Moreover, 
$$\ell^{\min(2i',n)}\vert \Delta(t,s) \Leftrightarrow s\equiv (t/2)^2 \mod \ell^{\min(2i',n)}$$
and this also guarantees that $\ell^{2i}\vert \Delta(T-t,s)$ provided $\ell^{\min(2i,n)}\vert T$. If $n>2i'$ the conclusion follows from Lemma \ref{singlematrix}. When $2n>2i'\geq n>2i$ we obtain from Lemma \ref{singlematrix} the count: 
\begin{align*}
S_{i,i',n}(T)&=\sum\limits_{t\in (\mathbb{Z}/\ell^{n})^\times}\sum\limits_{s\equiv (t/2)^2(\ell^{n})}\ell^{3n-3i'-3}(\ell^3-1)\ell^{2n-i-2}h_i(\Delta(T-t,s))\\
&= \sum\limits_{t\in (\mathbb{Z}/\ell^{n})^\times}\ell^{5n-(i+3i')-5}(\ell^3-1)h_i(\Delta(T-t, t^2/4))\\
&=\ell^{5n-(i+3i')-5}(\ell^3-1)\sum\limits_{t\in (\mathbb{Z}/\ell^{n})^\times} h_i(T(T-2t)).
\end{align*}
Similarly, when $2n=2i'>2i$ we get $S_{i,i',n}(T)=\ell^{2n-i-2}\sum\limits_{t\in (\mathbb{Z}/\ell^{n})^\times} h_i(T(T-2t))$.
The result follows from:
$$\sum_{u\in (\mathbb{Z}/\ell^{n}\mathbb{Z})^\times} h_i(Tu)=\begin{cases} (\ell-1)\ell^{n-1}(\ell^2-1)&\text{ if } \ell^{2i+1}\vert T\\
(\ell-1)\ell^{n-1}\ell^2&\text{ if }\ell^{2i}\parallel T\\
\end{cases}$$
The first case is trivial since then all the residue symbols vanish. When $\ell^{2i}\parallel T$, we obtain that
\begin{align*}
\sum_{u\in (\mathbb{Z}/\ell^{n})^\times} h_i(Tu)&=\sum_{x\in (\mathbb{Z}/\ell^{n})^\times} \left( \ell^2+\ell\left(\frac{x}{\ell}\right)+\left(\frac{x}{\ell}\right)^2-1 \right)\\
&=\ell^{n-1}(\ell-1)\ell^2,
\end{align*}
as claimed. All of the cases now follow by an easy calculation based on Lemma \ref{singlematrix}.
\end{proof}

We now provide an exact count also in the first case when $n>2i'$. We first recall a useful standard result for the readers convenience:
\begin{Lemma}\label{lemma:legendresum}
For integers $a,b,c\in\bZ$ with $(a,b)\neq (0,0)$ and for any odd prime $\ell$ we have the formula
$$\sum_{x=0}^{\ell-1}\left(\frac{ax^2+bx+c}{\ell}\right)=
\begin{cases}-\left(\frac{a}{\ell}\right)&\text{ if }\ell\nmid b^2-4ac\\
(\ell-1)\cdot \left(\frac{a}{\ell}\right)&\text{ else.}
\end{cases}
$$
\end{Lemma}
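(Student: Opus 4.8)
The plan is to reduce everything modulo $\ell$ and split into two cases according to whether $\ell$ divides the leading coefficient $a$; here the hypothesis $(a,b)\neq(0,0)$ must be read modulo $\ell$, i.e.\ $a$ and $b$ do not both vanish in $\mathbb{F}_\ell$ (this is how the lemma is applied, and the verbatim formula requires it).

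First I would dispose of the case $\ell\mid a$. Then $\ell\nmid b$, so $x\mapsto bx+c$ is a bijection of $\mathbb{Z}/\ell\mathbb{Z}$, whence $\sum_{x}\left(\frac{ax^2+bx+c}{\ell}\right)=\sum_{y\in\mathbb{Z}/\ell\mathbb{Z}}\left(\frac{y}{\ell}\right)=0$. On the other side $b^2-4ac\equiv b^2\not\equiv 0 \pmod\ell$ and $\left(\frac a\ell\right)=0$, so the claimed value $-\left(\frac a\ell\right)$ is also $0$, in agreement.

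Next, suppose $\ell\nmid a$. Completing the square (legitimate since $2a$ is a unit mod $\ell$) gives $ax^2+bx+c\equiv a\bigl(x+\tfrac{b}{2a}\bigr)^2-\tfrac{b^2-4ac}{4a}\pmod\ell$, and after the substitution $u=x+\tfrac{b}{2a}$ and pulling out the factor $a$ I obtain
$$
\sum_{x}\left(\frac{ax^2+bx+c}{\ell}\right)=\left(\frac{a}{\ell}\right)\sum_{u\in\mathbb{Z}/\ell\mathbb{Z}}\left(\frac{u^2-e}{\ell}\right),\qquad e:=\frac{b^2-4ac}{4a^2},
$$
noting that $\ell\mid e$ if and only if $\ell\mid b^2-4ac$. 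If $\ell\mid e$, the inner sum is $\sum_{u}\left(\frac{u^2}{\ell}\right)=\ell-1$ (the $u=0$ term contributes $0$, the rest contribute $1$), yielding $(\ell-1)\left(\frac a\ell\right)$. If $\ell\nmid e$, I evaluate the inner sum by a point count: using $\#\{v:v^2\equiv w \pmod\ell\}=1+\left(\frac{w}{\ell}\right)$ for every $w$, the number of pairs $(u,v)$ with $v^2\equiv u^2-e$ equals $\ell+\sum_{u}\left(\frac{u^2-e}{\ell}\right)$; but via the bijection $(u,v)\mapsto(u-v,u+v)$ of $(\mathbb{Z}/\ell\mathbb{Z})^2$ this same count equals $\#\{(s,t):st\equiv e\}=\ell-1$, since $e\not\equiv 0$. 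Hence the inner sum is $-1$, and the total is $-\left(\frac a\ell\right)$, as claimed.

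There is no real obstacle here; the only step needing a touch of care is the clean evaluation of $\sum_{u}\left(\frac{u^2-e}{\ell}\right)$ when $\ell\nmid e$, handled by the $(u-v)(u+v)=e$ substitution, together with the reminder that the condition on $(a,b)$ is to be interpreted modulo $\ell$.
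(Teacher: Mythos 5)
Your proof is correct and follows essentially the same route as the paper's: dispose of the degenerate case $\ell\mid a$, complete the square to pull out $\left(\frac{a}{\ell}\right)$, and reduce to the standard evaluation of $\sum_{u}\left(\frac{u^2-e}{\ell}\right)$, the only difference being that you supply the hyperbola point-count proof of that last identity where the paper simply cites it. Your observation that the hypothesis $(a,b)\neq(0,0)$ must be interpreted modulo $\ell$ is a fair (and necessary) reading of the statement as it is used.
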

\begin{proof}
This is a straightforward computation: when $l\mid a$ the result easily holds, and when $l\nmid a$ the sum can be rewritten as 
$$\left(\frac{a}{\ell}\right)\cdot\sum_{x=0}^{\ell-1}\left(\frac{(2ax+b)^2+(4ac-b^2)}{\ell}\right).$$
One immediately concludes when $l\mid b^2-4ac$. When $l\nmid b^2-4ac$, one concludes since $\sum_{x=0}^{\ell-1}\left(\frac{x^2+k}{\ell}\right)=-1$ when $l\nmid k$.
\end{proof}

Subsequently we shall assume that $n>v_\ell(T)$. Therefore we only get contributions when $\ell^{2i}\mid T$ and hence only when $n>2i$, so we only list these cases. 

\begin{proposition}\label{exactcount}
Assume $i'>0$ and $v_\ell(T)>0$. We have that 
$$
S_{i,i',n}(T)=
\begin{cases}(\ell-1)\ell^{6n-(i+3i')-6}(\ell^3-1)\left(\ell^2+\left(\frac{\ell^{-2i}T}{\ell}\right)^2-1 \right)& \text{ if }n>i'>i,n>2i\text{ and } \ell^{2i}\vert T \\
(\ell-1)\ell^{6n-(i+3i')-6}(\ell^3-3)\ell^2& \text{ if }i'=i,n>2i\text{ and } \ell^{2i}\parallel T\\
(\ell-1)\ell^{6n-(i+3i')-6}(\ell-1)(\ell^4+\ell^3+2\ell^2-\ell-1)& \text{ if }i'=i,n>2i\text{ and } \ell^{2i+1}\vert T\\
(\ell-1)\ell^{6n-(i+3i')-6}\ell^3\left( \ell^2+\left(\frac{\ell^{-2i}T}{\ell}\right)^2-1 \right) & \text{ if }i'=n, n>2i\text{ and } \ell^{2i}\vert T\\
0&\text{ if }\ell^{\min(2i,n)}\nmid T\\
\end{cases}
$$ 
\end{proposition}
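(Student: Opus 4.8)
The plan is to start from the sum formula for $S_{i,i',n}(T)$ supplied by the first case of Lemma~\ref{Hcounts}, make the substitution $s=(t/2)^2+\ell^{2i'}u$, and evaluate the resulting elementary character sums; everything outside the range $n>2i'$ requires no new work. Indeed, the standing hypothesis $n>v_\ell(T)$ forces any nonzero contribution to satisfy $\ell^{\min(2i,n)}\mid T$, hence $n>2i$ and $\ell^{2i}\mid T$, which disposes of the last (vanishing) case and of the fourth and fifth cases of Lemma~\ref{Hcounts} (those require $\ell^n\mid T$). When $i'=n$ the claim is the third case of Lemma~\ref{Hcounts} (the exponent $6n-(i+3i')-6$ together with the displayed factor $\ell^3$ collapses to $\ell^{3n-i-3}$), and for $i<i'<n$ with $2i'\ge n$ it coincides verbatim with the second case of that lemma. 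So the new content lies in the range $n>2i'$, which always holds when $i'=i$ since then $2i'=2i<n$, and where Lemma~\ref{Hcounts} left the answer as an unevaluated double sum.

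In that range I would write each $s\equiv(t/2)^2\pmod{\ell^{2i'}}$ as $s=(t/2)^2+\ell^{2i'}u$ with $t$ running over $(\mathbb{Z}/\ell^n\mathbb{Z})^\times$ and $u$ over $\mathbb{Z}/\ell^{n-2i'}\mathbb{Z}$, so that $\Delta(t,s)=-4\ell^{2i'}u$ and $\Delta(T-t,s)=\Delta(t,s)+T(T-2t)=-4\ell^{2i'}u+T(T-2t)$. The decisive simplification is that $h_j$ depends on its argument only modulo $\ell^{2j+1}$: writing $g(x):=\ell^2-1+\ell\left(\tfrac{x}{\ell}\right)+\left(\tfrac{x}{\ell}\right)^2$, one has $h_{i'}(\Delta(t,s))=g(-4u)$, which depends on $u$ only through $u\bmod\ell$, while $h_i(\Delta(T-t,s))=g\bigl(-4\ell^{2i'-2i}u+\ell^{-2i}T(T-2t)\bigr)$, with $\ell^{-2i}T(T-2t)$ a well-defined element of $\mathbb{Z}_\ell$ since $v_\ell(T(T-2t))=v_\ell(T)\ge2i$. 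Two further ingredients feed in: $t\mapsto T-2t$ is a bijection of $(\mathbb{Z}/\ell^n\mathbb{Z})^\times$ onto itself (as $\ell\mid T$ and $2$ is a unit), and the elementary identities $\sum_{a\in\mathbb{Z}/\ell\mathbb{Z}}g(a)=\ell^3-1$ and $\sum_{a\in\mathbb{Z}/\ell\mathbb{Z}}g(a)^2=(\ell-1)(\ell^4+\ell^3+2\ell^2-\ell-1)$, both of which follow from Lemma~\ref{lemma:legendresum} (or by splitting according as $a$ is zero, a nonzero square, or a non-square).

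With these in hand the two sub-cases split off. If $i'>i$, then $\ell^{2i'-2i}u\equiv0\pmod\ell$, so $h_i(\Delta(T-t,s))$ is independent of $u$ and equals $h_i(T(T-2t))$; the double sum factors as $\bigl(\sum_u g(-4u)\bigr)\bigl(\sum_t h_i(T(T-2t))\bigr)=\ell^{n-2i'-1}(\ell^3-1)\cdot\sum_{w\in(\mathbb{Z}/\ell^n\mathbb{Z})^\times}h_i(Tw)$, and the last sum is precisely the one evaluated inside the proof of Lemma~\ref{Hcounts}, namely $\ell^{n-1}(\ell-1)\bigl(\ell^2-1+\bigl(\tfrac{\ell^{-2i}T}{\ell}\bigr)^2\bigr)$; multiplying by the prefactor $\ell^{4n-(i+i')-4}$ gives the first displayed case, which reassuringly matches the second case of Lemma~\ref{Hcounts}, so that formula holds for all $i<i'<n$, $n>2i$. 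If $i'=i$, the double sum is a genuine correlation sum; reducing the two factors mod $\ell$ and collecting multiplicities it becomes $\ell^{n-2i-1}\ell^{n-1}\sum_{v\in\mathbb{Z}/\ell\mathbb{Z}}\sum_{\bar t\in(\mathbb{Z}/\ell\mathbb{Z})^\times}g(-4v)\,g(-4v+\theta_{\bar t})$, where $\theta_{\bar t}\equiv\ell^{-2i}T(T-2t)\pmod\ell$, and its value hinges on the exact power of $\ell$ dividing $T$: when $\ell^{2i+1}\mid T$ one has $\theta_{\bar t}\equiv0$, the two factors coincide, and the inner sum is $(\ell-1)\sum_a g(a)^2$; when $\ell^{2i}\parallel T$ (which forces $i\ge1$) one has $\theta_{\bar t}\equiv-2\tau\bar t$ with $\tau:=\ell^{-2i}T$ a unit, so the substitutions $a:=-4v$ and $b:=-2\tau\bar t$ turn the inner sum into $\sum_{a\in\mathbb{Z}/\ell\mathbb{Z}}\sum_{b\in(\mathbb{Z}/\ell\mathbb{Z})^\times}g(a)g(a+b)=\sum_a g(a)\bigl((\ell^3-1)-g(a)\bigr)=(\ell^3-1)^2-\sum_a g(a)^2=\ell^2(\ell-1)(\ell^3-3)$; reinstating the prefactor $\ell^{4n-2i-4}$ then yields the Proposition's second and third displayed cases. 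I expect the only real difficulty to be this $i'=i$ correlation sum together with the careful bookkeeping of the powers of $\ell$ (the $\ell^{n-2i'-1}$ lifts of each $u$-residue, the $\ell^{n-1}$ lifts of each nonzero $t$-residue, and the prefactor $\ell^{4n-(i+i')-4}$); the dichotomy $\ell^{2i}\parallel T$ versus $\ell^{2i+1}\mid T$ is exactly what separates the Proposition's two non-trivial cases.
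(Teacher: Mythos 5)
Your proposal is correct and follows essentially the same route as the paper: reduce to the range $n>2i'$ via Lemma \ref{Hcounts}, substitute $s=(t/2)^2+\ell^{2i'}u$ so that everything becomes a sum of the quadratic-character expression $g$ over residues mod $\ell$, and split according to $i'>i$ versus $i'=i$ and $\ell^{2i}\parallel T$ versus $\ell^{2i+1}\mid T$. The only (welcome) organizational difference is that your convolution identity $\sum_a\sum_{b\neq 0}g(a)g(a+b)=(\ell^3-1)^2-\sum_a g(a)^2$ packages in one line the several Legendre-symbol cross-term evaluations that the paper carries out separately in its Case 3.
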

\begin{proof}
Using Lemma \ref{Hcounts} it remains to deal with the case of $n>2i'$ and compute the sum
$$C_{i,i',n}(T):=\sum_{t\in (\mathbb{Z}/\ell^{n}\mathbb{Z})^\times}\sum_{s\equiv (t/2)^2\mod \ell^{2i'}}h_{i'}(\Delta(t,s))h_i(\Delta(T-t,s))$$
under the assumption that $\ell^{2i}\vert T$. First observe that since discriminants run through all residue classes
$$\sum_{s\equiv (t/2)^2(\ell^{2i'})}\left(\frac{\ell^{-2i'}\Delta(t,s)}{\ell}\right)=\sum_{s\equiv (t/2)^2(\ell^{2i'})}\left(\frac{\ell^{-2i}\Delta(T-t,s)}{\ell}\right)=0.$$ We can therefore ignore the corresponding terms in the expansion of $h_{i'}(\Delta(t,s))\cdot h_i(\Delta(T-t,s))$. In what follows, we use crucially that $\Delta(T-t,s)-\Delta(t,s)=T\cdot (\text{unit})$ to establish the formulas.
\begin{itemize}
\item[Case 1.]
Assume $i'>i$ and $\ell^{2i+1}\vert T$.
Since $\ell^{2i+1}\vert \Delta(t,s)$ it follows that  $\ell^{2i+1}\vert \Delta(T-t,s)$ and thus 
$$\ell^2+\ell\left(\frac{\ell^{-2i}\Delta(T-t,s)}{\ell}\right)+\left(\frac{\ell^{-2i}\Delta(T-t,s)}{\ell}\right)^2-1=\ell^2-1.$$
Therefore it follows that 
\begin{align*}C_{i,i',n}(T)&=\sum_{t\in (\mathbb{Z}/\ell^{n})^\times}\sum_{y\in \mathbb{Z}/\ell^{n-2i'}}(\ell^2-1)\left( \ell^2+\left(\frac{y}{\ell}\right)+\left(\frac{y}{\ell}\right)^2-1 \right)\\
&=\sum_{t\in (\mathbb{Z}/\ell^{n})^\times}(\ell^2-1)(\ell^{n-2i'-1}(\ell^2-1)+\ell^{n-2i'-1}(\ell-1)\ell^2)\\
&=\sum_{t\in (\mathbb{Z}/\ell^{n})^\times}(\ell^2-1)\ell^{n-2i'-1}(\ell^3-1)\\
&=(\ell-1)(\ell^2-1)\ell^{2n-2i'-2}(\ell^3-1),\\
\end{align*}
establishing the claim in this case.\\

\item[Case 2.]
Assume $i=i'$ and $\ell^{2i+1}\vert T$. Since $\ell^{-2i}(\Delta(t,s)-\Delta(T-t,s))\equiv \ell^{-2i}T\equiv 0\mod \ell$, we may write 
\begin{align*}
C_{i',i',n}(T)&=\sum_{t\in (\mathbb{Z}/\ell^{n})^\times}\sum_{s\equiv(t/2)^2(\ell^{2i'})}\left( \ell^2+\ell\left(\frac{\ell^{-2i'}\Delta(t,s)}{\ell}\right)+\left(\frac{\ell^{-2i'}\Delta(t,s)}{\ell}\right)^2-1 \right)^2\\
&=\sum_{t\in (\mathbb{Z}/\ell^{n}\mathbb{Z})^\times}\sum_{y\in\mathbb{Z}/\ell^{n-2i'}}\left( \ell^2+\ell\left(\frac{y}{\ell}\right)+\left(\frac{y}{\ell}\right)^2-1 \right)^2\\
&=\sum_{t\in (\mathbb{Z}/\ell^{n}\mathbb{Z})^\times}\ell^{n-2i'-1}(\ell^2-1)^2+\ell^{n-2i'-1}(\ell-1)(\ell^4+\ell^2)\\
&=(\ell-1)\ell^{2n-2i'-2}(\ell-1)(\ell^4+\ell^3+2\ell^2-\ell-1).
\end{align*}

\item[Case 3.]
We assume that $\ell^{2i} \parallel T$. For any unit $t$ we set $u_t:=\ell^{-2i}T(T-2t)$, a unit, and examine the contributions to $C_{i,i',n}(T)$ of the different terms of $h_{i'}(\Delta(t,s))\cdot h_i(\Delta(T-t,s))$. We first evaluate the expression
\small{$$\sum_{t\in (\mathbb{Z}/\ell^{n}\mathbb{Z})^\times}\sum_{s\equiv (t/2)^2(\ell^{2i'})}\left(\frac{\ell^{-2i'}\Delta(t,s)}{\ell}\right)\left(\frac{\ell^{-2i}\Delta(T-t,s)}{\ell}\right)=\sum_{t\in (\mathbb{Z}/\ell^{n}\mathbb{Z})^\times}\sum_{y\in\mathbb{Z}/\ell^{n-2i'}}\left(\frac{y}{\ell}\right)\left(\frac{u_t+\ell^{2(i'-i)}y}{\ell}\right),$$}
writing $y$ for $\ell^{-2i'}\Delta(t,s)$.
If $i'>i$ this equals $\sum_{y\in \mathbb{Z}/\ell^{n-2i'}}\left(\frac{yu_t}{\ell}\right)=0$.
However, if $i'=i$ this becomes:
\begin{align*}
\sum_{y\in \mathbb{Z}/\ell^{n-2i'}}\sum_{t\in (\mathbb{Z}/\ell^{n})^\times}\left(\frac{yu_t+y^2}{\ell}\right)&=\sum_{y\in \mathbb{Z}/\ell^{n-2i'}}\sum_{u_t\in (\mathbb{Z}/\ell^{n})^\times}\left(\frac{yu_t+y^2}{\ell}\right)\\
&=\sum_{y\in (\mathbb{Z}/\ell^{n-2i'})^\times} -\ell^{n-1}\\
&=-\ell^{2n-2i'-2}(\ell-1).
\end{align*}
Moreover, observe that the term
$$\sum_{t\in (\mathbb{Z}/\ell^{n})^\times}\sum_{s\equiv (t/2)^2(\ell^{2i'})}\left(\left(\frac{\ell^{-2i'}\Delta(t,s)}{\ell}\right)^2-1\right)\left(\frac{\ell^{-2i}\Delta(T-t,s)}{\ell}\right)=0$$
vanishes, and this also holds after exchanging the roles of $T-t$ and $t$ when $i=i'$. Indeed, for a fixed trace $t$ we have with notations as above: 
\begin{align*}\sum_{s\equiv (t/2)^2(\ell^{2i'})}\left(\left(\frac{\ell^{-2i'}\Delta(t,s)}{\ell}\right)^2-1\right)\left(\frac{\ell^{-2i}\Delta(T-t,s)}{\ell}\right)&=\sum_{y\in \mathbb{Z}/\ell^{n-2i'}}\left( \left(\frac{y}{\ell}\right)^2-1 \right)\left(\frac{u_t+\ell^{2(i'-i)}y}{\ell}\right)\\
&=-\ell^{n-2i'-1}\left(\frac{u_t}{\ell}\right)\\
\end{align*}
so that summing over $t$ we get:
$$\sum_{t\in (\mathbb{Z}/\ell^{n})^\times}-\ell^{n-2i'-1}\left(\frac{u_t}{\ell}\right)=\sum_{t\in (\mathbb{Z}/\ell^{n})^\times}-\ell^{n-2i'-1}\left(\frac{(\ell^{-2i}T)2t}{\ell}\right)=0.$$
The remaining terms need to be treated separately depending on whether $i=i'$ or not:
\begin{itemize}
\item[Case 3a)] 
First consider the situation when $i'>i$. Since $\Delta(T-t,s)-\Delta(t,s)=T(T-2t)$ we see that $\ell^{2i} \parallel \Delta(T-t,s)$ and thus 
$$
\ell^2+\ell\left(\frac{\ell^{-2i}\Delta(T-t,s)}{\ell}\right)+\left(\frac{\ell^{-2i}\Delta(T-t,s)}{\ell}\right)^2-1=\ell\left( \ell+\left(\frac{\ell^{-2i}\Delta(T-t,s)}{\ell}\right) \right).
$$
Putting everything together we obtain when $i'>i$ the count
\begin{align*}
C_{i,i',n}(T)&=\sum_{t\in (\mathbb{Z}/\ell^{n})^\times}\sum_{s\equiv (t/2)^2(\ell^{2i'})}\ell^4+\ell^2\left( \left(\frac{\ell^{-2i'}\Delta(t,s)}{\ell}\right)^2-1 \right)\\
&=(\ell-1)\ell^{2n-2i'}(\ell^3-1).
\end{align*}
\item[Case 3b)]
We now assume $i'=i$. In this case we also remark that for fixed $t$ with notations as above
$$
\sum_{y\in \mathbb{Z}/\ell^{n-2i'}}\left( \left(\frac{y}{\ell}\right)^2-1 \right) \left( \left(\frac{y+u_t}{\ell}\right)^2-1 \right) = 0
$$
since $y\equiv 0\equiv y+u_t \mod \ell$ is impossible, so we get no positive contribution from these terms. We also use:
$$
\sum_{y\in \mathbb{Z}/\ell^{n-2i'}}\left( \left(\frac{y}{\ell}\right)^2-1 \right)+\left( \left(\frac{y+u_t}{\ell}\right)^2-1 \right) = -2\ell^{n-2i'-1}.
$$
Eliminating the terms in $h_{i'}(\Delta(t,s))\cdot h_{i}(\Delta(T-t,s))$ that cancel out, we thereby obtain that
\begin{align*}
C_{i',i',n}(T)&=\sum_{t\in (\mathbb{Z}/\ell^{n})^\times}\sum_{y\in\mathbb{Z}/(\ell^{n-2i'})} \left( \ell^4+\ell^2\left(\frac{y}{\ell}\right)\left(\frac{u_t+y}{\ell}\right)+\ell^2 \left( \left(\frac{y}{\ell}\right)^2+\left(\frac{u_t+y}{\ell}\right)^2-2 \right) \right)\\
&=\ell^4(\ell-1)\ell^{n-1}\ell^{n-2i'}+\ell^2(\ell-1)(-\ell^{2n-2i'-2})+\ell^2(\ell-1)\ell^{n-1}(-2\ell^{n-2i'-1})\\
&=(\ell-1)\ell^{2n-2i'}(\ell^3-3).
\end{align*}
\end{itemize}
\end{itemize}
The result now follows easily using Lemma \ref{Hcounts}.
\end{proof}

It remains to count pairs whose sum of traces is $T$ and which are not congruent to scalar matrices. 
\begin{Lemma}\label{notcongruent}
Assume $\ell\vert T$, then the number of pairs in $G(\ell^n,T)$ both not congruent to scalar matrices is
$$S_{0,0,n}(T):=\ell^{6n-6}(\ell-1)(\ell^5-\ell^3-3\ell^2+1)$$
\end{Lemma}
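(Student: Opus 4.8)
The plan is to expand $S_{0,0,n}(T)$ directly from its definition as
$$S_{0,0,n}(T) = \sum_{t \in \mbz/\ell^n\mbz} \sum_{s \in (\mbz/\ell^n\mbz)^\times} h_{0,n}(t,s)\, h_{0,n}(T-t,s),$$
and then reduce this to a finite sum over $\bF_\ell$. By the $i=0$ case of Lemma \ref{singlematrix} we have $h_{0,n}(t,s) = \ell^{2n-2}\,h_0(\Delta(t,s))$, and since $h_0(x)$ depends only on $\legendre{x}{\ell}$, the summand depends only on the reductions of $t$ and $s$ modulo $\ell$. Each residue of $t$ modulo $\ell$ has $\ell^{n-1}$ lifts to $\mbz/\ell^n\mbz$, and each nonzero residue $s_0$ of $s$ has $\ell^{n-1}$ lifts to a unit of $\mbz/\ell^n\mbz$; hence
$$S_{0,0,n}(T) = \ell^{4n-4}\cdot \ell^{2(n-1)} \sum_{t_0 \in \bF_\ell}\sum_{s_0 \in \bF_\ell^\times} h_0(t_0^2 - 4s_0)\, h_0\bigl((T_0 - t_0)^2 - 4s_0\bigr),$$
where $T_0 := T \bmod \ell$. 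This is exactly where the hypothesis $\ell \mid T$ is used: it forces $T_0 = 0$, so $(T_0-t_0)^2 \equiv t_0^2$ and the two factors coincide, giving $S_{0,0,n}(T) = \ell^{6n-6}\, N$ with $N := \sum_{t_0 \in \bF_\ell}\sum_{s_0 \in \bF_\ell^\times} h_0(t_0^2 - 4s_0)^2$.

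Next I would evaluate $N$ by sorting the pairs $(t_0,s_0)$ according to the value of $\chi(\Delta)$, where $\Delta := t_0^2 - 4s_0$ and $\chi = \legendre{\cdot}{\ell}$. Writing out $h_0$ gives $h_0(\Delta)^2 = (\ell^2-1)^2$, $\ell^2(\ell+1)^2$, or $\ell^2(\ell-1)^2$ according as $\Delta \equiv 0$, $\chi(\Delta) = 1$, or $\chi(\Delta) = -1$. The pairs with $\Delta \equiv 0$ are precisely those with $s_0 = (t_0/2)^2 \neq 0$, of which there are $\ell-1$. For the remaining $(\ell-1)^2$ pairs I would use the elementary identity $\sum_{s_0 \in \bF_\ell^\times} \chi(t_0^2 - 4s_0) = -\chi(t_0^2)$ (immediate from $\sum_{s_0 \in \bF_\ell}\chi(t_0^2-4s_0) = 0$, cf.\ Lemma \ref{lemma:legendresum}), which after summing over $t_0$ yields $\sum_{t_0,s_0}\chi(\Delta) = -(\ell-1)$; together with the count $(\ell-1)^2$ this forces exactly $\tfrac{(\ell-1)(\ell-2)}{2}$ pairs with $\chi(\Delta) = 1$ and $\tfrac{\ell(\ell-1)}{2}$ pairs with $\chi(\Delta) = -1$.

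Substituting these multiplicities gives
$$N = (\ell-1)(\ell^2-1)^2 + \tfrac{(\ell-1)(\ell-2)}{2}\,\ell^2(\ell+1)^2 + \tfrac{\ell(\ell-1)}{2}\,\ell^2(\ell-1)^2,$$
and a routine simplification — factor out $\ell-1$ and note that the two half-integer terms combine to $\ell^5 - \ell^4 - \ell^3 - \ell^2$ — yields $N = (\ell-1)(\ell^5 - \ell^3 - 3\ell^2 + 1)$, hence $S_{0,0,n}(T) = \ell^{6n-6}(\ell-1)(\ell^5-\ell^3-3\ell^2+1)$, as claimed. There is no conceptual obstacle here: the reduction to $\bF_\ell$ is forced by Lemma \ref{singlematrix}, and the only mildly delicate step is the bookkeeping in the Legendre-symbol count and the final polynomial arithmetic, both of which are entirely elementary.
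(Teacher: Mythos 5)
Your proposal is correct and follows essentially the same route as the paper: both reduce to a sum over $\bF_\ell\times\bF_\ell^\times$ via the $i=0$ case of Lemma \ref{singlematrix}, use $\ell\mid T$ to turn the product $h_0(\Delta(t,s))h_0(\Delta(T-t,s))$ into a square, and evaluate via the character sum $\sum_{s_0\in\bF_\ell^\times}\legendre{t_0^2-4s_0}{\ell}=-\legendre{t_0^2}{\ell}$. The only (cosmetic) difference is that you count the exact number of pairs in each class of $\legendre{\Delta}{\ell}$ before multiplying, whereas the paper expands $h_0(\Delta)^2$ and sums the cross terms directly; the arithmetic checks out either way.
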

\begin{proof}
The count $S_{0,0,n}(T)$ by Lemma \ref{singlematrix} equals
$$ 
\sum\limits_{t\in \mathbb{Z}/\ell^{n}}\sum\limits_{s\in (\mathbb{Z}/\ell^{n})^\times}\ell^{4n-4} \left( \ell^2+\ell\left(\frac{\Delta(t,s)}{\ell}\right)+\left(\frac{\Delta(t,s)}{\ell}\right)^2-1 \right) \left( \ell^2+\ell\left(\frac{\Delta(T-t,s)}{\ell}\right)+\left(\frac{\Delta(T-t,s)}{\ell}\right)^2-1 \right).
$$ 
Since $\ell\vert T$, we have that $\Delta(t,s)\equiv \Delta (T-t,s) \mod \ell$ and we arrive at the count:

\begin{align*}S_{0,n}(T)&=\ell^{4n-4}\sum\limits_{t\in \mathbb{Z}/\ell^{n}}\sum\limits_{s\in (\mathbb{Z}/\ell^{n})^\times}  \left( \ell^2+\ell\left(\frac{\Delta(t,s)}{\ell}\right)+\left(\frac{\Delta(t,s)}{\ell}\right)^2-1 \right)^2\\
&=\ell^{4n-4}\sum\limits_{s\in (\mathbb{Z}/\ell^{n})^\times} \left(\sum\limits_{t^2\equiv 4s (\ell)}(\ell^2-1)^2+\sum\limits_{t^2\not\equiv 4s (\ell)}\left( \ell^4+2\ell^3\left(\frac{\Delta(t,s)}{\ell}\right)+\ell^2 \right) \right)\\
&=\ell^{4n-4}\sum\limits_{s\in (\mathbb{Z}/\ell^{n})^\times}\left(\ell^{n-1}(\ell^2-1)^2+(\ell-1)\ell^{n-1}(\ell^4+\ell^2)+2\ell^3(-\ell^{n-1})\right)\\
&=\ell^{6n-6}(\ell-1)(\ell^5-\ell^3-3\ell^2+1).
\end{align*}

\end{proof}

We can now establish explicit formulas for these local densities:
\begin{theorem}\label{primesdividingtrace}
Let $T\in\mathbb{Z}$ and let $\ell\neq 2$ be a prime so that $v_\ell(T)>0$. 
\begin{enumerate}
\item Assume $n\geq v_\ell(T)+1$.
Then the number of $\mathbb{Z}/\ell^{n}\mathbb{Z}$-points on $\GL_2\times_{\det}\GL_2$ consisting of pairs of matrices with sum of traces $T$ equals
$$\vert H(\ell^n,T)\vert=(\ell-1)\ell^{6n-4e_\ell(T)-9}(\ell^{4e_\ell(T)+3}(\ell^5-\ell^3-3\ell^2+1)+E(\ell,T))+\delta_2(v_\ell(T))(\ell^3-1)).$$
\item We have that 
$$
\lim_{n\to \infty}\frac{\ell^n\vert H(\ell^n,T)\vert}{\vert H(\ell^n)\vert}=
\frac{\ell^6-\ell^4-3\ell^3-\ell}{\ell^6-2\ell^4+\ell^2}+\frac{E(\ell,T)+\delta_2(v_\ell(T))(\ell^3-1)}{\ell^{4e_\ell(T)+4}(\ell^2-1)^2},
$$
\end{enumerate}
where $e_\ell(T)=\lfloor (v_\ell(T)-1)/2\rfloor$, where $\delta_2(v_\ell(T))=1$ if $2\vert v_\ell(T)$ and $\delta_2(v_\ell(T))=0$ otherwise, and where $E(\ell,T)\in \mathbb{Z}(\ell)$ is given by:

$$E(\ell,T)=\frac{\ell^3(\ell+1)(\ell^{4e_\ell(T)+4}-1)+(\ell^4+\ell^3+2\ell^2-\ell-1)(\ell^{4e_\ell(T)}-1)}{(\ell^2+1)(\ell+1)}.$$
\end{theorem}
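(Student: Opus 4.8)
The plan is to assemble the local count $|H(\ell^n,T)| = |G(\ell^n,T)|$ (the equality holds because $\ell\nmid m_A$) from the stratified sub-counts already established, and then carry out the resulting finite summation over strata. The structural input is that every pair $(A_1,A_2)\in G(\ell^n,T)$ lies in exactly one stratum indexed by the $\ell$-adic distances $(i,i')$ of $A_1$ and $A_2$ to the scalar matrices, so that, using the symmetry $S_{i,i',n}(T)=S_{i',i,n}(T)$ (substitute $t\mapsto T-t$ in the defining double sum),
\[
|G(\ell^n,T)| = \sum_{i=0}^{n} S_{i,i,n}(T) \;+\; 2\!\!\sum_{0\le i<i'\le n}\!\! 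S_{i,i',n}(T),
\]
where $S_{0,0,n}(T)$ is given by Lemma~\ref{notcongruent} and every term with $i'\ge 1$ by Proposition~\ref{exactcount}.

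First I would identify the contributing strata. Proposition~\ref{exactcount} gives $S_{i,i',n}(T)=0$ unless $\ell^{\min(2i,n)}\mid T$; since $n\ge v_\ell(T)+1$ this forces $2i\le v_\ell(T)$, hence $0\le i\le \lfloor v_\ell(T)/2\rfloor$ (and then automatically $n>2i$), so only the nonvanishing cases of Proposition~\ref{exactcount} are in play. For each such $i$ there is a dichotomy: either $\ell^{2i+1}\mid T$ (the ``generic'' strata, occurring precisely for $0\le i\le e_\ell(T)$ with $e_\ell(T)=\lfloor (v_\ell(T)-1)/2\rfloor$) or $\ell^{2i}\parallel T$ (which can occur only at $i=v_\ell(T)/2$, and only when $v_\ell(T)$ is even). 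This is exactly the arithmetic origin of $e_\ell(T)$ and of the indicator $\delta_2(v_\ell(T))$, which toggles the contribution of that single boundary stratum.

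Next I would substitute the monomial expressions of Proposition~\ref{exactcount} — simplifying $\legendre{\ell^{-2i}T}{\ell}=0$ when $\ell^{2i+1}\mid T$ and $\legendre{\ell^{-2i}T}{\ell}^2=1$ when $\ell^{2i}\parallel T$ — and sum. For fixed $i$, summing $S_{i,i',n}(T)$ over the three regimes $i'=i$ (two sub-cases, matching the dichotomy above), $i<i'<n$, and $i'=n$ yields a geometric progression in $i'$ that collapses; crucially all the lower-order contributions (lower powers of $\ell^{n}$, down through the $\ell^{3n}$-scale endpoint terms) cancel against one another, leaving $(\ell-1)\ell^{6n}$ times a rational function of $\ell$. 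Summing the result over $i\in\{0,1,\dots,\lfloor v_\ell(T)/2\rfloor\}$ produces a further geometric series in $\ell^{4i}$, which I expect to collect — after factoring out $(\ell-1)\ell^{6n-4e_\ell(T)-9}$ — into the $\ell^{4e_\ell(T)+3}(\ell^5-\ell^3-3\ell^2+1)$ main term, the quantity $E(\ell,T)$ (itself the geometric sum over the generic $i$'s), and the $\delta_2(v_\ell(T))(\ell^3-1)$ correction, giving part (1). Part (2) is then immediate: dividing by $|G(\ell^n)|=(\ell-1)(\ell^2-1)^2\ell^{7n-5}$ (which follows from $|G(\ell)|=(\ell-1)\ell^2(\ell^2-1)^2$ since $G(\bZ/\ell^n)\to G(\bZ/\ell)$ is surjective with kernel of order $\ell^{7(n-1)}$) and multiplying by $\ell^n$, all the $n$-dependence cancels, so the ratio is already constant for $n\ge v_\ell(T)+1$ — consistent with Lemma~\ref{stable} — and the stated limit simply reads off this constant.

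The step I expect to be the main obstacle is the penultimate one: keeping the three $i'$-regimes and the $\ell^{2i}\parallel T$ versus $\ell^{2i+1}\mid T$ dichotomy straight simultaneously, organizing the geometric sums so that all the lower powers of $\ell^n$ cancel (they must, since $|G(\ell^n,T)|/|G(\ell^n)|$ stabilizes by Lemma~\ref{stable}), and then recognizing the finite sum over $i$ as the rather opaque closed form for $E(\ell,T)$. Everything else — the stratification, the reduction to $G$, and the final division — is bookkeeping.
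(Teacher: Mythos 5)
Your proposal follows essentially the same route as the paper's proof: the same stratification of $G(\ell^n,T)$ by the pairs $(i,i')$ of $\ell$-adic distances to scalars, the same reduction to Lemma \ref{notcongruent} and Proposition \ref{exactcount}, the same identification of the contributing range $0\le i\le \lfloor v_\ell(T)/2\rfloor$ with the dichotomy producing $e_\ell(T)$ and $\delta_2(v_\ell(T))$, the same geometric-series collapse over $i'$ and then over $i$, and the same derivation of part (2) from part (1) via $|G(\ell^n)|=(\ell-1)(\ell^2-1)^2\ell^{7n-5}$. The only difference is that you defer the explicit bookkeeping that the paper carries out term by term, but your plan is a faithful outline of that computation.
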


For instance, 
we deduce the following counts:
$$\vert H(\ell^n,T)\vert=\begin{cases}(\ell-1)\ell^{6n-6}(\ell^5-\ell^3-\ell^2-1) &\text{ when }v_\ell(T)=1,\\
(\ell-1)\ell^{6n-6}(\ell^7-\ell^5-\ell^4+\ell^3-\ell^2-1) &\text{ when }v_\ell(T)=2,\\
(\ell-1)\ell^{6n-13}(\ell^{12}-\ell^{10}-\ell^7+3\ell^5-\ell^3-3\ell^2+2\ell-1) &\text{ when }v_\ell(T)=3,
\end{cases}$$

\begin{proof}
The second assertion follows from the first since 
$$\vert H(\ell^n)\vert=\ell^{7(n-1)}\vert G(\ell)\vert=\ell^{7(n-1)}(\ell-1)\ell^2(\ell^2-1)^2.$$
To prove the formula for $H(\ell^n,T)$ we observe that 
$$\vert H(\ell^n,T)\vert:=2\sum_{i=0}^{\lfloor v_\ell(T)/2 \rfloor}\sum_{i'=\max{(i+1,1)}}^nS_{i,i',n}(T)+\sum_{i=0}^{\lfloor v_\ell(T)/2 \rfloor}S_{i,i,n}(T).$$
Recall $e_\ell(T)=\lfloor (v_\ell(T)-1)/2\rfloor$, so that in particular $n\geq e_\ell(T)+2$. We evaluate the sum using Proposition \ref{exactcount}. We compute the term:
\begin{align*}
\sum\limits_{i=0}^{e_\ell(T)}\sum\limits_{i'=i+1}^{n-1}S_{i,i',n}(T)&=\sum\limits_{i=0}^{e_\ell(T)}\sum\limits_{i'=i+1}^{n-1}(\ell-1)\ell^{6n-6-i}(\ell^3-1)(\ell^2-1)\ell^{-3i'}\\
&=(\ell-1)\ell^{6n-6}(\ell^3-1)(\ell^2-1)\sum\limits_{i=0}^{e_\ell(T)} \ell^{-i} \frac{\ell^{-3i-3}-\ell^{-3n}}{1-\ell^{-3}}\\
&=(\ell-1)\ell^{6n-6}(\ell^2-1)\sum\limits_{i=0}^{e_\ell(T)} (\ell^{-4i}-\ell^{3-3n}\ell^{-i})\\
&=(\ell-1)\ell^{6n-6}(\ell^2-1)\left(\frac{\ell^4-\ell^{-4e_\ell(T)}}{\ell^4-1}-\ell^{-3n+3}\frac{\ell-\ell^{-e_\ell(T)}}{\ell-1}\right)\\
&=(\ell-1)\ell^{6n-6}\frac{\ell^4-\ell^{-4e_\ell(T)}}{\ell^2+1}-\ell^{3n-3}(\ell^2-1)(\ell-\ell^{-e_\ell(T)})
\end{align*}
We also compute the remaining terms:
\begin{align*}
\sum\limits_{i=0}^{e_\ell(T)} S_{i,n,n}(T)&=\sum\limits_{i=0}^{e_\ell(T)}(\ell-1)\ell^{3n-i-6}\ell^3(\ell^2-1)\\
&=\ell^{3n-3}(\ell^2-1)(\ell-\ell^{-e_\ell(T)})
\end{align*}
and also provided $e_\ell(T)\geq 1$:
\begin{align*}
\sum\limits_{i=1}^{e_\ell(T)} S_{i,i,n}(T)&=(\ell-1)\ell^{6n-6}(\ell-1)(\ell^4+\ell^3+2\ell^2-\ell-1)\sum\limits_{i=1}^{e_\ell(T)}\ell^{-4i}\\
&=(\ell-1)\ell^{6n-9}(\ell^4+\ell^3+2\ell^2-\ell-1)\left( \frac{1-\ell^{-4e_\ell(T)}}{\ell^3+\ell^2+\ell+1} \right).
\end{align*}
Finally, when $v_\ell(T)$ is even we have that $\ell^{2(e_\ell(T)+1)}\parallel T$ and there are the additional terms:
\begin{align*}
&S_{e_\ell(T)+1, e_\ell(T)+1, n}(T)+2\sum\limits_{i'=e_\ell(T)+2}^{n-1}S_{e_\ell(T)+1,i',n}(T)+2S_{e_\ell(T)+1,n,n}(T)\\
&=(\ell-1)\ell^{6n-e_\ell(T)-7}\left(\ell^2(\ell^3-3)\ell^{-3(e_\ell(T)+1)}+2(\ell^3-1)\ell^2 \sum\limits_{i'=e_\ell(T)+2}^{n-1}\ell^{-3i'}+2\ell^2\ell^{-3n+3}\right)\\
&=(\ell-1)\ell^{6n-4e_\ell(T)-8}(\ell^3-1).
\end{align*}
to account for. The result follows by adding all the terms and $S_{0,0,n}(T)$ computed in Lemma \ref{notcongruent}.

\end{proof}

\subsection{The prime $\ell=2$}
Throughout this subsection $\ell=2$ as we deal with this remaining case. We will use both notations $\ell$ and $2$ interchangeably for comparison with the odd prime case. This prime requires special treatment, for instance the discriminant plays a crucial role in all of our calculations, but involves a factor of $4$. This explains why we have to look modulo $8$ in the following result: 
\begin{Lemma}\label{notscalar}
Provided $n\geq 1$, the number of matrices in $M_{2\times 2}(\mathbb{Z}/\ell^n\mathbb{Z})$ not congruent to scalar of trace and determinant $(t,s)$ is 
$$h_{0,n}(t,s)=\ell^{2n-2}\cdot \begin{cases}
\ell^2-1 &\text{ if }\Delta(t,s)\equiv 0 \mod 4\\
\ell^2-\ell &\text{ if }\Delta(t,s)\equiv 5 \mod 8\\
\ell^2+\ell &\text{ if }\Delta(t,s)\equiv 1\mod 8\\
0& \text{ otherwise. }\\
\end{cases}
$$
\end{Lemma}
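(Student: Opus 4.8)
The plan is to parametrize the matrices and reduce the count to a question about square roots modulo powers of $2$. Fix $(t,s)$ and $n\geq 1$. A matrix $\left(\begin{smallmatrix} a & b \\ c & d\end{smallmatrix}\right)$ over $\mathbb{Z}/2^n\mathbb{Z}$ has trace $t$ and determinant $s$ exactly when $d=t-a$ and $bc\equiv -g(a)\pmod{2^n}$, where $g(a):=a^2-at+s$; and such a matrix is congruent to a scalar modulo $2$ precisely when $b,c$ are both even and $a\equiv d\pmod 2$, the latter condition forcing $t$ even. Recalling the elementary fact that, for $y\in\mathbb{Z}/2^n\mathbb{Z}$, the number of pairs $(b,c)$ with $bc\equiv y\pmod{2^n}$ is $(v_2(y)+1)2^{n-1}$ when $v_2(y)<n$ and $2^{n-1}(n+2)$ when $y\equiv 0$ (and depends on $y$ only through $v_2(y)$), one sees that $N(t,s):=\#\{(a,b,c):bc\equiv -g(a)\}$ is a weighted sum over $a\in\mathbb{Z}/2^n\mathbb{Z}$ with weight determined by $v_2(g(a))$. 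If $t$ is odd, no matrix of trace $t$ is scalar mod $2$, so $h_{0,n}(t,s)=N(t,s)$; if $t$ is even, then $h_{0,n}(t,s)=N(t,s)-M(t,s)$, where $M(t,s)$ counts such triples with $b,c$ both even.

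The second ingredient is completing the square: $4g(a)=(2a-t)^2-\Delta$ with $\Delta=\Delta(t,s)=t^2-4s$, which, read modulo $2^{n+2}$, recovers $g(a)\bmod 2^n$ and hence $v_2(g(a))$ from $v_2\left((2a-t)^2-\Delta\right)$. Since $(2a-t)^2\equiv t^2\equiv\Delta\pmod 4$, one always has $\Delta\in\{0,1\}\pmod 4$, so $\Delta\bmod 8\in\{0,1,4,5\}$ and the final ``otherwise'' case of the statement is vacuous; moreover $\Delta$ is odd iff $t$ is odd, so $\Delta\bmod 8\in\{1,5\}$ goes with $h_{0,n}=N$ and $\Delta\bmod 8\in\{0,4\}$ with $h_{0,n}=N-M$. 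For $t$ odd, one writes $y=2a-t$, which runs bijectively over the odd residues modulo $2^{n+1}$, so $v_2(g(a))=v_2(y^2-\Delta)-2$. If $\Delta\equiv 5\pmod 8$ then $y^2-\Delta\equiv 4\pmod 8$ for every odd $y$, hence every $g(a)$ is a unit and $N(t,s)=2^n\cdot 2^{n-1}=2^{2n-1}=2^{2n-2}(\ell^2-\ell)$. If $\Delta\equiv 1\pmod 8$ then $\Delta$ is a square in $\mathbb{Z}_2^\times$, and using the classical count that $y^2\equiv\Delta$ has exactly four solutions modulo $2^k$ for each $k\geq 3$ one determines the number of $a$ realizing each value of $v_2(g(a))$; summing the weights then telescopes to $N(t,s)=3\cdot 2^{2n-1}=2^{2n-2}(\ell^2+\ell)$.

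For $t$ even, write $t=2t_0$ and $D:=t_0^2-s=\Delta/4$, so $g(a)=(a-t_0)^2-D$ and $z:=a-t_0$ runs over $\mathbb{Z}/2^n\mathbb{Z}$. One splits on whether $D$ is odd ($\Delta\equiv 4\bmod 8$) or even ($\Delta\equiv 0\bmod 8$); in each case the distribution of $v_2(z^2-D)$ over $z$ is again governed by the number of square roots modulo powers of $2$, the even-$D$ case being self-similar after extracting the common power of $2$ from $z$ and $D$, hence handled by induction on $n$. For $M(t,s)$ one substitutes $b=2b'$, $c=2c'$: this forces $4\mid g(a)$ and reduces the inner count of $(b',c')$ modulo $2^{n-1}$ to a product-count at level $n-2$, so $M(t,s)$ is expressed through level-$(n-2)$ data together with the same valuation distribution. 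Assembling the pieces gives $N(t,s)-M(t,s)=3\cdot 2^{2n-2}=2^{2n-2}(\ell^2-1)$, which is the claimed value in all cases.

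The main obstacle will be the bookkeeping in this last, $t$ even, case: one must track simultaneously the valuation of $D$, the resulting distribution of $v_2(z^2-D)$, and the subtraction of the scalar-mod-$2$ locus $M(t,s)$, whose evaluation itself unwinds into product-counts two levels down and an attendant induction. The structural reason the prime $2$ forces one to work modulo $8$ rather than modulo $2$ — and the reason the answer has the three distinct values $\ell^2-1$, $\ell^2-\ell$, $\ell^2+\ell$ — is one and the same: a $2$-adic unit is a square exactly when it is $\equiv 1\pmod 8$, and then has four (not two) square roots modulo each $2^k$ with $k\geq 3$, a phenomenon with no counterpart in the odd-prime Lemma \ref{singlematrix}.
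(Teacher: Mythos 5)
Your argument is correct in outline (and the cases you work out in full are right), but it is a genuinely different proof from the one in the paper. The paper disposes of the lemma in three lines by induction on $n$: it checks the count at the base level and then shows that a matrix which is non-scalar modulo $2$ has exactly $\ell^2=4$ lifts from $M_{2\times 2}(\bZ/2^n\bZ)$ to $M_{2\times 2}(\bZ/2^{n+1}\bZ)$ with prescribed lifted trace and determinant (the trace condition fixes one of the four new coordinates, the determinant condition is a single nontrivial linear condition on the rest precisely because the matrix is non-scalar mod $2$). This is the $2$-adic incarnation of the ``uniform scaling away from the scalar locus'' already used at odd primes in Lemma \ref{singlematrix}, and it completely sidesteps the $2$-adic square-root analysis. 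Your route instead computes $h_{0,n}(t,s)$ directly: parametrize by $a$ with $d=t-a$, count pairs $(b,c)$ with $bc\equiv -g(a)$ via the valuation of $g(a)$, complete the square to relate $v_2(g(a))$ to $v_2\bigl((2a-t)^2-\Delta\bigr)$, and invoke the count of square roots of a unit $\equiv 1\pmod 8$ modulo $2^k$. What your approach buys is explicit information — the full distribution of $v_2(g(a))$ — which is essentially what Proposition \ref{primetwocount} needs anyway for $i>0$; what it costs is substantial bookkeeping, concentrated exactly where you flag it: the even-trace case, where you must both track the valuation distribution of $z^2-D$ for $D$ of arbitrary $2$-adic valuation and subtract the scalar-mod-$2$ locus $M(t,s)$. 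That portion of your write-up is only a sketch (``handled by induction on $n$,'' ``assembling the pieces gives''), so if you pursue this route you still owe the detailed computation there; the paper's lifting argument avoids this entirely by pushing all case analysis into the level-$1$ count. One small interpretive quibble: the trichotomy $\ell^2-1$, $\ell^2-\ell$, $\ell^2+\ell$ is exactly parallel to the odd-prime formula $\ell^2+\ell\legendre{\Delta}{\ell}+\legendre{\Delta}{\ell}^2-1$ (discriminant zero, non-square, nonzero square); what is special at $2$ is only that squareness of a unit is detected modulo $8$ rather than modulo $\ell$, which is why the case distinction must be read off from $\Delta\bmod 8$.
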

\begin{proof}
This is easily checked for $n=1$ and it suffices to show that there are $\ell^2$ lifts from $\mathbb{Z}/\ell^n\mathbb{Z}$ to $\mathbb{Z}/\ell^{n+1}\mathbb{Z}$ of  trace and determinant congruent to $(t,s)$ modulo $\ell^{n+1}$. The lifts of a fixed matrix may be written as 
$$\begin{pmatrix}b_0&c_0\\
d_0&e_0\\
\end{pmatrix}+2^n\begin{pmatrix}b&c\\
d&e\\
\end{pmatrix}$$
and we count the classes of $b,c,d,e$ modulo $2$. The trace condition yields $e\equiv -b\mod \ell$ 
and the determinant condition yields $b_0e+be_0-cd_0-dc_0\equiv 0 \mod \ell$ yielding exactly $\ell^2$ solutions if and only if $\begin{pmatrix}b_0&c_0\\
d_0&e_0\\
\end{pmatrix}$ isn't a scalar modulo $\ell$.
\end{proof}
We deduce the rest of the counts from Lemma \ref{notscalar}. We summarize the results here:
\begin{proposition}\label{primetwocount}
The number of matrices congruent to scalar matrices exactly modulo $2^i$ and of trace and determinant $(t,s)$ is, provided $n\geq 3$:
\begin{enumerate}
\item If $i<n/2$
$$h_{i,n}(t,s)=\ell^{2n-2-i}\cdot \begin{cases}
\ell^2-1 &\text{ if }\ell^{-2i}\Delta(t,s)\equiv 0 \mod 4\\
\ell^2-\ell &\text{ if }\ell^{-2i}\Delta(t,s)\equiv 5 \mod 8\\
\ell^2+\ell &\text{ if }\ell^{-2i}\Delta(t,s)\equiv 1\mod 8\\
0& \text{ otherwise. }\\
\end{cases}
$$\\
\item
If $n>i\geq n/2+1$ then 
$$h_{i,n}(t,s)=\ell^{3n-3i-2}\cdot \begin{cases}\ell^3-1&\text{ if }\ell^{n+2} \vert \Delta(t,s)\\
0& \text{ otherwise.}\\
\end{cases}
$$

\item and the remaining cases: 
$$h_{i,n}(t,s)=\ell^{3n-3i-2}\cdot \begin{cases}
\ell^2-1 &\text{ if }\ell^{-2i}\Delta(t,s)\equiv 0 \mod 4\\
\ell^2 &\text{ if }\ell^{-2i}\Delta(t,s)\equiv 1 \mod 4\\
0& \text{ otherwise. }\\
\end{cases}$$
if $i=n/2$ as well as if $i=(n+1)/2$
$$
h_{i,n}(t,s)=\ell^{3n-3i-2}\cdot \begin{cases}
\ell^2-1 &\text{ if }\ell^{-2i}\Delta(t,s)\equiv 0 \mod 2\\
\ell^2 &\text{ if }\ell^{-2i}\Delta(t,s)\equiv 1 \mod 2\\
0& \text{ if }\ell^{2i}\nmid\Delta(t,s).\\
\end{cases}
$$
\end{enumerate}
\end{proposition}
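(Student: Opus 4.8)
The proof follows the template of the odd-prime computation in Lemma~\ref{singlematrix}, with Lemma~\ref{notscalar} (the count $h_{0,n}$) serving as the base case $i=0$. The one genuinely new feature at $\ell=2$ is that completing the square introduces a factor of $4$, so one must work modulo $2^{n+2}$ rather than $2^n$ and track this extra power of two throughout; this is exactly what turns the quadratic-residue dichotomy of the odd case into the trichotomy ``modulo $4$ versus modulo $8$'' appearing in the statement, since a $2$-adic unit is a square precisely when it is $\equiv 1\pmod 8$.

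Fix $1\le i<n$ (the case $i=n$ is trivial). Every $A\in M_{2\times 2}(\mathbb{Z}/2^n\mathbb{Z})$ that is congruent to a scalar exactly modulo $2^i$ can be written uniquely as $A\equiv aI_2+2^iB$ with $a\in\mathbb{Z}/2^i\mathbb{Z}$ and $B\in S_0(n-i)$, so $h_{i,n}(t,s)$ counts the pairs $(a,B)$ satisfying the two congruences on trace and determinant. Reducing the trace condition $2a+2^i\tr B\equiv t\pmod{2^n}$ modulo $2^i$ forces $t$ to be even and leaves exactly two classes for $a$ in $\mathbb{Z}/2^i\mathbb{Z}$ (differing by $2^{i-1}$), and it then determines $\tr B$ modulo $2^{n-i}$. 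Expanding $\det(aI_2+2^iB)=a^2+2^i a\,\tr B+2^{2i}\det B$, substituting $2^i\tr B\equiv t-2a$, multiplying by $4$, and writing $t-2a=2^iw$ yields the key congruence
\[
2^{2i+2}\det B\;\equiv\;2^{2i}w^2-\Delta(t,s)\pmod{2^{n+2}}.
\]
Everything is extracted from this: it is solvable in $\det B$ only when $2^{\min(2i,n)}\mid\Delta(t,s)$; when $2i<n$ it pins $\det B$ to a fixed residue modulo $2^{n-2i}$ (hence $2^i$ lifts inside $\mathbb{Z}/2^{n-i}\mathbb{Z}$) and forces $2^{-2i}\Delta(t,s)\equiv w^2\pmod 4$; and when $2i\ge n$ it leaves $\det B$ entirely free while imposing $\Delta(t,s)\equiv 2^{2i}w^2\pmod{2^{n+2}}$.

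With the key congruence in hand, each range of $i$ reduces to counting matrices $B\in S_0(n-i)$ of prescribed trace and with $\Delta(\tr B,\det B)$ either in a prescribed residue class or unconstrained, which is exactly what Lemma~\ref{notscalar} (summed, where relevant, over the free determinant classes) computes. When $i<n/2$ one has $n-2i\ge 1$, the quantity $\Delta(\tr B,\det B)$ reduces to $2^{-2i}\Delta(t,s)$ modulo $8$, and the answer is the value of Lemma~\ref{notscalar} times the $2^i$ determinant lifts times the single class of $a$ that survives the constraint $2^{-2i}\Delta(t,s)\equiv w^2\pmod 4$ (the count being $0$ when $2^{-2i}\Delta(t,s)$ is $\equiv 2$ or $3$ modulo $4$); the exponents then match after simplification. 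When $n/2+1\le i<n$ the solvability condition already forces $2^{n+2}\mid\Delta(t,s)$, both $\tr B$ and $\det B$ are free, and summing the values of Lemma~\ref{notscalar} over all determinants and over both parities of $w$ produces $2^{3n-3i-2}(2^3-1)$. The boundary values $i=n/2$ ($n$ even) and $i=(n+1)/2$ ($n$ odd) go the same way, except that $S_0(n-i)$ now records $\Delta$ only modulo $2^{n-i}$, which is why the coarser conditions modulo $4$, respectively modulo $2$, appear there in place of the modulo-$8$ ones.

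I expect the main obstacle to be precisely these two boundary cases. There the count of free determinant classes, the coarsened visibility of $\Delta$ inside the small ring $S_0(n-i)$, and the fact that only the parity of $w$ matching $2^{-2i}\Delta(t,s)$ modulo $4$ (resp. modulo $2$) contributes all interact, and one must verify that the total collapses to the stated closed forms $2^{3n-3i-2}(2^2-1)$ and $2^{3n-3i-2}\cdot 2^2$ and vanishes otherwise. Everything else is a faithful, if laborious, transcription of the proof of Lemma~\ref{singlematrix} with the extra factor of $4$ carried along; the hypothesis $n\ge 3$ enters only to ensure $n-i\ge 2$ in the range $i<n/2$, which is what legitimizes the modulo-$8$ reductions there.
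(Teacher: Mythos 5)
Your proposal is correct and follows essentially the same route as the paper: the decomposition $A=aI_2+2^iB$ with $B\in S_0(n-i)$, the two admissible values of $a$ from the trace condition, the completed-square congruence modulo $2^{n+2}$ (the paper writes it as $2^{2i}\tilde s\equiv -\Delta(t,s)/4+2^{2i-2}\tilde t^2$, which is your key congruence divided by $4$), reduction to Lemma \ref{notscalar}, and the averaging over the free determinant in the ranges $2i\geq n$. The boundary cases $i=n/2$ and $i=(n+1)/2$ are handled exactly as you anticipate, by averaging the $1$ and $5$ mod $8$ counts since only the coarser residue of $\Delta(\tilde t,\tilde s)$ is pinned down.
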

 \begin{proof}
We count matrices congruent to scalars exacly modulo $2^i$ of trace/det $(t,s)$, so of the form 
$$aI+2^i\begin{pmatrix}b&c\\
d&e\\
\end{pmatrix}$$
where $\begin{pmatrix}b&c\\
d&e\\
\end{pmatrix}\in S_0(n-i)$ are matrices not congruent to scalar in $\mathbb{Z}/2^{n-i}\mathbb{Z}$ and $a\in \mathbb{Z}/2^i\mathbb{Z}$.
The trace $t$ must be even and there are now two choices for $a\mod 2^i$ such that $2a\equiv t\mod 2^i$ which are congruent modulo $2^{i-1}$. We must have for any choice of $a$:
\begin{alignat*}{2}2^{2i}(be-dc)&\equiv a^2-at+s &\mod \ell^n &\text{ (det. condition) }\\
2^i(b+e)&\equiv t-2a &\mod \ell^n&\text{ (trace condition). }
\end{alignat*}

First, observe that solving the determinant condition yields two choices of $a$ congruent modulo $2^i$, whereas the trace condition yields two choices congruent modulo $2^{i-1}$. Thus when $i<n/2$ there is a unique $a \mod 2^i$ such that both conditions are satisfied, and this happens if and only if $2^{2i}\vert \Delta(t,s)$. 
Relating the discriminant of trace $\tilde{t}=b+e$ and determinant $\tilde{s}=be-dc$ of this new matrix to $\Delta(t,s)$ one also has that:
$$2^{2i}(\tilde{t}^2-4 \tilde{s})\equiv (t-2a)^2-4(a^2-at+s)\equiv \Delta(t,s) \mod 2^{n}.$$
When $i<n/2$ we deduce the following count: 

$$\ell^{2n-2-i}\cdot \begin{cases}
\ell^2-1 &\text{ if }2^{-2i}\Delta(t,s)\equiv 0 \mod 4\\
\ell^2-\ell &\text{ if }2^{-2i}\Delta(t,s)\equiv 5 \mod 8\\
\ell^2+\ell &\text{ if }2^{-2i}\Delta(t,s)\equiv 1\mod 8\\
0& \text{ otherwise. }\\
\end{cases}
$$
This follows from Lemma \ref{notscalar} as $\tilde{t}$ is a fixed class mod $2^{n-i}$ and $\tilde{s}$ has to be a fixed class mod $2^{n-2i}$, which yields $2^i$ choices for $\tilde{s}$. So the count is $2^i h_{0,n-i}(\tilde{t},\tilde{s})$ as claimed. \\

Now let us consider the case of $i\geq n/2$. The determinant condition yields
\begin{align*}
2^{2i}\tilde{s}&\equiv a^2-at+s\\
&\equiv 1/4(t-2^{i}\tilde{t})^2-t/2 (t-2^{i}\tilde{t})+s\\
&\equiv -t^2/4-2^{i-1}t\tilde{t}+2^{i-1}t\tilde{t}+s+2^{2i-2}\tilde{t}^2\\
&\equiv -\Delta(t,s)/4+2^{2i-2}\tilde{t}^2 \mod 2^n
\end{align*}
If $i\geq n/2+1$, this is equivalent to $2^{n+2} \vert \Delta(t,s)$. In this case for each of the two solutions for $a \mod 2^i$ we get that $\tilde{t}\mod 2^{n-i}$ is $0$ or $1$, respectively and $\tilde{s}$ runs through all the classes modulo $2^{n-i}$. Summing over all possibilities and keeping track of $\Delta(\tilde{t},\tilde{s})\mod 8$ as in Lemma \ref{notscalar} we obtain:
$$\ell^{2(n-i)-2}\cdot \begin{cases}\ell^{n-i}(\ell^2-1)+\ell^{n-i}(\ell^2)&\text{ if }\ell^{n+2} \vert \Delta(t,s)\\
0& \text{ otherwise,}\\
\end{cases}
$$
as claimed. \par
It remains to treat the case of $i=\lfloor (n+1)/2\rfloor $ when we still need to work modulo $\ell^{n+2}$. Since $\ell^{2i}\Delta(\tilde{t}, \tilde{s}) \equiv \Delta(t,s)\mod \ell^{n+2}$ we get a trivial count if $\ell^{2i}\nmid \Delta(t,s)$. 
 If $n$ is even then $i=n/2$, and we have $\Delta(\tilde{t}, \tilde{s}) \equiv \ell^{-n}\Delta(t,s) \mod 4$. Since $\Delta(t,s)$ doesn't see if $\Delta(\tilde{t}, \tilde{s})$ is $1$ or $5$ modulo $8$ we just get the average of those two counts for $h_{0,n-i}(\tilde{t},\tilde{s})$, as there is no condition on $\tilde{s}\in \mathbb{Z}/\ell^{n-i}\mathbb{Z}$:
$$\ell^{3n-3i-2}\cdot \begin{cases}
\ell^2-1 &\text{ if }\ell^{-2i}\Delta(t,s)\equiv 0 \mod 4\\
\ell^2 &\text{ if }\ell^{-2i}\Delta(t,s)\equiv 1 \mod 4\\
0& \text{ otherwise. }\\
\end{cases}
$$
Finally if $n$ is odd, then $i=(n+1)/2$  and $\Delta(\tilde{t}, \tilde{s}) \equiv \ell^{-n}\Delta(t,s) \mod 2$.
 We get the count:
$$\ell^{3n-3i-2}\cdot \begin{cases}
\ell^2-1 &\text{ if }2^{-2i}\Delta(t,s)\equiv 0 \mod 2\\
\ell^2 &\text{ if }2^{-2i}\Delta(t,s)\equiv 1 \mod 2\\
0& \text{ if }2^{2i}\not \vert\Delta(t,s)\\
\end{cases}
$$
where now again $\tilde{s}$ can vary in $\mathbb{Z}/\ell^{n-i}\mathbb{Z}$ and we get a similar average.
\end{proof}

\section{The exceptional factor}\label{section:bad primes}

This section is devoted to the study of the factor at the \emph{conductor} $m_A$ of the open subgroup $\im (\rho_A) \subseteq G(\hat{\mbz})$ where again we just take throughout this section $A=E_1\times E_2$. Recall that $m_A$ is defined as the least integer $m$ such that, in the diagram
\[
\begin{tikzcd}
    \Gal(\overline{\mathbb{Q}}/\mathbb{Q}) \ar[r, "\rho_A"] \ar[dr, "\bar\rho_{A,m}"']
        & G(\hat\bZ) \ar[d, "\rred"] \\
    & G(m),
\end{tikzcd}
\]
we have $\im (\rho_A) = \rred^{-1} H_A(m)$, keeping with our notations of $G=\GL_2\times_{\det}\GL_2$ and $H_A(m)=\im(\overline{\rho}_{A,m})$.  \\
In particular, we would like to compute the stable factor 
$$ \frac{m_{A,T} \vert H_A(m_{A,T},T) \vert}{\vert H_A(m_{A,T})\vert}$$
that appears in Conjecture \ref{conj:main}. In general, it is difficult to compute this factor systematically and we have seen it can even vanish depending on the trace $T$. However, we will be able to compute it in the case when $\rho_A$ has image as large as possible in $G(\widehat{\bZ})$: 

\begin{Definition}
A pair $(E_1,E_2)$ of elliptic curves over $\bQ$ is a \emph{Serre pair} if the image of $\rho_{E_1\times E_2}$ has index $4$ in 
$G(\hat\bZ)$. 
\end{Definition}

In particular, since the image of $\rho_{E_i}$ in $\GL_2(\widehat{\bZ})$ has index at least $2$, each individual elliptic curve $E_i$ must have maximal possible image, ergo must be a \emph{Serre curve}. It has been shown by the second author \cite{MR3071819,MR2563740} that almost all (ordered by height) elliptic curves and pairs of curves over $\bQ$ are Serre curves or Serre pairs, respectively. \par

This image of $\rho_{E_i}$ is going to be by definition the full preimage under $\rred :\GL_2(\widehat{\bZ})\to \GL_2(\bZ/m_E\bZ)$ of an index two subgroup of $\GL_2(\bZ/m_E\bZ)$.
We briefly characterize this subgroup, following \cite[Section 3]{MR2534114}:  \\
Since the discriminant of the elliptic curve has a concrete expression as a symmetric polynomial in the coordinates of the $2$-torsion of $E$, it is easy to see that any element of the Galois group $\sigma \in \Gal(E[2])$ has to act on $\sqrt{\Delta_E}$ via the unique non-trivial quadratic character of $\GL_2(\bZ/2\bZ)$, which we denote by $\varepsilon:\GL_2(\bZ/2\bZ)\cong S_3\to \{\pm 1\} $, so we have that:
$$\sigma(\sqrt{\Delta_E})= \epsilon(\sigma)\sqrt{\Delta_E}$$
and $\bQ(\sqrt{\Delta_E})\subseteq\bQ(E[2])$. By class field theory, we also have the containment  $\bQ(\sqrt{\Delta_E})\subseteq\bQ(\zeta_{D_E})$ for some primitive root of unity $\zeta_{D_E}$ of order $D_E$. Since the conductor of the quadratic field $\bQ(\sqrt{\Delta_E})$ is the absolute value of its discriminant (for instance by the F\"uhrerdiskriminantenproduktformel), we have that 
$$D_E=\begin{cases}\vert \Delta_{sf}\vert& \text{if }  \Delta_{sf}\equiv 1\mod 4\\
4\vert \Delta_{sf}\vert& \text{otherwise}
\end{cases}$$
with $\Delta_{sf}$ denoting the squarefree part of the discriminant $\Delta_E$ of the elliptic curve. 
Letting now $m_E:= \lcm(D_E,2)$ so that the field $\bQ(E[m_E])$ is the compositum of $\bQ(E[D_E])$ and $\bQ(E[2])$, we must have that for $\sigma\in \Gal(E[m_E])$: 
$$\sigma(\sqrt{\Delta_E})= \left(\frac{\Delta_{sf}}{\det\sigma}\right)\cdot\sqrt{\Delta_E},$$
using the Kronecker symbol. 
Therefore, for any elliptic curve $E$, the image of $\bar{\rho}_{E,m_E}\subseteq \GL_2(\bZ/m_E\bZ)$ is contained in the index two subgroup $\ker(\varepsilon(\cdot)\left(\frac{\Delta_{sf}}{\det(\cdot)}\right))$. Moreover, $E$ is a Serre curve if and only if this subgroup is exactly the image of $\bar{\rho}_{E,m_E}$ and $m_E$ is the conductor of the open subgroup $\im (\rho_E) \subseteq \GL_2(\hat{\mbz})$. \par
Considering now pairs of elliptic curves and the abelian surface $A=E_1\times E_2$, we get therefore an obstruction to the Galois representation on torsion having full image in the group $G(\widehat{\bZ})$ for $G=\GL_2\times_{\det}\GL_2$. Namely, considering the compositum $L=\bQ(E_1[D_1])\cdot\bQ(E_2[D_2])\cdot\bQ((E_1\times E_2)[2])$, we must have for $\sigma\in\Gal(L)$: 
$$\sigma(\sqrt{\Delta_{E_i}})= \left(\frac{\Delta_{E_i,sf}}{\det\sigma}\right)\cdot\sqrt{\Delta_{E_i}}$$
for $i=1,2$. We remark that even though each individual elliptic curve is a Serre curve, there could be additional obstructions coming from entanglements between the division fields of the $E_i$. However, since this obstruction forces the image of $\rho_A$ to lie in an index $4$ subgroup of $G(\widehat{\bZ})$, for Serre pairs this has to be the only obstruction. Therefore, setting
$$m_A:=\lcm(m_{E_1},m_{E_2})=\lcm(2,D_{E_1},D_{E_2}),$$
$m_A$ is precisely the integer we called the conductor of the open subgroup $\im (\rho_A) \subseteq G(\hat{\mbz})$ so that the image of $\bar{\rho}_{A,m_A}\subseteq G(m_A)$ is the index $4$ subgroup $\ker(\boldsymbol{\psi}:G(m_A)\to \{\pm 1\}^2)$ for 
\[
\boldsymbol{\psi}(g)=(\varepsilon(g^{(1)})\left(\frac{\Delta_{E_1,sf}}{\det(g^{(1)})}\right),\varepsilon(g^{(2)})\left(\frac{\Delta_{E_2,sf}}{\det(g^{(2)})}\right))
\]
where $g=(g^{(1)},g^{(2)})$ denote the projections onto each $\GL_2$-factor of $G$. The preimage of this subgroup under projection is the full image of $\rho_A$ inside $G(\widehat{\bZ})$. 
The next subsection is devoted to computing the relevant counts for Serre curves. We conclude this discussion by providing pairs of Serre curves which are not Serre pairs, showing that entanglements between division fields are indeed a prevalent issue. 
\begin{proposition}\label{prop:Serrenopair}
There are infinitely many pairs $(E_1,E_2)$ of elliptic curves over $\bQ$ for which each $E_i$ is a Serre curve but which, nevertheless, do not constitute a Serre pair. 
\end{proposition}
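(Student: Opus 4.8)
The plan is to reduce the statement to producing an entanglement already at level $2$, and then to realise such entanglements by quadratic twisting. For any two Serre curves $E_1,E_2/\bQ$, the usual relation $\sigma\bigl(\sqrt{\Delta_{E_i}}\bigr)=\bigl(\tfrac{\Delta_{E_i,\mathrm{sf}}}{\det\sigma}\bigr)\sqrt{\Delta_{E_i}}$ (valid since $\sqrt{\Delta_{E_i}}$ lies in $\bQ(E_i[2])\cap\bQ(\zeta_{D_{E_i}})$) shows that $H:=\im\rho_{E_1\times E_2}$ is contained in $\widetilde H:=\rred^{-1}\bigl(\ker\boldsymbol{\psi}\bigr)$, where $\boldsymbol{\psi}\colon G\bigl(\lcm(2,D_{E_1},D_{E_2})\bigr)\to\{\pm1\}^2$ is the character from Section \ref{section:bad primes}. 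The two components of $\boldsymbol{\psi}$ are independent non-trivial characters, so $[\,G(\widehat{\bZ}):\widetilde H\,]=4$ (this is the source of the lower bound $[\,G(\widehat{\bZ}):\im\rho_{E_1\times E_2}\,]\geq 4$ recalled above). Hence $(E_1,E_2)$ is a Serre pair if and only if $H=\widetilde H$; in particular it is not a Serre pair as soon as $H\bmod 2$ is a proper subgroup of $\widetilde H\bmod 2$ inside $G(2)=\GL_2(\bZ/2\bZ)^2$.

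First I would fix a single Serre curve $E_1/\bQ$, say $E_1\colon y^2=x^3+6x-2$ (a Serre curve, with $\Delta_{E_1,\mathrm{sf}}=-3$, so $L:=\bQ(E_1[2])$ is an $S_3$-extension of $\bQ$), and for a squarefree $d\neq 1$ take $E_2:=E_1^{(d)}$. The quadratic character $\chi_d$ of $\bQ(\sqrt d)$ is cyclotomic, hence of the form $\psi_d\circ\det\circ\rho_{E_1}$ for a quadratic character $\psi_d$ of $\widehat{\bZ}^\times$, so $\rho_{E_1^{(d)}}=\chi_d\otimes\rho_{E_1}=\alpha_d\circ\rho_{E_1}$, where $\alpha_d(g):=\psi_d(\det g)\,g$ is an automorphism of $\GL_2(\widehat{\bZ})$. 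Since $\alpha_d$ changes neither $\det g$ nor the reduction of $g$ mod $2$, it preserves the subgroup $\ker\bigl(\varepsilon\cdot(\tfrac{\Delta_{E_1,\mathrm{sf}}}{\det})\bigr)$; hence $\im\rho_{E_1^{(d)}}=\alpha_d(\im\rho_{E_1})=\im\rho_{E_1}$ is again of index $2$, so $E_1^{(d)}$ is again a Serre curve, with the same $2$-division field $\bQ(E_1^{(d)}[2])=L$ and the same squarefree discriminant as $E_1$.

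Next, reducing modulo $2$ one has $\overline{\rho}_{E_1^{(d)},2}=\overline{\rho}_{E_1,2}$ — the $2$-torsion $x$-coordinates of $E_1^{(d)}$ are $\{d\,r_i\}$ where $\{r_i\}$ are those of $E_1$, so the two Galois permutation actions coincide — whence $H\bmod 2$ is the diagonal copy of $\GL_2(\bZ/2\bZ)$, of order $6$. On the other hand, since $\Delta_{E_1,\mathrm{sf}}=\Delta_{E_1^{(d)},\mathrm{sf}}$, a short direct computation with $\boldsymbol{\psi}$ gives $\widetilde H\bmod 2=\{(g_1,g_2)\in\GL_2(\bZ/2\bZ)^2:\varepsilon(g_1)=\varepsilon(g_2)\}$, of order $18$. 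Therefore $H\subsetneq\widetilde H$ (indeed $H$ is not even open in $G(\widehat{\bZ})$), so $(E_1,E_1^{(d)})$ is a pair of Serre curves which is not a Serre pair; letting $d$ run over the infinitely many squarefree integers $\neq 1$ then completes the proof.

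I do not anticipate a genuine obstacle in this argument: the reduction is formal, and once one observes that a quadratic twist is governed by a cyclotomic character the rest is the elementary level-$2$ computation above. If one also wants the two curves to be $\overline{\bQ}$-non-isogenous, in keeping with the rest of the paper, one should instead take $E_2$ among the elliptic curves with $2$-division field $L$ but a different $j$-invariant (for instance $y^2=\prod_i(x-\theta_i^2)$, with the $\theta_i$ the conjugates of a root of the cubic cutting out $L$, together with its rational translates and twists); infinitely many members of such a one-parameter family are Serre curves by \cite{MR2563740}, and the level-$2$ argument applies verbatim. In that variant the one substantive input is the infinitude of Serre curves in the family, which is exactly what \cite{MR2563740} supplies — that is the only place I would expect to have to do real work.
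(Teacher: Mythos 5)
Your quadratic-twist construction is internally sound: the identity $\rho_{E^{(d)}}=\alpha_d\circ\rho_E$ with $\alpha_d(g)=\psi_d(\det g)\,g$, together with the observations that $\alpha_d$ fixes $\det g$ and $g\bmod 2$ and that $\Delta_{E^{(d)},\mathrm{sf}}=\Delta_{E,\mathrm{sf}}$, does show that every quadratic twist of a Serre curve is a Serre curve, and the pairs $(E_1,E_1^{(d)})$ are visibly not Serre pairs. So the proposition as literally worded is verified. But these examples are degenerate: $E_1$ and $E_1^{(d)}$ are $\overline{\bQ}$-isomorphic, so the pair fails to be a Serre pair for the trivial reason that $\im\rho_{E_1\times E_1^{(d)}}$ is not even open in $G(\hat{\bZ})$, and such pairs sit outside the framework the proposition is meant to illuminate (Conjecture \ref{conj:main} and the surrounding discussion concern non-$\overline{\bQ}$-isogenous curves, for which the index-$4$ condition is actually attainable and the obstruction is a genuine \emph{entanglement} of division fields rather than an isomorphism of the curves).

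Your fallback for non-isogenous curves is structurally the paper's proof — fix a Serre curve, produce a family sharing a division field with it, and find Serre curves in that family — but the one step you defer is the substantive one, and the reference you lean on does not supply it. \cite{MR2563740} concerns the full two-parameter family of all elliptic curves ordered by height and says nothing about Serre curves in a thin one-parameter family; what is needed is the Cojocaru--Grant--Jones theorem \cite{MR2837018} for non-isotrivial families with large image over $\bQ(t)$, and applying it requires verifying its hypotheses for a genuinely one-parameter, non-isotrivial family. The ``family'' you propose (one curve $y^2=\prod_i(x-\theta_i^2)$ together with its rational translates and quadratic twists) is isotrivial, so \cite{MR2837018} does not apply; by your own twist argument the question reduces to checking that a single explicit curve is a Serre curve, which you have not done. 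The paper avoids all of this by fixing a Serre curve $E_1$ with controlled exceptional levels, taking the explicit non-isotrivial Rubin--Silverberg family \cite{MR1638488} of curves $E_2$ with $E_2[4]\simeq E_1[4]$ as Galois modules, and invoking \cite{MR2837018} to get a positive proportion of Serre curves in it; the shared mod-$4$ structure then forces $[\,G(\hat{\bZ}):\im\rho_{E_1\times E_2}\,]>4$. To complete your version you would need an analogous verified input at level $2$ (or simply verify that your chosen auxiliary curve is a Serre curve and then run your twist argument on it).
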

\begin{proof}
We prove this by exhibiting an explicit family. Such examples may be found via the following strategy: fix an explicit Serre curve $E_1/\bQ$. One may then construct explicit families of elliptic curves with the same mod $n$ Galois representation for some $n$. Clearly any Serre curve $E_2$ in that family will then produce an abelian surface $A=E_1\times E_2$ with $\overline{\rho}_{A,n}(G_\bQ)$ of index at least $\vert\SL_2(\bZ/n\bZ)\vert$ in $G(\bZ/n\bZ)$. It remains to show that such a Serre curve can be found in the family, but one certainly expects most elliptic curves in such a family to be Serre curves. Concretely, Cojocaru, Grant and Jones show in the main result of \cite{MR2837018} that for a non-isotrivial elliptic curve $E/\bQ(t)$, a positive proportion of specializations are Serre curves, provided the Galois representation associated to $E$ is sufficiently large. Namely, they require that 
$$[\Gal(\bQ(t)(E[n])/\bQ(t)):\GL_2(\bZ/n\bZ)]=1\text{ or }2.$$ 
Moreover, they show \cite[Proposition 4]{MR2837018} that to prove full Galois image it suffices to show that $\bQ(t)(\sqrt{\Delta_E})\cap\overline{\bQ}=\bQ$ and that 
$$ \Gal(\bQ(t)(E[n])/\bQ(t))=\GL_2(\bZ/n\bZ) \text{ for }n\in\{36, 5,7,11,13\}.$$
The latter condition will be automatically satisfied if we start with a Serre curve $E_1$ where no primes $l\geq 5$ are exceptional and where $n=36$ is not exceptional (meaning that the Galois representation modulo $n$ has image $\GL_2(\bZ/n\bZ)$). There are infinitely many such examples in \cite{MR3349445}. Finally, checking non-isotriviality and that adjoining the square-root of the discriminant is geometric is straightforward for explicit families. \\
Concretely, one may for instance take the Serre curve $E_1: y^2+xy=x^3+3$ and consider the family of elliptic curves with isomorphic Galois module $E[4]$ as given explicitly in \cite[Theorem 4.1.]{MR1638488} to obtain such an infinite family of Serre curves $E_2$ such that $E_1, E_2$ is not a Serre pair. 
\end{proof}

\subsection{Computing the factor at $m_A$}
The main result of this subsection, Proposition \ref{badprimesmainprop}, can be in particular used to compute the quantity $\vert H_A(m_{A,T},T) \vert$ appearing in Conjecture \ref{conj:main} in the case of Serre pairs, via our previous considerations. \\

It will be convenient to fix some additional notations here: we will be calculating the size of a certain subset of $G(m)$, where $m$ is a composite number, and this will utilize the Chinese Remainder Theorem.  As such, we choose to reserve subscripts for the Chinese Remainder Theorem decomposition and superscripts for the decomposition inherent in $G=\GL_2 \times_{\det} \GL_2$, i.e. we will use the following notation for ${\mathbf{g}} \in G(m)$:
\[
\begin{array}{ccccc}
G(m) & := & \GL_2(\bZ/m\bZ) \times_{\det} \GL_2(\bZ/m\bZ) & \simeq & \ds \prod_{\ell^\alpha \parallel m}  \GL_2(\bZ/\ell^\alpha\bZ) \times_{\det} \GL_2(\bZ/\ell^\alpha\bZ) \\
{\mathbf{g}} & \leftrightarrow & (g^{(1)},g^{(2)}) & \leftrightarrow & \prod_{l\mid m}\left( g^{(1)}_\ell, g^{(2)}_\ell \right).
\end{array}
\]
Let $D_1$ and $D_2$ be the discriminants of the quadratic fields $\mathbb{Q}(\sqrt{\Delta_{E_i}})$ for $i=1,2$. Since $(E_1,E_2)$ is a Serre pair, it follows that for $A = E_1 \times E_2$, we have $m_A = \lcm(2,|D_1|,|D_2|)$. Let $m \in \bN$ be any positive integer satisfying 
\[
m_A \mid m \mid m_A^\infty.
\]
Let
$
\ds \chi^{(i)}(\cdot) := \left( \frac{D_{i}}{\cdot} \right)
$
denote the Kronecker symbol, and regard each character $\chi^{(i)}$ as a function on $(\bZ/m\bZ)^\times$, which as before we further extend to a character on the group $\GL_2(\bZ/m\bZ)$ by pre-composing with the determinant map\footnote{We apologize for the abuse of notation here; in general when $\psi$ is a Dirichlet character of conductor dividing $m$, its domain will sometimes be regarded as $(\bZ/m\bZ)^\times$ and sometimes as $\GL_2(\bZ/m\bZ)$, via pre-composing with determinant.}, i.e. we set
$
\chi^{(i)}(g) := \chi^{(i)}(\det g).
$
We recall that $\ve : \GL_2(\bZ/2\bZ) \longrightarrow \{ \pm 1 \}$ denotes the unique non-trivial character of order $2$, viewed as a function on $\GL_2(\bZ/m\bZ)$ by first reducing modulo $2$.  Finally, we consider for general $m_A \mid m \mid m_A^\infty$ the map
\[
{\boldsymbol{\psi}} : G(m) \longrightarrow \{ \pm 1 \}^2
\]
defined by ${\boldsymbol{\psi}}({\mathbf{g}}) := (\psi^{(1)}(g^{(1)}),\psi^{(2)}(g^{(2)})) \in \{ \pm 1 \}^2$ with 
$\psi^{(i)}(g) := \ve (g) \cdot \chi^{(i)}(g) \in \{ \pm 1 \}$.
We will use the abbreviation
\begin{equation} \label{defofnotation} 
G(m,T)_{{\boldsymbol{\psi}} = {\boldsymbol{1}}} := \{ {\mathbf{g}} \in G(m,T) : {\boldsymbol{\psi}}({\mathbf{g}}) = {\boldsymbol{1}} \},
\end{equation}
and by our previous considerations it is precisely the size of this set when $m$ is divisible enough by any prime $\ell \mid m_A$ that we ultimately wish to evaluate. \par
Write $m = 2^\ga m'$ where $\ga \geq 1$ and $m'$ is odd and define the odd part of the discriminant $D_i$ to be
\begin{equation} \label{defofDsubi}
D_i' := \gcd(m',D_i).
\end{equation}
We may write a decomposition ${\boldsymbol{\psi}} = {\boldsymbol{\psi}}_2 \cdot {\boldsymbol{\psi}}_{m'}$, where $\psi^{(i)}_{m'}:=\left( \frac{\cdot}{\vert D_i'\vert}\right)$ and $\psi^{(i)}_{2}$ is a character modulo $8$. The size  $\ds | G(m,T)_{{\boldsymbol{\psi}} = {\boldsymbol{1}}} |$ depends somewhat on whether or not the characters $\psi^{(1)}_{m'}$ and $\psi^{(2)}_{m'}$ are equal, or, equivalently, whether or not $|D_1'| = |D_2'|$. 
In the following proposition, we denote: 
\begin{equation} \label{lotsofdefinitions}
\begin{split}
e_\ell(T) :=& \lfloor (v_\ell(T)-1)/2 \rfloor, \quad\quad \gd_n(m) :=
\begin{cases}
1 & \text{ if } n \text{ divides } m \\
0 & \text{ otherwise,}
\end{cases} \\
E(\ell,T) :=& \frac{\ell^3(\ell+1)(\ell^{4e_\ell(T)+4}-1) + (\ell^4 + \ell^3 + 2\ell^2 + \ell - 1)(\ell^{4e_\ell(T)}-1)}{(\ell^2+1)(\ell+1)}, \\
A_\ell(T) :=&
\begin{cases}
 \ell^5 - \ell^3 - 3\ell^2 + 1 + \frac{2E(\ell,T)}{\ell^{4e_\ell(T)+3}} + \gd_2(v_\ell(T)) \frac{(\ell^3-1)}{\ell^{4e_\ell(T)+2}} & \text{ if $\ell \mid T$} \\ 
(\ell^6 - \ell^5 - 2\ell^4 + \ell^3 + 2\ell^2)/(\ell-1) & \text{ if $\ell \nmid T$,}
\end{cases} \\
B_\ell(T) :=& 
\begin{cases}
-3\ell^2 + 1 + \frac{2E(\ell,T)}{\ell^{4e_\ell(T)+3}} + \gd_2(v_\ell(T)) \frac{(\ell^3-1)}{\ell^{4e_\ell(T)+2}} & \text{ if $\ell \mid T$} \\
\ell^2/(\ell-1) & \text{ if $\ell \nmid T$}.
\end{cases} \\
\mathfrak{S}_{{\boldsymbol{\psi}}_2}(T) &:=
\left| G\left(2^{v_2(m)},T\right)_{\im {\boldsymbol{\psi}}_2 \subseteq \im {\boldsymbol{\psi}}_{m'}} \right|, \\
\mathfrak{S}_{{\boldsymbol{\psi}}_2}^{(i)}(T) &:=
\left| G\left(2^{v_2(m)},T\right)_{\im {\boldsymbol{\psi}}_2 \subseteq \im {\boldsymbol{\psi}}_{m'}}^{\psi_2^{(i)} = 1} \right| - \left| G\left(2^{v_2(m)},T\right)_{\im {\boldsymbol{\psi}}_2 \subseteq \im {\boldsymbol{\psi}}_{m'}}^{\psi_2^{(i)} = -1} \right|.
\end{split}
\end{equation}
\begin{proposition} \label{badprimesmainprop}
Assume the notation just outlined and let $T \in \mathbb{Z} - \{ 0 \}$ be arbitrary. Assume that $m$ is large enough so that for all primes $\ell\vert m_A$ we have $v_\ell(m)>v_\ell(T)$. In case $|D_1'| \neq |D_2'|$, the quantity $\ds \left| G(m,T)_{{\boldsymbol{\psi}} = {\boldsymbol{1}}} \right|$ is equal to
\[
\frac{(m')^5\varphi(m')}{4\left( \rad(m') \right)^5} \left( \mathfrak{S}_{{\boldsymbol{\psi}}_2}(T) \prod_{\ell \mid m'} \left( A_\ell(T) + 1\right) 
+ 2^{\omega(\frac{m'}{|D_1'|})} \mathfrak{S}_{{\boldsymbol{\psi}}_2}^{(1)}(T) \prod_{\ell \mid D_1'} B_\ell(T) \\
+ 2^{\omega(\frac{m'}{|D_2'|})} \mathfrak{S}_{{\boldsymbol{\psi}}_2}^{(2)}(T) \prod_{\ell \mid D_2'} B_\ell(T) \right).
\]
In case $|D_1'| = |D_2'| =: D'$, $\ds \left| G(m,T)_{{\boldsymbol{\psi}} = {\boldsymbol{1}}} \right|$ is equal to
\[
\frac{(m')^5\varphi(m')}{2\left( \rad(m') \right)^5} \left( \mathfrak{S}_{{\boldsymbol{\psi}}_2}(T) \prod_{\ell \mid m'} \left( A_\ell(T) + 1 \right) \\
+ 2^{\omega(\frac{m'}{D'})} \mathfrak{S}_{{\boldsymbol{\psi}}_2}^{(1)}(T) \prod_{\ell \mid D'} B_\ell(T) \right).
\]
\end{proposition}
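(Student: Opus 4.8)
The plan is to detect the condition ${\boldsymbol{\psi}}({\mathbf{g}}) = {\boldsymbol{1}}$ by quadratic orthogonality and then Chinese-remainder everything into local contributions. Since ${\boldsymbol{\psi}}$ is valued in $\{\pm 1\}^2$, for any ${\mathbf g}$ one has ${\mathbf 1}_{{\boldsymbol{\psi}}({\mathbf g}) = {\boldsymbol 1}} = \tfrac14 \bigl(1 + \psi^{(1)}({\mathbf g})\bigr)\bigl(1 + \psi^{(2)}({\mathbf g})\bigr)$, so that
\[
\bigl| G(m,T)_{{\boldsymbol{\psi}} = {\boldsymbol{1}}} \bigr| = \frac14 \sum_{S \subseteq \{1,2\}} \ \sum_{{\mathbf g} \in G(m,T)} \ \prod_{i \in S} \psi^{(i)}({\mathbf g}).
\]
Writing $m = 2^{v_2(m)} m'$ with $m'$ odd and using ${\boldsymbol{\psi}} = {\boldsymbol{\psi}}_2 \cdot {\boldsymbol{\psi}}_{m'}$ together with the decomposition $G(m,T) \cong G\bigl(2^{v_2(m)},T\bigr) \times \prod_{\ell \mid m'} G\bigl(\ell^{v_\ell(m)},T\bigr)$ afforded by the Chinese Remainder Theorem (the trace $T$ splitting componentwise), each inner sum factors as a $2$-adic sum $\sum_{{\mathbf g}_2} \prod_{i \in S}\psi^{(i)}_2({\mathbf g}_2)$ times a product over the odd primes $\ell \mid m'$ of local factors $\sum_{{\mathbf g}_\ell} \prod_{i \in S}\psi^{(i)}_\ell({\mathbf g}_\ell)$. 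The key structural observation is that, since ${\boldsymbol{\psi}}_{m'}$ factors through $\det$ and the odd part of a fundamental discriminant is squarefree, the only local characters occurring at an odd prime $\ell$ are the trivial one and the quadratic residue symbol $\legendre{\cdot}{\ell} \circ \det$.

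Consequently exactly two odd local quantities are needed for each $\ell \mid m'$ (with $v_\ell(m) > v_\ell(T)$): the unweighted count $\bigl|G(\ell^{v_\ell(m)},T)\bigr|$, already furnished by Lemma~\ref{good primes} when $\ell \nmid T$ and by Theorem~\ref{primesdividingtrace} when $\ell \mid T$; and the quadratic-twisted count $\sum_{{\mathbf g} \in G(\ell^{v_\ell(m)},T)} \legendre{\det {\mathbf g}}{\ell}$, which is the genuinely new input. I would compute the latter by the method of Section~\ref{section:explicit constants}: since $\legendre{\det {\mathbf g}}{\ell}$ depends only on $\det {\mathbf g} \bmod \ell$, the argument of Lemma~\ref{stable} shows the twisted count stabilizes already at exponent $v_\ell(T)+1$, so it suffices to evaluate it there; when $\ell \nmid T$ it is read off directly from $\bigl|M_{2\times2}(\mbz/\ell\mbz)^{\tr \equiv t}_{\det \equiv d}\bigr| = \ell\bigl(\ell + \legendre{t^2-4d}{\ell}\bigr)$ via Lemma~\ref{lemma:legendresum}, while when $\ell \mid T$ one reruns the stratification of matrices by exact $\ell$-adic distance to the scalars (Lemma~\ref{singlematrix}, Lemma~\ref{Hcounts}, Proposition~\ref{exactcount}, Lemma~\ref{notcongruent}), now carrying the weight $\legendre{\det}{\ell}$ and re-summing the one-variable Legendre sums that arise by Lemma~\ref{lemma:legendresum}. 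Packaging the unweighted and twisted counts together produces the quantities $A_\ell(T)$ (the unweighted contribution, together with the normalizing $+1$) and $B_\ell(T)$ (the twisted contribution) of \eqref{lotsofdefinitions}; the extra factors of $2$ and the exponent shift from $\ell^{4e_\ell(T)+3}$ to $\ell^{4e_\ell(T)+2}$ distinguishing $B_\ell(T)$ from $A_\ell(T)$ record exactly how the Legendre weight interacts with the strata contributing to Proposition~\ref{exactcount} and Lemma~\ref{notcongruent}.

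For the prime $2$ I would evaluate the sums $\sum_{{\mathbf g}_2 \in G(2^{v_2(m)},T)} \prod_{i \in S}\psi^{(i)}_2({\mathbf g}_2)$ from the mod-$2^n$ stratified counts of Proposition~\ref{primetwocount}; here the point is that $\ve$ does not factor through the determinant, so this is genuinely a computation modulo $8$, and the sign of $D_i$ — which dictates how $\bigl(\tfrac{D_i}{\cdot}\bigr)$ distributes between its $2$-part and its odd part — couples the $2$-adic and odd data, which is why these sums must be recorded against the constraint that $\boldsymbol{\psi}_2({\mathbf g}_2)$ lie in $\im \boldsymbol{\psi}_{m'}$; this is what produces $\mathfrak S_{{\boldsymbol{\psi}}_2}(T)$ and $\mathfrak S^{(i)}_{{\boldsymbol{\psi}}_2}(T)$. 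Finally I would reassemble $\tfrac14 \sum_S$, distinguishing the two stated cases according to whether the prime-divisor sets of $D_1'$ and $D_2'$ coincide — equivalently, whether $\psi^{(1)}_{m'}$ and $\psi^{(2)}_{m'}$ are multiplicatively independent: when $|D_1'| \neq |D_2'|$ the four terms $S = \emptyset, \{1\}, \{2\}, \{1,2\}$ reorganize into the three displayed summands, the combinatorial factors $2^{\omega(m'/|D_i'|)}$ emerging from the primes of $m'$ not dividing $D_i'$, whereas when $|D_1'| = |D_2'|$ one has $\sigma_{\{1,2\}} = \sigma_\emptyset$ and $\sigma_{\{1\}} = \sigma_{\{2\}}$, collapsing the expression to two summands and halving the denominator; multiplicativity of $\varphi$, $\rad$ and $2^{\omega(\cdot)}$ then extracts the prefactor $\tfrac{(m')^5\varphi(m')}{(\rad m')^5}$ from the identities expressing $\bigl|G(\ell^{v_\ell(m)},T)\bigr|$ and $\sum_{{\mathbf g}}\legendre{\det {\mathbf g}}{\ell}$ in terms of $A_\ell(T)$ and $B_\ell(T)$. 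The main obstacle is the $\ell \mid T$ quadratic-twisted count of the second step: redoing the delicate distance-to-scalars stratification with a Legendre weight, and checking that the many resulting character sums either vanish or collapse to the asserted closed form, is where essentially all of the work lies; the mod-$8$ bookkeeping of the third step — again needing the full Proposition~\ref{primetwocount} analysis and careful tracking of the signs of $D_1,D_2$ — is a close second.
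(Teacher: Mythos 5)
Your proposal is correct in substance and is essentially the dual organization of the paper's own argument: you expand the indicator $\mathbf{1}_{{\boldsymbol{\psi}}({\mathbf{g}})={\boldsymbol{1}}}=\tfrac14\prod_i(1+\psi^{(i)})$ first and then apply CRT to each of the four character sums, whereas the paper first conditions on the common value ${\mathbf{v}}\in\im{\boldsymbol{\psi}}_{m'}$ of $\psi_2^{(i)}$ and $\psi_{m'}^{(i)}$, evaluates each constrained odd local sum as $\tfrac{\ell^{5\ga_\ell-5}\varphi(\ell^{\ga_\ell})}{2}\left(A_\ell+\nu_\ell B_\ell\right)$, and only then expands the product of binomials over subsets $S$ of the primes dividing $m'$, with cancellation outside $S=\emptyset,\{\ell\mid D_1'\},\{\ell\mid D_2'\}$. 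The local ingredients you identify are exactly the ones the paper uses, though your route to the twisted count $B_\ell(T)$ for $\ell\mid T$ (rerunning the full distance-to-scalars stratification with a Legendre weight) is more laborious than the paper's shortcut: the paper computes the count $F(-1,T_\ell)$ over non-residue determinants, observes that a non-square $d$ forces $t^2-4d$ to be a unit so that only the $i=i'=0$ stratum of Lemma \ref{singlematrix} contributes, and then recovers $B_\ell$ from $B_\ell=A_\ell-\tfrac{2F(-1,T_\ell)}{(\ell-1)\ell^{6n-6}}$. One imprecision to flag in your final step: in the case $|D_1'|=|D_2'|$ it is not true that $\sigma_{\{1,2\}}=\sigma_\emptyset$ and $\sigma_{\{1\}}=\sigma_{\{2\}}$ as full character sums — only their odd-prime factors coincide, since $\psi_{m'}^{(1)}\psi_{m'}^{(2)}(d)=\legendre{d}{|D'|}^2=1$ on the common determinant, while the $2$-adic factors $\psi_2^{(1)}$ and $\psi_2^{(2)}$ can differ (e.g.\ $D_1\equiv 1\bmod 4$ but $D_2=4D_2'$). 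It is exactly the recombination of these unequal $2$-adic factors over the two-element image $\im{\boldsymbol{\psi}}_{m'}$ that produces the restricted counts $\mathfrak{S}_{{\boldsymbol{\psi}}_2}$, $\mathfrak{S}^{(i)}_{{\boldsymbol{\psi}}_2}$ and the halved denominator in the second displayed formula; you gesture at this constraint earlier in your third step, so this is a presentational slip rather than a gap, but the collapse must be stated for the odd parts only.
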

\begin{proof}
By \eqref{defofnotation}, we find that
\begin{equation} \label{nnewmaingoal}
\begin{split}
\left| G(m,T)_{{\boldsymbol{\psi}} = {\boldsymbol{1}}} \right| &= \sum_{{\mathbf{t}} \in (\bZ/m\bZ)^2 \atop t^{(1)} + t^{(2)} = T} \left| \GL_2(\bZ/m\bZ)_{\psi^{(1)} = 1}^{\tr \equiv t^{(1)}} \times_{\det} \GL_2(\bZ/m\bZ)_{\psi^{(2)} = 1}^{\tr \equiv t^{(2)}} \right| \\
&= 
\sum_{{\mathbf{t}} \in (\bZ/m\bZ)^2 \atop t^{(1)} + t^{(2)} = T} \sum_{d \in (\bZ/m\bZ)^\times} \left| \GL_2(\bZ/m\bZ)_{\psi^{(1)} = 1 \atop \det \equiv d}^{\tr \equiv t^{(1)}} \right| \cdot \left| \GL_2(\bZ/m\bZ)_{\psi^{(2)} = 1 \atop \det \equiv d}^{\tr \equiv t^{(2)}} \right|.
\end{split}
\end{equation}
Recall that $m = 2^\ga m'$ where $\ga \geq 1$ and $m'$ is odd, and likewise that
$D_i'$ is odd part of $D_i$.
As noted above, we may write $\psi^{(i)}$ as a product $\psi^{(i)}_2 \cdot \psi^{(i)}_{m'}$, where $\psi^{(i)}_{m'}$ factors through the determinant map.  Thus, the condition $\psi^{(i)}(g^{(i)}) = 1$ is equivalent to $\psi^{(i)}_2(g^{(i)}) = \psi^{(i)}_{m'}(\det(g^{(i)}))$.  We will split the sum in \eqref{nnewmaingoal} according to the common value of $\psi^{(i)}_2$ and $\psi^{(i)}_{m'}$, i.e. we consider the map 
\[
{\boldsymbol{\psi}}_{m'} : (\bZ/m\bZ)^\times \longrightarrow \{ \pm 1 \}^2,
\]
and, for fixed ${\mathbf{t}} \in (\bZ/m\bZ)^2$, we write the inner sum in \eqref{nnewmaingoal} as
\begin{equation} \label{nnnewmaingoal}
\sum_{{\mathbf{v}} \in \im {\boldsymbol{\psi}}_{m'}} 
\sum_{d \in (\bZ/m\bZ)^\times \atop {\boldsymbol{\psi}}_{m'}(d) = {\mathbf{v}}} 
\left| \GL_2(\bZ/m\bZ)_{\psi^{(1)}_2 = \psi^{(1)}_{m'} = v^{(1)} \atop \det \equiv d}^{\tr \equiv t^{(1)}} \right| \cdot \left| \GL_2(\bZ/m\bZ)_{\psi^{(2)}_2 = \psi^{(2)}_{m'} = v^{(2)} \atop \det \equiv d}^{\tr \equiv t^{(2)}} \right|.
\end{equation}
Since
$
\psi^{(i)}_{m'}(\cdot) = \left( \frac{\cdot}{|D_i'|} \right)
$
is simply the Jacobi symbol, we see that the image $\im {\boldsymbol{\psi}}_{m'} \subseteq \{ \pm 1 \}^2$ satisfies
\[
\im {\boldsymbol{\psi}}_{m'} = 
\begin{cases}
\{ (1,1), (-1,-1) \} & \text{ if } |D_1'| = |D_2'| \\
\{ \pm 1 \}^2 & \text{ otherwise.}
\end{cases}
\]
We now wish to further analyze each factor $(i = 1, 2)$ in each summand in \eqref{nnewmaingoal}. We may write $\psi^{(i)}_{m'}(\cdot)=\prod_{\ell\mid m'}\psi^{(i)}_{\ell}(\cdot)$ where, for an odd prime $\ell$, we set
\begin{equation} \label{identifyingpsisubell}
\psi^{(i)}_{\ell}(\cdot) := 
\begin{cases}
\left( \frac{\cdot}{\ell} \right) & \text{ if } \ell \mid D_i' \\
1 & \text{ if } \ell \nmid D_i'.
\end{cases}
\end{equation}
Consider now the Chinese Remainder Isomorphism
\begin{equation} \label{CRT}
\GL_2(\bZ/m\bZ)_{\det \equiv d}^{\tr \equiv t^{(i)}} \; \longleftrightarrow \; \GL_2(\bZ/2^\ga\bZ)_{\det \equiv d_2}^{\tr \equiv t^{(i)}_2} \times \prod_{\ell^{\ga_{\ell}} \parallel m'} \GL_2(\bZ/\ell^{\ga_{\ell}}\bZ)_{\det \equiv d_\ell}^{\tr \equiv t^{(i)}_{\ell}},
\end{equation}
where $t^{(i)}_{\ell}$, $d_{\ell}$ (resp. $t^{(i)}_2$,$d_{2}$) denote the reductions of $t^{(i)}$, $d$ modulo $\ell^{\ga_{\ell}}$ (resp. modulo $2^\ga$). Since for any prime $\ell$, the characters $\psi^{(i)}_\ell$ factor through the $\ell$-primary part in the decomposition \eqref{CRT}, we obtain with notations as above the factorization
\[
\psi^{(i)}(g^{(i)}) = \psi^{(i)}_2(g^{(i)}_2) \prod_{\ell \mid m'} \psi^{(i)}_\ell(\det(g^{(i)}_\ell)).
\]

(We have isolated the prime $2$ in \eqref{CRT} because it is the only prime $\ell$ for which $\psi^{(i)}_\ell$ does not factor through the determinant map.)  Thus, for a fixed ${\mathbf{v}} = (v^{(1)},v^{(2)}) \in \im {\boldsymbol{\psi}}_{m'}$, the inner sum of \eqref{nnnewmaingoal} becomes
\begin{equation} \label{nnnnewmaingoal}
\left( \sum_{d_2 \in (\bZ/2^{\ga}\bZ)^\times} f_{{\boldsymbol{\psi}}_2,{\mathbf{v}}}({\mathbf{t}}_2,d_2) \right)
\left( 
\sum_{{\begin{substack} { (v_\ell^{(1)}), (v_\ell^{(2)}) \in \{ \pm 1 \}^{\{ \ell \mid m' \}} \\ \prod_{\ell \mid D_i'} v_\ell^{(i)} = v^{(i)}} \end{substack}}}
\sum_{{\begin{substack} { (d_\ell) \in \prod_{\ell \mid m'} (\bZ/\ell^{\ga_{\ell}}\bZ)^\times \\ \psi^{(i)}_{\ell}(d_{\ell}) = v^{(i)}_{\ell}} \end{substack}}}
\prod_{\ell \mid m'} f_{\ell}({\mathbf{t}}_\ell,d_\ell)
\right),
\end{equation}
where we have introduced the notation
\begin{equation} \label{defoffsub2}
f_{{\boldsymbol{\psi}}_2,{\mathbf{v}}}({\mathbf{t}}_2,d_2) := \prod_{i = 1}^2 \left| \GL_2(\bZ/2^\ga \bZ)_{\psi^{(i)}_2 = v^{(i)} \atop \det \equiv d_2}^{\tr \equiv t^{(i)}_2} \right| \quad \text{ and } \quad 
f_{\ell}({\mathbf{t}}_\ell,d_\ell) := \prod_{i = 1}^2 \left| \GL_2(\bZ/\ell^{\ga_{\ell}}\bZ)_{\det \equiv d_{\ell}}^{\tr \equiv t_{\ell}^{(i)}} \right|.
\end{equation}
Noting the condition 
\begin{equation} \label{conditiononi}
\psi^{(i)}_{\ell}(d_{\ell}) = v_{\ell}^{(i)} \quad\quad (i = 1, 2)
\end{equation}
in the inner sum above and considering \eqref{identifyingpsisubell}, we see that this sum becomes an empty sum exactly when either some prime $\ell$ does not divide $D_i'$ and $v^{(i)}_{\ell} = -1$, or when some prime $\ell$ divides both $D_1$ and $D_2$ and $v^{(1)}_\ell \neq v^{(2)}_\ell$.  Thus, we may impose the conditions
\begin{equation} \label{vcondition}
\begin{split}
v^{(i)}_{\ell} &= 1 \; \text{ whenever } \; \ell \nmid D_i' \\
v^{(1)}_\ell &= v^{(2)}_\ell \; \text{ whenever } \; \ell \mid \gcd(D_1',D_2')
\end{split}
\end{equation}
in the outer sum without affecting the expression.
Further noting that
\[
\begin{cases}
\psi^{(1)}_{\ell}(\cdot) = \psi^{(2)}_{\ell}(\cdot) = \left( \frac{\cdot}{\ell} \right) & \text{ if } \ell \mid D_1' \text{ and } \ell \mid D_2' \\
\psi^{(1)}_{\ell}(\cdot) = 1 \text{ and } \psi^{(2)}_{\ell}(\cdot) = \left( \frac{\cdot}{\ell} \right) & \text{ if } \ell \nmid D_1' \text{ and } \ell \mid D_2' \\
\psi^{(1)}_{\ell}(\cdot) = \left( \frac{\cdot}{\ell} \right) \text{ and } \psi^{(2)}_{\ell}(\cdot) = 1 & \text{ if } \ell \mid D_1' \text{ and } \ell \nmid D_2',
\end{cases}
\]
we see that, for each $\ell \mid m'$, the conditions \eqref{conditiononi} and \eqref{vcondition} are equivalent to 
\begin{equation*} 
\psi^{(i_{\ell})}_{\ell}(d_{\ell}) = v_{\ell}^{(i_{\ell})},
\end{equation*}
where $i_{\ell} \in \{1, 2\}$ is any number for which $\ell \mid D_{i_\ell}'$ (in the ambiguous case that $\ell \mid D_i'$ for each $i$, one may take either $1$ or $2$ for $i_\ell$ since the value of $\psi^{(i_{\ell})}_{\ell}$ is well-defined).  We may thus replace the outer sum in \eqref{nnnnewmaingoal} with a sum over $(v_\ell) \in \{ \pm 1 \}^{\{ \ell \mid m' \}}$, obtaining that \eqref{nnnnewmaingoal} is equal to
\begin{equation} \label{nnnnnewmaingoal}
\left( \sum_{d_2 \in (\bZ/2^{\ga}\bZ)^\times} f_{{\boldsymbol{\psi}}_2,{\mathbf{v}}}({\mathbf{t}}_2,d_2) \right)
\sum_{{\begin{substack} {(v_\ell) \in \{ \pm 1 \}^{\{ \ell \mid m' \}} \\ \prod_{\ell \mid D_i'} v_\ell = v^{(i)}} \end{substack}}}
\sum_{(d_\ell) \in \prod_{\ell \mid m'} (\bZ/\ell^{\ga_{\ell}}\bZ)^\times \atop \psi_\ell(d_{\ell}) = v_{\ell}} 
\prod_{\ell \mid m'} f_{\ell}({\mathbf{t}}_\ell,d_\ell),
\end{equation}
where now $\psi_\ell$ simply denotes the Legendre symbol.  We insert this back into \eqref{nnnewmaingoal} and then further into \eqref{nnewmaingoal}, obtaining that \eqref{nnewmaingoal} is equal to
\begin{equation} \label{bigequation}
\begin{split}
&\sum_{{\begin{substack} {{\mathbf{t}} \in (\bZ/m\bZ)^2 \\ t^{(1)} + t^{(2)} = T }\end{substack}}} 
\sum_{{\mathbf{v}} \in \im {\boldsymbol{\psi}}_{m'}}
\left( \sum_{d_2 \in (\bZ/2^{\ga}\bZ)^\times} f_{{\boldsymbol{\psi}}_2,{\mathbf{v}}}({\mathbf{t}}_2,d_2) \right)
\sum_{{\begin{substack} {(v_\ell) \in \{ \pm 1 \}^{\{ \ell \mid m' \}} \\ \prod_{\ell \mid D_i'} v_\ell = v^{(i)}} \end{substack}}} 
\sum_{(d_\ell) \in \prod_{\ell \mid m'}(\bZ/\ell^{\ga_{\ell}}\bZ)^\times \atop \psi_\ell(d_{\ell}) = v_{\ell}} 
\prod_{\ell \mid m'} f_\ell({\mathbf{t}}_\ell,d_\ell) \\
=  
&\sum_{{\mathbf{v}} \in \im {\boldsymbol{\psi}}_{m'}} 
\left( \sum_{{\begin{substack} { {\mathbf{t}}_2 \in (\bZ/2^{\ga}\bZ)^2 \\ t^{(1)}_2 + t^{(2)}_2 = T_2 \\ d_2 \in (\bZ/2^{\ga}\bZ)^\times } \end{substack}}} f_{{\boldsymbol{\psi}}_2,{\mathbf{v}}}({\mathbf{t}}_2,d_2) \right)
\sum_{{\begin{substack} {(v_\ell) \in \{ \pm 1 \}^{\{ \ell \mid m' \}} \\ \prod_{\ell \mid D_i'} v_\ell = v^{(i)}} \end{substack}}} 
\prod_{\ell \mid m'} 
\sum_{{\begin{substack} {{\mathbf{t}}_\ell \in (\bZ/\ell^{\ga_{\ell}}\bZ)^2 \\ t^{(1)}_\ell + t^{(2)}_\ell = T_\ell \\ d_\ell \in (\bZ/\ell^{\ga_{\ell}}\bZ)^\times \\ \psi_\ell(d_\ell) = v_\ell} \end{substack}}}  f_\ell({\mathbf{t}}_\ell,d_\ell).
\end{split}
\end{equation}
The following lemma aids in evaluating the above summand.  
We recall the following notation:
\begin{equation} \label{defofAandB}
\begin{split}
e_\ell(T_\ell) &:= \lfloor (v_\ell(T_\ell)-1)/2 \rfloor, \quad\quad \gd_n(m) :=
\begin{cases}
1 & \text{ if } n \text{ divides } m \\
0 & \text{ otherwise,}
\end{cases} \\
E(\ell,T_\ell) &:= \frac{\ell^3(\ell+1)(\ell^{4e_\ell(T_\ell)+4}-1) + (\ell^4 + \ell^3 + 2\ell^2 + \ell - 1)(\ell^{4e_\ell(T_\ell)}-1)}{(\ell^2+1)(\ell+1)}, \\
A_\ell(T_\ell) &:= 
\begin{cases}
 \ell^5 - \ell^3 - 3\ell^2 + 1 + \frac{2E(\ell,T_\ell)}{\ell^{4e_\ell(T_\ell)+3}} + \gd_2(v_\ell(T_\ell)) \frac{(\ell^3-1)}{\ell^{4e_\ell(T_\ell)+2}} & \text{ if $\ell \mid T_\ell$} \\ 
(\ell^6 - \ell^5 - 2\ell^4 + \ell^3 + 2\ell^2)/(\ell-1) & \text{ if $\ell \nmid T_\ell$,}
\end{cases} \\
B_\ell(T_\ell) &:= 
\begin{cases}
-3\ell^2 + 1 + \frac{2E(\ell,T_\ell)}{\ell^{4e_\ell(T_\ell)+3}} + \gd_2(v_\ell(T_\ell)) \frac{(\ell^3-1)}{\ell^{4e_\ell(T_\ell)+2}} & \text{ if $\ell \mid T_\ell$} \\
(\ell^2)/(\ell-1) & \text{ if $\ell \nmid T_\ell$}.
\end{cases}
\end{split}
\end{equation}
\begin{Lemma}
Let $\ell$ be an odd prime, let ${\mathbf{t}}_\ell \in (\bZ/\ell^{\ga_{\ell}}\bZ)^2$ and $d_\ell \in (\bZ/\ell^{\ga_{\ell}}\bZ)^\times$, and let $f_\ell({\mathbf{t}}_\ell,d_\ell)$ be as defined in \eqref{defoffsub2}.  Assume that $\alpha_\ell>v_\ell(T_\ell)$. We have 
\[
\sum_{{\begin{substack} {{\mathbf{t}}_\ell \in (\bZ/\ell^{\ga_{\ell}}\bZ)^2 \\ t^{(1)}_\ell + t^{(2)}_\ell = T_\ell \\ d_\ell \in (\bZ/\ell^{\ga_{\ell}}\bZ)^\times \\ \psi_\ell(d_\ell) = \nu_\ell} \end{substack}}}  f_\ell({\mathbf{t}}_\ell,d_\ell) = \frac{\ell^{5\ga_{\ell}-5} \varphi(\ell^{\ga_\ell})}{2} \left[ A_\ell(T_\ell) + \nu_\ell B_\ell(T_\ell) \right],
\]
where $\varphi$ denotes the Euler phi function and $A_\ell(T_\ell)$ and $B_\ell(T_\ell)$ are as in \ref{defofAandB}. 
\end{Lemma} 
\begin{proof}
We let $F(-1,T_\ell)$ denote the sum above when $\nu_\ell=-1$ and put $n=\alpha_\ell$. It suffices to compute $F(-1,T_\ell)$, since the result may then be deduced by setting $A_\ell(T_\ell)=\frac{\vert G(\ell^n,T_\ell)\vert}{2}\cdot \frac{2}{(\ell-1)\ell^{6n-6}}$ and $B_\ell(T_\ell)=A_\ell(T_\ell)-\frac{2 F(-1,T_\ell)}{(\ell-1)\ell^{6n-6}}$. The computation of $\vert G(\ell^n,T_\ell)\vert$ can then be read off from Theorem \ref{primesdividingtrace} when $\ell\vert T_\ell$ and Lemma \ref{good primes} otherwise. \\

To compute $F(-1,T_\ell)$, observe that if $d_\ell \mod \ell$ is a non-square the discriminant $t_\ell^2-4d_\ell \mod \ell$ never vanishes and therefore we are counting only matrices which are not congruent to scalars modulo $\ell$. Hence applying Lemma \ref{singlematrix} with $i=0$ we obtain:  
\begin{equation}\label{eqgenT}
F(-1,T_\ell)=\sum_{t_\ell^{(1)}+t_\ell^{(2)}= T_\ell}\sum_{{\begin{substack}{d_\ell\in(\mathbb{Z}/\ell^{n}\mathbb{Z})^\times\\
\psi(d_\ell)=-1}\end{substack}}}\ell^{4n-2}\left(\ell+\left(\frac{{(t_\ell^{(1)})}^2-4d_\ell}{\ell}\right)\right)\cdot\left(\ell+\left(\frac{{(t_\ell^{(2)})}^2-4d_\ell}{\ell}\right)\right).
\end{equation}
We have for any $T_\ell$ the equality 
\begin{equation}\label{eqsum}
\sum_{t_\ell^{(1)}\in(\mathbb{Z}/\ell\mathbb{Z})}\sum_{{\begin{substack}{d_\ell\in(\mathbb{Z}/\ell^{n}\mathbb{Z})^\times\\
\psi(d_\ell)=-1}\end{substack}}}\left(\frac{(t_\ell^{(1)})^2-4d_\ell}{\ell}\right)+\left(\frac{(T_\ell-t_\ell^{(1)})^2-4d_\ell}{\ell}\right)=-(\ell-1)
\end{equation}

by exchanging the summation order and using from Lemma \ref{lemma:legendresum} that for any $T\in\bZ$ and prime  $\ell\nmid 2d_\ell$:
$$\sum_{t_\ell\in(\mathbb{Z}/\ell\mathbb{Z})}\left(\frac{(T-t_\ell)^2-4d_\ell}{\ell}\right)=-1.
$$
To compute the quantity in equation \ref{eqgenT}, it remains to evaluate the expression 
$$P(T_\ell,\ell):=\sum_{t_\ell^{(1)}\in(\mathbb{Z}/\ell\mathbb{Z})}\sum_{{\begin{substack}{d_\ell\in(\mathbb{Z}/\ell^{n}\mathbb{Z})^\times\\
\psi(d_\ell)=-1}\end{substack}}}\left(\frac{(t_\ell^{(1)})^2-4d_\ell}{\ell}\right)\cdot\left(\frac{{(T_\ell-t_\ell^{(1)})}^2-4d_\ell}{\ell}\right).
$$
Observe that for any unit $u\in(\mathbb{Z}/\ell\mathbb{Z})^\times$, we have
$P(u\cdot T_\ell,\ell)=P(T_\ell,\ell)$ provided $\ell\nmid T_\ell$, whereas we trivially obtain $P(0,\ell)=\ell(\ell-1)/2$. Moreover, we may write 
\begin{align*}
\sum_{T_\ell=0}^{\ell-1}P(T_\ell,\ell)&=\sum_{{\begin{substack}{d_\ell\in(\mathbb{Z}/\ell^{n}\mathbb{Z})^\times\\
\psi(d_\ell)=-1}\end{substack}}}\left(\sum_{t_\ell\in(\mathbb{Z}/\ell\mathbb{Z})}\left(\frac{t_\ell^2-4d_\ell}{\ell}\right)\right)^2\\
&=\sum_{{\begin{substack}{d_\ell\in(\mathbb{Z}/\ell^{n}\mathbb{Z})^\times\\
\psi(d_\ell)=-1}\end{substack}}}\left(-1\right)^2\\
&=(\ell-1)/2.
    \end{align*}
We deduce therefore that 
\begin{equation}\label{eqpl}
    P(T_\ell,\ell)=\begin{cases}
    \ell(\ell-1)/2&\text{ if }\ell\vert T_\ell\\
    -(\ell-1)/2&\text{ else. }
        \end{cases}
\end{equation}

Putting everything together, we obtain from equations \ref{eqgenT}, \ref{eqsum} and \ref{eqpl} the count: 
$$F(-1,T_\ell)=\ell^{6n-4}\cdot(\ell-1)/2\cdot
\begin{cases}
\ell^3-\ell&\text{ if }\ell\vert T_\ell\\
\ell^3-2\ell-1&\text{ else.}
\end{cases}$$
The result now follows by simple calculation from our previous considerations. 
\end{proof}

Inserting the result of the lemma into \eqref{bigequation}, we obtain
\begin{equation} \label{nbigequation}
\left| G(m,T)_{{\boldsymbol{\psi}} = 1} \right| = 
\frac{(m')^5\varphi(m')}{2^{\om(m')}\left( \rad(m') \right)^5} \sum_{{\mathbf{v}} \in \im {\boldsymbol{\psi}}_{m'}} 
S_{{\boldsymbol{\psi}}_2}(T_2,{\mathbf{v}})
\sum_{{\begin{substack} {(v_\ell) \in \{ \pm 1 \}^{\{ \ell \mid m' \}} \\ \prod_{\ell \mid D_i'} v_\ell = v^{(i)}} \end{substack}}} 
\prod_{\ell \mid m'} \left( A_\ell(T_\ell) + v_\ell B_\ell(T_\ell) \right),
\end{equation}
where
\[
S_{{\boldsymbol{\psi}}_2}(T_2,{\mathbf{v}}) :=  \sum_{{\begin{substack} { {\mathbf{t}}_2 \in (\bZ/2^\ga\bZ)^2 \\ t^{(1)}_2 + t^{(2)}_2 = T_2 \\ d_2 \in (\bZ/2^\ga\bZ)^\times } \end{substack}}} f_{{\boldsymbol{\psi}}_2,{\mathbf{v}}}({\mathbf{t}}_2,d_2).
\]
Formally expanding out the innermost product of binomials and reversing summation, we observe that
\[
\sum_{{\begin{substack} {(v_\ell) \in \{ \pm 1 \}^{\{ \ell \mid m' \}} \\ \prod_{\ell \mid D_i'} v_\ell = v^{(i)}} \end{substack}}} 
\prod_{\ell \mid m'} 
(A_\ell + v_\ell B_\ell)
=
\sum_{S \subset \{ \ell \mid m' \}}
\sum_{{\begin{substack} {(v_\ell) \in \{ \pm 1 \}^{\{ \ell \mid m' \}} \\ \prod_{\ell \mid D_i'} v_\ell = v^{(i)}} \end{substack}}}
\left( \prod_{\ell \in S^c} A_\ell + \prod_{\ell \in S} v_\ell \prod_{\ell \in S} B_\ell \right).
\]
In the sum over $(v_\ell)$ on the second term $\ds \prod_{\ell \in S} v_\ell \prod_{\ell \in S} B_\ell$, we have perfect cancellation unless $S = \{ \ell \mid D_1' \}$ or $S = \{ \ell \mid D_2' \}$.  Thus we have
\[
\sum_{{\begin{substack} {(v_\ell) \in \{ \pm 1 \}^{\{ \ell \mid m' \}} \\ \prod_{\ell \mid D_i} v_\ell = v^{(i)}} \end{substack}}} 
\prod_{\ell \mid m'} 
(A_\ell + v_\ell B_\ell)
=
\begin{cases}
\ds 2^{\omega(m') - 2} \left( \prod_{\ell \mid m'} \left( A_\ell + 1 \right) +
v^{(1)} \prod_{\ell \mid D_1'} B_\ell + 
v^{(2)} \prod_{\ell \mid D_2'} B_\ell \right) & \text{ if } |D_1'| \neq |D_2'| \\
\ds 2^{\omega(m') - 1} \left( \prod_{\ell \mid m'} \left( A_\ell + 1 \right) +
v^{(1)} \prod_{\ell \mid D_1'} B_\ell \right) & \text{ if } |D_1'| = |D_2'|.
\end{cases}
\]
Inserting this back into \eqref{nbigequation}, we find that our original expression \eqref{nnewmaingoal} is equal to
\begin{equation} \label{nnnnnnnewmaingoal}
\begin{cases}
\ds \frac{(m')^5\varphi(m')}{4 \left( \rad(m') \right)^5} 
\left[
\mathfrak{S}_{{\boldsymbol{\psi}}_2}(T_2) \prod_{\ell \mid m'} (A_\ell + 1) 
+  
\mathfrak{S}_{{\boldsymbol{\psi}}_2}^{(1)}(T_2) \prod_{\ell \mid D_1'} B_\ell 
 + 
\mathfrak{S}_{{\boldsymbol{\psi}}_2}^{(2)}(T_2) \prod_{\ell \mid D_2'} B_\ell 
\right] & \text{ if } |D_1'| \neq |D_2'| \\
\ds \frac{(m')^5\varphi(m')}{2 \left( \rad(m') \right)^5}
\left[
\mathfrak{S}_{{\boldsymbol{\psi}}_2}(T_2) \prod_{\ell \mid m'} (A_\ell + 1) 
+  
\mathfrak{S}_{{\boldsymbol{\psi}}_2}^{(1)}(T_2) \prod_{\ell \mid D_1'} B_\ell 
\right] & \text{ if } |D_1'| = |D_2'|,
\end{cases}
\end{equation}
where we have used the fact that
\[
\begin{split}
\mathfrak{S}_{{\boldsymbol{\psi}}_2}(T_2) &:=  \sum_{{\mathbf{v}} \in \im {\boldsymbol{\psi}}_{m'}} S_{{\boldsymbol{\psi}}_2}(T_2,{\mathbf{v}}), \\
\mathfrak{S}_{{\boldsymbol{\psi}}_2}^{(i)}(T_2) &:= \sum_{{\mathbf{v}} \in \im {\boldsymbol{\psi}}_{m'}} v^{(i)} S_{{\boldsymbol{\psi}}_2}(T_2,{\mathbf{v}}).
\end{split}
\]
This concludes the proof of the proposition.
\end{proof}
We now establish formulas which lead to an explicit evaluation of the terms $\mathfrak{S}_{{\boldsymbol{\psi}}_2}(T_2)$ and $\mathfrak{S}_{{\boldsymbol{\psi}}_2}^{(i)}(T_2)$ above. To that end, recall that we have set $$\psi_2^{(i)}(\cdot)=\varepsilon(\cdot)\left(\frac{D_i}{\det(\cdot)}\right)\cdot \left(\frac{\det(\cdot)}{\vert D_i'\vert}\right).$$ 
By quadratic reciprocity, this yields the following cases for $\psi_2^{(i)}$ when $i=1,2$: 
\begin{equation}\label{psiprimetwo}
\psi_2^{(i)}(g)=
\begin{cases}
\varepsilon(g) & \text{ if }D_i\equiv 1 \mod 4\\
\varepsilon(g) \cdot \chi_4(\det(g)) & \text{ if }D_i=4D_i'\text{ and }D_i'\equiv 3\mod 4\\
\varepsilon(g) \cdot\chi_8(\det(g)) & \text{ if }D_i=4D_i'\text{ and }D_i'\equiv 2 \mod 8\\
\varepsilon(g) \cdot\chi_4\chi_8(\det(g)) & \text{ if }D_i=4D_i'\text{ and }D_i'\equiv 6 \mod 8,\\
\end{cases}
\end{equation}
where the characters $\chi_4$ and $\chi_8$ are the characters modulo $4$ and $8$ respectively defined by $\chi_4(x)=\left(\frac{-1}{x}\right)$ and $\chi_8(x)=\left(\frac{2}{x}\right)$ for odd integers $x$. In the following lemma we also set
\begin{equation} \label{defofentd}
e(n,t,d) := \min \left\{ \left\lceil \frac{n}{2} \right\rceil, \left\lfloor \frac{v_2(t^2-4d)}{2} \right\rfloor \right\}.
\end{equation}
Furthermore, for any set of formulas $S$, we abbreviate by $\gd(S)$ the truth indicator of $S$:
\[
\gd(S) :=
\begin{cases}
1 & \text{ if every formula in $S$ is true} \\
0 & \text{ otherwise}.
\end{cases}
\]
Finally, we will employ the abbreviations
\begin{equation} \label{defofgDandv}
\gD = \gD(t,d) := t^2-4d, \quad\quad v = v(t,d,n) := 
\begin{cases}
v_2(\gD(t,d)) & \text{ if } \gD(t,d) \neq 0 \\
n + 2 & \text{ if } \gD(t,d) = 0.
\end{cases}
\end{equation}
\begin{Lemma} \label{totalfixedtraceanddetcountlemma}
Let $t, d \in \bZ$ with $d$ odd, let $\gD \in \bZ$ and $v \in \bZ_{\geq 0}$ be defined by \eqref{defofgDandv} and let $e(n,t,d)$ be defined by \eqref{defofentd}.  The quantity $\ds | \GL_2(\bZ/2^n\bZ)^{\tr \equiv t}_{\det \equiv d} |$ is equal to
\[
\begin{split}
&2^{2n-1} 3 \left(1 - 2^{-e(n,t,d)} \right) \; + \; \gd\left( \begin{matrix} v < n, \, v \in 2\bZ, \\ \gD/2^v \equiv 0,1 \hspace{-.05in} \mod 4 \end{matrix} \right) 2^{2n - 2 - v/2} \left(4 + 2 \chi_8\left( \gD/2^v \right) \right) \\ 
+ \; &\gd \left( \begin{matrix} v \geq n, \, n \in 2\bZ, \\ \gD/2^n \equiv 0, 1 \hspace{-.05in} \mod 4 \end{matrix} \right) 2^{3n/2-2}  \left(3 + \chi_4\left( \gD/2^n \right) \right) \; + \; 
\gd \left( \begin{matrix} v \geq n+1, \\ n \notin 2\bZ \end{matrix} \right) 2^{3(n-3)/2} \left( 7 - (-1)^{\gD/2^{n+1}}\right) \\
+ \; &\gd \left( v \geq n + 2 \right) 2^{3n - 3 \lceil n/2 \rceil - 2}.
\end{split}
\]
\end{Lemma}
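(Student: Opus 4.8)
The plan is to stratify by $2$-adic distance to scalar matrices. Since $d$ is a unit, every matrix in $M_{2\times 2}(\bZ/2^n\bZ)$ with determinant $\equiv d$ is automatically invertible, and every such matrix is congruent to a scalar matrix modulo $2^i$ for a unique maximal $i \in \{0,1,\dots,n\}$; therefore
\[
\left| \GL_2(\bZ/2^n\bZ)^{\tr \equiv t}_{\det \equiv d} \right| = \sum_{i=0}^{n} h_{i,n}(t,d),
\]
where $h_{i,n}(t,d)$ counts the matrices congruent to a scalar exactly modulo $2^i$. Each term is computed by Lemma \ref{notscalar} (for $i = 0$) and Proposition \ref{primetwocount} (for $1 \le i < n$), and $h_{n,n}(t,d) \in \{0,1\}$ is the elementary count of scalar matrices with the prescribed trace and determinant modulo $2^n$, which is nonzero precisely when $2^{n+2} \mid \gD$. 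These inputs require $n \ge 3$; the cases $n = 1, 2$ are finite and verified directly against the asserted formula.

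First I would rewrite the discriminant conditions of Proposition \ref{primetwocount} in terms of $\gD = t^2 - 4d$ and $v = v(t,d,n)$ as in \eqref{defofgDandv}. Because $d$ is odd, one has $v_2(\gD) = 0$ when $t$ is odd and $v_2(\gD) = 2 + v_2((t/2)^2 - d) \ge 2$ when $t$ is even (with $v = n+2$ by convention when $\gD = 0$). The stratum for index $i$ is nonempty precisely when $2^{\min(2i,n)} \mid \gD$; which case of Proposition \ref{primetwocount} applies is governed by comparing $2i$ with $n$ (the thresholds $i < n/2$, $i \in \{\lfloor n/2\rfloor, \lceil n/2\rceil\}$, $i \ge n/2 + 1$, $i = n$), and the residue class entering the count is $\gD/2^{2i} \bmod 4$ or $\bmod 8$; this is $\equiv 0 \bmod 4$ whenever $2i < v$ and is determined by $\gD/2^v \bmod 8$ (up to squares) when $2i = v$.

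Next I would carry out the sum over $i$. For the \emph{bulk} range $0 \le i < e(n,t,d) = \min\{\lceil n/2\rceil, \lfloor v/2\rfloor\}$ one checks that $i < n/2$ and $2i < v$, so Proposition \ref{primetwocount}(1) gives the generic value $h_{i,n}(t,d) = 3 \cdot 2^{2n-2-i}$; summing the resulting geometric series yields
\[
\sum_{i=0}^{e(n,t,d)-1} 3 \cdot 2^{2n-2-i} = 3 \cdot 2^{2n-1}\bigl(1 - 2^{-e(n,t,d)}\bigr),
\]
the first term of the claimed formula. The remaining contribution comes from the at most two \emph{boundary} strata near $i \approx \min\{\lceil n/2\rceil, \lfloor v/2\rfloor\}$ together with, when $2^n \mid \gD$, the \emph{high} strata $\lceil n/2\rceil \le i \le n$ (the top one being the genuine scalar matrices). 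I would organize this into the four cases distinguishing whether $\lfloor v/2\rfloor$ or $\lceil n/2\rceil$ is smaller and the parities of $v$ and $n$; in each case the relevant sub-case of Proposition \ref{primetwocount} produces exactly one of the indicator terms $\gd(\cdots)\cdot 2^{(\cdots)}$ in the statement, with the Kronecker symbols $\chi_8(\gD/2^v)$, $\chi_4(\gD/2^n)$ or the sign $(-1)^{\gD/2^{n+1}}$ appearing through the $\bmod 8$ / $\bmod 4$ refinement of the discriminant, and the geometric tail of the high strata (each of the form $2^{3(n-i)-2}\cdot 7$, cf. Proposition \ref{primetwocount}(2)) summing to the final term $\gd(v \ge n+2)\, 2^{3n - 3\lceil n/2\rceil - 2}$.

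The main obstacle will be the case analysis at the boundary $i \approx n/2$: keeping straight the joint parities of $n$ and $v$, the $\bmod 4$ versus $\bmod 8$ refinement of the discriminant residue, and the precise normalizations of $\chi_4$ and $\chi_8$ on odd integers, so that the boundary and high strata assemble into exactly the four indicator terms with the stated exponents. Useful sanity checks are the degenerate case $t$ odd ($v = 0$, $e(n,t,d) = 0$), where only the first indicator term survives and must reproduce the elementary count of matrices over $\bZ/2^n\bZ$ of fixed odd trace and fixed unit determinant, and the extreme case $\gD = 0$ ($v = n+2$), where every stratum up to $i = n$ contributes.
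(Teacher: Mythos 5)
Your approach is exactly the paper's, whose entire proof is the one line ``This follows from Proposition \ref{primetwocount}'': partition $\GL_2(\bZ/2^n\bZ)^{\tr\equiv t}_{\det\equiv d}$ by the exact $2$-adic scalar depth $i$, get the first term as the geometric sum of the generic strata $3\cdot 2^{2n-2-i}$ over $0\le i<e(n,t,d)$, and match the boundary and high strata to the four indicator terms. One small correction to your plan: for $n\ge 2$ and $t$ even the top stratum satisfies $h_{n,n}(t,d)\in\{0,2\}$, not $\{0,1\}$, since the two lifts $\lambda\equiv t/2 \bmod 2^{n-1}$ have the same square modulo $2^n$; the value $2$ is what makes the tail $\sum_{i=\lceil n/2\rceil+1}^{n-1}7\cdot 2^{3n-3i-2}+h_{n,n}$ telescope to exactly $2^{3n-3\lceil n/2\rceil-2}$ when $v\ge n+2$.
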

\begin{proof}
This follows from Proposition \ref{primetwocount}.
\end{proof}
The next lemma is the result of a tedious computation, making repeated use of Proposition \ref{primetwocount}.
\begin{Lemma} \label{tediouslemma}
Distinguishing cases as in \ref{psiprimetwo} we obtain the following counts at the prime $2$:\\
If $\psi_2 = \ve$ and $n \geq 1$, then we have
\[
\left| \GL_2(\bZ/2^n\bZ)^{\tr \equiv t}_{{\begin{substack} {\det \equiv d \\ \psi_2 = -1} \end{substack}}} \right| = 
\begin{cases}
3 \cdot 4^{n-1} & \text{ if $t$ is even } \\
0 & \text{ if $t$ is odd.}
\end{cases}
\]
If $\psi_2 = \ve \chi_4$ and $n \geq 3$, then we have
\[
\left| \GL_2(\bZ/2^n\bZ)^{\tr \equiv t}_{{\begin{substack} {\det \equiv d \\ \psi_2 = -1} \end{substack}}} \right| = 
\begin{cases}
3 \cdot 4^{n-1} & \text{ if $t$ is even and $\chi_4(d) = 1$ } \\
2 \cdot 4^{n-1} & \text{ if $t$ is odd and $\chi_4(d) =  -1$ } \\
3 \cdot 4^{n-1} & \text{ if $t \equiv d+1 \mod 8$ and $\chi_4(d) = -1$ } \\
4^{n-1} & \text{ if $t \equiv d + 5 \mod 8$ and $\chi_4(d) = -1$ } \\
0 & \text{ otherwise.}
\end{cases}
\]
If $\psi_2 = \ve \chi_8$ and $n \geq 5$, then we have
\[
\left| \GL_2(\bZ/2^n\bZ)^{\tr \equiv t}_{{\begin{substack} {\det \equiv d \\ \psi_2 = -1} \end{substack}}} \right| = 
\begin{cases}
3 \cdot 4^{n-1} & \text{ if $t$ is even and $\chi_8(d) = 1$ } \\
2 \cdot 4^{n-1} & \text{ if $t$ is odd and $\chi_8(d) = -1$ } \\
3 \cdot 4^{n-1} & \text{ if $t \equiv 4 \mod 8$ and $d \equiv 3 \mod 8$ } \\
1 \cdot 4^{n-1} & \text{ if $t \equiv 0 \mod 8$ and $d \equiv 3 \mod 8$ } \\
6 \cdot 4^{n-2} & \text{ if $t \equiv 2 \mod 8$ and $d \equiv t + 3 \mod 16$ } \\
6 \cdot 4^{n-2} & \text{ if $t \equiv 6 \mod 8$ and $d \equiv t + 7 \mod 16$ } \\
\left[ 10 + 2 \chi_8\left( \frac{\gD}{16} \right) \right] \cdot 4^{n-2} & \text{ if $t \equiv 2 \mod 8$ and $d \equiv t - 5 \mod 16$ } \\
\left[ 10 + 2 \chi_8\left( \frac{\gD}{16} \right) \right] \cdot 4^{n-2} & \text{ if $t \equiv 6 \mod 8$ and $d \equiv t - 1 \mod 16$ } \\
0 & \text{ otherwise.}
\end{cases}
\]
Finally, if If $\psi_2 = \ve \chi_8 \chi_4$ and $n \geq 5$, then we have
\[
\left| \GL_2(\bZ/2^n\bZ)^{\tr \equiv t}_{{\begin{substack} {\det \equiv d \\ \psi_2 = -1} \end{substack}}} \right| = 
\begin{cases}
3 \cdot 4^{n-1} & \text{ if $t$ is even and $\chi_8(d)\chi_4(d) = 1$ } \\
2 \cdot 4^{n-1} & \text{ if $t$ is odd and $\chi_8(d)\chi_4(d) = -1$ } \\
3 \cdot 4^{n-1} & \text{ if $t \equiv 0 \mod 8$ and $d \equiv 7 \mod 8$ } \\
1 \cdot 4^{n-1} & \text{ if $t \equiv 4 \mod 8$ and $d \equiv 7 \mod 8$ } \\
6 \cdot 4^{n-2} & \text{ if $t \equiv 2 \mod 8$ and $d \equiv t + 3 \mod 16$ } \\
6 \cdot 4^{n-2} & \text{ if $t \equiv 6 \mod 8$ and $d \equiv t + 7 \mod 16$ } \\
\left[ 10 + 2 \chi_8\left( \frac{\gD}{16} \right) \right] \cdot 4^{n-2} & \text{ if $t \equiv 2 \mod 8$ and $d \equiv t - 5 \mod 16$ } \\
\left[ 10 + 2 \chi_8\left( \frac{\gD}{16} \right) \right] \cdot 4^{n-2} & \text{ if $t \equiv 6 \mod 8$ and $d \equiv t - 1 \mod 16$ } \\
0 & \text{ otherwise.}
\end{cases}
\]

\end{Lemma}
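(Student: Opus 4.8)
The plan is to reduce the count to two already-available quantities and then carry out a case bookkeeping. Fix one of the four characters and write $\psi_2 = \ve\cdot(\chi\circ\det)$, where $\chi$ is a Dirichlet character modulo $8$ (trivial in the first case, and $\chi_4$, $\chi_8$, or $\chi_4\chi_8$ in the others). Since the determinant $d$ is held fixed and odd, $\chi(\det g)=\chi(d)$ is a constant sign, so the constraint $\psi_2(g)=-1$ becomes $\ve(\bar g)=-\chi(d)$, where $\bar g$ is the reduction of $g$ modulo $2$. Thus the lemma amounts to counting the matrices in $\GL_2(\bZ/2^n\bZ)$ of trace $\equiv t$ and determinant $\equiv d$ with a prescribed value of $\ve(\bar g)$.

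Next I would analyze $\ve$ on $\GL_2(\bZ/2\bZ)\cong S_3$: it is the sign character, equal to $+1$ on the identity and on the two elements of order $3$ (those with irreducible characteristic polynomial $x^2+x+1$, equivalently those of odd trace) and equal to $-1$ on the three involutions (the non-identity matrices with eigenvalue $1$, equivalently those of even trace). Hence $\ve(\bar g)=1$ whenever $t$ is odd, while for $t$ even one has $\ve(\bar g)=1$ if and only if $g\equiv I \bmod 2$. Writing $N(t,d,n):=|\GL_2(\bZ/2^n\bZ)^{\tr\equiv t}_{\det\equiv d}|$ and letting $N_I(t,d,n)$ be the number of such matrices that are congruent to $I$ modulo $2$, the count sought in the lemma is therefore $0$ or $N(t,d,n)$ when $t$ is odd (according to the sign $\chi(d)$) and $N_I(t,d,n)$ or $N(t,d,n)-N_I(t,d,n)$ when $t$ is even.

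It then remains to make $N$ and $N_I$ explicit. The count $N(t,d,n)$ is exactly Lemma \ref{totalfixedtraceanddetcountlemma}. For $N_I(t,d,n)$, note that $I$ is the only scalar matrix invertible modulo $2$, so the matrices of trace $\equiv t$, determinant $\equiv d$ that reduce to $I$ are precisely $\bigsqcup_{i\ge 1}\mathcal{H}_{i,n}(t,d)$; hence $N_I(t,d,n)=\sum_{i=1}^{n} h_{i,n}(t,d)$, which I would evaluate term by term from Proposition \ref{primetwocount} and sum as a geometric series, obtaining a closed form depending only on $v:=v_2(t^2-4d)$ (with $v:=n+2$ if $t^2=4d$) and on the residue of $(t^2-4d)/2^v$ modulo $8$. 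Translating the sign of $\chi(d)$ into congruences on $d$ — $\chi_4(d)=1\iff d\equiv 1\bmod 4$, $\chi_8(d)=1\iff d\equiv \pm1\bmod 8$, $\chi_4\chi_8(d)=1\iff d\equiv 1,3\bmod 8$ — and re-expressing $v$ and $(t^2-4d)/2^v\bmod 8$ in terms of $t\bmod 8$ or $t\bmod 16$, $d\bmod 8$ or $d\bmod 16$, and the symbol $\chi_8(\gD/16)$, one reads off the four tables. The hypotheses $n\ge 3$, resp. $n\ge 5$, are what is needed for Proposition \ref{primetwocount} to apply across the relevant range of $i$ and for the congruence condition on $d$ to be meaningful; the finitely many smaller values of $n$ in the $\psi_2=\ve$ case are checked by hand.

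The hard part is not conceptual but organizational. Evaluating $\sum_{i=1}^n h_{i,n}(t,d)$ requires splitting Proposition \ref{primetwocount} into its three regimes in $i$ (below, at, and above the middle exponent near $n/2$), splitting further within each regime on the residue of $2^{-2i}(t^2-4d)$ modulo $8$, and then merging with the case structure already present in Lemma \ref{totalfixedtraceanddetcountlemma}; performing this for all four characters and both signs of $\chi(d)$ is what produces the long list of cases, and the only real danger is mis-tracking a power of $2$ or dropping a boundary term. I would organize the whole computation around the single invariant $v=v_2(t^2-4d)$ and sanity-check each resulting case against a brute-force enumeration for small $n$.
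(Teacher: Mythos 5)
Your proposal is correct and amounts to the same argument the paper has in mind — the paper offers no written proof beyond "a tedious computation, making repeated use of Proposition \ref{primetwocount}" — and your organizing observation that, for fixed odd $d$, the condition $\psi_2(g)=-1$ reduces to prescribing $\ve(\bar g)$, hence to counting either $h_{0,n}(t,d)$ (via Lemma \ref{notscalar}/Lemma \ref{totalfixedtraceanddetcountlemma}) or $\sum_{i\geq 1}h_{i,n}(t,d)$ (via Proposition \ref{primetwocount}), is exactly the right way to structure that computation. Spot-checking several table entries (e.g.\ the $3\cdot 4^{n-1}$, $4^{n-1}$, $6\cdot 4^{n-2}$ and $\bigl[10+2\chi_8(\gD/16)\bigr]\cdot 4^{n-2}$ cases) against your recipe reproduces the stated values.
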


We remark that Lemmas \ref{totalfixedtraceanddetcountlemma} and \ref{tediouslemma} result in explicit evaluations of 
\[
\left| \GL_2(\bZ/2^n\bZ)^{\tr \equiv t}_{{\begin{substack} {\det \equiv d \\ \psi_2 = \pm 1} \end{substack}}} \right|.
\]
These evaluations, in turn, lead to an evaluation of the quantities $\mathfrak{S}_{{\boldsymbol{\psi}_2}}(T)$ and $\mathfrak{S}_{{\boldsymbol{\psi}_2}}^{(i)}(T)$, via the formulas
\[
\begin{split}
\mathfrak{S}_{{\boldsymbol{\psi}_2}}(T) &= \sum_{v \in \im \psi_{m'}} \sum_{{\begin{substack} { \boldsymbol{t} \in (\bZ/2^\ga \bZ)^2 \\ t^{(1)} + t^{(2)} \equiv T \mod 2^\ga \\ d \in (\bZ/2^\ga \bZ)^\times } \end{substack}}} \prod_{i = 1}^2 \left| \GL_2(\bZ/2^\ga\bZ)^{\tr \equiv t^{(i)}}_{{\begin{substack} {\det \equiv d \\ \psi_2^{(i)} = v^{(i)} } \end{substack}}} \right|, \\
\mathfrak{S}_{{\boldsymbol{\psi}_2}}(T) &= \sum_{v \in \im \psi_{m'}} v^{(i)} \sum_{{\begin{substack} { \boldsymbol{t} \in (\bZ/2^\ga \bZ)^2 \\ t^{(1)} + t^{(2)} \equiv T \mod 2^\ga \\ d \in (\bZ/2^\ga \bZ)^\times } \end{substack}}} \prod_{i = 1}^2 \left| \GL_2(\bZ/2^\ga\bZ)^{\tr \equiv t^{(i)}}_{{\begin{substack} {\det \equiv d \\ \psi_2^{(i)} = v^{(i)} } \end{substack}}} \right|.
\end{split} 
\]
Finally, by virtue of Proposition \ref{badprimesmainprop}, the evaluation of these quantities leads to an explicit evaluation of the factor at $m_{E_1\times E_2}$ (the conductor of the open subgroup $\im (\rho_{E_1\times E_2}) \subseteq G(\hat{\mbz})$) of the constant appearing in Conjecture \ref{conj:main}.  The other factors, as presented in Theorem \ref{thm:mainformulas}, have already been previously computed in Section \ref{section:explicit constants}.

\section{Numerical evidence}\label{section:numerical}

We now present the results of some computations geared towards gathering evidence for Conjecture \ref{conj:main}. The results of the previous sections provide explicit formulas for the constants appearing therein when $E_1 \times E_2$ is a Serre pair, summarized in Theorem \ref{thm:mainformulas}. Using the computational packages MAGMA and SageMath, we worked with the explicit family of Serre pairs found in \cite{MR3557121}, wherein it is shown that, for primes $\ell_1, \ell_2$ satisfying the conditions
 \begin{equation} \label{danielsetalserrepairscondition}
  \ell_1, \ell_2 \notin \{ 2, 7 \}, \quad\quad \gcd(432\ell_1^2+\ell_1, 432\ell_2^2+\ell_2)=1,
  \end{equation}
 the elliptic curves  
 \begin{equation} \label{defofEell1andEell2}
 \begin{split}
 E_{\ell_1}: &y^2+xy = x^3 + \ell_1, \\
 E_{\ell_2}: &y^2 + xy = x^3 + \ell_2
 \end{split}
 \end{equation}
form a Serre pair. We note that each $E_{\ell_i}$ has discriminant $\Delta_{\ell_i}=-432\ell_i^2-\ell_i$. 
With all of this in hand, we computed a whole range of examples for distinct primes $(\ell_1,\ell_2)$ satisfying \eqref{danielsetalserrepairscondition}. We computed the actual count
\begin{equation} \label{piactualdef}
\pi_{E_{\ell_1} \times E_{\ell_2}, T}^{\actual}(x) := \# \{ p \leq x : p \nmid N_{E_{\ell_1}} N_{E_{\ell_2}}, \, a_p(E_{\ell_1}) + a_p(E_{\ell_2}) = T \} 
\end{equation}
for $x = 10^7$ and all non-zero integers $T$ in the Hasse interval $|T| \leq 4\sqrt{x}$.  Furthermore, for all such non-zero $T$, we computed a  value closely related to the one predicted by Conjecture \ref{conj:main}.  In the spirit of the computations carried out in \cite{baierjones}, we calculated the value
\begin{equation} \label{pipreddef}
\pi_{E_{\ell_1} \times E_{\ell_2}, T}^{\pred}(x) := \frac{C(E_1 \times E_2,T)}{2 \Phi(0)} \int_{\max \left\{ 2, (T/2g)^2 \right\}}^x \Phi \left( \frac{T}{\sqrt{t}} \right) \frac{1}{2\sqrt{t} \log t} \, dt,
\end{equation}
which is asymptotic to what is predicted by Conjecture \ref{conj:main} for fixed $T$ as $x \rightarrow \infty$ but which is more accurate when $T$ is on the scale of $\sqrt{x}$ (see Remark \ref{integralexpressionremark}).  Furthermore, we computed the absolute error $\ds \mc{E}_{E_{\ell_1} \times E_{\ell_2}, T}^{\abs}(x)$, relative error $\ds \mc{E}_{E_{\ell_1} \times E_{\ell_2}, T}^{\rel}(x)$ and the ``Central Limit Theorem error'' $\ds \mc{E}_{E_{\ell_1} \times E_{\ell_2}, T}^{\CLT}(x)$, defined respectively by
\begin{equation} \label{formsoferror}
\begin{split}
\mc{E}_{E_{\ell_1} \times E_{\ell_2}, T}^{\abs}(x) &:= \pi_{E_{\ell_1} \times E_{\ell_2}, T}^{\actual}(x) - \pi_{E_{\ell_1} \times E_{\ell_2}, T}^{\pred}(x) \\
\mc{E}_{E_{\ell_1} \times E_{\ell_2}, T}^{\rel}(x) &:= \frac{\pi_{E_{\ell_1} \times E_{\ell_2}, T}^{\actual}(x) - \pi_{E_{\ell_1} \times E_{\ell_2}, T}^{\pred}(x)}{\pi_{E_{\ell_1} \times E_{\ell_2}, T}^{\pred}(x)} \\
\mc{E}_{E_{\ell_1} \times E_{\ell_2}, T}^{\CLT}(x) &:= \frac{\pi_{E_{\ell_1} \times E_{\ell_2}, T}^{\actual}(x) - \pi_{E_{\ell_1} \times E_{\ell_2}, T}^{\pred}(x)}{\sqrt{\pi_{E_{\ell_1} \times E_{\ell_2}, T}^{\pred}(x)}}.
\end{split}
\end{equation}
With all of the results in this paper, such computations should be easily reproducible and we present here a few representative examples for the Serre pairs $E_3 \times E_{11}$, $E_{31} \times E_{107}$, and $E_{79} \times E_{107}$.  For the first pair, we display data plots for the sets
\[
\begin{split}
\left\{ \left( T, \pi_{E_{\ell_1} \times E_{\ell_2}, T}^{\pred}(x) \right) : |T| \leq 4\sqrt{x} \right\}, \\ 
\left\{ \left( T, \pi_{E_{\ell_1} \times E_{\ell_2}, T}^{\actual}(x) \right) : |T| \leq 4\sqrt{x} \right\},
\end{split}
\]
and similarly for each of the error functions in \eqref{formsoferror}; for the latter two pairs, we only display such data for $\pi_{E_{\ell_1} \times E_{\ell_2}, T}^{\pred}(x)$, for $\pi_{E_{\ell_1} \times E_{\ell_2}, T}^{\actual}(x)$ and for absolute error $\mc{E}_{E_{\ell_1} \times E_{\ell_2}, T}^{\abs}(x)$ (see Figures \ref{fig:Pred311p10M} through \ref{fig:Diff79107p10M}).

\begin{figure}[H]
\centering
\includegraphics[width=12cm]{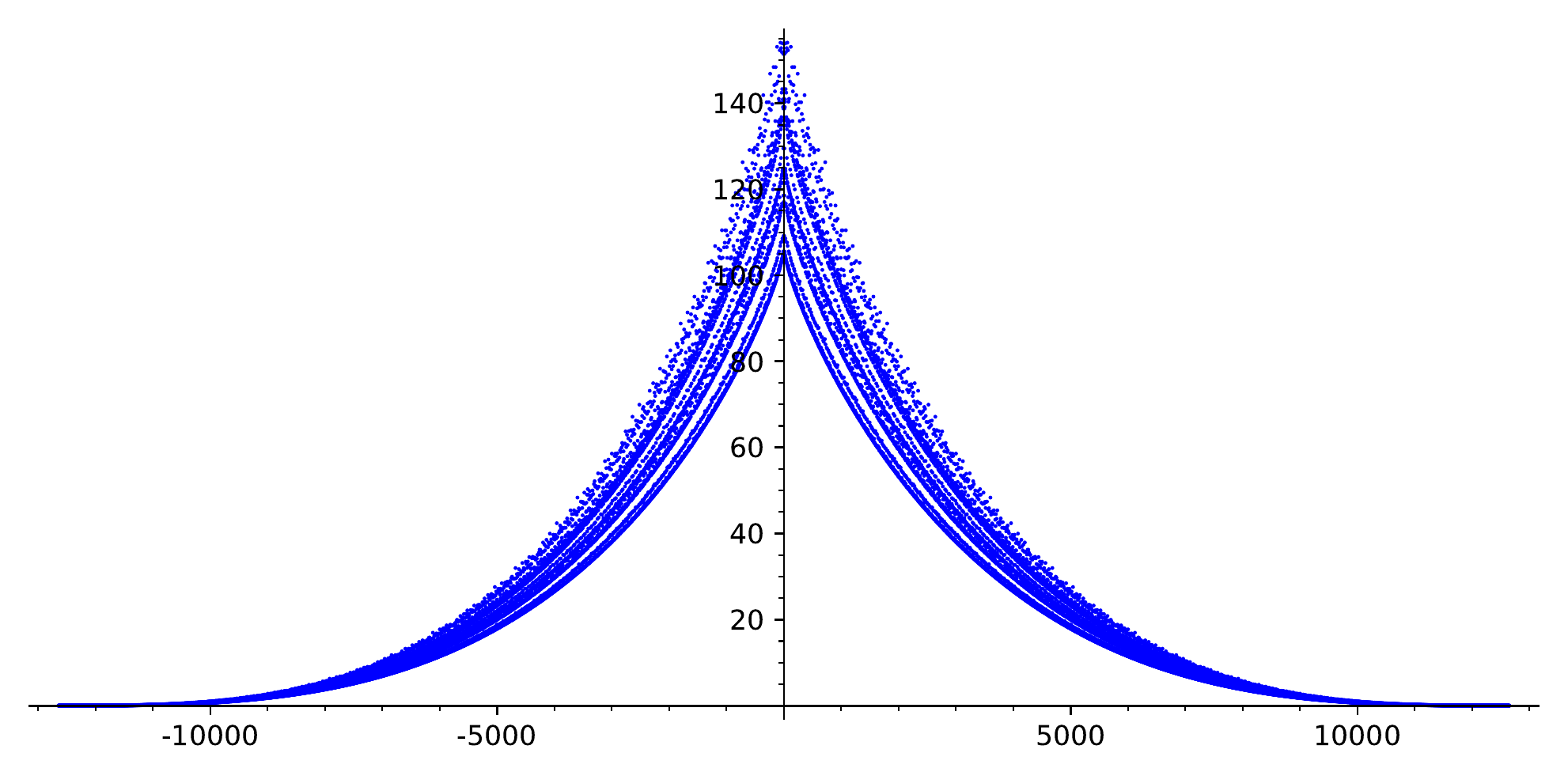}
\caption{$\pi_{E_{3} \times E_{11}, T}^{\pred}(10^7)$}
\label{fig:Pred311p10M}
\end{figure}

\begin{figure}[H]
\centering
\includegraphics[width=12cm]{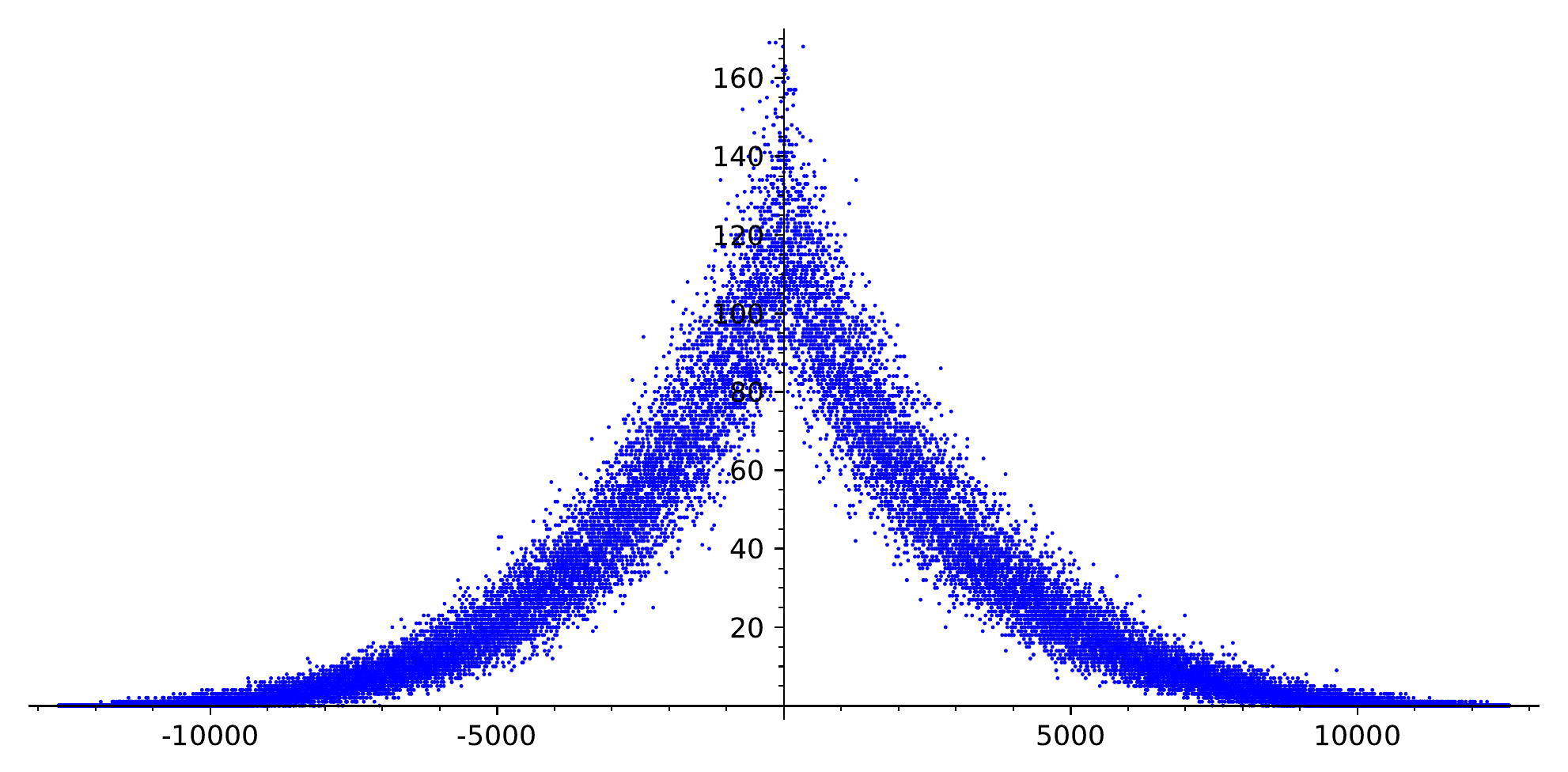}
\caption{$\pi_{E_{3} \times E_{11}, T}^{\actual}(10^7)$}
\label{fig:Actual311p10M}
\end{figure}

\begin{figure}[H]
\centering
\includegraphics[width=12cm]{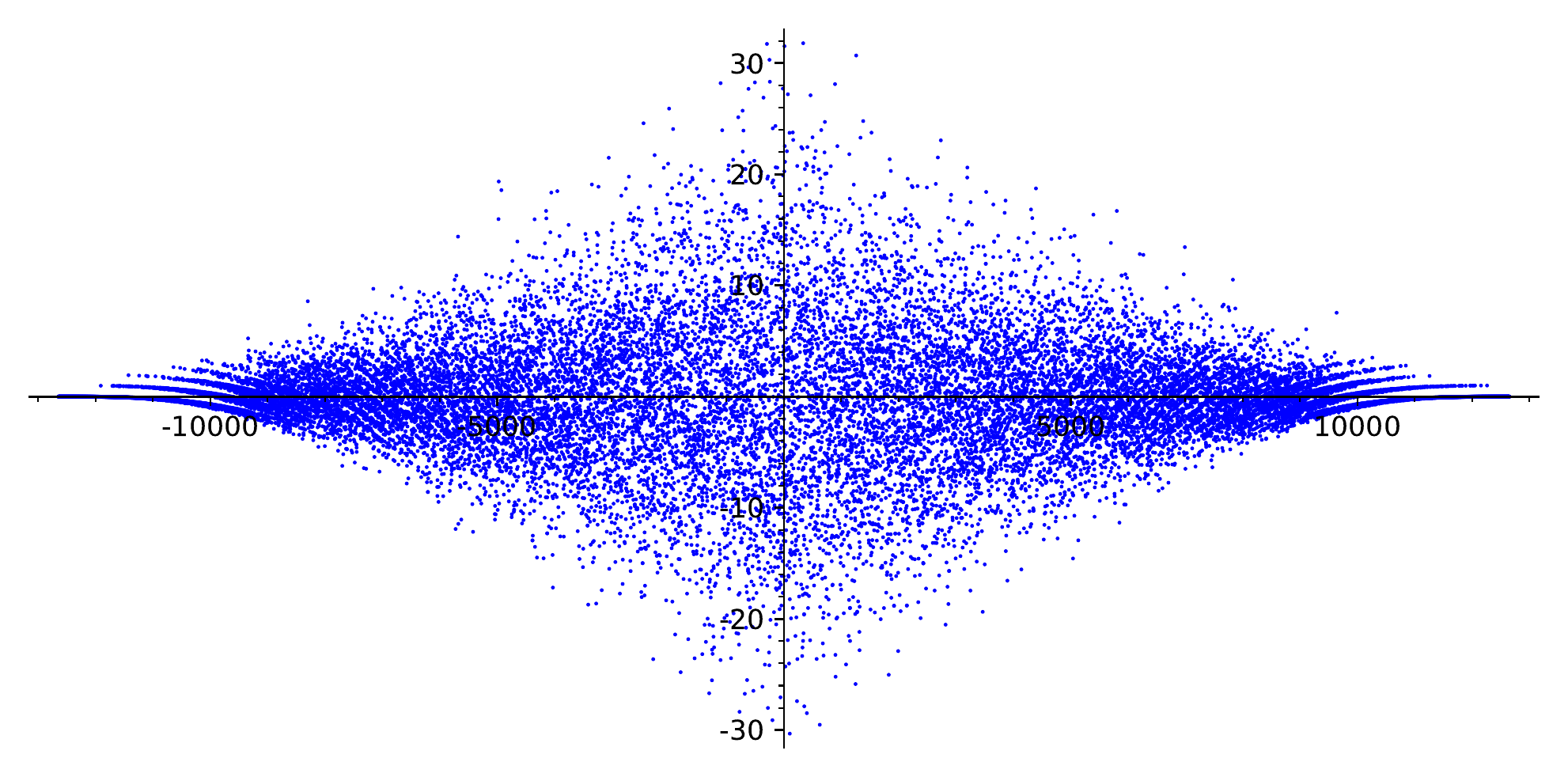}
\caption{$\mc{E}_{E_{3} \times E_{11}, T}^{\abs}(10^7)$}
\label{fig:Diff311p10M}
\end{figure}

\begin{figure}[H]
\centering
\includegraphics[width=12cm]{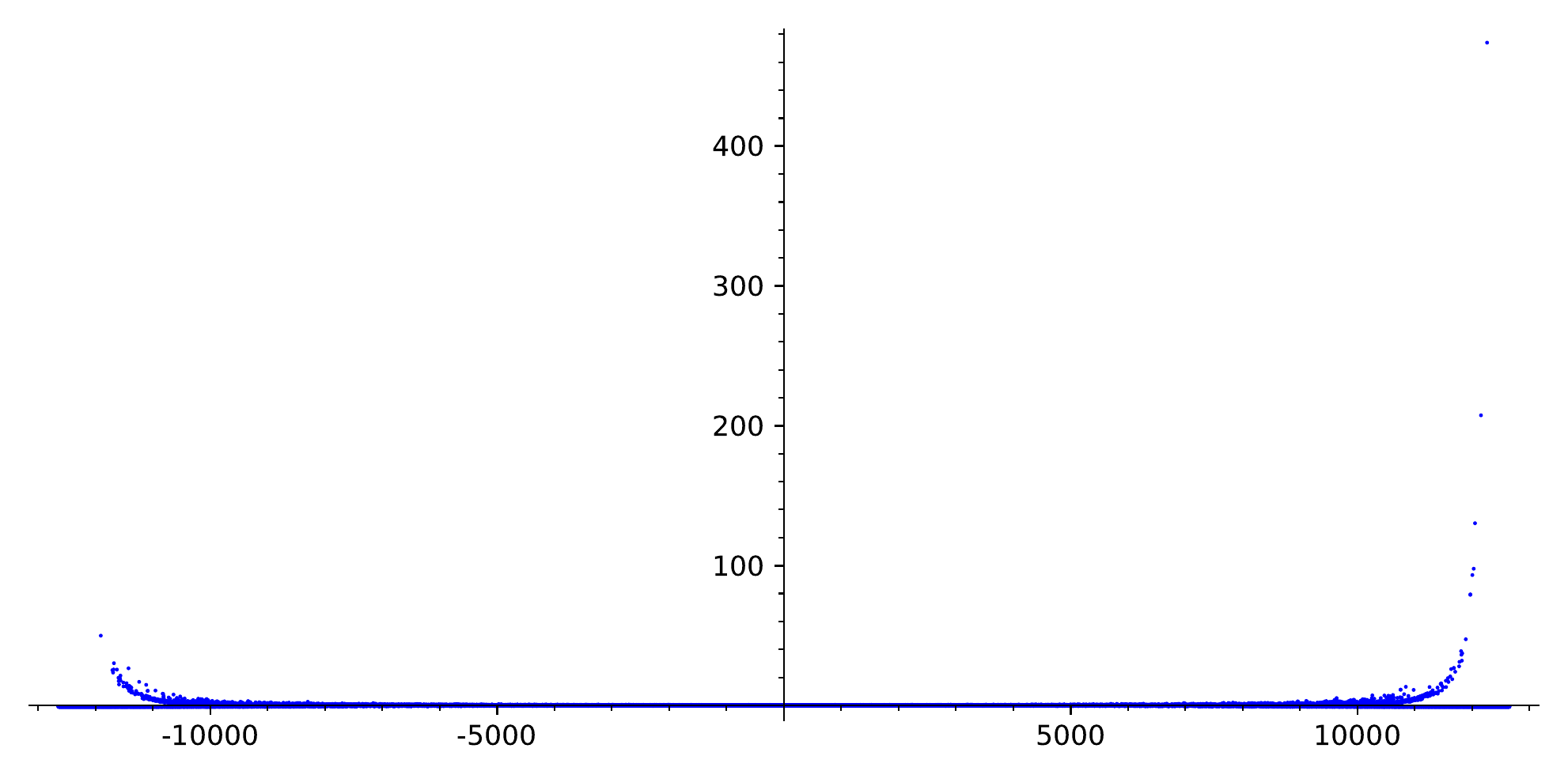}
\caption{$\mc{E}_{E_{3} \times E_{11}, T}^{\rel}(10^7)$}
\label{fig:RelDiff311p10M}
\end{figure}

\begin{figure}[H]
\centering
\includegraphics[width=12cm]{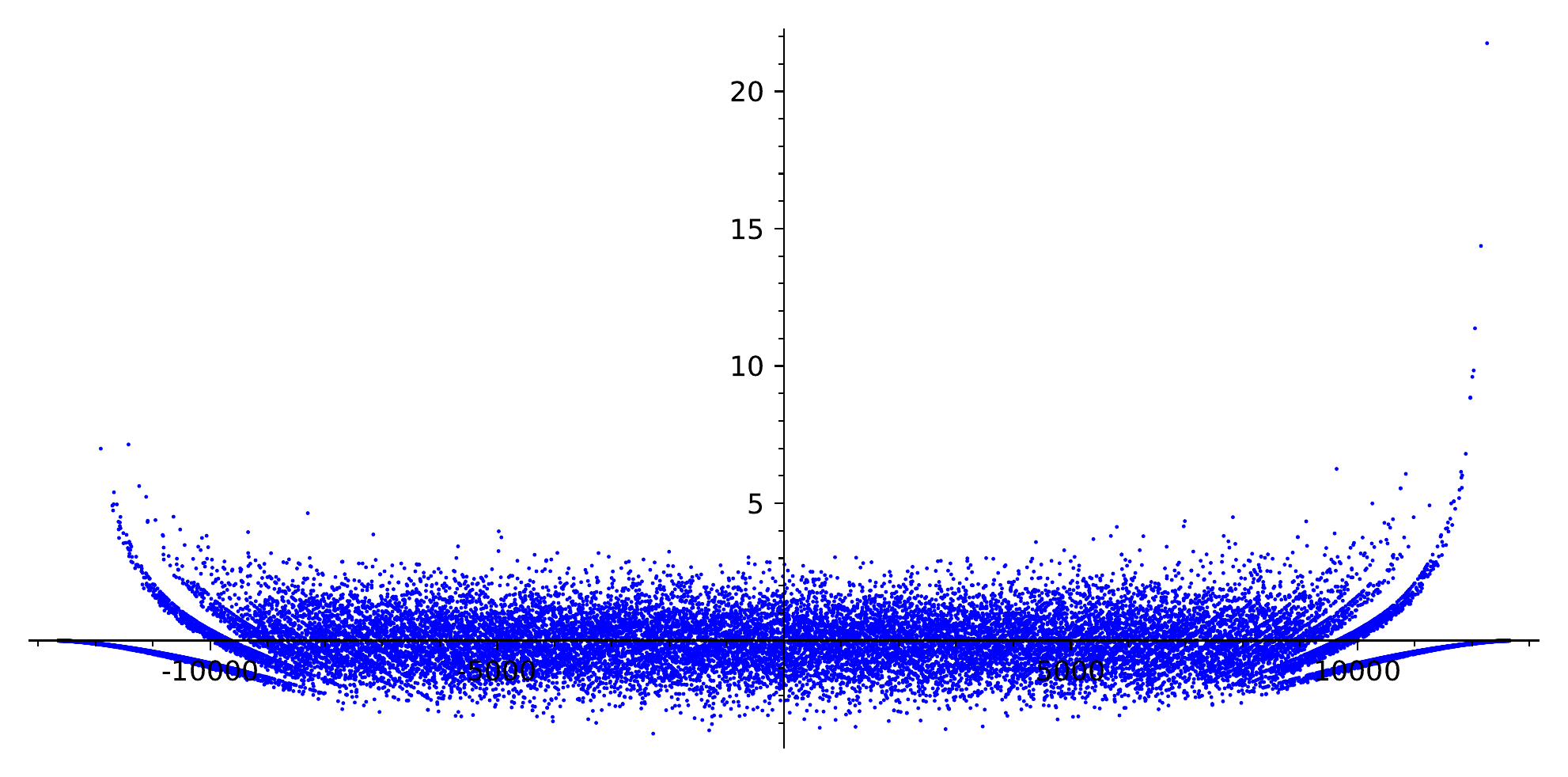}
\caption{$\mc{E}_{E_{3} \times E_{11}, T}^{\CLT}(10^7)$}
\label{fig:CLTDiff311p10M}
\end{figure}

\begin{figure}[H]
\centering
\includegraphics[width=12cm]{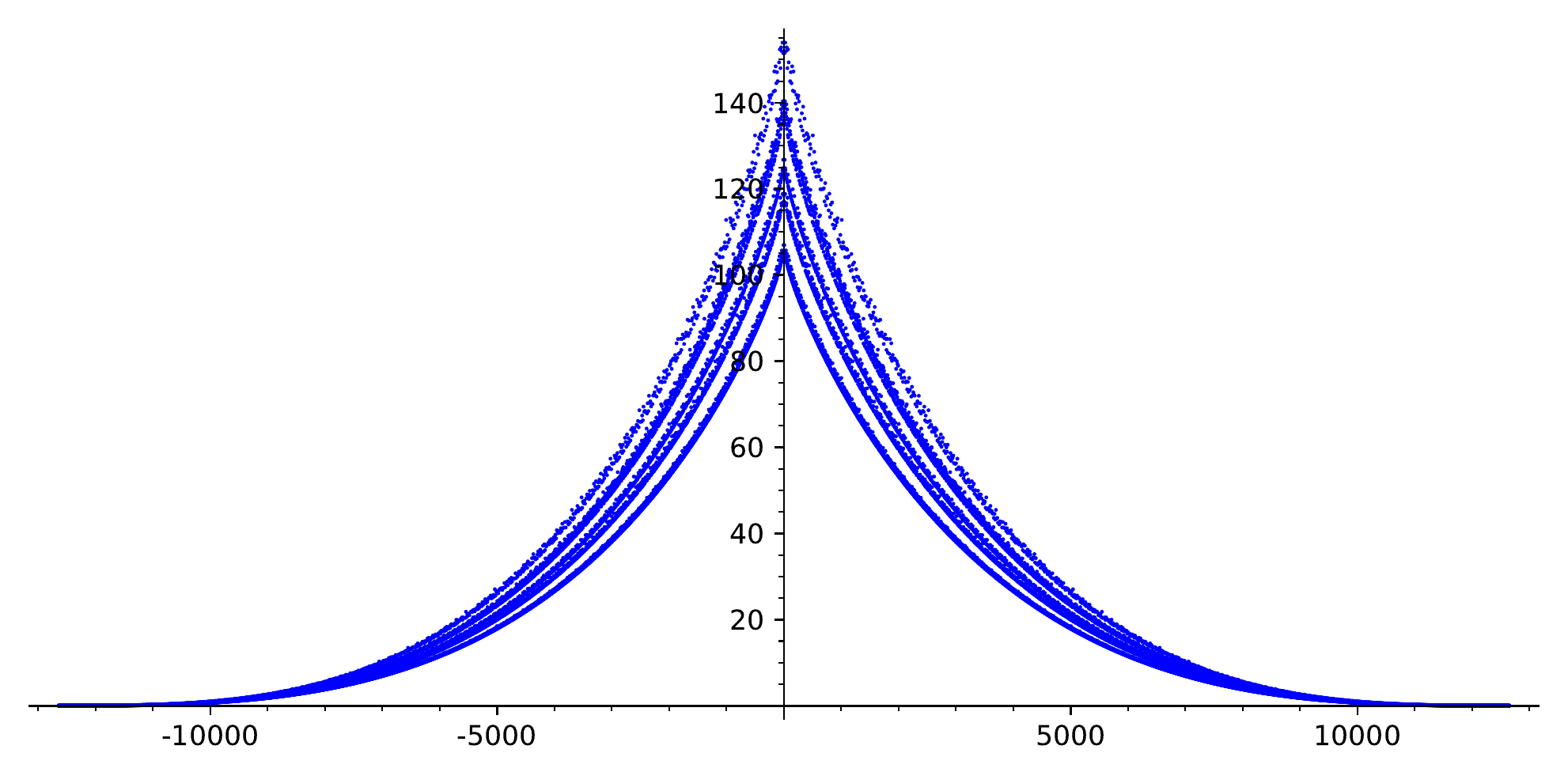}
\caption{$\pi_{E_{31} \times E_{107}, T}^{\pred}(10^7)$}
\label{fig:Pred31107p10M}
\end{figure}

\begin{figure}[H]
\centering
\includegraphics[width=12cm]{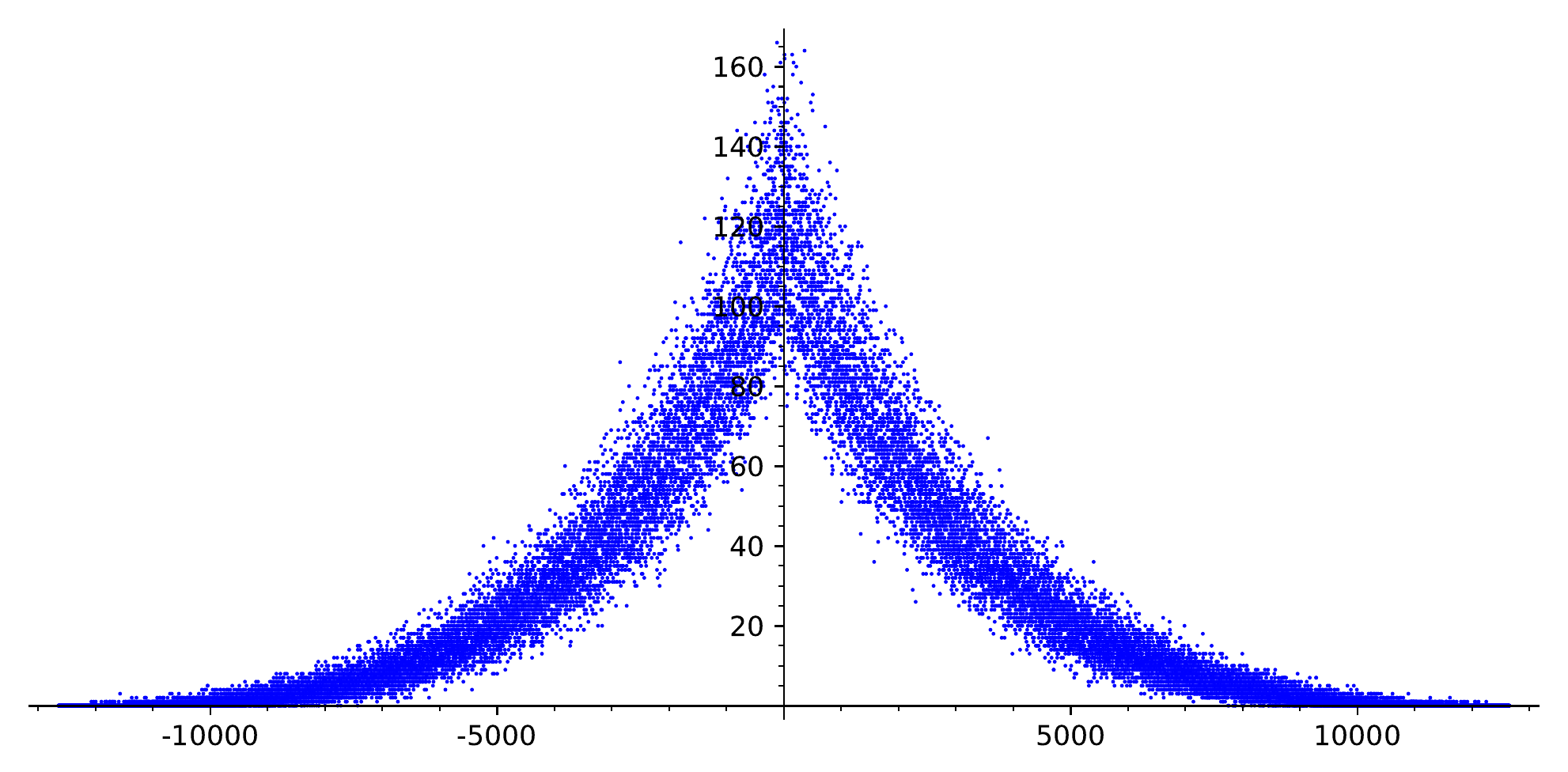}
\caption{$\pi_{E_{31} \times E_{107}, T}^{\actual}(10^7)$}
\label{fig:Actual31107p10M}
\end{figure}

\begin{figure}[H]
\centering
\includegraphics[width=12cm]{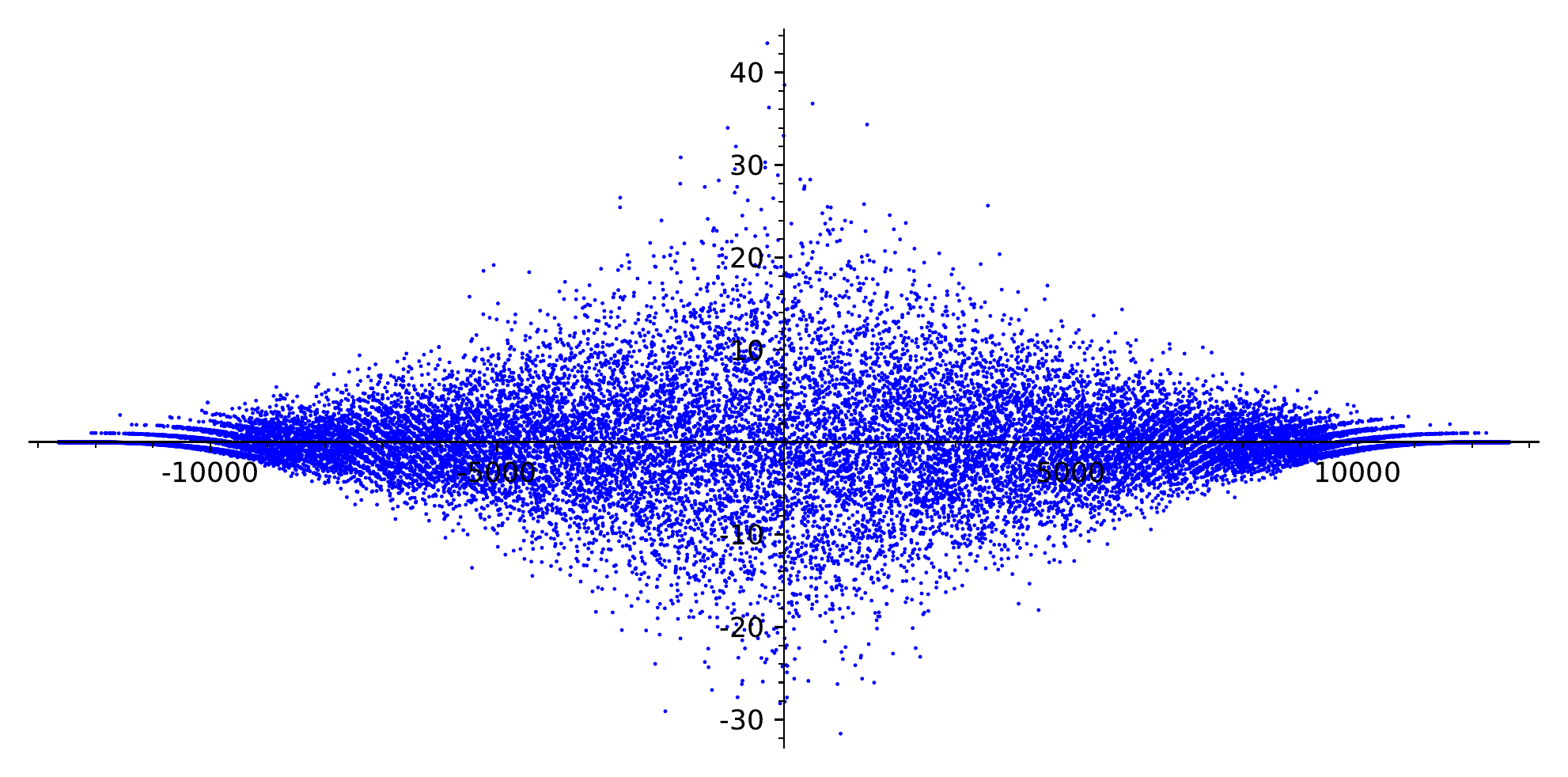}
\caption{$\mc{E}_{E_{31} \times E_{107}, T}^{\abs}(10^7)$}
\label{fig:Diff31107p10M}
\end{figure}

\begin{figure}[H]
\centering
\includegraphics[width=12cm]{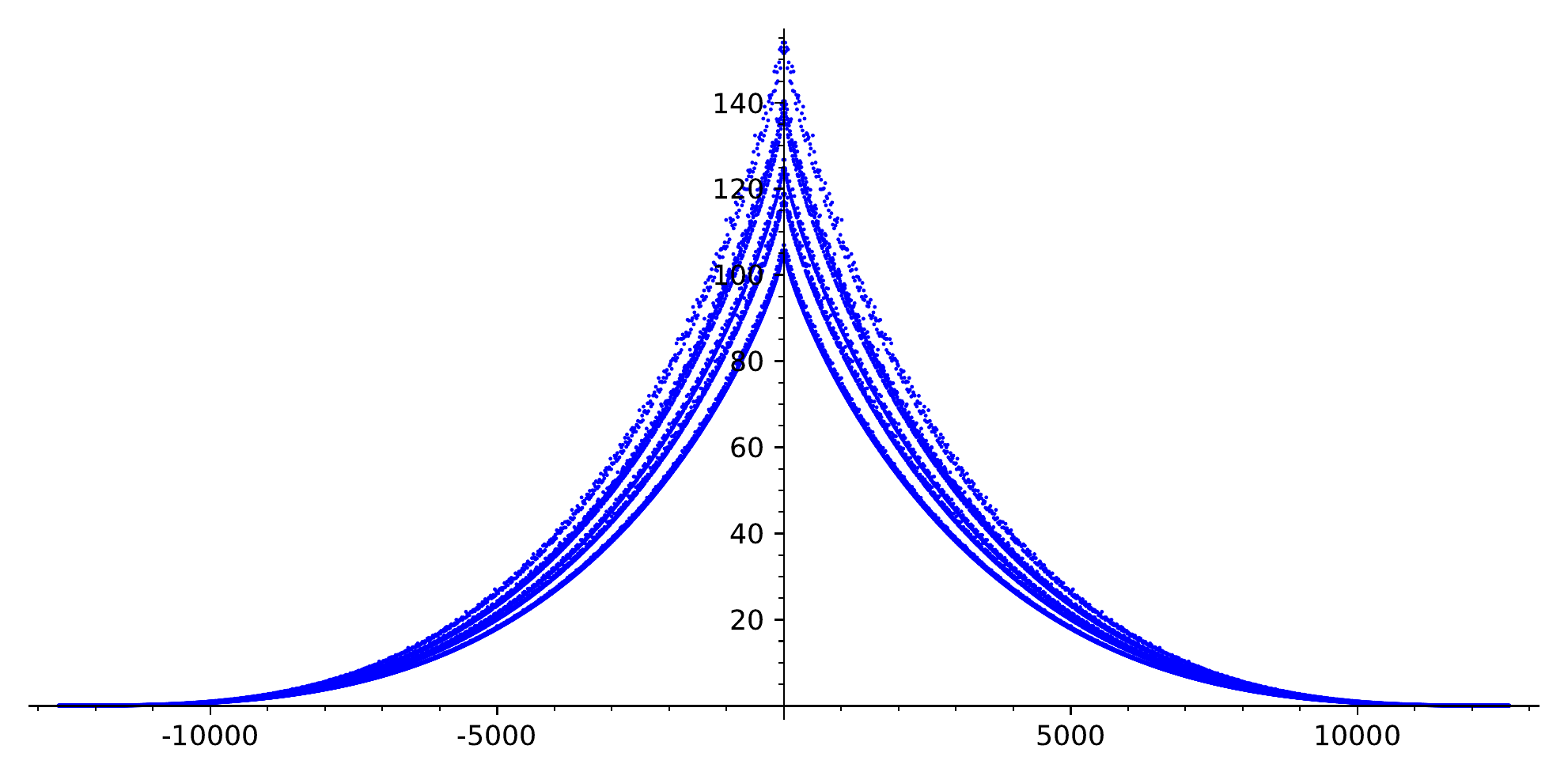}
\caption{$\pi_{E_{79} \times E_{107}, T}^{\pred}(10^7)$}
\label{fig:Pred79107p10M}
\end{figure}

\begin{figure}[H]
\centering
\includegraphics[width=12cm]{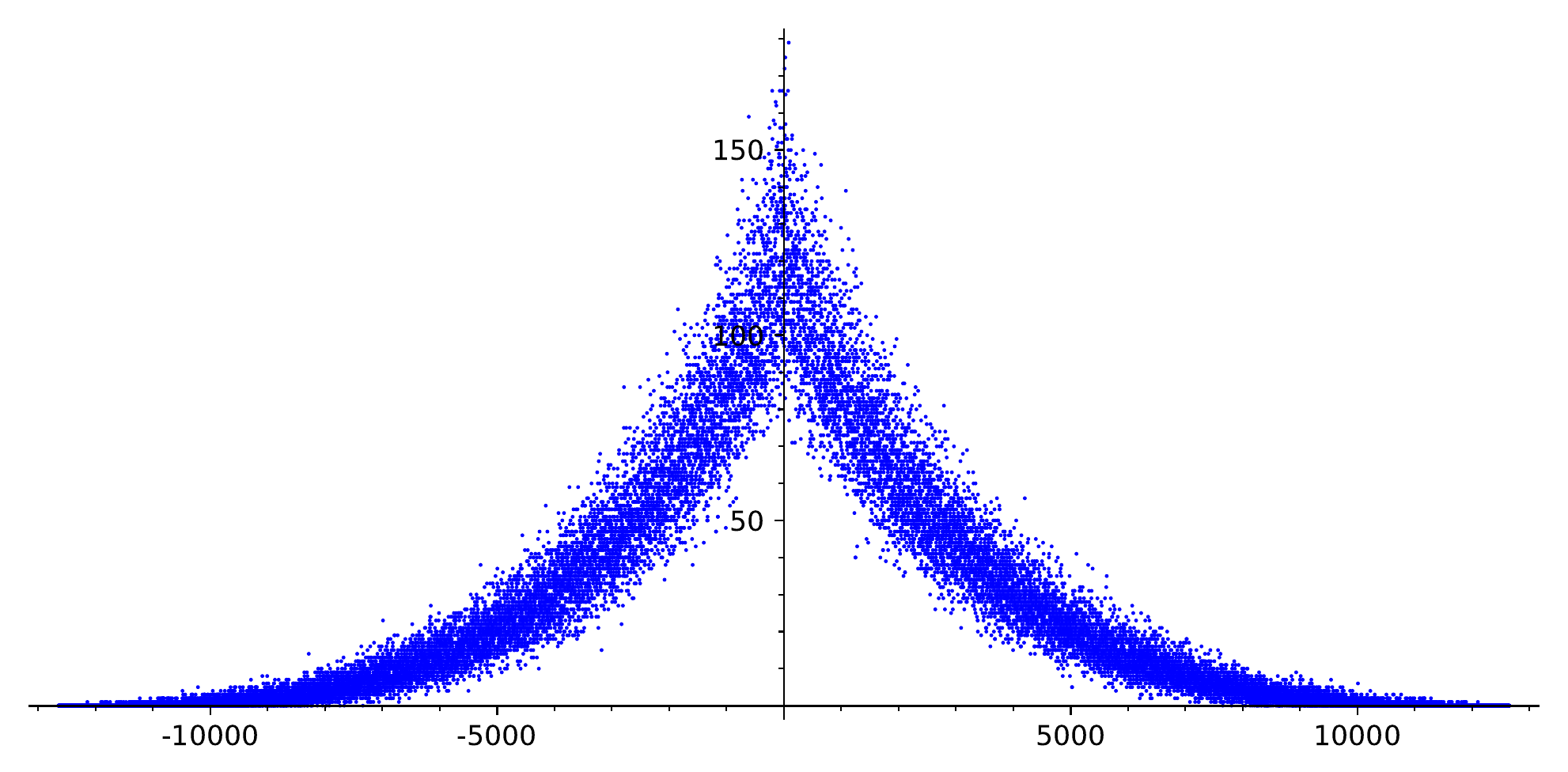}
\caption{$\pi_{E_{79} \times E_{107}, T}^{\actual}(10^7)$}
\label{fig:Actual79107p10M}
\end{figure}

\begin{figure}[H]
\centering
\includegraphics[width=12cm]{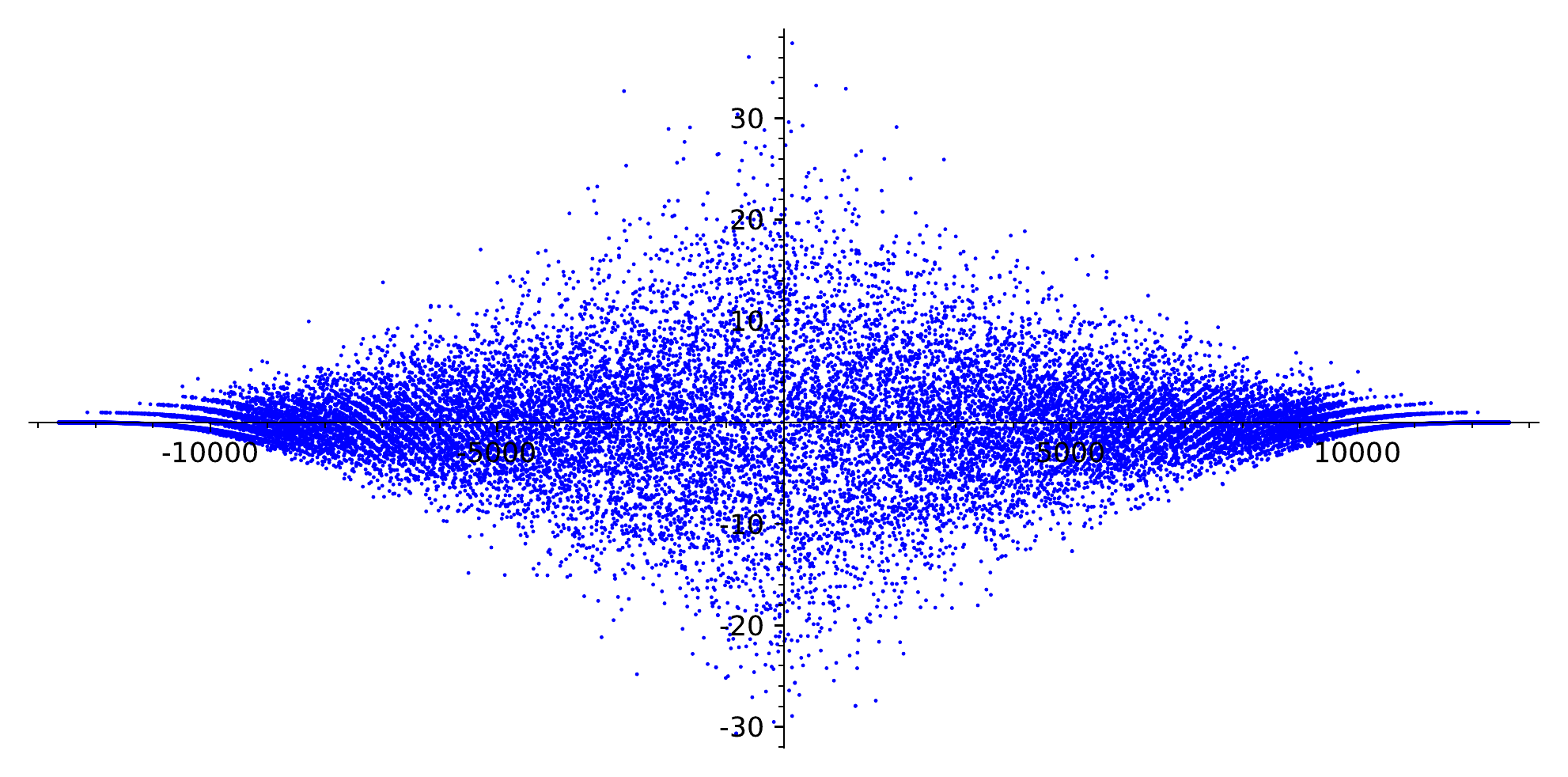}
\caption{$\mc{E}_{E_{79} \times E_{107}, T}^{\abs}(10^7)$}
\label{fig:Diff79107p10M}
\end{figure}

In each case, there are various ``stripes'' appearing in our prediction $\pi_{E_{\ell_1} \times E_{\ell_2}, T}^{\pred}(10^7)$, corresponding to values of $T$ lying in certain residue classes modulo $12$.  As may be seen in the numerical evidence presented in \cite{baierjones}, similar stripes are also present in the analogous plot of the points $\left\{ \left(T,\pi_{E,T}^{\pred}(x) \right) : |T| \leq 2\sqrt{x} \right\}$ associated to the original Lang-Trotter conjecture for fixed trace $T$ involving a single elliptic curve $E$. In that case stripes also emerge in the actual data $\left\{ \left(T,\pi_{E,T}^{\actual}(x) \right) : |T| \leq 2\sqrt{x} \right\}$.  By contrast, in the plots associated to pairs of elliptic curves we computed, the stripes appearing in the predictions are situated closer to one another, and the statistical noise present in the actual data causes them to ``blur together'' and no longer be visible in the plots of $\pi_{E_{\ell_1} \times E_{\ell_2}, T}^{\actual}(10^7)$.  To remedy this and further validate our conjectural constants $c(E_{\ell_1} \times E_{\ell_2},T)$, we additionally exhibit rainbow-colored graphs for the Serre pair $E_{79} \times E_{107}$ (see Figure \ref{fig:PredExtreme311} and Figure \ref{fig:ActualExtreme311}), in which data points are colored according to their greatest common divisor with $12$, as detailed in the following table.
\begin{center}
\begin{tabular}{|L|L|}
\hline \gcd(T,12) &\text{color of $\left( T,\pi_{E_{\ell_1} \times E_{\ell_2}, T}(10^7) \right)$}\\
\hline\hline
1 & \text{ red } \\
3 & \text{ orange } \\
2 & \text{ yellow }\\
4 \text{ or } 6 & \text{ green } \\
12 & \text{ blue }\\
\hline
\end{tabular}
\end{center}
The reason for the appearance of $\gcd(T,12)$ here is that the local factor of our constant at a prime $\ell$ is larger when $\ell$ divides $T$ than when $\ell$ does not divide $T$ (see Lemma \ref{good primes}, Theorem \ref{primesdividingtrace} and Proposition \ref{badprimesmainprop}).  Furthermore, the difference in size between the factor when $\ell \mid T$ versus when $\ell \nmid T$ is most dramatic when $\ell$ is small, and decreases very quickly as $\ell$ grows.  Because of this, the primes $2$ and $3$ account for the stripes in our prediction. \\
The color coding of the stripes in the rainbow graphs allows us to observe a more precise agreement between the predicted and actual data in this regard, giving yet more credence to the conjectural constants $c(E_{\ell_1}\times E_{\ell_2},T)$.

\begin{figure}[H]
\centering
\includegraphics[width=12cm]{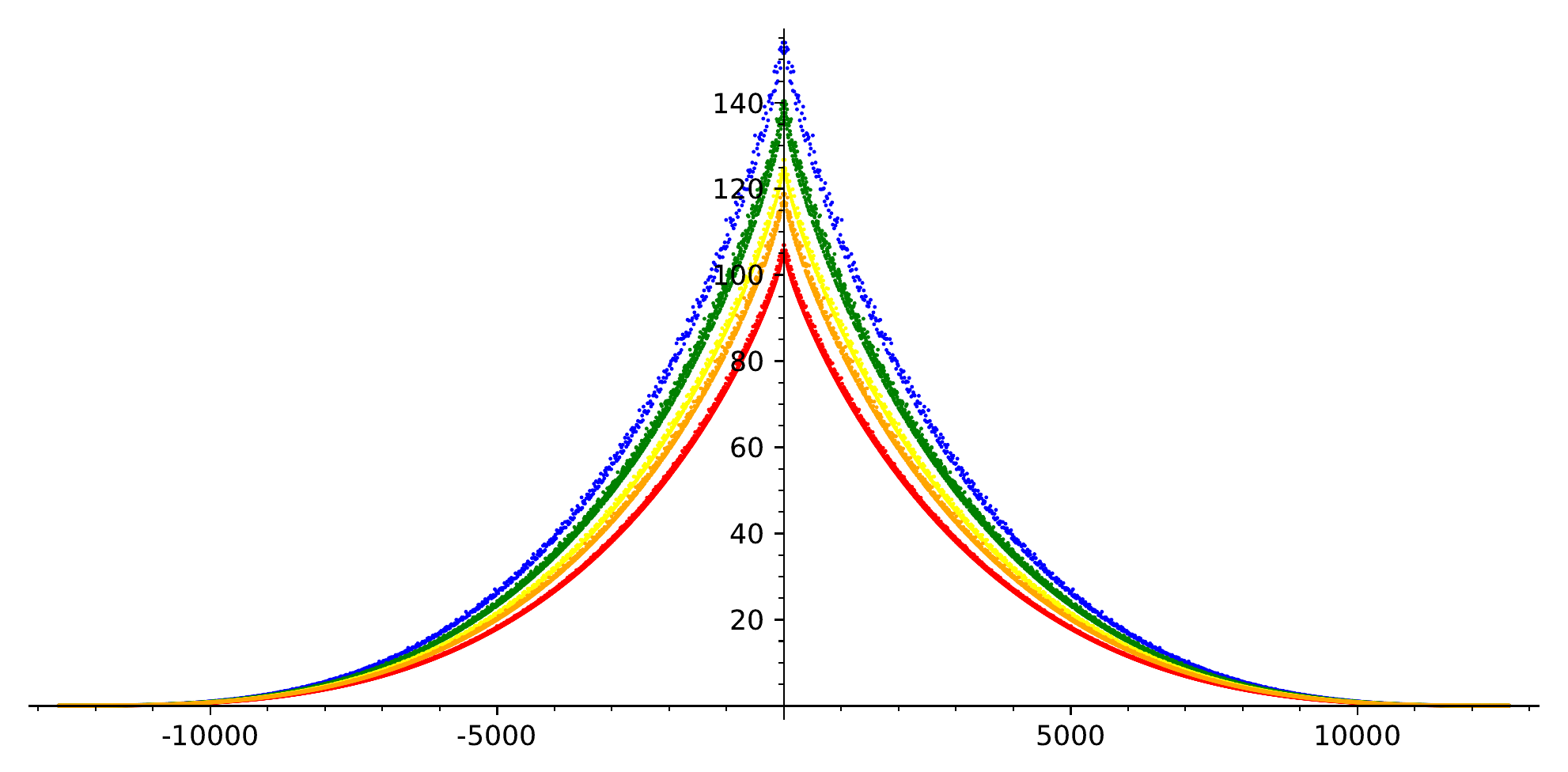}
\caption{Colored values of $\pi_{E_{79} \times E_{107}, T}^{\pred}(10^7)$}
\label{fig:PredExtreme311}
\end{figure}

\begin{figure}[H]
\centering
\includegraphics[width=12cm]{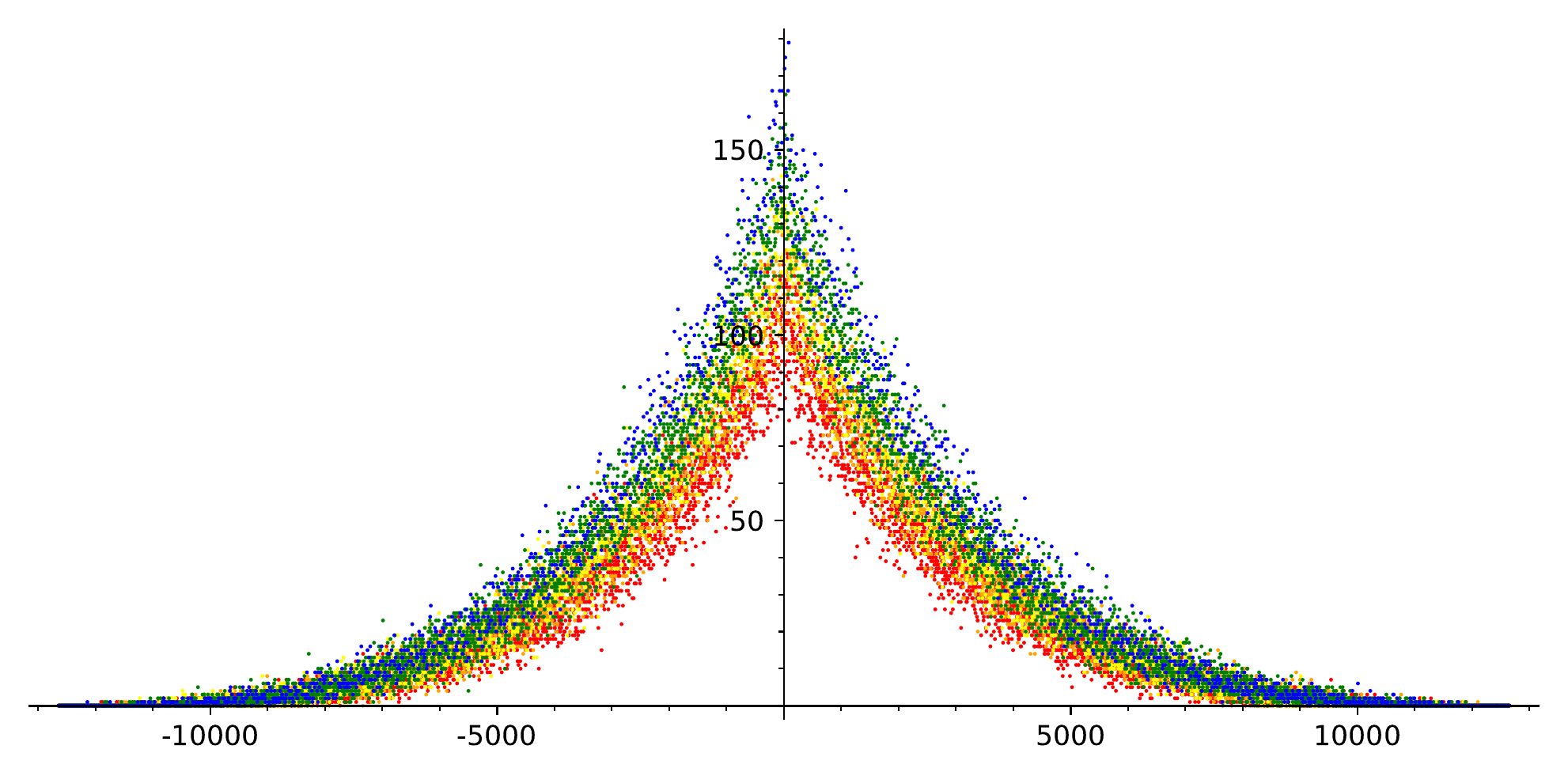}
\caption{Colored values of $\pi_{E_{79} \times E_{107}, T}^{\actual}(10^7)$}
\label{fig:ActualExtreme311}
\end{figure}

\printbibliography[heading=bibintoc]

@article {CDSS,
    AUTHOR = {Cojocaru, Alina Carmen and Davis, Rachel and Silverberg, Alice
              and Stange, Katherine E.},
     TITLE = {Arithmetic properties of the {F}robenius traces defined by a
              rational abelian variety (with two appendices by {J}-{P}.
              {S}erre)},
   JOURNAL = {Int. Math. Res. Not. IMRN},
  FJOURNAL = {International Mathematics Research Notices. IMRN},
      YEAR = {2017},
    NUMBER = {12},
     PAGES = {3557--3602},
      ISSN = {1073-7928},
   MRCLASS = {11G10 (14G25 14H52 14K15)},
  MRNUMBER = {3693659},
MRREVIEWER = {Joseph H. Silverman},
       DOI = {10.1093/imrn/rnw058},
       URL = {https://doi.org/10.1093/imrn/rnw058},
}

@manual{SAGEMATH,
  Key          = {SageMath},
  Author       = {{The Sage Developers}},
  Title        = {{S}ageMath, the {S}age {M}athematics {S}oftware {S}ystem ({V}ersion 9.0)},
  note         = {\url{ https://www.sagemath.org}},
  Year         = {2020},
}

@article {MAGMA,
    AUTHOR = {Bosma, Wieb and Cannon, John and Playoust, Catherine},
     TITLE = {The {M}agma algebra system. {I}. {T}he user language},
      NOTE = {Computational algebra and number theory (London, 1993)},
   JOURNAL = {J. Symbolic Comput.},
  FJOURNAL = {Journal of Symbolic Computation},
    VOLUME = {24},
      YEAR = {1997},
    NUMBER = {3-4},
     PAGES = {235--265},
      ISSN = {0747-7171},
   MRCLASS = {68Q40},
  MRNUMBER = {MR1484478},
       DOI = {10.1006/jsco.1996.0125},
       URL = {http://dx.doi.org/10.1006/jsco.1996.0125},
}

@article{fouvry_murty_1996, title={On the Distribution of Supersingular Primes}, volume={48}, DOI={10.4153/CJM-1996-004-7}, number={1}, journal={Canadian Journal of Mathematics}, publisher={Cambridge University Press}, author={Fouvry, Etienne and Murty, M. Ram}, year={1996}, pages={81–104}}

@article {DavidPapp,
    AUTHOR = {David, Chantal and Pappalardi, Francesco},
     TITLE = {Average {F}robenius distributions of elliptic curves},
   JOURNAL = {Internat. Math. Res. Notices},
  FJOURNAL = {International Mathematics Research Notices},
      YEAR = {1999},
    NUMBER = {4},
     PAGES = {165--183},
      ISSN = {1073-7928},
   MRCLASS = {11G05 (11F30 11N36)},
  MRNUMBER = {1677267},
MRREVIEWER = {M. Ram Murty},
       DOI = {10.1155/S1073792899000082},
       URL = {https://doi.org/10.1155/S1073792899000082},
}

@inproceedings {Lagarias1977EffectiveVO,
    AUTHOR = {Lagarias, J. C. and Odlyzko, A. M.},
     TITLE = {Effective versions of the {C}hebotarev density theorem},
 BOOKTITLE = {Algebraic number fields: {$L$}-functions and {G}alois
              properties ({P}roc. {S}ympos., {U}niv. {D}urham, {D}urham,
              1975)},
     PAGES = {409--464},
      YEAR = {1977},
   MRCLASS = {12A75},
  MRNUMBER = {0447191},
MRREVIEWER = {Matti Jutila},
}

@article{baierjones,
author = {Baier, Stephan and Jones, Nathan},
year = {2008},
month = {02},
pages = {},
title = {A Refined Version of the Lang-Trotter Conjecture},
volume = {2009},
journal = {International Mathematics Research Notices},
doi = {10.1093/imrn/rnn136}
}

@unpublished{future_upperbounds,
author = {Cojocaru, Alina Carmen and Jones, Nathan and Serban, Vlad and Wang, Tian},
title = {Bounds for distributions of Frobenius traces on abelian varieties with small Sato-Tate group},
note = {(in preparation)}
}

@incollection {Harris2009,
    AUTHOR = {Harris, Michael},
     TITLE = {Potential automorphy of odd-dimensional symmetric powers of
              elliptic curves and applications},
 BOOKTITLE = {Algebra, arithmetic, and geometry: in honor of {Y}u. {I}.
              {M}anin. {V}ol. {II}},
    SERIES = {Progr. Math.},
    VOLUME = {270},
     PAGES = {1--21},
 PUBLISHER = {Birkh\"auser Boston, Inc., Boston, MA},
      YEAR = {2009},
   MRCLASS = {11F80 (11F70 11G05 22E55)},
  MRNUMBER = {2641185},
       DOI = {10.1007/978-0-8176-4747-6_1}
}

@book {LangTrotter,
    AUTHOR = {Lang, Serge and Trotter, Hale},
     TITLE = {Frobenius distributions in {${\rm GL}_{2}$}-extensions},
    SERIES = {Lecture Notes in Mathematics, Vol. 504},
 PUBLISHER = {Springer-Verlag, Berlin-New York},
      YEAR = {1976},
   MRCLASS = {12A50 (10K05)},
  MRNUMBER = {0568299},
}

@article {MR3557121,
    AUTHOR = {Daniels, Harris B. and Hatley, Jeffrey and Ricci, James},
     TITLE = {Elliptic curves with maximally disjoint division fields},
   JOURNAL = {Acta Arith.},
  FJOURNAL = {Acta Arithmetica},
    VOLUME = {175},
      YEAR = {2016},
    NUMBER = {3},
     PAGES = {211--223},
      ISSN = {0065-1036},
   MRCLASS = {14H52 (11F80)},
  MRNUMBER = {3557121},
MRREVIEWER = {Rupam Barman},
}

@incollection {MR1638488,
    AUTHOR = {Silverberg, Alice},
     TITLE = {Explicit families of elliptic curves with prescribed mod {$N$}
              representations},
 BOOKTITLE = {Modular forms and {F}ermat's last theorem ({B}oston, {MA},
              1995)},
     PAGES = {447--461},
 PUBLISHER = {Springer, New York},
      YEAR = {1997},
   MRCLASS = {11G05 (11F80 11G18)},
  MRNUMBER = {1638488},
}

@article {MR3349445,
    AUTHOR = {Daniels, Harris B.},
     TITLE = {An infinite family of {S}erre curves},
   JOURNAL = {J. Number Theory},
  FJOURNAL = {Journal of Number Theory},
    VOLUME = {155},
      YEAR = {2015},
     PAGES = {226--247},
      ISSN = {0022-314X},
   MRCLASS = {11G05 (11F70 14G25)},
  MRNUMBER = {3349445},
MRREVIEWER = {Iv\'{a}n Blanco-Chac\'{o}n},
       DOI = {10.1016/j.jnt.2015.03.016},
       URL = {https://doi.org/10.1016/j.jnt.2015.03.016},
}

@article {MR2837018,
    AUTHOR = {Cojocaru, Alina-Carmen and Grant, David and Jones, Nathan},
     TITLE = {One-parameter families of elliptic curves over {$\Bbb Q$} with
              maximal {G}alois representations},
   JOURNAL = {Proc. Lond. Math. Soc. (3)},
  FJOURNAL = {Proceedings of the London Mathematical Society. Third Series},
    VOLUME = {103},
      YEAR = {2011},
    NUMBER = {4},
     PAGES = {654--675},
      ISSN = {0024-6115},
   MRCLASS = {11G05 (11F80 11G30)},
  MRNUMBER = {2837018},
MRREVIEWER = {Joseph H. Silverman},
       DOI = {10.1112/plms/pdr001},
       URL = {https://doi.org/10.1112/plms/pdr001},
}

@article {Serre1972,
    AUTHOR = {Serre, Jean-Pierre},
     TITLE = {Propri\'et\'es galoisiennes des points d'ordre fini des
              courbes elliptiques},
   JOURNAL = {Invent. Math.},
  FJOURNAL = {Inventiones Mathematicae},
    VOLUME = {15},
      YEAR = {1972},
    NUMBER = {4},
     PAGES = {259--331},
      ISSN = {0020-9910},
   MRCLASS = {14G25 (14K15)},
  MRNUMBER = {0387283},
}

@article {AkbaryParks,
    AUTHOR = {Akbary, Amir and Parks, James},
     TITLE = {On the {L}ang-{T}rotter conjecture for two elliptic curves},
   JOURNAL = {Ramanujan J.},
  FJOURNAL = {Ramanujan Journal. An International Journal Devoted to the
              Areas of Mathematics Influenced by Ramanujan},
    VOLUME = {49},
      YEAR = {2019},
    NUMBER = {3},
     PAGES = {585--623},
      ISSN = {1382-4090},
   MRCLASS = {11G05 (11M41)},
  MRNUMBER = {3979693},
MRREVIEWER = {Joseph H. Silverman},
       DOI = {10.1007/s11139-018-0050-7},
       URL = {https://doi.org/10.1007/s11139-018-0050-7},
}

@incollection {BaierPitankar,
    AUTHOR = {Baier, S. and Patankar, Vijay M.},
     TITLE = {Applications of the square sieve to a conjecture of {L}ang and
              {T}rotter for a pair of elliptic curves over the rationals},
 BOOKTITLE = {Geometry, algebra, number theory, and their information
              technology applications},
    SERIES = {Springer Proc. Math. Stat.},
    VOLUME = {251},
     PAGES = {39--57},
 PUBLISHER = {Springer, Cham},
      YEAR = {2018},
   MRCLASS = {11N36 (11G05 11N45 11R45)},
  MRNUMBER = {3880382},
MRREVIEWER = {Volker Ziegler},
       DOI = {10.1007/978-3-319-97379-1_3},
       URL = {https://doi.org/10.1007/978-3-319-97379-1_3},
}

@article {Oesterle,
    AUTHOR = {Oesterl\'{e}, Joseph},
     TITLE = {R\'{e}duction modulo {$p^{n}$} des sous-ensembles analytiques
              ferm\'{e}s de {${\bf Z}^{N}_{p}$}},
   JOURNAL = {Invent. Math.},
  FJOURNAL = {Inventiones Mathematicae},
    VOLUME = {66},
      YEAR = {1982},
    NUMBER = {2},
     PAGES = {325--341},
      ISSN = {0020-9910},
   MRCLASS = {12B99 (10D99 10K40)},
  MRNUMBER = {656627},
MRREVIEWER = {W. Bartenwerfer},
       DOI = {10.1007/BF01389398},
       URL = {https://doi.org/10.1007/BF01389398},
}

@article {SerreCebo,
    AUTHOR = {Serre, Jean-Pierre},
     TITLE = {Quelques applications du th\'{e}or\`eme de densit\'{e} de {C}hebotarev},
   JOURNAL = {Inst. Hautes \'{E}tudes Sci. Publ. Math.},
  FJOURNAL = {Institut des Hautes \'{E}tudes Scientifiques. Publications
              Math\'{e}matiques},
    NUMBER = {54},
      YEAR = {1981},
     PAGES = {323--401},
      ISSN = {0073-8301},
   MRCLASS = {12A75 (10D99 10H25 14G25)},
  MRNUMBER = {644559},
MRREVIEWER = {J. Tunnell},
       URL = {http://archive.numdam.org/article/PMIHES_1981__54__123_0.pdf},
}

@misc{lmfdb,
  shorthand    = {LMFDB},
  author       = { {The LMFDB Collaboration} },
  title        =  {The L-functions and Modular Forms Database},
  howpublished = {\url{http://www.lmfdb.org}},
  year         = {2013},
  note         = {[Online; accessed 16 September 2013]},
}

@article {MR2563740,
    AUTHOR = {Jones, Nathan},
     TITLE = {Almost all elliptic curves are {S}erre curves},
   JOURNAL = {Trans. Amer. Math. Soc.},
  FJOURNAL = {Transactions of the American Mathematical Society},
    VOLUME = {362},
      YEAR = {2010},
    NUMBER = {3},
     PAGES = {1547--1570},
      ISSN = {0002-9947},
   MRCLASS = {11G05 (11F80 11R45)},
  MRNUMBER = {2563740},
MRREVIEWER = {Ravi K. Ramakrishna},
       DOI = {10.1090/S0002-9947-09-04804-1},
       URL = {https://doi.org/10.1090/S0002-9947-09-04804-1},
}

@article {MR3071819,
    AUTHOR = {Jones, Nathan},
     TITLE = {Pairs of elliptic curves with maximal {G}alois
              representations},
   JOURNAL = {J. Number Theory},
  FJOURNAL = {Journal of Number Theory},
    VOLUME = {133},
      YEAR = {2013},
    NUMBER = {10},
     PAGES = {3381--3393},
      ISSN = {0022-314X},
   MRCLASS = {11G05 (11F80 14H52)},
  MRNUMBER = {3071819},
MRREVIEWER = {\'{A}lvaro Lozano-Robledo},
       DOI = {10.1016/j.jnt.2013.03.002},
       URL = {https://doi.org/10.1016/j.jnt.2013.03.002},
}

@article {MR2534114,
    AUTHOR = {Jones, Nathan},
     TITLE = {Averages of elliptic curve constants},
   JOURNAL = {Math. Ann.},
  FJOURNAL = {Mathematische Annalen},
    VOLUME = {345},
      YEAR = {2009},
    NUMBER = {3},
     PAGES = {685--710},
      ISSN = {0025-5831},
   MRCLASS = {11G05 (11N05 11R32)},
  MRNUMBER = {2534114},
MRREVIEWER = {M. Ram Murty},
       DOI = {10.1007/s00208-009-0373-1},
       URL = {https://doi.org/10.1007/s00208-009-0373-1},
}

@article{RubinSilverberg,
author = {Rubin, Karl and Silverberg, Alice},
year = {1999},
month = {02},
pages = {},
title = {Mod 2 representations of elliptic curves},
volume = {129},
journal = {Proceedings of the American Mathematical Society},
doi = {10.2307/2669028}
}

@incollection {MurtyFrobdistribs,
    AUTHOR = {Murty, V. Kumar},
     TITLE = {Frobenius distributions and {G}alois representations},
 BOOKTITLE = {Automorphic forms, automorphic representations, and arithmetic
              ({F}ort {W}orth, {TX}, 1996)},
    SERIES = {Proc. Sympos. Pure Math.},
    VOLUME = {66},
     PAGES = {193--211},
 PUBLISHER = {Amer. Math. Soc., Providence, RI},
      YEAR = {1999},
   MRCLASS = {11F80 (11G05 11R45)},
  MRNUMBER = {1703751},
MRREVIEWER = {Ian Kiming},
}

@article {Deuring,
    AUTHOR = {Deuring, Max},
     TITLE = {Die {T}ypen der {M}ultiplikatorenringe elliptischer
              {F}unktionenk\"{o}rper},
   JOURNAL = {Abh. Math. Sem. Hansischen Univ.},
    VOLUME = {14},
      YEAR = {1941},
     PAGES = {197--272},
   MRCLASS = {09.1X},
  MRNUMBER = {0005125},
MRREVIEWER = {S. Mac Lane},
       DOI = {10.1007/BF02940746},
       URL = {https://doi.org/10.1007/BF02940746},
}

@article {KatzLTrevisited,
    AUTHOR = {Katz, Nicholas M.},
     TITLE = {Lang-{T}rotter revisited},
   JOURNAL = {Bull. Amer. Math. Soc. (N.S.)},
  FJOURNAL = {American Mathematical Society. Bulletin. New Series},
    VOLUME = {46},
      YEAR = {2009},
    NUMBER = {3},
     PAGES = {413--457},
      ISSN = {0273-0979},
   MRCLASS = {11G05 (11F80 11G18 11G20)},
  MRNUMBER = {2507277},
MRREVIEWER = {\'{A}lvaro Lozano-Robledo},
       DOI = {10.1090/S0273-0979-09-01257-9},
       URL = {https://doi.org/10.1090/S0273-0979-09-01257-9},
}

@article {KedlayaFite+,
    AUTHOR = {Fit\'{e}, Francesc and Kedlaya, Kiran S. and Rotger, V\'{\i}ctor and
              Sutherland, Andrew V.},
     TITLE = {Sato-{T}ate distributions and {G}alois endomorphism modules in
              genus 2},
   JOURNAL = {Compos. Math.},
  FJOURNAL = {Compositio Mathematica},
    VOLUME = {148},
      YEAR = {2012},
    NUMBER = {5},
     PAGES = {1390--1442},
      ISSN = {0010-437X},
   MRCLASS = {11M50 (11G10 11G20 14G10 14K15)},
  MRNUMBER = {2982436},
MRREVIEWER = {Imin Chen},
       DOI = {10.1112/S0010437X12000279},
      URL = {https://doi.org/10.1112/S0010437X12000279},
}
\end{document}